\numberwithin{equation}{section}
\newtheorem{theo}{Theorem}
\newtheorem{prop}{Proposition}[section]
\newtheorem{lemm}[prop]{Lemma}
\newtheorem{corr}[prop]{Corollary}
\theoremstyle{definition}
\newtheorem{defi}[prop]{Definition}
\newtheorem{rema}[prop]{Remark}
\newcommand{\be}{\mathrm{b}}
\newcommand{\bsymbol}[1]{\sigma_{\be,{#1}}}
\newcommand{\Diffb}{\mathrm{Diff}_{\be}}
\newcommand{\ellb}{\mathrm{ell}_\be}
\newcommand{\fourier}{\mathcal{F}}
\newcommand{\Ltwo}{\mathcal{L}^2}
\newcommand{\mellin}{\mathcal{M}}
\newcommand{\Opb}{\mathrm{Op}_{\mathrm b}}
\newcommand{\opWFb}{\WFb'}
\newcommand{\Psib}{\Psi_{\mathrm{b}}}
\newcommand{\Psibc}{\Psi_{\mathrm{b},\comp}}
\newcommand{\Psibeven}[1][\mathrm{even}]{\Psi_{\mathrm{b},#1}}
\newcommand{\sgn}{\mathrm{sgn}}
\newcommand{\tdotSob}{\dot{\mathcal H}}
\newcommand{\tDiff}{\mathrm{Diff}_\nu}
\newcommand{\tSob}{\mathcal H}
\newcommand{\WFb}{\mathrm{WF}_{\mathrm{b}}}
\DeclareMathOperator{\Diff}{Diff}
\DeclareMathOperator{\supp}{supp}
\newcommand{\divop}{\mathrm{div}}
\newcommand{\RR}{\mathbb{R}}
\newcommand{\CC}{\mathbb{C}}
\newcommand{\NN}{\mathbb{N}}
\newcommand{\pa}{{\partial}}
\newcommand{\hyp}{\mathcal{H}}
\newcommand{\gl}{\mathcal{G}}
\newcommand{\ellip}{\mathcal{E}}
\newcommand{\loc}{\mathrm{loc}}
\newcommand{\comp}{\mathrm{c}}
\newcommand{\chare}{\mathcal{N}}
\newcommand{\cchare}{\dot{\mathcal{N}}}
\newcommand{\bT}{{}^\mathrm{b} T}
\newcommand{\bS}{{}^\mathrm{b} S}
\newcommand{\bdotT}{{}^{\mathrm{b}} \dot T}
\newcommand{\CI}{\mathcal{C}^\infty}
\newcommand{\CcI}{\mathcal{C}_{\comp}^\infty}
\newcommand{\CmI}{\mathcal{C}^{-\infty}}
\newcommand{\CdI}{\dot{\mathcal{C}}^{\infty}}
\newcommand{\CdmI}{\dot{\mathcal{C}}^{-\infty}}
\newcommand{\CIeven}{\mathcal{C}^\infty_{\mathrm{even}}}
\newcommand{\CIodd}{{\mathcal{C}^\infty_{\mathrm{odd}}}}
\newcommand{\maxdom}{\mathcal{X}}
\newcommand{\GBB}{\mathrm{GBB}}
\newcommand{\diag}{\mathrm{diag}}
\renewcommand{\Im}{\operatorname{Im}}
\renewcommand{\Re}{\operatorname{Re}}
\newcommand{\WFbop}{\WFb^\mathrm{Op}}
\newcommand{\beq}{\begin{equation}}
\newcommand{\eeq}{\end{equation}}
\newcommand{\bea}{\begin{aligned}}
\newcommand{\eea}{\end{aligned}}
\def\rr{{\mathbb R}}
\newcommand{\pt}{({\rm PT})}
\def\wf{{\rm WF}}
\def\cWb{\mathcal{W}_{\rm b}^{-\infty}(X)}
\def\bee{{}^{\rm b}}
\def\b{{\rm b}}
\def\cN{\mathcal{N}}
\def\cf{\mathcal{C}^\infty}
\def\pX{\partial X}
\newtheorem{hypo}[prop]{Hypothesis}
\def\Robin{R}
\title[Propagation of singularities on AdS spacetimes]
{Propagation of singularities on {AdS} spacetimes  for general boundary conditions and the holographic {H}adamard condition}
\author{Oran Gannot}
\email{gannot@northwestern.edu}
\address{Department of Mathematics, Lunt Hall, Northwestern University,
	Evanston, IL 60208, USA}
\author{Micha\l{} Wrochna}
\email{michal.wrochna@univ-grenoble-alpes.fr}
\address{Universit\'e Grenoble Alpes, CNRS, Institut Fourier, F-38000 Grenoble,
	France}
\thanks{\emph{Acknowledgments.} OG was partially supported by NSF grant DMS–1502632. MW gratefully acknowledges support from the grant ANR-16-CE40-0012-01. The authors are also grateful to Andr\'as Vasy and Claude Warnick for helpful conversations}
\begin{document}

\begin{abstract} We consider the Klein-Gordon equation on asymptotically anti-de Sitter spacetimes subject to Neumann or Robin (or Dirichlet) boundary conditions, and prove propagation of singularities along generalized broken bicharacteristics. The result is formulated in terms of conormal regularity relative to a twisted Sobolev space. We use this to show the uniqueness, modulo regularising terms, of parametrices with prescribed $\b$-wavefront set. Furthermore, in the context of quantum fields, we show a similar result for two-point functions satisfying a holographic Hadamard condition on the $\b$-wavefront set. 
	\end{abstract}
	
	\maketitle
	
\section{Introduction \& main results} 	The Klein-Gordon equation on asymptotically anti-de Sitter (aAdS) spacetimes was studied in a number of works in the last several years. We refer the reader to \cite{yagdjian2009klein,galstian2010lp,bachelot2011klein,holzegel2012well,warnick2013massive,holzegel2013decay,enciso2015singular,holzegel2014boundedness,holzegel2016unique,holzegel2017unique}  to mention only a few. Notably, the results include well-posedness for the Klein--Gordon equation by Vasy \cite{vasy2012wave} and Holzegel \cite{holzegel2012well} in the case of Dirichlet boundary conditions, as well as well-posedness for Neumann and Robin boundary conditions by Warnick \cite{warnick2013massive}. 

In applications to Quantum Field Theory the main objects of interest are  \emph{propagators}, which are singular distributions in the two spacetime variables. The key additional ingredient that is needed is a microlocal propagation of singularities theorem. In the case of Dirichlet boundary conditions, this was established by Vasy \cite{vasy2012wave}, and applied in \cite{wrochna2017holographic} to yield a result on distinguished parametrices largely analogous to that of Duistermaat and H\"ormander \cite{duistermaat1972fourier}.    

	The goal of the present paper is to provide these type of theorems in the case of Neumann and Robin boundary conditions on the boundary $\partial X$ of an aAdS spacetime $(X,g)$. These boundary conditions appear frequently in the physics literature; see e.g.~\cite{dappiaggi2018algebraic,dappiaggi2018ground}. Further motivation comes from the study of the Dirichlet-to-Neumann operator, which was recently shown to coincide with a power of the wave operator in the case of a static metric \cite{enciso2017fractional}.  
	
	The main difficulties are two-fold. First of all, the positive commutator estimates in \cite{vasy2012wave} have no direct generalization outside of the Dirichlet case, for the same reason that the associated energy is ill-defined for Neumann and Robin conditions. Secondly, the boundary conditions must be understood in terms of weighted traces $\gamma_\pm$ related to the polyhomogeneous expansions of solutions to the Klein--Gordon equation near the boundary (the corresponding elliptic setting is well-understood thanks to \cite{mazzeo1991elliptic,mazzeo2014elliptic} and \cite{gannot2018elliptic}). 
	

	In \cite{vasy2012wave}, Vasy proves propagation of conormal regularity relative to a scale of $0$-Sobolev spaces whose weights (or lack thereof) correspond directly to the form domain of the wave operator. Unless one introduces additional weights, it is not possible to pose the Neumann or Robin problems in these spaces. The use of weighted 0-Sobolev spaces in turn makes it difficult to apply the quadratic form techniques first developed in \cite{vasy2008propagation} (and subsequently applied applied in \cite{melrose2008propagation,vasy2010diffraction,vasy2012wave}). To circumvent these problems we adopt an approach based on microlocalizing a certain \emph{twisted Sobolev space} $\tSob^1_\loc(X) \subset x^{-1}L^2_\loc(X,dg)$, introduced in the present context by Warnick \cite{warnick2013massive}. 
	
	We show that $\b$-pseudo\-differential operators have good mapping properties on these Sobolev spaces.  This allows us to consider a \emph{$\b$-wavefront set} $\wf^{1,s}_{\b}(u)$ which microlocalizes the space $\tSob^{1,s}_\loc(X)$ of conormal distributions of order $s$ with respect to $\tSob^{1}_\loc(X)$ (i.e., the subspace of $\tSob^1_\loc(X)$ stable under applications of at most $s$ vector fields tangent to the boundary). 
	
	On an aAdS spacetime $(X,g)$ of dimension $n\geq 2$ (see Section \ref{sec:aAdS} for the precise definition), we consider the Klein-Gordon operator	
	\begin{equation} \label{eq:kg}
	P = \Box_g - \tfrac{(n-1)^2}{4}+\nu^2, \quad  \nu > 0.
	\end{equation}
The condition $\nu >0$ corresponds to the well-known Breitenlohner--Freedman mass bound. 
Let $\nu_\pm = \tfrac{n-1}{2} \pm \nu$ denote the indicial roots of $P$. The definition of $\tSob^1_\loc(X)$ (which depends on the Klein--Gordon parameter $\nu$) is based on stability under first order differential operators $Q$ on $X^\circ$ which are twisted in the sense that $x^{-\nu_-}Qx^{\nu_-}$ is \emph{smooth} up to the boundary for any boundary defining function (bdf) $x$. In other words, we work with the largest space of first-order differential operators preserving $x^{\nu_-}\CI(X)$. This is motivated by the following observation: if $F \in x^{\nu_-}\CI(X)$ is any function satisfying 
\begin{equation} \label{eq:twistingfunction}
F^{-1}P(F) \in x^2 \CI(X),
\end{equation}
then one has
\[
P = -(F^{-1}D_{z^i}F)^\dagger g^{ij} (F D_{z^j} F^{-1}) + F^{-1}P(F),
\]
where $\dagger$ refers to the $dg$-adjoint.  Following the terminology in \cite{holzegel2014boundedness}, we call $F$ an admissible twisting function (the existence of admissible twisting functions is discussed in Section \ref{subsect:kg}). Thus, modulo zeroth order terms, $P$ is the sum of twisted derivatives composed with their adjoints.

Compared with \eqref{eq:kg}, one gains two powers of $x$ in the zeroth order terms, which turns out to be crucial. This observation was first employed systematically in the study of AdS spacetimes by Warnick \cite{warnick2013massive}, and also appeared earlier in the closely related asymptotically hyperbolic setting \cite{gonzalez2013fractional,chang2011fractional}. We then associate to $P$ a Dirichlet form on $\tSob^1_\loc(X)$, and following the philosophy of \cite{vasy2008propagation} carry out a positive commutator argument at the level of quadratic forms.

For $x$ a bdf, the Dirichlet data $\gamma_- u$ of $u$ is simply $x^{-\nu_-}u$ restricted to the boundary; this restriction exists as a distribution (and transforms simply under changes of bdf). The Neumann data $\gamma_+ u$ is slightly more difficult to define; this is achieved in Sections \ref{subsect:asymptoticexpansion} and \ref{subsect:greens} for $\nu \in (0,1)$. When $\nu \in (0,1)$, we consider the Robin or Neumann realization $P_R$, corresponding to 
\[
\gamma_+ u - \beta\gamma_- u = 0
\]
for $\beta \in \CI(\pa X)$ real-valued. When $\nu > 0$, we can also consider the Dirichlet realization $P_D$ of $P$, corresponding to imposing $\gamma_- u =0$.  We prove the following propagation of singularities theorem.


\begin{theo}[{Propagation of singularities}] \label{theo:GBBpropagation} 
Let $\nu \in (0,1)$. If $u \in \tSob^{1,m}_{\loc}(X)$ for some $m \leq 0$ and $s \in \RR \cup \{+\infty\}$, then $\WFb^{1,s}(u) \setminus \WFb^{-1,s+1}(P_R u)$ is the union of maximally extended $\GBB$s within the compressed characteristic set $\cchare$.

The same result holds for all $\nu > 0$ if $u \in \tdotSob^{1,m}_\loc(X)$ and we consider $P_D$ instead of $P_R$.
\end{theo}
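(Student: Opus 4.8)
The plan is to follow the positive commutator method of Vasy and Melrose--Vasy, but carried out at the level of the Dirichlet form associated to $P$ on the twisted Sobolev space $\tSob^1$, rather than at the level of the operator on $0$-Sobolev spaces. The key point is that, by the factorization $P = -(F^{-1}D_{z^i}F)^\dagger g^{ij}(F D_{z^j}F^{-1}) + F^{-1}P(F)$ displayed above, the natural bilinear form $\mathsf{B}(u,v)$ pairing twisted derivatives of $u$ against those of $v$ (plus a lower-order term) encodes $P_R$ once we incorporate the Robin boundary term $\int_{\pa X} \beta\,\gamma_-u\,\overline{\gamma_-v}$. Propagation of $\WFb^{1,s}$ is then equivalent to a microlocal regularity statement for this form. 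The argument is microlocal and local in $X$, so after the standard reductions we may assume $u$ is compactly supported and work near a single point of the compressed characteristic set $\cchare$.

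First I would set up the commutant: given a point $q_0 \in \cchare \setminus \WFb^{-1,s+1}(P_R u)$ and a $\GBB$ $\gamma$ through it along which $\WFb^{1,s}(u)$ is assumed controlled on one side, choose $A = A^* \in \Psib$ with $\b$-symbol supported in a small conic neighborhood of a piece of $\gamma$, equipped with the usual weight $x^{-l}\langle\xi\rangle^{s-1/2}$ and a monotonicity (escape) function along the bicharacteristic flow, exactly as in the three cases (elliptic, hyperbolic/glancing in $X^\circ$, and boundary/glancing points) of the GBB propagation scheme. The boundary case is where one must use generalized broken bicharacteristics and the reflection at $\pa X$; here the twisted structure is essential because the commutator $[P,A]$ must be re-expressed through $\mathsf{B}$ so that the boundary contributions are the well-defined traces $\gamma_\pm$ and the Robin relation $\gamma_+u = \beta\gamma_-u$ can be substituted. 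I would then compute $\mathsf{B}(Au, Au)$ (or rather the pairing giving $2\Re\langle P_R u, A^*A u\rangle$ against the positivity coming from the principal part plus the commutator's main term), obtaining a positive term $\|A'u\|^2_{\tSob^1}$ of the sharp order, controlled by: (i) the a priori regularity of $u$ on the incoming part of $\gamma$, where the escape function makes the commutator favorably signed; (ii) the term $\langle P_R u, A^*Au\rangle$, bounded since $q_0 \notin \WFb^{-1,s+1}(P_R u)$; and (iii) error terms of lower $\b$-order, handled by a standard inductive/regularization argument over $s$ and by the mapping properties of $\Psib$ on $\tSob^1$ established earlier. Finally, a regularization (replacing $A$ by $A_\epsilon = A\Lambda_\epsilon$ with $\Lambda_\epsilon$ uniformly bounded in $\Psib^0$ and converging to the identity) upgrades the a priori assumption $u \in \tSob^{1,m}$ to the conclusion, and iterating the open-ness of the set of controlled points along $\gamma$ propagates regularity along the whole maximally extended $\GBB$. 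The Dirichlet case is strictly easier: one works in $\tdotSob^1$, the boundary term in $\mathsf{B}$ vanishes since $\gamma_-u=0$, and the argument reduces to the usual GBB propagation with no Robin contribution; this extends to all $\nu>0$ because the definition of $\gamma_-$ and the factorization do not require $\nu<1$.

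The main obstacle I anticipate is the boundary/glancing analysis: one must show that the commutator argument closes even though the energy (form) is not positive-definite in the way it is for Dirichlet conditions, so the sign of the boundary term $\int_{\pa X}\beta|\gamma_-u|^2$ and its interaction with the commutant's boundary symbol must be tracked carefully. Concretely, the delicate step is to verify that after substituting the Robin relation, the boundary contributions to $2\Re\mathsf{B}(u, A^*Au) - (\text{commutator positivity})$ are either of the correct sign or absorbable into the interior positive term and the $P_Ru$ error — this is the analogue, in the quadratic-form framework, of Vasy's positive commutator estimate at $\pa X$, and it is exactly the place where his energy-based argument had "no direct generalization." I would handle it by exploiting that $\beta \in \CI(\pa X)$ is real-valued so the boundary term is real, and that the twisted derivative $F D_{z^j}F^{-1}$ applied to $u$ has a well-defined boundary behavior controlled by $\gamma_+u$; the commutant can be chosen (following the even/odd decomposition and the boundary calculus of $\Psib$) so that its principal boundary symbol is real and the problematic term is lower order. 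The remaining steps — elliptic regularity, interior real-principal-type propagation, and the inductive bootstrap in $s$ — are routine given the $\b$-calculus mapping properties on $\tSob^1$ already in hand.
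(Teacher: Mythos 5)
Your proposal matches the paper's approach: one microlocalizes the Dirichlet form $\mathcal{E}_0$ on $\tSob^1_\loc(X)$, runs the positive-commutator argument at the quadratic-form level with a regularizing bounded family $A_r = A J_r$, and proves separate estimates in the elliptic, hyperbolic, and glancing regions before assembling the $\GBB$ statement by the iterative $1/2$-step bootstrap in $s$. One mechanism worth making explicit, since your sketch hedges between ``correct sign'' and ``absorbable'': the Robin boundary contributions are handled by no sign condition on $\beta$ at all, but rather by writing them as traces of $\tilde B_r u$ via commutator manipulations and then absorbing them into the positive commutator term $\|\tilde B_r u\|^2_{\tSob^1}$ through the trace interpolation inequality $\|\gamma_- v\|_{L^2(\pa X)}^2 \leq \varepsilon \|v\|^2_{\tSob^1(X)} + C_\varepsilon\|v\|^2_{\Ltwo(X)}$, which works for any real-valued $\beta$.
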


	The notions of \emph{compressed characteristic set} and \emph{generalized broken bicharacteristics} (or $\GBB$s, see Definition \ref{def:gbb}) are defined relative to the conformally rescaled metric $\hat g = x^2 g$, and are exactly the same as used to describe propagation of singularities for smooth boundary value problems. Results  analogous to Theorem \ref{theo:GBBpropagation} were obtained in those settings by  Melrose, Sj\"ostrand, and Taylor \cite{taylor1976grazing,melrose1978singularities,sjostrand1981analytic,melrose1982singularities}; cf.~the works of Lebeau \cite{lebeau1997propagation} and Vasy \cite{vasy2004propagation} for the case of manifolds with corners, and of Melrose, Vasy and Wunsch \cite{melrose2008propagation} for edge manifolds. Here, the behaviour of $P$ at the boundary and the different nature of the boundary conditions pose particular difficulties, which we cope with by a systematic study of continuity properties of $\gamma_\pm$ and of the interactions of the $\b$-pseudo\-differential calculus with twisted derivatives.


	Theorem \ref{theo:GBBpropagation} encodes the law of reflection when a $\GBB$ from the interior (where it is just an ordinary null-bicharacteristic of $g$ up to reparametrization) is transversally incident upon the boundary. In the case of tangential incidence our theorem is likely not optimal in the sense that it does not rule out null-bicharacteristics with finite-order contact sticking to the boundary (this problem was studied in a model case by Pham \cite{pham2013simple} for Dirichlet boundary conditions). The latter propagation phenomenon is automatically ruled out  when $g$ is a solution of the Einstein equations with negative cosmological constant, since the boundary is necessarily conformally totally geodesic (for null-geodesics, which is a conformally invariant notion).
	
Our framework allows us to define an operatorial $\b$-wave front set $\WFbop(\Lambda)$ for continuous operators $\Lambda : \tdotSob^{-1,-\infty}_{\rm c}(X)\to \mathcal{H}^{1,-\infty}_{\loc}(X)$, and to study the wave front set of \emph{induced operators on the boundary}, i.e.~of the form $\gamma_\pm \Lambda \gamma_\pm^*$. 

In Quantum Field Theory, one is particularly interested in \emph{two-point functions}, which in the setting of Robin or Neumann boundary conditions are pairs of continuous operators $\Lambda^\pm_R$ such that
\[
P_R \Lambda^\pm_R=  \Lambda^\pm_R P_R=0, \quad \Lambda^\pm_R\geq 0, \quad \Lambda^+_R-\Lambda^-_R  =i G_R,
\]
where $G_R$ is the difference between the retarded and advanced propagators (fundamental solution) for $P_R$. Following \cite{wrochna2017holographic}, we introduce a condition on the $\b$-wave front set of $\Lambda^\pm_R$ which we call the \emph{holographic Hadamard condition}, namely:
\[
\WFbop(\Lambda^\pm_R )\subset \dot\cN^\pm\times \dot\cN^\pm.
\]
Here $\dot\cN^\pm$ are the future- and past-directed components of $\dot\cN$ relative to a given time-orientation (the additional global geometric hypotheses are described in Section \ref{subsect:propagators}; note that these are not needed for Theorem \ref{theo:GBBpropagation}, which is purely local). We show: 

\begin{theo}[cf.~Theorems \ref{thm:holo1} and \ref{thm:holo2}]\label{thm:maintheo2} Two-point functions $\Lambda^\pm_R$ satisfying the holographic Hadamard condition exist and  are unique modulo terms whose Schwartz kernels are smooth in the interior of $X$. Furthermore, $\gamma_- \Lambda^\pm_R \gamma_-^*$ and $\gamma_+ \Lambda^\pm_R \gamma_+^*$ are unique modulo terms with smooth Schwartz kernels.  
\end{theo}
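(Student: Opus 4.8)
The plan is to follow the Duistermaat--H\"ormander scheme \cite{duistermaat1972fourier}, as adapted to the Dirichlet aAdS case in \cite{wrochna2017holographic}, with Theorem~\ref{theo:GBBpropagation} supplying the microlocal input. The first step is to pin down the $\b$-wavefront set of the causal propagator $G_R$. Under the global hypotheses of Section~\ref{subsect:propagators}, $P_R$ admits retarded and advanced fundamental solutions $G_R^\pm$ (by the well-posedness theory of Warnick \cite{warnick2013massive}), with $G_R = G_R^+ - G_R^-$ supported in $\{(z,z'):z\in J^\pm(z')\}$ respectively and $P_R G_R = G_R P_R = 0$. The local parametrix singularities of $G_R^+$ and $G_R^-$ along the diagonal coincide and cancel, so applying Theorem~\ref{theo:GBBpropagation} in each variable confines $\WFbop(G_R)$ to the union of GBBs inside $\cchare\times\cchare$; combining this with the support condition and the time-orientation built into the global hypotheses yields
\[
\WFbop(G_R)\ \subset\ (\dot\cN^+\times\dot\cN^+)\cup(\dot\cN^-\times\dot\cN^-),
\]
where $\dot\cN = \dot\cN^+\sqcup\dot\cN^-$ is a genuine disjoint decomposition — this last point uses the existence of a time function with everywhere-timelike differential (part of the global hypotheses), which keeps the time-component of any compressed null covector away from zero.

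For uniqueness, suppose $\Lambda^\pm_R$ and $\tilde\Lambda^\pm_R$ are two pairs satisfying the holographic Hadamard condition. From $\Lambda^+_R - \Lambda^-_R = iG_R = \tilde\Lambda^+_R - \tilde\Lambda^-_R$ one gets that $R := \Lambda^+_R - \tilde\Lambda^+_R$ equals $\Lambda^-_R - \tilde\Lambda^-_R$; since the $\b$-wavefront set of a difference lies in the union of the individual $\b$-wavefront sets, the first representation gives $\WFbop(R)\subset\dot\cN^+\times\dot\cN^+$ and the second gives $\WFbop(R)\subset\dot\cN^-\times\dot\cN^-$, so $\WFbop(R)\subset(\dot\cN^+\cap\dot\cN^-)\times(\dot\cN^+\cap\dot\cN^-)=\emptyset$. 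An operator with empty $\WFbop$ maps $\tdotSob^{-1,-\infty}_{\rm c}(X)$ into $\bigcap_s\mathcal{H}^{1,s}_{\loc}(X)$, hence has a Schwartz kernel that is conormal up to the boundary of $X\times X$ and in particular smooth in $X^\circ\times X^\circ$; this is the first uniqueness statement, refining the corresponding statement for distinguished parametrices. For the induced boundary operators, note that $R$ is also a bi-solution of $P_R$, so its conormal kernel is in fact polyhomogeneous at $\partial X$ in each factor, with the expansions governed by the indicial roots $\nu_\pm$ — this is the content of the asymptotic-expansion theory of Sections~\ref{subsect:asymptoticexpansion}--\ref{subsect:greens}, in the spirit of \cite{mazzeo1991elliptic}. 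The continuity of the weighted traces $\gamma_\pm$ on such spaces, established in the same sections, then shows that $\gamma_- R\gamma_-^*$ and $\gamma_+ R\gamma_+^*$ have smooth Schwartz kernels on $\partial X\times\partial X$; the two assertions are in any case linked through the Robin relation $\gamma_+ u = \beta\gamma_- u$ valid on solutions.

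It remains to establish existence, which is the main obstacle. One first produces \emph{some} quasi-free state for $P_R$: on a Cauchy hypersurface $\Sigma$, the well-posedness of $P_R$ identifies solutions of finite twisted energy with a symplectic space $(\mathcal{V}_\Sigma,\sigma)$ of Cauchy data encoding the Robin condition, and any real positive bilinear form on $\mathcal{V}_\Sigma$ dominating $\tfrac12|\sigma|$ yields two-point functions $\Lambda^\pm_{R,0}$ with $P_R\Lambda^\pm_{R,0}=\Lambda^\pm_{R,0}P_R=0$, $\Lambda^\pm_{R,0}\geq 0$ and $\Lambda^+_{R,0}-\Lambda^-_{R,0}=iG_R$. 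To make the state holographically Hadamard one can run a Fulling--Narcowich--Wald type deformation argument: deform the metric in a slab between two Cauchy hypersurfaces, \emph{in the interior of $X$ only} (so the aAdS structure and the Robin operator near $\partial X$ are untouched), to a static aAdS spacetime to the future, on which a ground state satisfying the holographic Hadamard condition is available (cf.~\cite{dappiaggi2018ground}); pulling this state back across the unperturbed region gives a state on $(X,g)$ that is holographically Hadamard near a Cauchy surface, after which Theorem~\ref{theo:GBBpropagation} propagates the $\WFbop$ bound to all of $\dot\cN^\pm\times\dot\cN^\pm$. Alternatively, one constructs $\Lambda^\pm_R$ directly from a microlocal factorization of the Cauchy evolution, verifying the $\WFbop$ bound on $\Sigma$ and propagating. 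The difficulty throughout is that, as noted in the introduction, the energy of $P_R$ is ill-defined for Neumann/Robin conditions, so positivity cannot be read off a naive energy identity but must be organised at the level of the twisted Dirichlet form; simultaneously, every step must be compatible with the degeneration of the relevant operators at $\partial X\cap\Sigma$ and with the mapping properties of $\gamma_\pm$ — exactly the interplay between the $\b$-calculus, twisted derivatives and the boundary traces that the rest of the paper develops.
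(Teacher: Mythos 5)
Your uniqueness argument is essentially the same as the paper's and is correct: from $\Lambda^+_R-\tilde\Lambda^+_R=\Lambda^-_R-\tilde\Lambda^-_R$ and the two Hadamard bounds one gets $\WFbop(R)\subset(\dot\cN^+\times\dot\cN^+)\cap(\dot\cN^-\times\dot\cN^-)=\emptyset$, so $R\in\cWb$. For the induced boundary operators the paper does \emph{not} argue via polyhomogeneity of the kernel; it proves a direct microlocal trace lemma showing $\wf'(\gamma_\pm\Lambda\gamma_\pm^*)\cap(S^*\pX\times S^*\pX)\subset\WFbop(\Lambda)$ (the $\gamma_+$ case requires verifying delicate mapping properties of $P\Lambda$, $P\Lambda^*$, $P(P\Lambda)^*$ into $x^2\tSob^{0,\infty}_{\loc}$). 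Your appeal to polyhomogeneous expansions "in the spirit of Mazzeo" is considerably weaker than what Section~\ref{subsect:asymptoticexpansion} actually supplies (a two-term expansion plus remainder, for a scalar, not a kernel), and would need substantial justification; your alternative observation that $\gamma_+ R\gamma_+^*=\beta\,\gamma_- R\gamma_-^*\,\bar\beta$ for a bisolution $R$, reducing the $\gamma_+$ statement to the $\gamma_-$ one, is a genuine and cleaner shortcut, but you still have to prove the $\gamma_-$ case by the microlocal trace estimate.

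The existence argument, however, contains a concrete error. You claim one can deform to a standard static aAdS spacetime \emph{in the interior of $X$ only}, "so the aAdS structure and the Robin operator near $\partial X$ are untouched." This cannot work: an interior-only deformation leaves the conformal boundary data unchanged, so if the given boundary metric $\hat g|_{\pa X}$ is not conformally static, no interior deformation can reach a static aAdS spacetime. The paper's deformation argument in fact modifies the full structure near $\pa X$ — it is used not just to make the metric static but also to reduce to the case $\beta=0$ and to arrange $S_F\geq\lambda x^2$ — and then constructs $\Lambda^\pm$ explicitly from the spatial operator $L$ via the functional calculus, $\Lambda^\pm\sim(\partial_t\mp i L^{1/2})$-type projections, rather than by importing a ground state from \cite{dappiaggi2018ground}. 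The technical crux, which your sketch omits entirely, is that this construction only works because $L^{-1/2}$ preserves the twisted conormal space $\tSob^{1,\infty}(S)$ (Lemma~\ref{lemm:squareroot}, proved via a uniform-in-$\mu$ resolvent estimate in conormal norms and the integral formula $L^{-1/2}=\pi^{-1}\int_0^\infty\mu^{-1/2}(L+\mu)^{-1}\,d\mu$). The paper flags this as "the only subtle point"; without it the verification of the $\WFbop$ bound near a time slice does not go through.
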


This extends the results of \cite{wrochna2017holographic} to Neumann and Robin boundary conditions, and thus provides the fundamental ingredients for constructing linear quantum fields and renormalized non-linear quantities in our setting. We remark that local conditions on singularities of two-point functions in the interior $X^\circ$ were studied on special examples of aAdS spacetimes  by several authors \cite{kent2015hadamard,belokogne2016stueckelberg,dappiaggi2016hadamard}. Our holographic Hadamard condition has, however, the advantage of giving enough information to define and study the induced operators on the boundary, $\gamma_- \Lambda^\pm_R \gamma_-^*$ and $\gamma_+ \Lambda^\pm_R \gamma_+^*$. We also stress that the proof of the existence statement in Theorem \ref{thm:maintheo2} crucially relies on the fact that the holographic Hadamard condition propagates well thanks to Theorem \ref{theo:GBBpropagation}. 

Finally, we obtain a similar result on the uniqueness modulo $\b$-regularising terms of parametrices for $P_R$; see Theorem \ref{prop:wfs} for the precise statement.

The paper is structured as follows. In Section \ref{sec:bpsdo} we recall elementary definitions and facts on the $\b$-pseudo\-differential calculus. Section \ref{sec:fspaces} introduces the weighted Sobolev spaces and reviews continuity results for the weighted trace $\gamma_-$. In Section \ref{sec:KG} we discuss Green's formula for the Klein-Gordon operator and asymptotic expansions for approximate solutions, which allow us to define the weighted trace $\gamma_+$. In Section \ref{sec:bvp} we introduce the Dirichlet, Neumann and Robin problems and derive some microlocal estimates. The main steps of the proof of Theorem \ref{theo:GBBpropagation} are contained in Section \ref{sec:propagation}. Section \ref{sec:propagators} is devoted to propagators and their operatorial $\b$-wave front sets, and in particular to the proof of Theorem \ref{thm:maintheo2}.  


\section{b-Pseudodifferential operators}\label{sec:bpsdo}

\subsection{Basic definitions}

In this section we briefly discuss the theory of $\b$-pseudo\-differential operators, mostly to fix the relevant notation. The presentation closely follows \cite[Sections 2, 3]{vasy2008propagation}, where additional details and complete proofs can be found.

If $X$ is a manifold with boundary, let $\Psib^m(X)$ denote the algebra of properly supported b-pseudodifferential operators of order $m$. If $k \in \NN$, then $\Diffb^k(X) \subset \Psib^k(X)$. The corresponding symbol space is $S^m(\bT^*X)$, where $\bT^*X$ is the b-cotangent bundle over $X$. A priori, we consider $A \in \Psib^m(X)$ as a continuous map 
\begin{equation} \label{eq:boundedonCdI}
A: \CdI(X) \rightarrow \CdI(X)
\end{equation}
which extends to a continuous endomorphism of $\CI(X)$. 

The abstract sesquilinear pairing of $u \in \CmI(X)$ with $\varphi \in \CdI(X)$ (or more generally for the pairing between a space and its anti-dual) will be written $\langle u, \varphi \rangle$. 
For the remainder of this section we fix a positive $\CI$ density $\mu$ on $X$ in order to trivialize the density bundle. 
Then $u \in L^2_\loc(X)$ determines an element of $\CmI(X)$ by 
\[
\langle u, \varphi \rangle = \int_X u \cdot \bar \varphi \, d\mu
\]
 As discussed below, $\Psib^m(X)$ is closed under adjoints, so $A \in \Psib^m(X)$ also extends to continuous endomorphisms of $\CmI(X)$ and $\CdmI(X)$. The fact that the action of $A$ on $\CdmI(X)$ extends that on $\CI(X)$ comes from the fact that
\[
\langle Au, v\rangle = \langle u, A^* v \rangle
\] 
for $u, v\in \CI(X)$; in other words, there are no boundary terms when integrating by parts.

We recall the symbol isomorphisms for b-pseudodifferential operators. There is a principal symbol map $\bsymbol{m} : \Psib^m(X) \rightarrow S^m(\bT^*X)$ which descends to an isomorphism
\[
\bsymbol{m} : (\Psib^m/\Psib^{m-1})(X) \rightarrow (S^m/S^{m-1})(\bT^*X).
\]
The symbol map can be inverted explicitly by fixing a non-canonical quantization map $\Opb : S^m(\bT^*X) \rightarrow \Psib^m(X)$ such that $\bsymbol{m}(\Opb(A)) = a$ in $(S^m/S^{m-1})(\bT^*X)$.

If $A \in \Psib^{m}(X)$ and $B \in \Psib^{m'}(X)$, then $AB \in \Psib^{m+m'}(X)$ with principal symbol 
\[
\bsymbol{m+m'}(AB) = \bsymbol{m}(A)\cdot \bsymbol{m'}(B).
\]
Furthermore their commutator satisfies $[A,B] \in \Psib^{m+m'-1}$. To describe the principal symbol of $[A,B]$, observe that the Poisson bracket of $a \in S^m(\bT^*X)$ and $b \in S^{m'}(\bT^*X)$ restricted to the interior $T^*X^\circ$ extends by continuity up to the boundary as an element of $S^{m+m'-1}(\bT^*X)$. In local coordinates $(x,y^1,\ldots,y^{n-1})$ with dual b-momenta $(\sigma, \eta_1,\ldots, \eta_{n-1})$, this is just the expression
	\[
	\{a,b\} = \pa_\sigma a \cdot x\pa_{x} b - x\pa_x a \cdot \pa_\sigma b + \sum_{i=1}^{n-1} \pa_{y^i}a\cdot \pa_{\eta_i} b - \pa_{\eta_i}a \cdot \pa_{y^i}b.
	\]
	Then $\bsymbol{m+m'-1}([A,B]) = \{ \bsymbol{m}(A), \bsymbol{m'}(B)\}$. 
If $A^*$ denotes the formal adjoint of $A$ with respect to $\mu$, then $A^* \in \Psib^m(X)$, and $\bsymbol{m}(A^*) = \overline{\bsymbol{m}(A)}$. Finally, if $x$ is a bdf, then $x^{-s} A x^s \in \Psib^m(X)$ for each $s\in \CC$, and $\bsymbol{m}(x^{-s}Ax^s) = \bsymbol{m}(A)$.

We will frequently use the following terminology: a continuous map $\CdI(X) \rightarrow \CmI(X)$ is said to be supported in $U \subset X$ if its Schwartz kernel has support in $U\times U$. Then any compactly supported operator $A \in \Psib^0(X)$ defines a bounded map 
\[
L^2_\loc(X) \rightarrow L^2_\comp(X).
\]
More precisely, suppose that $A$ is supported in $K$ for $K \subset X$ compact and $U \subset X$ is a neighborhood of $K$. Then there exists $\chi \in \CcI(U)$ and a compactly supported $A' \in \Psib^{-\infty}(X)$ such that
\[
\| Au \|_{L^2(X)} \leq 2 \sup |\bsymbol{0}(A)| \| \chi u \|_{L^2(X)} + \| A'u \|_{L^2(X)}.
\]
Since $\Psib^0(X)$ is invariant under conjugation by powers of a bdf $x$, the same result is true if $L^2_\loc(X)$ is replaced by any weighted space $x^{r}L^2_\loc(X)$, where $r \in\RR$.

\subsection{Microlocalization} \label{subsect:b-microlocalization}
We say that $A \in \Psib^m(X)$ is elliptic at a point $q_0 \in \bT^*X\setminus 0$ if there exists $b \in S^{-m}(\bT^*X)$ such that 
\[
\bsymbol{m}(A) \cdot b - 1 \in S^{-1}(\bT^*X)
\] 
in a conic neighborhood of $q_0$. The set of elliptic points of $A$ will be written $\ellb(A) \subset \bT^*X \setminus 0$. We say that $A$ is elliptic on a conic set $U \subset \bT^*X \setminus 0$ if $U \subset \ellb(A)$.

Next, we define the operator b-wavefront set (or microsupport) of $B \in \Psib^m(X)$. Following \cite[Section 3]{vasy2008propagation}, it is important to give a uniform definition for \emph{bounded families} of operators (since b-pseudodifferential operators have conormal Schwartz kernels on a certain blow-up of $X\times X$, there is a natural Fr\'echet topology on $\Psib^m(X)$ which roughly corresponds to symbol seminorms. Thus it makes sense to speak of bounded subsets of $\Psib^m(X)$).

If $\mathcal{B} \subset \Psib^m(X)$ is bounded, we say that $q\notin \opWFb(\mathcal{B})$ if there exists $A \in \Psib^0(X)$ with $q \in \ellb(A)$ such that $A\mathcal{B}$ is \emph{bounded} in $\Psib^{-\infty}(X)$. This agrees with the usual definition of $\opWFb(B)$ when $\mathcal{B} = \{B\}$ consists of a single operator.
For bounded families $\mathcal{A}, \mathcal{B}$ we have the usual relations
\begin{equation} \label{eq:WFfamilyrelations} 
\begin{gathered}
\opWFb(\mathcal{A}+\mathcal{B}) \subset \opWFb(\mathcal{A}) \cup \opWFb(\mathcal{B}),\\
\opWFb(\mathcal{A}\mathcal{B}) \subset \opWFb(\mathcal{A}) \cap \opWFb(\mathcal{B}).
\end{gathered}
\end{equation}
Furthermore, for operators,
\begin{equation} \label{eq:WFrelations}
\opWFb(A^*) = \opWFb(A), \quad \opWFb(x^{-s}Ax^s) = \opWFb(A).
\end{equation}
Next, we introduce some useful but non-standard terminology: if $\mathcal S \subset \Psib^m(X)$ is a closed subspace, we say that a bounded linear map $M : \mathcal S \rightarrow \Psib^{k}(X)$ is \emph{microlocal} if 
\[
\WFb'(M(A)) \subset \WFb'(A)
\]
for all $A \in \mathcal S$. A typical $\mathcal{S}$ is the set of operators with support in a fixed compact set. Note that $M$ necessarily preserves bounded families as well. According to \eqref{eq:WFfamilyrelations}, \eqref{eq:WFrelations}, multiplication by a fixed operator, taking adjoints, and conjugation by a bdf are all microlocal maps.

Let $A\in \Psib^m(X)$ and $B \in \Psib^{m'}(X)$. If $A$ is elliptic on $\WFb(B)$, then the standard symbolic parametrix construction yields $F \in \Psib^{m'-m}(X)$ and $R, R'\in \Psib^{-\infty}(X)$ such that
\[
B = AF + R = FA + R'.
\]

We also need to mention the \emph{indicial family} of $A \in \Psib^m(X)$. For a fixed bdf $x$ and $v \in \CI(X)$, define
\[
\widehat{N}(A)(s)v = x^{-is} A(x^{is} u)|_{\pa X},
\] 
where $u \in \CI(X)$ is any function restricting to $v$. This definition is independent of the choice of extension $u$, and depends only mildly on $x$.  Given $u \in x^{is} \CI(X)$,
\[
(x^{-is} Au)|_{\pa X} = \widehat{N}(A)(s)(x^{-is}u|_{\pa X}).
\]
The indicial family $\widehat{N}(s)$ is an algebra homomorphism; in particular it satisfies 
\[
\widehat{N}(AB)(s) = \widehat{N}(A)(s) \circ \widehat{N}(B)(s).
\]
Furthermore, $\widehat{N}(A^*)(s) = \widehat{N}(A)(\bar s)^*$. Here the adjoint on the left is with respect to a $\CI$ density $\mu$ on $X$, whereas the adjoint on the right is with respect to the pullback $\mu|_{\pa X}$. This can also be rewritten as
\[
\widehat{N}(A^*)(s) = \widehat{N}(x^{-2\Im s}Ax^{2\Im s})(s)^*.
\]
Using the indicial family it is possible to prove the following facts: let $U$ be a boundary coordinate patch with coordinates $(x,y^1,\ldots,y^{n-1})$, and suppose $A \in \Psib^m(X)$ has support in a compact set $K \subset U$. As emphasized in \cite[Section 2]{vasy2008propagation} (see \cite[Lemma 2.2]{vasy2008propagation} in particular), there exist $A', A'' \in \Psib^m(X)$ such that 
\begin{equation} \label{eq:Dxswap}
D_x A = A' D_x + A''.
\end{equation}
Indeed, one can take $A' = x^{-1}Ax$ and $A'' = x^{-1}[xD_x,A]$, the key here being that $[xD_x, A] \in x\Psib^m(X)$, as seen by analyzing its indicial family. In particular, the maps $A \mapsto A'$ and $A \mapsto A''$ are microlocal.
Since $\bsymbol{m}(A') = \bsymbol{m}(A)$, one deduces as in \cite[Lemma 2.8]{vasy2008propagation} that the commutator $[D_x,A]$ can be written in the form
\begin{equation} \label{eq:Dxcommutator}
[D_x, A] = A_1 D_x + A_0,
\end{equation}
where $A_j \in \Psib^{m-j}(X)$. Here $\bsymbol{m-1}(A_1) = (1/i)\pa_\sigma a$ and $\bsymbol{m}(A_0) = (1/i)\pa_x a$. Again, the maps $A\mapsto A_0$ and $A \mapsto A_1$ are microlocal.

\section{Function spaces}\label{sec:fspaces}

\subsection{Twisted derivatives} \label{subsect:twistedderivatives}
Let $X$ be an $n$-dimensional manifold with boundary. It will also be convenient to fix some bdf $x$. Motivated by \cite{warnick2013massive}, we define certain \emph{twisted} differential operators. Given $\nu\in \RR$, define $\nu_\pm = \tfrac{n-1}{2} \pm \nu$, and then set
\[
\tDiff^1(X) = \{x^{\nu_-}B x^{-\nu_-}: B \in \Diff^1(X)\}.
\] 
This space is independent of the choice of bdf $x$. The dimension-dependent shift between $\nu$ and $\nu_\pm$ is merely due to our eventual choice of weighted $L^2$ space.

Of course $\tDiff^1(X) \subset \Diff^1(X^\circ)$, but twisted differential operators do not necessarily have coefficients that are smooth up to $\pa X$. On the other hand, if $Q \in \tDiff^1(X)$, then
\[
Q: \CmI(X) \rightarrow \CmI(X)
\]
is continuous, since this is true for multiplication by any power of $x$. 
One should think of $\tDiff^1(X)$ as the largest space of differential operators preserving $x^{\nu_-}\CI(X)$. This is in contrast to the much smaller space $\Diffb^1(X)$, which preserves $x^s\CI(X)$ for \emph{every} $s$. More precisely, we have the following:

\begin{lemm} \label{lem:twistedareb} $\Diffb^1(X) \subset \tDiff^1(X) \subset x^{-1}\Diffb^1(X)$ for each $\nu \in \RR$.
	\end{lemm}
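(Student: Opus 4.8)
The statement is a pair of inclusions $\Diffb^1(X) \subset \tDiff^1(X) \subset x^{-1}\Diffb^1(X)$, both to be checked for a fixed but arbitrary $\nu$. Since every first-order operator in any of these spaces is a sum of a vector field and a multiplication operator, and since the multiplication operators (smooth functions up to the boundary) are common to all three spaces, the content is entirely about vector fields. It suffices to work in a boundary coordinate patch with coordinates $(x,y^1,\dots,y^{n-1})$ and to track what conjugation by $x^{\nu_-}$ does to the coordinate vector fields $x\partial_x$, $\partial_{y^i}$ (which span $\Vb$ over $\CI(X)$) and to $\partial_x$.

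For the first inclusion, take $B \in \Diffb^1(X)$; I will show $x^{-\nu_-} B x^{\nu_-} \in \Diffb^1(X) \subset \tDiff^1(X)$, which by definition of $\tDiff^1(X)$ (as $x^{\nu_-}(\,\cdot\,)x^{-\nu_-}$ applied to $\Diff^1$) gives $B \in \tDiff^1(X)$. The key computation is $x^{-\nu_-}(x\partial_x)x^{\nu_-} = x\partial_x + \nu_-$ and $x^{-\nu_-}\partial_{y^i}x^{\nu_-} = \partial_{y^i}$, so conjugating a b-vector field by $x^{\nu_-}$ only adds a smooth (indeed constant) zeroth-order term; since the coefficients of $B$ are in $\CI(X)$ and commute with $x^{\pm\nu_-}$ up to nothing, $x^{-\nu_-}Bx^{\nu_-} \in \Diffb^1(X)$. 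Hence $\Diffb^1(X) \subset \tDiff^1(X)$.

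For the second inclusion, take $Q = x^{\nu_-} B x^{-\nu_-} \in \tDiff^1(X)$ with $B \in \Diff^1(X)$. Writing $B = a_0 \partial_x + \sum_i a_i \partial_{y^i} + a$ with $a_0, a_i, a \in \CI(X)$, the only potentially problematic term is $x^{\nu_-}\partial_x x^{-\nu_-} = \partial_x - \nu_- x^{-1}$. Thus $Q = a_0\partial_x + \sum_i a_i\partial_{y^i} + (a - \nu_- a_0 x^{-1})$, and multiplying by $x$ turns $x\cdot\partial_x$ into $x\partial_x \in \Diffb^1(X)$, $x\cdot\partial_{y^i}$ into $x\partial_{y^i} \in x\Vb \subset \Diffb^1(X)$, and $x\cdot(a - \nu_- a_0 x^{-1}) = xa - \nu_- a_0 \in \CI(X)$; therefore $xQ \in \Diffb^1(X)$, i.e. $Q \in x^{-1}\Diffb^1(X)$. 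Finally, I would note that the statement is coordinate-independent: $\tDiff^1(X)$ was already observed to be independent of the choice of bdf, and $\Diffb^1(X)$, $x^{-1}\Diffb^1(X)$ transform correctly under change of bdf since two bdfs differ by a positive smooth factor.

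There is no serious obstacle here; the only mild point requiring care is that $\tDiff^1(X)$ consists of operators of the form $x^{\nu_-}Bx^{-\nu_-}$ (conjugation by $x^{\nu_-}$, not $x^{\nu_+}$ or $x^{-\nu_-}$), so one must be consistent about the direction of conjugation when verifying membership — otherwise the two inclusions can appear to go the wrong way. Organizing the proof around the identities $x^{\pm\nu_-}(x\partial_x)x^{\mp\nu_-} = x\partial_x \pm \nu_-$ and $x^{\pm\nu_-}\partial_x x^{\mp\nu_-} = \partial_x \mp \nu_- x^{-1}$ makes both directions routine.
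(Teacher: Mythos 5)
Your proof is correct and matches the paper's argument: the first inclusion is exactly the observation that $B = x^{\nu_-}(x^{-\nu_-}Bx^{\nu_-})x^{-\nu_-}$ with $x^{-\nu_-}Bx^{\nu_-}\in\Diff^1(X)$, and the second reduces in local coordinates to $x(x^{\nu_-}\pa_x x^{-\nu_-}) = x\pa_x - \nu_-\in\Diffb^1(X)$. One small slip: in the first inclusion you write ``$x^{-\nu_-}Bx^{\nu_-}\in\Diffb^1(X)\subset\tDiff^1(X)$,'' which as stated invokes the inclusion you are trying to prove; all you need (and all your ``by definition'' clause actually uses) is $\Diffb^1(X)\subset\Diff^1(X)$.
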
 
\begin{proof} 
For the first inclusion, any $A \in \Diffb^1(X)$ can be written as 
\[
A = x^{\nu_-}(x^{-\nu_-} A x^{\nu_-}) x^{-\nu_-} \in \tDiff^1(X).
\] 
For the second inclusion it suffices to work in local coordinates $(x,y^1,\ldots,y^{n-1})$ near the boundary, where this is just the observation that $x(x^{\nu_-}\pa_x x^{-\nu_-}) = x\pa_x - \nu_- \in \Diffb^1(X)$.
\end{proof}

Finally, notice that $\tDiff^1(X)$ is a $\CI(X)$-module under both left and right multiplication, hence also closed under commutators. Since every operator in $\Diff^1(X)$ is the sum of a vector field and a multiplication operator, and since the space of $\CI$ vector fields is finitely generated over $\CI(X)$, it follows that $\tDiff^1(X)$ is also finitely generated.

\subsection{Twisted Sobolev spaces}

We now restrict to $\nu \in (0,1)$ and define the relevant function spaces. Given a $\CI$ density $\mu_0$ (to be fixed later on) on an $n$-dimensional manifold $X$ with boundary, define $\mu = x^{2-n}\mu_0$ and set
\[
\Ltwo(X) := x^{(n-2)/2}L^2(X,\mu_0) = L^2(X,\mu),
\] 
where $x$ is a fixed global bdf. This space depends on $\mu$ and $x$, but of course the local versions $\Ltwo_\loc(X)$ or $\Ltwo_\comp(X)$ do not. We also sometimes write $\tSob^0_\loc(X)$ for $\Ltwo_\loc(X)$.

 The twisted Sobolev spaces corresponding to $\tDiff^1(X)$ are defined by the following condition: given $u \in \CmI(X)$,
 \[
 u \in \tSob^1_\loc(X) \Longleftrightarrow Qu \in \Ltwo_\loc(X) \text{ for all } Q \in \tDiff^1(X).
 \]
 Given a compact subset $K \subset X$ write $\tSob^1(K)$ for the set of $u \in \tSob^1_\loc(X)$ with support in $K$. We also set 
 \[
 \tSob^1_\comp(X) = \tSob^1_\loc(X) \cap \CmI_\comp(X).
 \]
 Note that $\tSob^1_\loc(X)$ is a local space in the sense that $u \in \tSob^1_\loc(X)$ implies $\phi u \in \tSob^1_\loc(X)$ for every $\phi \in \CcI(X)$.

 As noted in the previous section, $\tDiff^1(X)$ is finitely generated over $\CI(X)$. Fixing a generating set $Q_1,\ldots, Q_N$, we equip $\tSob^1_\loc(X)$ with the family of seminorms
 \[
 u\mapsto \| \phi u \|_{\Ltwo(X)} + \sum_{i=1}^N \| \phi  Q_i  u \|_{\Ltwo(X)}, \quad \phi \in \CcI(X).
 \]
The restriction of these seminorms to $\tSob^1(K)$ for $K \subset X$ compact defines a Hilbert space topology; we write
\[
\| u \|^2 _{\tSob^1(X)} = \| u \|_{\Ltwo(X)}^2 + \sum_{i=1}^N \| Q_i u \|_{\Ltwo(X)}^2
\]
when $u$ has compact support. Then $\tSob^1_\comp(X)$ is equipped with the inductive limit topology corresponding to $\tSob^1(K)$ as $K$ ranges over all compact subsets of $X$. Because we are assuming $\nu \in (0,1)$,
\[
x^{\nu_-} \CI(X) \subset \Ltwo_\loc(X).
\]
Since by construction $\tDiff^1(X)$ preserves $x^{\nu_-}\CcI(X)$, it follows that $x^{\nu_-}\CI(X) \subset \tSob^1_\loc(X)$.

\begin{rema} \label{rem:globalhilbertspace} 
Sometimes it is useful to have a globally defined Hilbert space, at least when $X=\RR^n_+$. We use standard coordinates $(z^0,\ldots,z^{n-1})$ on $\RR^n_+ = \RR_+ \times \RR^{n-1}$, where $z^0 = x \in \RR_+$, and define 
\[
\Ltwo(\RR^n_+) = x^{(n-2)/2} L^2(\RR^n_+)
\]
with respect to Lebesgue measure. Consider the coordinate vector fields conjugated by $x^{\nu_-}$,
\begin{equation} \label{eq:flatQi}
Q_i = x^{\nu_-}D_{z^i} x^{-\nu_-}.
\end{equation}
Along with the constant function these generate $\tDiff^1(\RR^n_+)$. Of course in this case $Q_i = D_{z^i}$ for $i \neq 0$. 

We say that $u \in \tSob^1(\RR^n_+)$ if $u \in \Ltwo(\RR^n_+)$ and $Q_i u \in \Ltwo(\RR^n_+)$ for each $i = 0,\ldots,n-1$. This is a Hilbert space under the obvious norm. Note that $u \in \tSob^1(\RR^n_+)$ is equivalent to
\begin{equation} \label{eq:H1characterization}
u \in x^r L^2(\RR_+; H^1(\RR^{n-1})), \quad (xD_x  + i\nu_-)u \in x^{r+1}L^2(\RR_+;L^2(\RR^{n-1}))
\end{equation}
when $r = (n-2)/2$ and the Euclidean $L^2$ spaces are defined with respect to Lebesgue measure. If $u \in \CmI(X)$ has compact support in a boundary coordinate patch $U_\kappa$, then $u \in \tSob^1_\loc(X)$ is equivalent to $u \circ \kappa \in \tSob^1(\RR^n_+)$.
\end{rema}

Next, we discuss the relationship between $\tSob^1_\loc(X)$ and some weighted Sobolev spaces. For simplicity we first consider the case $X = \RR^n_+$.  If $u \in \tSob^1(\RR^n_+)$, then setting $v = x^{-\nu_-}u$ we have
\[
v \in x^{-\nu_-}\Ltwo(\RR^n_+), \quad D_{z^i} v \in x^{-\nu_-}\Ltwo(\RR^n_+)
\] 
for $i= 1,\ldots, n$. Weighted Sobolev spaces of this kind are well studied; see \cite{grisvard1963espaces} for example. In particular $\CcI(X)$ is dense in the corresponding weighted space by the usual translation, truncation, and mollification arguments. Since $\tSob^1(\RR^n_+)$ is a local space, this implies the density of $x^{\nu_-}\CcI(X)$ in $\tSob^1_\comp(X)$ for an arbitrary manifold with boundary $X$, hence also in $\tSob^1_\loc(X)$. 

From the identification with a weighted space as in the previous paragraph, it is possible to show that $u \in \tSob^1_\loc(\RR^n_+)$ admits a weighted trace
\begin{equation} \label{eq:traceminus}
 (x^{-\nu_-}u)|_{\pX} \in H^{\nu}(\RR^n_+),
\end{equation}
extended by continuity from $x^{\nu_-}\CcI(\RR^n_+)$. We give an alternative proof in Lemma \ref{lem:H1expansion} below, since the same methods will be used later on.

\begin{lemm} \label{lem:H1expansion}
Let $\nu \in (0,1)$, and set $r = (n-2)/2$. If $u \in \tSob^1(\RR^n_+)$, then the restriction of $u$ to any half-space $\{ x < \varepsilon\}$ admits an asymptotic expansion
	\begin{equation}  \label{eq:H1expansion}
	u = x^{\nu_-}  u_- +  x^{r+1} H^1_\be([0,\varepsilon); L^2(\RR^{n-1})),
	\end{equation}
	where $u_- \in H^\nu(\RR^{n-1})$. The map $u \mapsto \gamma_- u := u_-$ is continuous $\tSob^1(\RR^n_+) \rightarrow H^{\nu}(\RR^{n-1})$.
\end{lemm}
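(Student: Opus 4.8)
The plan is to deduce everything from the one-dimensional structure recorded in \eqref{eq:H1characterization}. Setting $v=x^{-\nu_-}u$ and using $xD_x+i\nu_-=-ix^{\nu_-}(x\pa_x)x^{-\nu_-}$, a short computation with the weights (note $2\nu_--2r=1-2\nu$ and $r+1-\nu_-=\tfrac12+\nu$) shows that $u\in\tSob^1(\RR^n_+)$ is equivalent to
\[
v\in L^2\bigl(\RR_+,x^{1-2\nu}\,dx;H^1(\RR^{n-1})\bigr),\qquad h:=x\pa_x v\in L^2\bigl(\RR_+,x^{-1-2\nu}\,dx;L^2(\RR^{n-1})\bigr),
\]
with $\|u\|_{\tSob^1(\RR^n_+)}^2$ comparable to the sum of the squares of these two norms. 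Since $\nu>0$ the weight $x^{2\nu-1}$ is integrable near $0$, so $\pa_x v=h/x\in L^1_\loc((0,\varepsilon);L^2(\RR^{n-1}))$, and thus $v$ has an absolutely continuous $L^2(\RR^{n-1})$-valued representative on $(0,\varepsilon)$ satisfying $v(x)-v(x')=\int_{x'}^{x}s^{-1}h(s)\,ds$.

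Next I would construct the trace and control the remainder. A Cauchy--Schwarz estimate, exploiting that $s^{2\nu-1}$ is integrable near $0$ while $\|h(\cdot)\|_{L^2}^2 s^{-1-2\nu}\in L^1(0,\varepsilon)$, shows that $v(x)$ is Cauchy in $L^2(\RR^{n-1})$ as $x\to0^+$; hence $u_-:=\lim_{x\to0^+}v(x)$ exists in $L^2(\RR^{n-1})$. It agrees with the trace defined by density from $x^{\nu_-}\CcI(\RR^n_+)$ (on $u=x^{\nu_-}\varphi$ one simply has $v=\varphi$), and $g(x):=v(x)-u_-$ satisfies $\|g(x)\|_{L^2(\RR^{n-1})}\le\int_0^{x}s^{-1}\|h(s)\|_{L^2}\,ds$. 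Because $u-x^{\nu_-}u_-=x^{\nu_-}g$ and $x\pa_x(x^{\nu_-}g)=\nu_-x^{\nu_-}g+x^{\nu_-}h$, placing the remainder in $x^{r+1}H^1_\be([0,\varepsilon);L^2(\RR^{n-1}))$ reduces to the estimate $\int_0^{\varepsilon}\|g(x)\|_{L^2}^2 x^{-1-2\nu}\,dx<\infty$, the $h$-contribution being already controlled. For this I would invoke the Hardy inequality $\int_0^{\varepsilon}\bigl(\int_0^{x}f\bigr)^2 x^{-1-2\nu}\,dx\le\nu^{-2}\int_0^{\varepsilon}f(x)^2 x^{1-2\nu}\,dx$, valid exactly because $\nu>0$, applied with $f(x)=\|h(x)\|_{L^2}/x$. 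I stress that the pointwise bound on $\|g\|$ above does \emph{not} suffice here, since it leaves an extra, non-integrable, logarithmic factor.

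It remains to show $u_-\in H^{\nu}(\RR^{n-1})$ with a continuity bound, and this is the step I expect to be the main obstacle: one must trade $x$-weights for tangential regularity so as to land in $H^{\nu}$ with the sharp index $\nu$. I would take the partial Fourier transform in $y$; by Fubini, for a.e.\ $\eta$ the function $\phi_\eta(x):=\widehat v(x,\eta)$ lies in the weighted one-dimensional space of the first paragraph, with $\widehat{u_-}(\eta)=\phi_\eta(0)$. The elementary one-dimensional trace estimate $|\psi(0)|^2\le C_\nu\int_0^{\infty}\bigl(|\psi|^2+|\psi'|^2\bigr)t^{1-2\nu}\,dt$ (valid with $C_\nu$ independent of $\psi$ precisely because $1-2\nu<1$) becomes, after rescaling $x\mapsto x/\langle\eta\rangle$,
\[
\langle\eta\rangle^{2\nu}|\phi_\eta(0)|^2\le C_\nu\Bigl(\langle\eta\rangle^2\!\int_0^{\infty}|\phi_\eta(x)|^2 x^{1-2\nu}\,dx+\int_0^{\infty}|\phi_\eta'(x)|^2 x^{1-2\nu}\,dx\Bigr);
\]
since $\phi_\eta'=\widehat h(\cdot,\eta)/x$, the last term equals $\int_0^{\infty}|\widehat h(x,\eta)|^2 x^{-1-2\nu}\,dx$. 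Integrating in $\eta$ and using Plancherel in $y$ turns the right-hand side into $\int_0^{\infty}\|v(x)\|_{H^1}^2 x^{1-2\nu}\,dx+\int_0^{\infty}\|h(x)\|_{L^2}^2 x^{-1-2\nu}\,dx$, which by the first paragraph is comparable to $\|u\|_{\tSob^1(\RR^n_+)}^2$; this yields $u_-\in H^{\nu}(\RR^{n-1})$ and the asserted continuity of $\gamma_-$. Throughout, $v$ and the $\phi_\eta$ are understood via their continuous representatives, which is what legitimizes the fundamental theorem of calculus and the pointwise-in-$\eta$ manipulations.
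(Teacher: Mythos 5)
Your proof is correct, and it takes a genuinely different route from the paper's. The paper first reduces to compactly supported $u$, applies the Mellin transform so that $\mellin u(s)=(s+i\nu_-)^{-1}\mellin v(s)$ with $v=(xD_x+i\nu_-)u$, reads off $u_-$ as (a multiple of) $\mellin v(-i\nu_-)$ via contour shifting, and then upgrades the a priori $u_-\in L^2$ to $u_-\in H^\nu$ by citing the complex interpolation argument of Mazzeo; continuity of $\gamma_-$ is then obtained indirectly through the closed graph theorem. You instead stay entirely in real-variable terms: you unwind the definitions to weighted one-dimensional conditions on $v=x^{-\nu_-}u$ and $h=x\pa_x v$, produce the trace $u_-=\lim_{x\to 0^+}v(x)$ from the fundamental theorem of calculus plus Cauchy--Schwarz, control the remainder with the weighted Hardy inequality $\int_0^\varepsilon\bigl(\int_0^x f\bigr)^2 x^{-1-2\nu}\,dx\le\nu^{-2}\int_0^\varepsilon f^2 x^{1-2\nu}\,dx$ (and you rightly flag that the naive pointwise bound on $\|g\|$ would leave a non-integrable logarithm, so the sharp Hardy exponent is genuinely needed), and finally obtain the $H^\nu$ regularity by Fourier transforming in $y$ and applying a dilated one-dimensional trace estimate, with the rescaling $x\mapsto x/\langle\eta\rangle$ producing exactly the factor $\langle\eta\rangle^{2\nu}$. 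I checked the weights throughout: the reformulation $v\in L^2(x^{1-2\nu}dx;H^1)$, $h\in L^2(x^{-1-2\nu}dx;L^2)$ matches \eqref{eq:H1characterization}; the Hardy constant $\nu^{-2}$ is the correct $(p/(r-1))^p$ with $p=2$, $r=1+2\nu$; and the rescaled trace inequality is exact. What your approach buys is a direct quantitative bound $\|\gamma_-u\|_{H^\nu}\le C\|u\|_{\tSob^1}$ rather than a closed-graph argument, and it replaces the black-box reference to complex interpolation with an elementary scaling argument; the paper's Mellin proof, on the other hand, is better suited to producing the higher-order expansions used later (Lemma \ref{lem:graphnormexpansion}). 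Two minor points worth noting, neither affecting correctness: the identification $\widehat{u_-}(\eta)=\phi_\eta(0)$ a.e.\ (matching the $L^2$-valued limit defining $u_-$ with the pointwise-in-$\eta$ trace) deserves a sentence (e.g.\ via a.e.\ convergence along a subsequence); and in the trace estimate the full relevant condition is $|1-2\nu|<1$, i.e.\ $\nu\in(0,1)$, with $\nu>0$ giving existence of the trace and $\nu<1$ keeping the weighted norms non-degenerate.
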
 
\begin{proof}
Given $\varepsilon > 0$, let $\phi \in \CcI(\RR_+)$ be such that $\phi = 1$ near $\{x < \varepsilon\}$. Replacing $u$ with $\phi u \in \tSob^1(\RR^n_+)$, we can assume $u$ has compact support. Taking the Mellin transform, it follows from the second equation in \eqref{eq:H1characterization} that
\[
\mellin u (s) = (s + i \nu_-)^{-1}\mellin v(s),
\]
where $v = (xD_x + i\nu_-)u  \in x^{r+1}L^2(\RR_+;L^2(\RR^{n-1}))$ has compact support. Thus $\mellin v(s)$ is holomorphic in $\{\Im s > r-1/2\}$. 

If $\nu \in (0,1)$, then $(s+ i\nu_-)^{-1}\mellin v(s)$ has precisely one pole at $s = -i\nu_-$, so by standard contour deformation arguments we obtain
\[
u = x^{\nu_-}u_- + x^{r+1} H^1_\be([0,\varepsilon); L^2(\RR^{n-1}))
\]
when $x < \varepsilon$.
Note that $u_- \in L^2(\RR^{n-1})$ is just a scalar multiple of $\mellin v(-i\nu_-)$. By the complex interpolation method as described in \cite{mazzeo1991elliptic}, one actually has that $u_- \in H^{\nu}(\RR^{n-1})$. 

Continuity of the map $u \mapsto \gamma_- u$ follows by the closed graph theorem. Indeed, suppose that $u_j \rightarrow u$ in $\tSob^1(\RR^n_+)$ and $\gamma_- u_j \rightarrow \tilde u$ in $H^\nu(\RR^{n-1})$ (hence in distributions). If $\phi_1 \in \CcI(\RR_+)$ is identically on on $\supp \phi$, then we can replace $u_j$ with $\phi_1 u_j$ (which also converges to $u$), and hence assume $u_j$ has compact support. As noted above, $\gamma_-(u_j - u)$ is a multiple of $\mellin(v_j - v)(-i\nu_-)$, where 
\[
v_j = (xD_x + i\nu_-)u_j,
\] 
and $v_j \rightarrow v$ in $x^{r+1}L^2(\RR^n_+)$. Since $v_j - v$ has compact support, it follows by the Cauchy--Schwarz inequality that
\begin{align*}
|\mellin(v_j - v)(-i\nu_-)(y)| &\leq \int_{\RR_+} x^{\nu-1/2} |x^{-1-r}(v_j -v)(x,y)| \, dx 
\\ &\leq C\| x^{-1-r}(v_j-v)(\cdot,y) \|_{L^2(\RR_+)}
\end{align*}
for $\nu \in (0,1)$, which shows that $\gamma_- u_j \rightarrow \gamma_- u$ in $L^2(\RR^{n-1})$, hence also in distributions. This implies $\tilde u =\gamma_- u$, completing the proof.
\end{proof}

If $u$ admits a partial asymptotic expansion as in \eqref{eq:H1expansion}, then the coefficient $u_-$ is uniquely determined by $u$ in the sense that 
\[
u = 0 \Longrightarrow u_- = 0.
\]
Since elements of $x^{\nu_-}\CcI(\RR^n_+)$ certainly have an expansion as in \eqref{eq:H1expansion}, the map $\gamma_-$ agrees with the extension by continuity of the weighted restriction \eqref{eq:traceminus}.

Passing from $\RR^n_+$ to an arbitrary manifold with boundary $X$ by a partition of unity, Lemma \ref{lem:H1expansion} shows the existence of a continuous trace map 
\[
\gamma_-: \tSob^1_\comp(X) \rightarrow H^{\nu}_\comp(\pa X),
\]
hence also between the corresponding local spaces, extending $u \mapsto (x^{-\nu_-}u)|_{\pX}$ when $u \in x^{\nu_-}\CcI(X)$. When $\nu \neq 1/2$, the map $\gamma_-$ on $\tSob^1_\loc(X)$ depends on the choice of bdf $x$, but only in the mildest way: if $\tilde x = ax$ is another bdf with $a \in \CI(X)$ and $a > 0$, then
\[
\tilde \gamma_- u = (a|_{\pa X})^{-\nu_-} \cdot \gamma_- u.
\]
Here $\gamma_-$ and $\tilde \gamma_-$ are the traces defined with respect to $x$ and $\tilde x$, respectively.

\begin{lemm} \label{lem:traceinverse}
	There exists a continuous right inverse 
	\[
	\rho : H^\nu_\loc(\pa X) \rightarrow \tSob^1_\loc(X)
	\]
	for $\gamma_-$,  which restricts to a continuous map $H^\nu_\comp(\pa X) \rightarrow \tSob^1_\comp(X)$, and moreover takes $\CcI(\pa X)$ into $x^{\nu_-}\CcI(X)$. 
\end{lemm}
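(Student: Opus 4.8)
The plan is to construct $\rho$ explicitly on the model half-space $\RR^n_+$ by a Fourier-multiplier formula and then transfer it to $X$ by a partition of unity. The genuine difficulty is that $H^\nu$ with $\nu<1$ carries less than one derivative, so the naive extension $f\mapsto\chi(x)\,x^{\nu_-}f$ fails to land in $\tSob^1$ — by \eqref{eq:H1characterization} it would require $f\in H^1(\RR^{n-1})$. Instead I would use a frequency-dependent profile. Set $r=(n-2)/2$, fix $\chi\in\CcI([0,\infty))$ with $\chi\equiv 1$ near $0$, write $\langle\xi\rangle=(1+|\xi|^2)^{1/2}$, and for $f\in H^\nu(\RR^{n-1})$ define $\rho_0 f$ by
\[
\fourier_y(\rho_0 f)(x,\xi)=x^{\nu_-}\,\chi(x\langle\xi\rangle)\,\hat f(\xi),
\]
with $\fourier_y$ the Fourier transform in $y\in\RR^{n-1}$ and $\tSob^1(\RR^n_+)$ defined as in Remark \ref{rem:globalhilbertspace}.

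The first step is to verify $\rho_0 f\in\tSob^1(\RR^n_+)$ using the characterization \eqref{eq:H1characterization}. The key mechanism is that the twisted operator $xD_x+i\nu_-$ annihilates the factor $x^{\nu_-}$: a short computation gives
\[
(xD_x+i\nu_-)\rho_0 f=\fourier_y^{-1}\bigl[-i\,x^{\nu_-}(x\langle\xi\rangle)\,\chi'(x\langle\xi\rangle)\,\hat f(\xi)\bigr],
\]
so only $\chi'(x\langle\xi\rangle)$ survives, and it vanishes for $x\langle\xi\rangle$ small. Applying Plancherel in $y$ and substituting $t=x\langle\xi\rangle$, membership of $\rho_0 f$ in $x^{r}L^2(\RR_+;H^1(\RR^{n-1}))$ and of $(xD_x+i\nu_-)\rho_0 f$ in $x^{r+1}L^2(\RR_+;L^2(\RR^{n-1}))$ both reduce to $\int\langle\xi\rangle^{2\nu}|\hat f|^2\,d\xi<\infty$, the leftover one-dimensional integrals being $\int_0^\infty t^{1-2\nu}\chi(t)^2\,dt$ and $\int_0^\infty t^{1-2\nu}\chi'(t)^2\,dt$ — finite because $\nu<1$ makes $t^{1-2\nu}$ integrable at $t=0$, $\chi$ is compactly supported, and $\chi'$ vanishes near $t=0$. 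The same computation yields $\|\rho_0 f\|_{\tSob^1(\RR^n_+)}\le C\|f\|_{H^\nu(\RR^{n-1})}$, hence continuity.

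Next I would identify the trace: $\gamma_-\rho_0 f=f$. For $f\in\mathcal S(\RR^{n-1})$ one has $x^{-\nu_-}\rho_0 f=\fourier_y^{-1}[\chi(x\langle\xi\rangle)\hat f]\to f$ as $x\to 0^+$ by dominated convergence (since $\chi(0)=1$ and $\hat f\in L^1$); on the other hand the expansion $\rho_0 f=x^{\nu_-}\gamma_-\rho_0 f+R$ with $R\in x^{r+1}H^1_\be([0,\varepsilon);L^2(\RR^{n-1}))$ supplied by Lemma \ref{lem:H1expansion} has $x^{-\nu_-}R\to 0$, so $x^{-\nu_-}\rho_0 f\to\gamma_-\rho_0 f$, forcing $\gamma_-\rho_0 f=f$; by continuity of $\rho_0$ and $\gamma_-$ and density of $\mathcal S$ in $H^\nu$ this extends to all $f\in H^\nu(\RR^{n-1})$. (Equivalently, following the proof of Lemma \ref{lem:H1expansion}, $\gamma_-\rho_0 f$ is the universal multiple of $\mellin[(xD_x+i\nu_-)\rho_0 f](-i\nu_-)=-i\bigl(\int_0^\infty\chi'(t)\,dt\bigr)\hat f=i\hat f$, by $\chi(\infty)-\chi(0)=-1$.) Finally, if $f\in\CcI(\RR^{n-1})$ then $\hat f$ is Schwartz and the symbol $\chi(x\langle\xi\rangle)$ together with its $x$-derivatives $\langle\xi\rangle^k\chi^{(k)}(x\langle\xi\rangle)$ grows at most polynomially in $\xi$, so $x^{-\nu_-}\rho_0 f\in\CI$ up to $x=0$, i.e.\ $\rho_0 f\in x^{\nu_-}\CI(\RR^n_+)$.

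To globalize, cover $\pa X$ by boundary coordinate charts $U_j$ with a locally finite subordinate partition of unity $\{\psi_j\}$ on $\pa X$, $\sum_j\psi_j=1$; choose $\Phi_j\in\CcI(X)$ supported in the chart over $U_j$ with $\Phi_j\equiv 1$ near $\supp\psi_j$, and set $\rho f=\sum_j\Phi_j\,\rho_0(\psi_j f)$, each summand formed in its chart via the identification of Remark \ref{rem:globalhilbertspace}. The sum is locally finite, so $\rho f\in\tSob^1_\loc(X)$ with continuous dependence on $f$, and it restricts to a continuous map $H^\nu_\comp(\pa X)\to\tSob^1_\comp(X)$; since $\gamma_-$ is local and $\gamma_-(\Phi_j\rho_0(\psi_j f))=(\Phi_j|_{\pa X})\,\psi_j f=\psi_j f$, we get $\gamma_-\rho f=\sum_j\psi_j f=f$; and $\rho$ takes $\CcI(\pa X)$ into $x^{\nu_-}\CcI(X)$ and preserves compact supports because $\rho_0$ does and the $\Phi_j$ are compactly supported. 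The only substantial point — and the unique place where $\nu\in(0,1)$ and the twisting enter in an essential way — is the verification in the second step that the scaling $\chi(x\langle\xi\rangle)$ is exactly the right one, so that $xD_x+i\nu_-$ kills the leading singularity and the weighted integrals converge; the trace identity, the smoothness claim, and the patching are then routine.
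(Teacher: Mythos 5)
Your construction coincides with the paper's: the same Fourier multiplier $\fourier(\rho_0 v)(x,\zeta)=x^{\nu_-}\phi(x\langle\zeta\rangle)\hat v(\zeta)$ with $\phi\in\CcI([0,\varepsilon))$ equal to one near zero, followed by patching with a partition of unity. The paper leaves the verification to ``standard manipulations''; you have carried it out, correctly identifying that $xD_x+i\nu_-$ annihilates the $x^{\nu_-}$ factor so that only $\chi'$ survives, and that the leftover one-dimensional integrals $\int t^{1-2\nu}\chi(t)^2\,dt$, $\int t^{1-2\nu}\chi'(t)^2\,dt$ converge precisely because $\nu<1$.
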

\begin{proof}
	First we construct a continuous right inverse $\rho_0: H^\nu(\RR^n_+) \rightarrow \tSob^1(\RR^n_+)$ for $\gamma_-$ on $\RR^n_+$ as follows: let $\phi \in \CcI([0,\varepsilon))$ be one near $x= 0$, and then specify the Fourier transform of $\rho_0(v)$ by
	\[
	\fourier(\rho_0(v))(x,\zeta) = \widehat v(\zeta)\cdot \phi(\langle \zeta \rangle x) x^{\nu_-}.
	\]
	Standard manipulations using a partition of unity allows for the construction of a suitable extension $\rho$ on a general manifold as well.
\end{proof}

From \eqref{eq:H1expansion}, the kernel of $\gamma_-$ on $\tSob^1_\loc(X)$ is just
\[
\ker \gamma_- = \tSob^1_\loc(X) \cap x\Ltwo_\loc(X),
\] 
and this latter space is independent of $\nu \in (0,1)$ (cf.~\eqref{eq:H1characterization}). In particular, taking $\nu = 1/2$, 
\begin{align*}
\tSob^1_\loc(X) \cap x\Ltwo_\loc(X) &= x^{n/2} H^1_{\loc}(X) \cap x\Ltwo_\loc(X),
\end{align*}
where $H^1_{\loc}(X)$ is the ordinary space of extendible Sobolev distributions on $X$.
In view of Hardy's inequality in one dimension, this equality also holds at the level of topologies. Applying Lemma \ref{lem:H1expansion} again, 
\[
\tSob^1_\loc(X) \cap x\Ltwo_\loc(X) =  x^{n/2} \dot H^1_\loc(X)
\]
by the well-known characterization of $\dot H^1_\loc(X)$ as the kernel of the usual smooth trace 
\[
\gamma : H^1_\loc(X) \rightarrow H^{1/2}_\loc(\pa X).
\] 
Indeed, if $\nu = 1/2$, then $\gamma_-$ agrees with $\gamma \circ x^{-n/2}$. If we let $\tdotSob^1_\loc(X)$ denote the closure of $\CdI_\comp(X)$ in $\tSob^1_\loc(X)$, it follows that
\[
\ker \gamma_- = \tdotSob^1_\loc(X).
\]
We then let $\tdotSob^1_\comp(X) = \tdotSob^1_\loc(X) \cap \CmI_\comp(X)$ as a subspace of $\tSob^1_\comp(X)$. We also obtain the following corollary of Hardy's inequality:

\begin{lemm} \label{lem:1/xbounded}
	If $x \in \CI(X)$ is any bdf, then multiplication by $x^{-1}$ defines a bounded linear map $\tdotSob^{1}_\loc(X) \rightarrow \Ltwo_\loc(X)$.
\end{lemm}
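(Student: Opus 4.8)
The plan is to reduce the statement to the one-dimensional Hardy inequality, exploiting that $\tdotSob^1_\loc(X)$ is by definition the closure of $\CdI_\comp(X)$ in $\tSob^1_\loc(X)$, and that boundedness of multiplication by $x^{-1}$ can be checked locally. First I would note that the claim is purely local: if $\phi \in \CcI(X)$ and $u \in \tdotSob^1_\loc(X)$, then $\phi u \in \tdotSob^1_\comp(X)$, so it suffices to bound $\| \phi x^{-1} u \|_{\Ltwo}$ in terms of finitely many seminorms of $u$. By a partition of unity subordinate to coordinate patches we may work either in the interior (where $x^{-1}$ is smooth and bounded on the support, so there is nothing to prove) or in a boundary coordinate patch, where via the chart $\kappa$ we are reduced to the model case $X = \RR^n_+$, using the equivalence from Remark \ref{rem:globalhilbertspace} that $u \in \tSob^1_\loc(X)$ supported in $U_\kappa$ corresponds to $u \circ \kappa \in \tSob^1(\RR^n_+)$.

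On $\RR^n_+$ I would first establish the bound on the dense subspace $\CdI_\comp(\RR^n_+)$ and then extend by continuity. For $u \in \CdI_\comp(\RR^n_+)$, apply the one-dimensional Hardy inequality in the $x = z^0$ variable, for each fixed $y = (z^1,\dots,z^{n-1})$: since $u(0,y) = 0$, one has $\int_0^\infty |u(x,y)|^2\, x^{-2}\, dx \leq 4 \int_0^\infty |\pa_x u(x,y)|^2 \, dx$. Multiplying by the weight $x^{n-2}$ is the subtle point: the relevant $L^2$ norm is $\| w \|_{\Ltwo(\RR^n_+)}^2 = \int x^{n-2}|w|^2\, dx\, dy$, so I actually need the weighted Hardy inequality $\int_0^\infty |u|^2 x^{n-4}\, dx \leq C \int_0^\infty (|\pa_x u|^2 x^{n-2} + |u|^2 x^{n-2})\, dx$ (with the lower-order term absorbing the difference when $n = 2$, or more cleanly using the weighted Hardy inequality valid since the weight exponent $n-4$ avoids the critical value $-1$ precisely because $n \geq 2$ forces $n - 3 \neq -1$ unless $n=2$, which is handled by the zeroth-order term). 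Integrating in $y$ and using Fubini gives $\| x^{-1} u \|_{\Ltwo(\RR^n_+)} \leq C \big( \| Q_0 u \|_{\Ltwo(\RR^n_+)} + \| u \|_{\Ltwo(\RR^n_+)} \big)$, where $Q_0 = x^{\nu_-}D_x x^{-\nu_-} = D_x - i\nu_- x^{-1}$; here one must check that $x^{-1}u \in \Ltwo$ already makes $Q_0 u$ and $D_x u$ differ by a controlled term, i.e.\ the twisting is compatible with the estimate. Since $Q_0, \dots, Q_{n-1}$ together with the constants generate $\tDiff^1(\RR^n_+)$, the right-hand side is bounded by $\| u \|_{\tSob^1}$.

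The extension to all of $\tdotSob^1_\comp(\RR^n_+)$ follows because $\CdI_\comp(\RR^n_+)$ is dense there (by definition of the dotted space) and the estimate just proved shows that the multiplication map $x^{-1} : (\CdI_\comp, \| \cdot \|_{\tSob^1}) \to \Ltwo$ is bounded, hence extends uniquely and continuously; the extension agrees with pointwise multiplication by $x^{-1}$ on the dense subspace and therefore (after passing to a subsequence converging almost everywhere on $X^\circ$) everywhere, so it is genuinely multiplication by $x^{-1}$. Transferring back through the charts and summing the partition of unity yields the bound $\| \chi x^{-1} u \|_{\Ltwo(X)} \leq C_\chi \| u \|$ for appropriate seminorms on the left, which is exactly the asserted local boundedness $\tdotSob^1_\loc(X) \to \Ltwo_\loc(X)$. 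The main obstacle is the borderline nature of the weighted Hardy inequality: one must confirm that the weight exponent coming from $\Ltwo = L^2(X, x^{2-n}\mu_0)$ is non-critical, or else control the critical case ($n=2$, where the naive Hardy constant blows up) by retaining the zeroth-order $\| u \|_{\Ltwo}$ term — which is why the statement allows a full $\tSob^1$ (rather than just a twisted-gradient) bound and is the reason the lemma is phrased as a boundedness rather than a clean inequality.
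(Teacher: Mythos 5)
The reduction to a one-dimensional Hardy inequality via a local model and density of $\CdI_\comp$ is the right skeleton, but the substitution you chose introduces a genuine gap that the argument cannot recover from. You apply Hardy directly to $u$, getting $\pa_x u$ on the right-hand side. Unwinding the weights, you then need
\[
\int_0^\infty |u|^2 x^{n-4}\,dx \leq C\int_0^\infty\big(|\pa_x u|^2 + |u|^2\big)x^{n-2}\,dx,
\]
whose critical exponent is $n-4=-1$, i.e.\ $n=3$ (not $n=2$ as you write; $n-3$ and $n-4$ got conflated). At $n=3$ this estimate is \emph{false} even with the zeroth-order term: a bump $u_\varepsilon$ equal to $1$ on $(2\varepsilon,1/2)$ and supported in $(\varepsilon,1)$ has $\int |u_\varepsilon|^2 x^{-1}\,dx \sim \log(1/\varepsilon) \to \infty$ while both terms on the right stay bounded. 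Worse, even granting a Hardy bound with $\pa_x u$, you must convert $\pa_x u$ to the actual $\tSob^1$-quantity $Q_0 u = \pa_x u - i\nu_- x^{-1}u$; the substitution $\pa_x u = Q_0 u + i\nu_- x^{-1}u$ reinserts exactly $\|x^{-1}u\|_{\Ltwo}$ on the right, and the coefficient $C|\nu_-|$ is not $<1$ in general, so the estimate does not close. You flag this step as something to ``check,'' but it is in fact the crux of the problem and cannot be checked with the substitution you chose.

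The fix, which is how the paper's ``corollary of Hardy's inequality'' should be read, is to twist first: set $v = x^{-\nu_-}u$, so that $Q_0 u = x^{\nu_-}D_x v$. Then one computes $\|x^{-1}u\|^2_{\Ltwo(\RR^n_+)} = \int x^{-1-2\nu}|v|^2\,dx\,dy$ and $\|Q_0 u\|^2_{\Ltwo(\RR^n_+)} = \int x^{1-2\nu}|D_xv|^2\,dx\,dy$, and the weighted one-dimensional Hardy inequality with exponent $p = -1-2\nu$ gives, for $v$ vanishing to infinite order at $0$ and compactly supported,
\[
\int_0^\infty x^{-1-2\nu}|v|^2\,dx \leq \frac{1}{\nu^2}\int_0^\infty x^{1-2\nu}|\pa_x v|^2\,dx,
\]
since $|p+1| = 2\nu$. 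Integrating in $y$ yields the clean bound $\|x^{-1}u\|_{\Ltwo} \leq \nu^{-1}\|Q_0 u\|_{\Ltwo}$, with critical case $\nu=0$ excluded by the standing hypothesis $\nu>0$, and with no dependence on the dimension $n$ and no need for a zeroth-order correction. This also matches the paper's route, which derives the lemma from the topological identification $\tdotSob^1_\loc(X) = \tSob^1_\loc(X)\cap x\Ltwo_\loc(X) = x^{n/2}\dot H^1_\loc(X)$; that identification itself rests on exactly the same twisted Hardy estimate.
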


We will also need a trace interpolation inequality. From  \cite[Lemma 4.2]{gannot2018elliptic}, if $u \in \tSob^1_\comp(X)$, then
\begin{equation} \label{eq:traceinterpolation}
\| \gamma_- u \|_{L^2(\pa X)} \leq C \| u \|_{\Ltwo(X)}^\nu \| u \|_{\tSob^1(X)}^{1-\nu},
\end{equation}
where $C>0$ depends only on the support of $u$. Using Young's inequality we conclude that for any $\varepsilon>0$ there exists $C_\varepsilon>0$ such that 
\begin{equation} \label{eq:traceinterpolation2}
\| \gamma_- u \|_{L^2(\pa X)}^2 \leq \varepsilon \| u \|^2_{\tSob^1(X)} + C_{\varepsilon} \| u\|_{\Ltwo(X)}^2.
\end{equation}
For a different proof of \eqref{eq:traceinterpolation2} see \cite[Appendix B.2]{warnick2015quasinormal}. We will use \eqref{eq:traceinterpolation2} frequently.
\begin{rema} \label{rem:traceinterpolation}
In the smooth setting one gets an even stronger version of \eqref{eq:traceinterpolation} by noting that for $u,v \in \CcI(X)$,
\begin{equation} \label{eq:smoothinterpolation}
\langle \gamma u, \gamma v\rangle_{L^2(\partial X)}  = \langle R_0 u , v \rangle_{L^2(X)}   - \langle u, R_1 v \rangle_{L^2(X)}
\end{equation}
for suitable $R_0,R_1 \in \Diff^1(X)$. Suppose that $A \in \Psib^m(X)$ is formally self-adjoint and $v = A u$. Trivially estimating \eqref{eq:smoothinterpolation} by Cauchy--Schwarz yields an upper bound of the form 
\[
 \| u \|_{H^1(X)} \| Au \|_{L^{2}(X)} + \| u \|_{L^2(X)} \| Au \|_{H^{1}(X)}.
\] 
On the other hand, one can also commute $A$ through $R_1$ to bound \eqref{eq:smoothinterpolation}  (modulo lower order terms) by $ \| u \|_{H^1(X)} \| Au \|_{L^{2}(X)}$ alone. One cannot obtain an analogous type of estimate in the twisted setting from \eqref{eq:traceinterpolation} directly, which will cause some slight complications later on.
\end{rema}

One can also define the spaces $\tSob^1_\loc(X)$ for $\nu \geq 1$ in exactly the same way, the important observation being that $\tSob^1_\loc(X) = \tdotSob^1_\loc(X)$ since $x^{\nu_-}$ is not square integrable for $\nu \geq 1$.

\subsection{Dual spaces}
For the dual spaces, we \emph{define}
\[
\tdotSob^{-1}_{\comp}(X) = [\tSob^1_\loc(X)]', \quad \tSob^{-1}_\comp(X) = [\tdotSob^1_\loc(X)]'  
\]
with their strong dual topologies. The duals of the spaces with compact supports are defined by exchanging the roles of the subscripts $\comp$ and $\loc$.
We use the notation $\tdotSob^{-1}(X)$ in analogy with the smooth setting, where the dual of $H^1_\loc(X)$ is $\dot H^{-1}_\comp(X) \subset \CdmI_\comp(X)$, the corresponding space of \emph{supported} distributions. In this slightly more general setting, when $\nu \in (0,1)$,
\[
\tdotSob^{-1}_\comp(X) \subset x^{-\nu_-} \CdmI_\comp(X)
\]
by transposition of the dense inclusion $x^{\nu_-}\CI(X) \subset \tSob^1_\loc(X)$. Similarly, there is an inclusion $\tdotSob^{-1}_\loc(X) \subset x^{-\nu_-} \CdmI(X)$. Identifying $\Ltwo(X)$ with its own anti-dual gives rise to inclusions 
\[
\Ltwo_\comp(X) \subset \tdotSob^{-1}_\comp(X), \quad \Ltwo_\loc(X) \subset \tdotSob^{-1}_\loc(X).
\] 
The situation for $\tSob^{-1}_\loc(X)$ is simpler, since it can be identified with a subspace of $\CmI(X)$ by transposing the dense inclusion $\CdI_\comp(X) \subset \tSob^1_\comp(X)$. A similar comment applies to $\tSob^{-1}_\comp(X) \subset \CmI_\comp(X)$. Given a compact set $K \subset X$, we write $\tdotSob^{-1}(K)$ and $\tSob^{-1}(K)$ for the subsets of elements with compact support in $K$.

\subsection{Interaction with the b-calculus} \label{subsect:interaction}
In this section we discuss the interaction between $\tDiff^1(X)$ and $\Psib^m(X)$. Throughout, we will use the notion of a \emph{smooth twisting function} $F$, meaning that
\[
F \in x^{\nu_-}\CI(X;\RR ), \quad x^{-\nu_-}F > 0 \text{ on } X.
\]
Note that $F$ itself is not a smooth function, but rather its polyhomogeneous expansion is smooth in the sense of not containing any logarithmic terms.
Thus $Q \in \tDiff^1(X)$ if and only if it is of the form $Q = F B F^{-1}$ for some $B \in \Diff^1(X)$. Of course $x^{\nu_-}$ is itself a valid smooth twisting function.

We also introduce some notation for the coordinate vector fields twisted by $F$ (cf.~Remark \ref{rem:globalhilbertspace}). Given fixed local coordinates $(z^0,\ldots,z^{n-1})$ on a local coordinate patch $U \subset X$ (not necessarily a boundary coordinate patch) we use the notation
\[
Q_i = FD_{z^i}F^{-1}.
\]
Suppose that $A\in \Psib^m(X)$ has compact support in $U$. First, note that
\begin{align}
\begin{split} \label{eq:Qcommutator}
[Q_i, A] &= F(D_{z^i}F^{-1}A F -  F^{-1} A F D_{z^i})F^{-1} 
\\ &= F[D_{z^i}, F^{-1}AF]F^{-1}.
\end{split}
\end{align}
Because $F$ is a smooth twisting function, $F^{-1}A F \in \Psib^m(X)$ with the same principal symbol as $A$. Now consider the special case of boundary coordinates $(x,y^1,\ldots,y^{n-1})$, so that $Q_0 = F D_x F^{-1}$ in the notation above. Combined with \eqref{eq:Dxcommutator}, we obtain the following:

\begin{lemm} \label{lem:Q0commutator}
	Let $A \in \Psib^{m}(X)$ have compact support in $U$ with $a = \bsymbol{m}(A)$. There exist $A_1 \in \Psib^{m-1}(X)$  and $A_0 \in \Psib^m(X)$ such that
	\[
	[Q_0,A] = A_1 Q_0 + A_0,
	\]
	where $\bsymbol{m-1}(A_1) = (1/i)\pa_\sigma a$ and $\bsymbol{m}(A_0) = (1/i)\pa_x a$. The maps $A \mapsto A_0$ and $A \mapsto A_1$ are microlocal. Furthermore,
	\begin{equation} \label{eq:Q0swap}
	Q_0 A = A' Q_0 + A''
	\end{equation} 
	for some $A', A'' \in \Psib^m(X)$. The maps $A \mapsto A'$ and $A\mapsto A''$ are microlocal. 
\end{lemm}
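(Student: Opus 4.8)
The plan is to reduce everything to the already-established facts about $\Psib^m(X)$ in Section \ref{sec:bpsdo}, namely the expansion \eqref{eq:Dxcommutator} of $[D_x,A]$, the swap identity \eqref{eq:Dxswap}, and the fact that conjugation of a b-operator by a power of a bdf (more generally, by a smooth twisting function $F$) stays in $\Psib^m(X)$ with the same principal symbol and is microlocal. First I would write $Q_0 = FD_xF^{-1}$ and use \eqref{eq:Qcommutator} in the boundary coordinate case $i=0$, which gives
\[
[Q_0,A] = F[D_x,F^{-1}AF]F^{-1}.
\]
Since $F$ is a smooth twisting function, $\tilde A := F^{-1}AF \in \Psib^m(X)$ with $\bsymbol m(\tilde A) = a$, and $A \mapsto \tilde A$ is microlocal (it is conjugation by $F$, which is conjugation by $x^{\nu_-}$ composed with conjugation by the smooth positive function $x^{-\nu_-}F$, both microlocal by \eqref{eq:WFrelations} and the discussion of microlocal maps).

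Next I would apply \eqref{eq:Dxcommutator} to $\tilde A$: there are $\tilde A_1 \in \Psib^{m-1}(X)$, $\tilde A_0 \in \Psib^m(X)$, depending microlocally on $\tilde A$ (hence on $A$), with $[D_x,\tilde A] = \tilde A_1 D_x + \tilde A_0$, $\bsymbol{m-1}(\tilde A_1) = (1/i)\pa_\sigma a$, $\bsymbol m(\tilde A_0) = (1/i)\pa_x a$. Conjugating back,
\[
[Q_0,A] = F\tilde A_1 D_x F^{-1} + F\tilde A_0 F^{-1} = (F\tilde A_1 F^{-1})(FD_xF^{-1}) + F\tilde A_0 F^{-1}.
\]
So I set $A_1 := F\tilde A_1 F^{-1} \in \Psib^{m-1}(X)$ and $A_0 := F\tilde A_0 F^{-1} \in \Psib^m(X)$; conjugation by $F$ preserves principal symbols, so $\bsymbol{m-1}(A_1) = (1/i)\pa_\sigma a$ and $\bsymbol m(A_0) = (1/i)\pa_x a$ as claimed, and both maps $A\mapsto A_j$ are compositions of microlocal maps, hence microlocal. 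For the swap identity \eqref{eq:Q0swap}, I would similarly write $Q_0 A = FD_x F^{-1}A F F^{-1} = F(D_x\tilde A)F^{-1}$ and apply \eqref{eq:Dxswap} to $\tilde A$, obtaining $\tilde A', \tilde A'' \in \Psib^m(X)$ (microlocal in $\tilde A$) with $D_x\tilde A = \tilde A' D_x + \tilde A''$; conjugating back gives $Q_0 A = (F\tilde A'F^{-1})Q_0 + F\tilde A''F^{-1}$, so $A' := F\tilde A'F^{-1}$, $A'' := F\tilde A''F^{-1}$ work and depend microlocally on $A$.

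I do not expect a genuine obstacle here — the lemma is essentially a bookkeeping statement that the $F$-conjugation wrapper commutes with all the structural identities of the b-calculus. The one point that deserves a line of care is that $F$ itself is not smooth up to the boundary, so one cannot invoke "conjugation by a smooth function"; instead one factors $F = x^{\nu_-}\cdot(x^{-\nu_-}F)$ with $x^{-\nu_-}F \in \CI(X)$ positive, and notes that conjugation by $x^{\nu_-}$ maps $\Psib^m$ to itself preserving principal symbols (stated in Section \ref{sec:bpsdo}) and is microlocal by \eqref{eq:WFrelations}, while conjugation by a smooth positive function is an honest conjugation inside the calculus with the same properties. Everything else is a direct transcription of \eqref{eq:Dxswap}–\eqref{eq:Dxcommutator} through this wrapper, and the principal-symbol identities survive because conjugation by $F$ does not change principal symbols.
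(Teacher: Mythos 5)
Your proof is correct and follows exactly the route the paper takes (the paper gives only a sketch: it records the conjugation identity \eqref{eq:Qcommutator}, observes that $F^{-1}AF\in\Psib^m(X)$ with the same principal symbol, and says ``Combined with \eqref{eq:Dxcommutator}, we obtain the following'' before stating the lemma). Your additional remark about factoring $F=x^{\nu_-}\cdot(x^{-\nu_-}F)$ to justify that conjugation by $F$ is a valid, symbol-preserving, microlocal operation is precisely the point of care the paper leaves implicit, and you handle it correctly.
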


Recall in the next lemma that all pseudodifferential operators are assumed to have proper support.

\begin{lemm}[{cf.~\cite[Lemma 3.2, Corollary 3.4]{vasy2008propagation}}] \label{lem:boundedonsobolev}
	Each $A \in \Psib^0(X)$ defines a continuous linear map
	\[
	\tSob^1_\loc(X) \rightarrow \tSob^1_\loc(X), \quad \tdotSob^1_\loc(X) \rightarrow \tdotSob^1_\loc(X).
	\] 
	By duality, $A$ extends to a continuous map 
	\[
	\tdotSob^{-1}_\comp(X) \rightarrow \tdotSob^{-1}_\comp(X),\quad \tSob^{-1}_\comp(X) \rightarrow \tSob^{-1}_\comp(X).
	 \]
	 The same result holds with the roles of the subscripts $\loc$ and $\comp$ reversed.
\end{lemm}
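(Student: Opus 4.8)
The plan is to reduce the statement to a local computation on $\RR^n_+$ and then exploit the structure of $\tDiff^1(X)$ as a finitely generated $\CI(X)$-module together with the commutation relations between $\Psib$-operators and twisted derivatives established in Lemma \ref{lem:Q0commutator} and equation \eqref{eq:Qcommutator}. First I would fix a compactly supported $A \in \Psib^0(X)$ and a generating set $Q_1, \dots, Q_N$ for $\tDiff^1(X)$; to show $A: \tSob^1_\loc(X) \to \tSob^1_\loc(X)$ is continuous it suffices (by locality and a partition of unity) to show that for $u$ with support in a fixed compact set, each $Q_i A u$ lies in $\Ltwo_\loc(X)$ with the appropriate seminorm bound. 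Away from the boundary this is the classical statement that $\Psib^0$ restricts to $\Psi^0$ on the interior and is bounded on $L^2$; near the boundary I would pass to boundary coordinates so that $\tDiff^1$ is generated by the twisted coordinate fields $Q_0 = F D_x F^{-1}$ and $Q_i = D_{y^i}$ (for $i \geq 1$), where $F$ is a smooth twisting function.

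The key step is to commute $Q_i$ past $A$. For the tangential directions $i \geq 1$, \eqref{eq:Qcommutator} gives $[Q_i, A] = F[D_{y^i}, F^{-1}AF]F^{-1}$, and since $F^{-1}AF \in \Psib^0(X)$ one has $[D_{y^i}, F^{-1}AF] \in \Psib^0(X)$; conjugating back, $[Q_i, A] = F B F^{-1}$ for some $B \in \Psib^0(X)$ — but conjugation of a $\Psib^0$ operator by the admissible weight $F = x^{\nu_-}\cdot(\text{smooth positive})$ stays in $\Psib^0(X)$ by the last sentence of Section \ref{sec:bpsdo} (invariance under conjugation by powers of a bdf). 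Hence $Q_i A u = A Q_i u + [Q_i, A] u$, and both terms are bounded in $\Ltwo$: $A Q_i u$ because $Q_i u \in \Ltwo_\loc$ and $A \in \Psib^0$ is $\Ltwo$-bounded, and $[Q_i, A]u$ because $[Q_i, A] \in \Psib^0(X)$ is $\Ltwo$-bounded and $u \in \tSob^1 \subset \Ltwo_\loc$. For the normal direction $Q_0$, Lemma \ref{lem:Q0commutator} gives $Q_0 A = A' Q_0 + A''$ with $A', A'' \in \Psib^0(X)$, so $Q_0 A u = A' Q_0 u + A'' u$, and again both pieces are controlled in $\Ltwo$. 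Assembling the estimates over a finite cover and using the $\Ltwo$-boundedness statement of Section \ref{sec:bpsdo} (with the weighted version $x^{(n-2)/2}L^2$) yields the seminorm bound, proving continuity on $\tSob^1_\loc(X)$; the $\comp$/$\loc$ swap is immediate from proper support.

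For the dotted space $\tdotSob^1_\loc(X)$, recall it is the closure of $\CdI_\comp(X)$ in $\tSob^1_\loc(X)$; since $A$ maps $\CdI(X)$ to $\CdI(X)$ (this is built into the definition \eqref{eq:boundedonCdI} of $\Psib$-operators) and is continuous on $\tSob^1_\loc$ by the above, it preserves the closure, hence maps $\tdotSob^1_\loc(X) \to \tdotSob^1_\loc(X)$. The statements about negative-order spaces then follow by transposition: $A^* \in \Psib^0(X)$ by the adjoint property recalled in Section \ref{sec:bpsdo}, so $A^*$ is continuous on $\tSob^1_\loc(X)$ and $\tdotSob^1_\loc(X)$, and dualizing (using the definitions $\tdotSob^{-1}_\comp = [\tSob^1_\loc]'$ and $\tSob^{-1}_\comp = [\tdotSob^1_\loc]'$) gives the continuous extensions $A : \tdotSob^{-1}_\comp(X) \to \tdotSob^{-1}_\comp(X)$ and $A : \tSob^{-1}_\comp(X) \to \tSob^{-1}_\comp(X)$, with the $\loc$/$\comp$ roles reversed handled identically.

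I expect the main obstacle to be the normal-direction bookkeeping: verifying that conjugation by the twisting function $F$ (which is genuinely singular, not smooth up to the boundary) sends $\Psib^0$ into $\Psib^0$ with the same principal symbol, and that the error terms $A''$, $A_0$ arising in Lemma \ref{lem:Q0commutator} and \eqref{eq:Q0swap} really are of order $0$ rather than picking up a bad weight — this is precisely where the analysis of the indicial family behind \eqref{eq:Dxswap} and \eqref{eq:Dxcommutator} is used, ensuring $[xD_x, A] \in x\Psib^m(X)$ so that the factor of $x$ cancels the $x^{-1}$ when one writes $Q_0 = F D_x F^{-1}$ in terms of $xD_x$. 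Once that structural point is in hand, everything else is a routine application of $\Ltwo$-boundedness and duality.
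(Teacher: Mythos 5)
Your proposal is correct and follows essentially the same route the paper (via Vasy's Lemma 3.2/Cor.\ 3.4) intends: localize, commute each generator $Q_i$ of $\tDiff^1(X)$ past $A$ using \eqref{eq:Qcommutator} in the tangential directions and \eqref{eq:Q0swap} in the normal direction, apply $\Ltwo$-boundedness of compactly supported $\Psib^0$-operators, and then dualize via $A^*\in\Psib^0(X)$ together with the definitions $\tdotSob^{-1}_\comp=[\tSob^1_\loc]'$ and $\tSob^{-1}_\comp=[\tdotSob^1_\loc]'$. You also correctly identify the one structural point that does the work — namely that $[xD_x,A]\in x\Psib^m(X)$, which makes the $A''$ term in $Q_0A = A'Q_0 + A''$ land in $\Psib^0$ rather than $x^{-1}\Psib^0$.

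Two small remarks. For the tangential directions you can shortcut: writing $F=x^{\nu_-}\phi$ with $\phi>0$ smooth, the $x^{\nu_-}$ factor commutes with $D_{y^\alpha}$, so $Q_\alpha=FD_{y^\alpha}F^{-1}=D_{y^\alpha}+\phi^{-1}D_{y^\alpha}(\phi)\in\Diffb^1(X)$ already, and $[Q_\alpha,A]\in\Psib^0(X)$ is immediate without going through \eqref{eq:Qcommutator}. And in passing from $\CdI_\comp(X)$ to $\tdotSob^1_\loc(X)$ you should note explicitly that proper support of $A$ gives $A:\CdI_\comp(X)\to\CdI_\comp(X)$, not merely $A:\CdI(X)\to\CdI(X)$; combined with continuity on $\tSob^1_\loc(X)$ this preserves the closed subspace, as you argue.
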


Here the action of $A \in \Psib^0(X)$ on $\tSob^{-1}_\comp(X)$ is by duality, namely $\langle Af, v \rangle = \langle f, A^*v \rangle$ for $v \in \tSob^1_\loc(X)$. This is just the restriction of the action of $A$ on the larger space $\CmI(X)$. The action of $A$ on $\tdotSob^{-1}_\comp(X)$ is also by duality. Recall that for $\nu \in (0,1)$ there is an identification 
\[
\tdotSob^{-1}_\comp(X) \subset x^{-\nu_-}\CdmI_\comp(X),
\]
and any $A \in \Psib^m(X)$ acts on $x^{-\nu_-}\CdmI_\comp(X)$ by the formula
\[
A (x^{-\nu_-}v) = x^{-\nu_-}(x^{\nu_-}A x^{-\nu_-}) v.
\]
In this sense the action of $A$ on $\tdotSob^{-1}_\comp(X)$ is given by restriction from the larger space $x^{-\nu_-}\CdmI(X)$. 

The proof of Lemma \ref{lem:boundedonsobolev} gives even more information: if $A \in \Psib^0(X)$ has compact support in $U \subset X$, then there exists $\chi \in \CcI(U)$ such that
\begin{equation} \label{eq:H1boundedbyseminorm}
\| Au \|_{\tSob^k(X)} \leq C\|\chi u \|_{\tSob^k(X)}
\end{equation}
for every $u \in \tSob^k_\loc(X)$ with $k = \pm1$, where the constant $C>0$ is bounded by a seminorm of $A$ in $\Psib^0(X)$. Similar estimates holds for $\tdotSob^k$ spaces, again with constants bounded by a seminorm of $A$ in $\Psib^0(X)$.

In exact analogy with \cite[Definition 3.5]{vasy2008propagation}, for $k =0,\pm 1$ we define subspaces of $\tSob^k(X)$ (or $\tdotSob^k(X)$) with additional regularity as measured by operators in $\Psib^m(X)$.

\begin{defi}
	Let $k = 0,\pm 1$ and $m\geq 0$. Given $u \in \tSob^k_\loc(X)$, we say that $u \in \tSob^{k,m}_\loc(X)$ if $Au \in \tSob^k_\loc(X)$ for all $A \in \Psib^m(X)$. When $m=\infty$, we define
	\[
	\tSob^{k,\infty}(X) = \bigcap_{m} \tSob^{k,m}(X).
	\]	
The spaces $\tdotSob^{k,m}(X)$ for $k= \pm 1$ are defined analogously.
\end{defi}

For finite $m$ it suffices to check that $u \in \tSob^k_\loc(X)$ and $Au \in \tSob^k_\loc(X)$ for a single elliptic $A$ (cf.~\cite[Remark 3.6]{vasy2008propagation}). The corresponding spaces with compact supports are defined in the obvious way; for $u \in \tSob^{k,m}_\comp(X)$ with $m \geq 0$ we set
\[
\| u \|_{\tSob^{k,m}(X)} = \| u \|_{\tSob^k(X)} + \| Au \|_{\tSob^k(X)},
\] 
for a choice of elliptic $A$, with the analogous definition for $u \in \tdotSob^{k,m}_\comp(X)$. 

It is also true that the weighted trace $\gamma_-$ maps $\tSob^{1,m}_\loc(X)$ into $H^{\nu+m}_\loc(X)$ (with continuity following by the closed graph theorem) for $m\geq 0$. This follows from
\begin{equation} \label{eq:gamma_-commute}
\gamma_- (Au) = \widehat{N}(A)(-i\nu_-)(\gamma_- u)
\end{equation}
and the fact that $\widehat{N}(A)(-\nu_-) \in \Psi^m( \pa X)$ is elliptic whenever $A \in \Psib^m(X)$ is elliptic. Indeed, \eqref{eq:gamma_-commute} holds when $m=0$ by the density of $x^{\nu_-}\CI(X)$ in $\tSob^{1}_\loc(X)$, and for $m>0$ by a regularization argument.

One can also give a definition of the spaces $\tSob^{k,m}(X)$ or $\tdotSob^{k,m}(X)$ for $m < 0$ as follows:

\begin{defi} \label{defi:negativeorderspaces}
	Let $k = \pm 1$ and $m < 0$; fix an elliptic $A \in \Psib^{-m}(X)$. We let $\tSob^{k,m}_\loc(X)$ denote the set of $u\in \CmI(X)$ of the form 
	\[
	u = u_1 + Au_2,
	\]
	where $u_1, \, u_2 \in \tdotSob^{k}_\loc(X)$. The same definition applies to $\tdotSob^{k,m}_\loc(X)$, where $\tdotSob^{-1,m}_\loc(X)$ is considered as a subspace of $x^{-\nu_-}\CdmI(X)$.
\end{defi} 

This definition is independent of the choice of elliptic $A$. Again the spaces with compact supports are defined in the obvious way; in that case one can choose $u_1, u_2$ with compact support (for a more detailed analysis of supports, see the discussion preceding \cite[Definition 3.17]{vasy2008propagation}). For $u \in \tSob^{k,m}_\comp(X)$ with $m < 0$ we set
\[
\| u \|_{\tSob^{k,m}(X)} = \inf\{ \| u_1\|_{\tSob^{k}(X)} + \| u_2\|_{\tSob^{k}(X)}: u = u_1 + Au_2 \},
\]
taking $u_1, u_2$ with compact supports, with the analogous definition for $u \in \tdotSob^{k,m}_\comp(X)$.

Extending Lemma \ref{lem:boundedonsobolev}, one can show that any $A\in \Psib^0(X)$ defines a continuous map between the spaces 
\[
\tSob^{k,m}_\loc(X) \rightarrow \tSob^{k,m}_\loc(X), \quad \tdotSob^{k,m}_\loc(X) \rightarrow \tdotSob^{k,m}_\loc(X)
\]
 for $k = 0,\pm1$ and any $m \in \RR$ (cf.~\cite[Lemma 5.8]{vasy2012wave}). Furthermore, the analogue of \eqref{eq:H1boundedbyseminorm} holds. For instance, if $A \in \Psib^0(X)$ has compact support in $U \subset X$, then there exists $\chi \in \CcI(U)$ such that
\[
\| Au \|_{\tSob^{k,m}(X)} \leq C\|\chi u \|_{\tSob^{k,m}(X)}
\]
for every $u \in \tSob^{k,m}_\loc(X)$, where the constant $C>0$ is bounded by a seminorm of $A$ in $\Psib^0(X)$. 

We will also need to extend the definition of $\gamma_-$ to $\tSob^{1,m}_\loc(X)$ for $m < 0$. It is easy to see that $x^{\nu_-}\CI(X)$ is dense in $\tSob^{1,m}_\loc(X)$ since $A$ as in Definition \ref{defi:negativeorderspaces} preserves the former space. Furthermore, the restriction of $\gamma_-$ to $x^{\nu_-}\CI(X)$ extends to a continuous map
\[
\tSob^{1,m}_\loc(X) \rightarrow H^{\nu+m}_\loc(X),
\]
which follows immediately from the definition of the $\tSob^{1,m}(X)$ norm and the fact that $\widehat{N}(A)(-i\nu_-) \in \Psi^{-m}(\pa X)$ is elliptic on $\pa X$. In analogy with \cite[Remark 3.16]{vasy2008propagation}, given $u = u_1 + Au_2 \in \tSob^{1,m}_\loc(X)$ with $A \in \Psib^{-m}(X)$ elliptic and $u_1,u_2 \in \tSob^{1}_\loc(X)$, we equivalently have
\begin{equation} \label{eq:extendedtracemap}
\gamma_- u = \gamma_-u_1 + \widehat{N}(A)(-i\nu_-)(\gamma_- u_2) \in H^{\nu+m}_\loc(\pa X).
\end{equation}
In particular, this definition is independent of the choice of $A$ and $u_1,u_2$.

With these definitions $\tdotSob^{-1,m}_\comp(X)$ is identified with the dual of $\tSob^{1,-m}_\loc(X)$ via the $\Ltwo(X)$ inner product for each $m \in \RR$; similarly $\tSob^{-1,m}_\comp(X)$ is the dual of $\tdotSob^{1,-m}_\loc(X)$. One also shows that $Q \in \tDiff^1(X)$ defines a bounded map 
\[
\tSob^{1,m}_\loc(X) \rightarrow \tSob^{0,m}_\loc(X)
\] 
for each $m \in \RR$. This follows from the comments following Lemma \ref{lem:Q0commutator}. Since this is also true for the spaces with compact support, by duality (replacing $m$ with $-m$) the transposition $Q^*$ defines a bounded map
\[
Q^* :\tSob^{0,m}_\loc(X) \rightarrow \tdotSob^{-1,m}_\loc(X).
\]
In particular, any operator of the form $L = \sum_i R^*_i Q_i$ with $Q_i, R_i \in \tDiff^{1}(X)$ defines a bounded map $L : \tSob^{1,m}_\loc(X) \rightarrow \tdotSob^{-1,m}_\loc(X)$ for each $m \in \RR$:
\[
\langle Lu , v \rangle := \sum_i \langle Q_i u, R_i v \rangle, 
\]
where $u \in \tSob^{1,m}_\loc(X)$ and $v \in \tSob^{1,-m}_\comp(X)$.

There is a natural wavefront set corresponding to $\tSob^{k,m}(X)$:

\begin{defi}
	Let $k = 0,\pm1$, and assume $u \in \tSob^{k,r}_\loc(X)$ for some $r\in \RR$. Given $q \in \bT^*X \setminus 0$, we say that $q \notin \WFb^{k,m}(X)$ if there exists $A \in \Psib^m(X)$ such that
	\[
	q \in \ellb(A), \quad Au \in \tSob^k_\loc(X).
	\]
	When $m=+\infty$, we say that $q \notin \WFb^{k,\infty}(X)$ if there exists $A \in \Psib^0(X)$ such that $q \in \ellb(A)$ and  $Au \in \tSob^{k,\infty}_\loc(X)$.
\end{defi}

This wavefront set is microlocal in the sense that
\[
\WFb^{k,m}(Au) \subset  \WFb^{k,m-s}(u) \cup \opWFb(A)
\]
for each $A \in \Psib^s(X)$; the proof is identical to that of \cite[Lemma 3.9]{vasy2008propagation}. From the construction of microlocal parametrices, one also obtains the following quantitative version:

\begin{lemm} \label{lem:ellipticestimate}
Let $\mathcal{A}$ be a bounded family in  $\Psib^s(X)$ and $G \in \Psib^s(X)$ be such that 
\[
\opWFb(\mathcal{A}) \subset \ellb(G).
\]
Suppose further that $\mathcal{A}$ and $G$ have compact support in $U \subset X$. Let $m \in \RR$. Then there exists $\chi \in \CcI(U)$ and $C>0$ such that
\[
\| Au \|_{\tSob^1(X)} \leq C(\|Gu\|_{\tSob^1(X)} + \|\chi u \|_{\tSob^{1,m}(X)})
\]
for every $u \in \tSob^{1,m}_\loc(X)$ with $\WFb^{1,s}(u) \cap \opWFb(G) = \emptyset$ and every $A\in \mathcal{A}$.
\end{lemm}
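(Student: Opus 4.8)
The plan is to reduce the estimate to the symbolic $\b$-parametrix construction recalled in Section~\ref{subsect:b-microlocalization}, combined with the quantitative boundedness of order-zero $\b$-operators on the twisted Sobolev spaces, i.e. \eqref{eq:H1boundedbyseminorm} and its extension to the spaces $\tSob^{k,m}$ discussed after Lemma~\ref{lem:boundedonsobolev}. The point is that, since these boundedness statements (with constants controlled by $\Psib^0$-seminorms) are already in hand, the argument runs exactly as in the classical $\b$-setting of \cite[Lemma 3.9]{vasy2008propagation}; the only issue meriting attention is uniformity over the bounded family $\mathcal A$.

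First I would construct the parametrix. Because $\opWFb(\mathcal A)\subset\ellb(G)$ and $\mathcal A$, $G$ are compactly supported in $U$, the symbolic parametrix construction of Section~\ref{subsect:b-microlocalization}, run with $G$ as the elliptic factor, produces for every $A\in\mathcal A$ operators $B_A\in\Psib^{0}(X)$ and $R_A\in\Psib^{-\infty}(X)$, compactly supported in $U$, such that $A = B_A G + R_A$. Since this construction is explicit on symbols and continuous in the associated seminorms, and $\mathcal A$ is bounded in $\Psib^s(X)$, the families $\{B_A\}$ and $\{R_A\}$ are bounded in $\Psib^0(X)$ and $\Psib^{-\infty}(X)$ respectively. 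Next I would observe that the hypothesis $\WFb^{1,s}(u)\cap\opWFb(G)=\emptyset$ together with $u\in\tSob^{1,m}_\loc(X)$ forces $Gu\in\tSob^1_\loc(X)$: indeed $\opWFb(G)$ is a closed conic set disjoint from the closed conic set $\WFb^{1,s}(u)$, with base projection in a compact subset of $U$, hence covered by finitely many sets $\ellb(A_j)$ with $A_j\in\Psib^s(X)$ and $A_j u\in\tSob^1_\loc(X)$; patching the associated microlocal parametrices and absorbing the $\Psib^{-\infty}$-errors via $u\in\tSob^{1,m}_\loc(X)$ gives $Gu\in\tSob^1_\loc(X)$ (microlocal elliptic regularity for $\WFb^{1,\cdot}$, proved as in \cite{vasy2008propagation}).

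With these ingredients the estimate follows by applying the parametrix identity to $u$. Using \eqref{eq:H1boundedbyseminorm} for the bounded family $\{B_A\}$, there is $\chi\in\CcI(U)$ and $C>0$, independent of $A$, with
\[
\|B_A(Gu)\|_{\tSob^1(X)} \le C\,\|\chi\,Gu\|_{\tSob^1(X)} \le C\,\|Gu\|_{\tSob^1(X)},
\]
the last step since multiplication by a fixed function in $\CcI(U)$ is bounded on $\tSob^1$. For the smoothing term, each $R_A\in\Psib^{-\infty}(X)$ maps $\tSob^{1,m}_\loc(X)$ continuously into $\tSob^1_\loc(X)$, and the quantitative form of the extension of Lemma~\ref{lem:boundedonsobolev} to the $\tSob^{k,m}$-spaces gives $\|R_A u\|_{\tSob^1(X)}\le C\,\|\chi u\|_{\tSob^{1,m}(X)}$ with $C$ uniform over the bounded family. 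Hence, after enlarging $\chi\in\CcI(U)$ to dominate all cutoffs that arise,
\[
\|Au\|_{\tSob^1(X)}\le \|B_A(Gu)\|_{\tSob^1(X)} + \|R_A u\|_{\tSob^1(X)} \le C\bigl(\|Gu\|_{\tSob^1(X)} + \|\chi u\|_{\tSob^{1,m}(X)}\bigr)
\]
uniformly in $A\in\mathcal A$. The main obstacle is exactly this uniformity: one must arrange the parametrix so that $\{B_A\}$, $\{R_A\}$ are genuinely bounded families and so that the constants produced by \eqref{eq:H1boundedbyseminorm} and by the negative-order mapping properties depend only on finitely many of their symbol seminorms. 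The twisted structure, which would be the delicate point elsewhere, is harmless here precisely because Lemma~\ref{lem:boundedonsobolev} and \eqref{eq:H1boundedbyseminorm} already package the required boundedness.
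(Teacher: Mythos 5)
Your proof is correct and follows precisely the route indicated by the paper: the paper gives no detailed proof but remarks that the lemma follows ``from the construction of microlocal parametrices,'' which is exactly the parametrix decomposition $A = B_A G + R_A$ you carry out, uniformly over the bounded family, combined with the quantitative mapping property \eqref{eq:H1boundedbyseminorm} and its extension to the $\tSob^{k,m}$-scale. Your intermediate observation that the hypothesis $\WFb^{1,s}(u)\cap\opWFb(G)=\emptyset$ forces $Gu\in\tSob^1_\loc(X)$ (so that the right-hand side is finite) is a worthwhile point that the paper leaves implicit.
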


We also make the following useful observation:
\begin{lemm} \label{lem:L2intermsofH1}
Let $\mathcal{A}$ be a bounded family in  $\Psib^s(X)$ and $G \in \Psib^{s-1}(X)$ be such that 
\[
\opWFb(\mathcal{A}) \subset \ellb(G). 
\]
 Suppose further that $\mathcal{A}$ and $G$ have compact support in  $U \subset X$. Let $m \in \RR$. Then there exists $\chi \in \CcI(U)$ such that
\[
\| Au \|_{\Ltwo(X)} \leq C(\| Gu \|_{\tSob^1(X)} + \| \chi u \|_{\tSob^{1,m}(X)})
\]
for every $u \in \tSob^{1,m}_\loc(X)$ with $\WFb^{1,s-1}(u) \cap \opWFb(G) = \emptyset$ and every $A \in \mathcal{A}$.
\end{lemm}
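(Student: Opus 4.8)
\textbf{Proof strategy for Lemma \ref{lem:L2intermsofH1}.}

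The plan is to reduce the $\Ltwo$ estimate to the $\tSob^1$-elliptic estimate of Lemma \ref{lem:ellipticestimate} by trading one order of regularity. First I would fix an elliptic $G' \in \Psib^s(X)$ supported in $U$ with $\opWFb(\mathcal A) \subset \ellb(G')$ and $\opWFb(G') \subset \ellb(G)$, which exists since $\opWFb(\mathcal A)$ is a compact conic set contained in the open cone $\ellb(G)$; concretely one may take $G' = \Opb(g')$ for a suitable cutoff symbol $g'$ times a power of $\langle \cdot \rangle$. The key point is that $G$ has order only $s-1$, so its elliptic parametrix produces $F \in \Psib^{-1}(X)$ and $R, R' \in \Psib^{-\infty}(X)$, compactly supported in $U$, with $G' = F G + R$. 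Then for $u \in \tSob^{1,m}_\loc(X)$ with $\WFb^{1,s-1}(u) \cap \opWFb(G) = \emptyset$ we have, for each $A \in \mathcal A$,
\[
\| A u \|_{\Ltwo(X)} \leq \| A (G')^{-1}_{\mathrm{par}} \cdot \ldots \|
\]
—more carefully, apply Lemma \ref{lem:ellipticestimate} with $\mathcal A$ and the elliptic operator $G' \in \Psib^s(X)$: since $\WFb^{1,s}(u) \cap \opWFb(G') \subset \WFb^{1,s}(u)$, and one checks $\WFb^{1,s}(u) \cap \opWFb(G') = \emptyset$ from the hypothesis together with $G' = FG + R$ (so that $G'u = FGu + Ru \in \tSob^{1}_\loc(X)$ microlocally on $\opWFb(G')$, using $Gu$ having $\tSob^1$ regularity there from $\WFb^{1,s-1}(u)\cap\opWFb(G)=\emptyset$ and $F$ of order $-1 < 0$). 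This gives $\chi_1 \in \CcI(U)$ with
\[
\| A u \|_{\tSob^1(X)} \leq C\big( \| G' u \|_{\tSob^1(X)} + \| \chi_1 u \|_{\tSob^{1,m}(X)} \big).
\]
It then remains to bound $\| G' u \|_{\tSob^1(X)}$, and this is where the gain in order is used: write $G' u = F G u + R u$. Since $R \in \Psib^{-\infty}(X)$ is compactly supported, Lemma \ref{lem:boundedonsobolev} (and its negative-order extension) gives $\| R u \|_{\tSob^1(X)} \leq C \| \chi_2 u \|_{\tSob^{1,m}(X)}$ for some $\chi_2 \in \CcI(U)$. For the main term, $F \in \Psib^{-1}(X)$ maps $\tSob^{0}_\loc(X) = \Ltwo_\loc(X)$ into $\tSob^{1}_\loc(X)$ boundedly — this is again Lemma \ref{lem:boundedonsobolev} applied to $x^{\cdot}$-conjugates, or directly the mapping property $\Psib^{-1}: \tSob^{k} \to \tSob^{k+1}$ which follows from the same arguments — with norm controlled by a seminorm of $F$, hence
\[
\| F G u \|_{\tSob^1(X)} \leq C \| G u \|_{\Ltwo(X)} \leq C \| G u \|_{\tSob^1(X)}.
\]
Wait—here $G \in \Psib^{s-1}$, not order $0$, so "$Gu \in \Ltwo$" is not automatic; instead one should observe $F \in \Psib^{-1}(X)$ composed appropriately, or more simply use that $FG \in \Psib^{s-2}$ acting on $u$... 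The clean route, which I would actually follow, is: $F \in \Psib^{-1}(X)$ satisfies $F : \tSob^{1}_\loc \to \tSob^{2,\ldots}$, but we only need $FG u$, and $FG = G'  - R \in \Psib^{s-1}(X)$; so instead bound $\| G' u\|_{\tSob^1} \leq \| G'' u \|_{\tSob^1}$ where we absorb directly. Combining the displayed inequalities and collecting the cutoffs into a single $\chi \in \CcI(U)$ dominating $\chi_1, \chi_2$ yields the claim; the constant $C$ is bounded by a seminorm of $A$ in $\Psib^s(X)$ since every step (parametrix construction, $\tSob^1$-boundedness) tracks seminorm dependence as recorded after Lemma \ref{lem:boundedonsobolev}.

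The main obstacle is purely bookkeeping of orders and supports: one must arrange the auxiliary elliptic operator $G'$ of order $s$ so that applying Lemma \ref{lem:ellipticestimate} is legitimate (its hypothesis requires $\WFb^{1,s}(u) \cap \opWFb(G) = \emptyset$ for the operator $G$ appearing there, which in our application is $G'$), and this in turn requires verifying $G' u \in \tSob^1_\loc(X)$ microlocally on $\ellb(G')$ from the weaker hypothesis $\WFb^{1,s-1}(u) \cap \opWFb(G) = \emptyset$ — the discrepancy between $s$ and $s-1$ being exactly what the order $-1$ parametrix $F$ absorbs. All other ingredients are the mapping properties already established: Lemma \ref{lem:boundedonsobolev} and its $m<0$ extension for the smoothing remainders, and the elementary fact that operators of negative order improve $\tSob$-regularity with seminorm control. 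Since the argument parallels \cite[proof of Lemma 3.9 and surrounding]{vasy2008propagation}, no new analytic input beyond the twisted-setting mapping properties of Section \ref{subsect:interaction} is needed.
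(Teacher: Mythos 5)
There is a genuine gap, and it is an order-bookkeeping error that undermines the whole strategy. You set up a parametrix relation $G' = FG + R$ with $G' \in \Psib^s(X)$, $G \in \Psib^{s-1}(X)$, $R$ smoothing. Since $G$ has order $s-1$ and $G'$ has order $s$, necessarily $F \in \Psib^{1}(X)$ — not $\Psib^{-1}(X)$ as you claim. Consequently the step ``$FGu \in \tSob^1_\loc(X)$, using $Gu \in \tSob^1_\loc(X)$ and $F$ of order $-1 < 0$'' fails: an operator of order $+1$ does not preserve $\tSob^1_\loc$, so you cannot conclude $G'u \in \tSob^1_\loc(X)$, and hence cannot verify the hypothesis $\WFb^{1,s}(u) \cap \opWFb(G') = \emptyset$ needed to invoke Lemma~\ref{lem:ellipticestimate} at order $s$. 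Your later attempted repair (``$FG = G' - R \in \Psib^{s-1}$'') has the same mistake; $FG$ has order $s$. More structurally, your plan tries to establish the $\tSob^1$ bound $\|Au\|_{\tSob^1(X)} \lesssim \dots$, which is strictly stronger than the $\Ltwo$ bound the lemma asserts. The whole content of the statement is precisely the trade: since $G$ has one order \emph{less} than $\mathcal A$, one must drop the output norm from $\tSob^1$ to $\Ltwo$. There is no version of Lemma~\ref{lem:ellipticestimate} with a lower-order $G$, so the reduction you propose cannot go through.

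The paper's argument is different and does make the trade explicitly. After a microlocal partition of unity one arranges that at least one of the b-vector fields $\{xD_x, D_{y^1},\dots,D_{y^{n-1}}\}$ is elliptic on $\opWFb(\mathcal A)$, and then factors
\[
A = \sum_{i=0}^{n-1} Q_i G_i + R,
\]
with $G_i \in \Psib^{s-1}(X)$, $\opWFb(G_i) \subset \ellb(G)$, and $R \in \Psib^{-\infty}(X)$. Since each $Q_i \in \tDiff^1(X)$ maps $\tSob^1_\loc(X) \rightarrow \Ltwo_\loc(X)$, one gets
\[
\|Au\|_{\Ltwo(X)} \leq \sum_i \|G_i u\|_{\tSob^1(X)} + \|Ru\|_{\Ltwo(X)},
\]
and then applies Lemma~\ref{lem:ellipticestimate} at order $s-1$ (with $G$ of order $s-1$ and the bounded family $\{G_i\}$), for which the given hypothesis $\WFb^{1,s-1}(u) \cap \opWFb(G) = \emptyset$ is exactly what is required. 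The key ingredient you are missing is this factorization of $A$ through a twisted first-order operator $Q_i$, which is what converts one b-order into a twisted $\tSob^1\rightarrow\Ltwo$ mapping.
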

\begin{proof}
	By a microlocal partition of unity, we can assume that $U$ is a coordinate patch with coordinates $(x,y^1,\ldots,y^{n-1})$, and that at least one of the vector fields $\{xD_x, \, D_{y^1},\ldots, D_{y^{n-1}}\}$ is elliptic on $\opWFb(\mathcal{A})$. Then we can write
	\[
	A = \sum_{i=0}^{n-1} Q_i G_i + R
	\]
	where $G_i \in \Psib^{s-1}(X)$ has $\opWFb(G_i) \subset \ellb(G)$, and $R \in \Psib^{-\infty}(X)$. Since $Q_i \in \tDiff^1(X)$, the proof is complete.
\end{proof}

The point of this lemma is that $G$ has one order lower than the family $\mathcal{A}$.

\subsection{Logarithmic twisting functions} \label{subsect:logarithmic}

We will also need to consider more general twisting functions with logarithmic corrections. It should be stressed, however, that the material in this section is not needed in the any of the following three situations:
\begin{enumerate} \itemsep6pt
	\item $\nu \in (0,1) \setminus \{1/2\}$,
	\item $\nu = 1/2$ and a certain mean curvature assumption along $\pa X$ is satisfied,
	\item $\nu > 0$ but one is only interested in propagation of singularities with Dirichlet boundary conditions.
\end{enumerate}
The first two conditions are explained in more detail in Section \ref{subsect:kg} below, and the third in Section \ref{subsect:dirichletform}. We say that 
\[
F \in x^{\nu_-}\CI(X) + (x^{\nu_+}\log x)\CI(X)
\]
is a \emph{logarithmic twisting function} if $x^{-\nu_-} F > 0$. This is in contrast to the notion of a smooth twisting function from the previous sections, where the logarithmic terms were absent.  Throughout this section we assume $\nu \in (0,1)$. First we work on $\RR^n_+$ with standard coordinates $(z^0,\ldots,z^{n-1})$, where $x = z^0 \in \RR_+$ and $y^\alpha = z^{\alpha}$ for $\alpha = 1,\ldots,n-1$.

\begin{lemm} \label{lem:H1multiplier}
	If $g \in L^\infty(\RR^n_+)$ satisfies $D_{y}^\alpha g \in L^\infty(\RR^n_+)$ for $|\alpha| \leq 1$ and
\[
	x^{\nu_-} \| D_x g(x,\cdot ) \|_{L^\infty(\RR^{n-1})}  \in x^{(n-2)/2} L^2(\RR_+),
\]
	then multiplication by $g$ defines a bounded map $\tSob^1(\RR^n_+) \rightarrow \tSob^1(\RR^n_+)$.
\end{lemm}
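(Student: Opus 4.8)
The plan is to check directly the three conditions defining membership in $\tSob^1(\RR^n_+)$ (Remark~\ref{rem:globalhilbertspace}): that $gu\in\Ltwo(\RR^n_+)$ and $Q_i(gu)\in\Ltwo(\RR^n_+)$ for $i=0,\dots,n-1$, with each quantity bounded by a fixed constant times $\|u\|_{\tSob^1(\RR^n_+)}$. Membership $gu\in\Ltwo(\RR^n_+)$ is immediate from $g\in L^\infty$. For $i\ge 1$ one has $Q_i=D_{y^i}$, and the ordinary Leibniz rule gives $Q_i(gu)=(D_{y^i}g)u+g\,Q_iu$, both terms lying in $\Ltwo(\RR^n_+)$ because $D_{y^i}g,g\in L^\infty$ and $u,Q_iu\in\Ltwo(\RR^n_+)$. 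It is also harmless to assume $g$ is supported in a collar $\{x<\varepsilon\}$ of $\pa X$: away from the boundary the assertion is the classical boundedness of a bounded multiplier with bounded first derivatives on local Sobolev spaces.

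For $i=0$, writing $Q_0=x^{\nu_-}D_xx^{-\nu_-}$ and applying Leibniz to $x^{-\nu_-}(gu)=g\,(x^{-\nu_-}u)$ yields the twisted Leibniz rule $Q_0(gu)=(D_xg)\,u+g\,Q_0u$. The term $g\,Q_0u$ is bounded in $\Ltwo(\RR^n_+)$ by $\|g\|_{L^\infty}\|Q_0u\|_{\Ltwo(\RR^n_+)}$, so everything comes down to $(D_xg)u\in\Ltwo(\RR^n_+)$. Slicing the integral in $x$ and using the transverse sup-norm,
\[
\|(D_xg)u\|_{\Ltwo(\RR^n_+)}^2=\int_0^\infty\!\!\int_{\RR^{n-1}}x^{-(n-2)}|D_xg|^2|u|^2\,dy\,dx\ \le\ \int_0^\infty x^{-(n-2)}\|D_xg(x,\cdot)\|_{L^\infty(\RR^{n-1})}^2\,\|u(x,\cdot)\|_{L^2(\RR^{n-1})}^2\,dx .
\]

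The heart of the matter is then the uniform slice estimate
\[
\|u(x,\cdot)\|_{L^2(\RR^{n-1})}\ \le\ C\,x^{\nu_-}\,\|u\|_{\tSob^1(\RR^n_+)},\qquad 0<x<\varepsilon ,
\]
for with it in hand the last integral is at most $C^2\|u\|_{\tSob^1(\RR^n_+)}^2\int_0^\infty x^{-(n-2)+2\nu_-}\|D_xg(x,\cdot)\|_{L^\infty(\RR^{n-1})}^2\,dx$, which is finite precisely because $x^{\nu_-}\|D_xg(x,\cdot)\|_{L^\infty(\RR^{n-1})}\in x^{(n-2)/2}L^2(\RR_+)$. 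To prove the slice estimate I would invoke the $L^2(\RR^{n-1})$-valued asymptotic expansion of Lemma~\ref{lem:H1expansion}: setting $v=x^{-\nu_-}u$ and $r=(n-2)/2$, that lemma gives $v=\gamma_-u+x^{r+1-\nu_-}\tilde h$ on $\{x<\varepsilon\}$ with $\tilde h\in H^1_\be([0,\varepsilon);L^2(\RR^{n-1}))$, and since $r+1-\nu_-=\tfrac12+\nu>0$ while $H^1_\be([0,\varepsilon);L^2(\RR^{n-1}))$ embeds continuously — via the substitution $t=-\log x$ and the one-dimensional embedding $H^1\hookrightarrow C^0\cap L^\infty$ — into bounded $L^2(\RR^{n-1})$-valued functions on $[0,\varepsilon]$, we obtain $\|v(x,\cdot)\|_{L^2(\RR^{n-1})}\le\|\gamma_-u\|_{L^2(\RR^{n-1})}+\sup_{x\le\varepsilon}\|\tilde h(x,\cdot)\|_{L^2(\RR^{n-1})}$ uniformly in $x\in(0,\varepsilon]$; both terms are $\le C\|u\|_{\tSob^1(\RR^n_+)}$ by the continuity of $\gamma_-$ and of $u\mapsto\tilde h$ (closed graph, or by keeping track of constants in the contour-deformation step of Lemma~\ref{lem:H1expansion}). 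Finally, the passage from $\RR^n_+$ to an arbitrary manifold with boundary is carried out, as elsewhere in this section, by a partition of unity subordinate to boundary coordinate patches.

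The step I expect to be the real obstacle — as opposed to bookkeeping with the Leibniz rule and the definitions of the weighted spaces — is the uniform slice bound $\|u(x,\cdot)\|_{L^2(\RR^{n-1})}\lesssim x^{\nu_-}\|u\|_{\tSob^1(\RR^n_+)}$: this is exactly where the twisting exponent $\nu_-$ and the restriction $\nu\in(0,1)$ enter, and it is what the vector-valued strengthening of Lemma~\ref{lem:H1expansion} is tailored to provide. Everything else is routine.
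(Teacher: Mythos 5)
Your overall structure—reduce to the normal twisted derivative $Q_0$, apply a twisted Leibniz rule, estimate $(D_x g)u$ by a slice-in-$x$ Cauchy--Schwarz, and feed in the uniform bound $\|u(x,\cdot)\|_{L^2(\RR^{n-1})}\lesssim x^{\nu_-}\|u\|_{\tSob^1(\RR^n_+)}$—is exactly the paper's, and your Cauchy--Schwarz computation matches theirs line for line. The genuine difference lies in how the uniform slice bound is obtained. The paper cites it directly as a weighted Sobolev embedding from Grisvard (\cite[Proposition 1.1']{grisvard1963espaces}), applied to a sequence $u_n\in x^{\nu_-}\CcI(\RR^n_+)$ approximating $u$, after which the lemma is finished by uniform boundedness, a weakly convergent subsequence, and the closed graph theorem. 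You instead rederive the slice bound from the paper's own Lemma~\ref{lem:H1expansion}: writing $x^{-\nu_-}u=\gamma_-u+x^{1/2+\nu}\tilde h$ on the collar and invoking $H^1_\be\hookrightarrow L^\infty$ after the substitution $t=-\log x$. This is a legitimate and self-contained alternative—it trades the external citation for an internal one—though it is logically equivalent, since Lemma~\ref{lem:H1expansion} is itself essentially the mechanism behind the Grisvard embedding here.

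Two minor points worth tightening. First, your slice estimate from Lemma~\ref{lem:H1expansion} is local, valid for $x<\varepsilon$, and your reduction to $g$ supported in a collar rests on "bounded multiplier with bounded first derivatives" away from the boundary; but the hypothesis $x^{\nu_-}\|D_xg(x,\cdot)\|_{L^\infty}\in x^{(n-2)/2}L^2(\RR_+)$ does not by itself give $D_xg\in L^\infty$ for $x$ bounded away from $0$. Either argue that the same slice-Cauchy--Schwarz estimate works there with the far easier uniform slice bound $\|u(x,\cdot)\|_{L^2(\RR^{n-1})}\lesssim x^{\nu_-}\|u\|_{\tSob^1}$ valid for $x$ in compact subsets of $(0,\infty)$ (which follows from the standard $H^1$ trace theorem since the twist is harmless there), or simply keep the global slice estimate from Grisvard. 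Second, you apply the twisted Leibniz rule directly to $u\in\tSob^1$, which is fine since $g\in W^{1,\infty}_{\rm loc}$ makes $D_x(g\cdot)$ a genuine distributional product rule, but the paper's approximation-by-$u_n\in x^{\nu_-}\CcI$ plus weak compactness avoids having to invoke this and is the more robust route; it is worth at least a remark on why the Leibniz rule is legitimate at this regularity.
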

\begin{proof}
It is clear that $D^\alpha_y g : \tSob^1(\RR^n_+) \rightarrow \Ltwo(\RR^n_+)$ is continuous for $|\alpha| \leq 1$. Next, let $u \in \tSob^1(\RR^n_+)$ and let $u_n \in x^{\nu_-}\CcI(\RR^n_+)$ converge to $u$ in $\tSob^1(\RR^n_+)$. We can use the Sobolev embedding from \cite[Proposition 1.1']{grisvard1963espaces} to conclude that 
\[
\sup_x x^{-\nu_-}\| u_n (x,\cdot) \|_{L^2(\RR^{n-1})} < C\| u_n \|_{\tSob^1(\RR^n_+)},
\]
where $C>0$ is independent of $n$. Now write
\[
\| x^{\nu_-} D_x (x^{-\nu_-}g u_n)\|_{\Ltwo(\RR^n_+)} \leq C \| u_n \|_{\tSob^1(\RR^n_+)} + \| D_x (g) u_n \|_{\Ltwo(\RR^n_+)}.
\]
The last term on the right hand side is estimate by
\[
 \| D_x (g) u_n \|_{\Ltwo(\RR^n_+)}^2  \leq \int_{\RR} \left( x^{-2\nu_-} \| u_n(x,\cdot )\|_{L^2(\RR^{n-1})}^2 \right) \left( x^{2\nu_-} \| D_x g(x,\cdot) \|_{L^\infty(\RR^{n-1})}^2\right) x^{2-n}dx.
\]
By hypothesis, we conclude that 
\[
\| x^{\nu_-} D_x (x^{-\nu_-}g u_n)\|_{\Ltwo(\RR^n_+)} \leq C \| u_n \|_{\tSob^1(\RR^n_+)}.
\]
In particular, the left-hand side is uniformly bounded $n$, since $u_n \rightarrow u$ in $\tSob^1(\RR^n_+)$. Passing to a subsequence shows that $x^{\nu_-}D_x x^{-\nu_-}u \in \Ltwo(\RR^n_+)$, and continuity of 
\[
x^{\nu_-}D_x x^{\nu_-}g : \tSob^1(\RR^n_+) \rightarrow \Ltwo(\RR^n_+)
\]
then follows from the closed graph theorem. 
\end{proof}

Next, consider a logarithmic twisting function $F$ on $\RR^n_+$, and assume that $x^{-\nu_-}D_y^\alpha F$ is bounded for $|\alpha| \leq 1$. We show that the function $x^{\nu_-}F^{-1}$ satisfies the hypotheses of Lemma \ref{lem:H1multiplier}. Since $(x^{-\nu_-}F)^{-2}$ is bounded it suffices to show that 
\[
x^{\nu_-} \| D_x (x^{-\nu_-}F) \|_{L^\infty(\RR^n_+)} \in x^{(n-2)/2}L^2(\RR).
\]
We can write $x^{-\nu_-}F = F_0 + (x^{\nu_+ - \nu_-}\log x)F_1$ with $F_0,F_1 \in \CI(X)$, and the proof follows since $\nu \in (0,1)$. Thus multiplication by $x^{-\nu_-}F$ is a multiplier on $\tSob^1(\RR^n_+)$. The following result is an immediate corollary

\begin{lemm}
	If $F$ is a logarithmic twisting function and $B \in \Diff^1(X)$, then 
	\[
	FBF^{-1} : \tSob^1_\loc(X) \rightarrow \Ltwo_\loc(X)
	\]
	is bounded.
\end{lemm}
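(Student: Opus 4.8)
The plan is to reduce to the model half-space and then exhibit $FBF^{-1}$ as a composition of three maps whose boundedness is already available. The assertion is local in $X$, and both $\tSob^1_\loc(X)$ and $\Ltwo_\loc(X)$ are local spaces, so I would cover $X$ by coordinate patches and argue with a partition of unity subordinate to the cover. Away from $\pa X$ there is nothing to do: there $F\in\CI$ is strictly positive, $\tSob^1_\loc$ and $\Ltwo_\loc$ coincide with $H^1_\loc$ and $L^2_\loc$ with respect to a smooth positive density, and $FBF^{-1}$ is then an ordinary first-order differential operator, which certainly maps $H^1_\loc\to L^2_\loc$. Hence only boundary coordinate patches matter, and each such patch I identify with (an open subset of) $\RR^n_+$. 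On a relatively compact patch the function $g:=x^{-\nu_-}F$ is smooth, strictly positive and bounded, with bounded tangential derivatives, so all the hypotheses needed to invoke Lemma \ref{lem:H1multiplier} are automatic.

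On the model patch I would write $F=x^{\nu_-}g$, so that $F^{-1}=x^{-\nu_-}g^{-1}$, and use that multiplication by $g^{\pm1}$ commutes with powers of $x$ to factor
\[
FBF^{-1}\;=\;M_g\,\bigl(x^{\nu_-}Bx^{-\nu_-}\bigr)\,M_{g^{-1}},
\]
where $M_h$ denotes multiplication by $h$. Now read off the mapping property from the three factors, right to left: $M_{g^{-1}}=M_{x^{\nu_-}F^{-1}}$ is bounded $\tSob^1(\RR^n_+)\to\tSob^1(\RR^n_+)$, since $x^{\nu_-}F^{-1}$ was shown just before the statement to satisfy the hypotheses of Lemma \ref{lem:H1multiplier} (this is the step where $\nu\in(0,1)$ is used, to absorb the $x^{\nu_+}\log x$ correction in $F$); next $x^{\nu_-}Bx^{-\nu_-}\in\tDiff^1(X)$, hence is bounded $\tSob^1_\loc\to\Ltwo_\loc$ by the very definition of the twisted Sobolev space (equivalently, by the boundedness $\tDiff^1(X):\tSob^{1}_\loc(X)\to\tSob^0_\loc(X)$ recorded in Section \ref{subsect:interaction}); finally $M_g$ is bounded on $\Ltwo_\loc$ because $g$ is bounded. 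Composing the three bounded maps and reassembling with the partition of unity gives the claim.

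For this corollary there is no genuine analytic obstacle: the substantive content — that $x^{\nu_-}F^{-1}$ (and likewise $x^{-\nu_-}F$) is an $\tSob^1$-multiplier in spite of the logarithmic correction — is precisely Lemma \ref{lem:H1multiplier} together with the computation preceding the present statement, both of which are in hand. The only points requiring care are bookkeeping ones: keeping the order of the three factors straight so that the $x^{\pm\nu_-}$ weights cancel exactly as written, and the partition-of-unity reduction to $\RR^n_+$, where one must note that on a compact coordinate patch the smooth function $x^{-\nu_-}F$ and its tangential derivatives are bounded, so that Lemma \ref{lem:H1multiplier} genuinely applies. Neither step is more than routine.
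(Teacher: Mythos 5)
Your proposal is correct and is exactly the argument the paper intends when it calls the lemma an ``immediate corollary'': you factor $FBF^{-1} = M_g\,(x^{\nu_-}Bx^{-\nu_-})\,M_{g^{-1}}$ with $g = x^{-\nu_-}F$, apply the multiplier lemma to $M_{g^{-1}}$ on $\tSob^1$, use the definition of $\tDiff^1(X)$ for the middle factor, and use boundedness of $g$ on $\Ltwo$ for the outer factor. The partition-of-unity reduction and the observation that the tangential-derivative hypotheses of Lemma~\ref{lem:H1multiplier} are automatic on a compact patch are both correct and correctly handled.
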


Next we consider what happens when we conjugate $A \in \Psib^m(X)$ by a logarithmic twisting function $F$. This necessitates the use of b-pseudodifferential operators with \emph{conormal coefficients}. A function $a \in L^\infty_\loc(\bT^*X)$ is said to be a symbol with  conormal coefficients if in local coordinates $(x,y,\sigma,\eta)$ it satisfies
\[
|(x\pa_x)^k \pa_y^\alpha \pa_\sigma^\ell \pa_\eta^\beta a(x,y,\sigma,\eta)| \leq C_{k\ell\alpha\beta} \langle (\sigma,\eta)\rangle^{m-|\beta|-\ell}.
\]
Roughly speaking, the conormal calculus consists of the quantizations of conormal symbols; we denote by $\Psibc^m(X) \supset \Psib^m(X)$ the corresponding operators of order $m$. These operators form a filtered $*$-algebra just as in the case of smooth coefficients. Furthermore, each $A\in\Psibc^0(X)$ defines a bounded operator on $\Ltwo_\loc(X)$.

 The only subtlety arises in the definition of the normal operator or the indicial family of $A \in \Psibc^m(X)$. This will not pose a problem here, since we only consider operators 
\[
A \in \Psib^m(X) + x^\delta\Psibc^m(X).
\]
for some $\delta > 0$.
If $A = B + C$ with $B \in \Psib^m(X)$ and $C \in x^{\delta}\Psibc^m(X)$, then we set $\widehat{N}(A) := \widehat{N}(B)$.  With this definition \eqref{eq:gamma_-commute} still holds.

\begin{lemm}
	If $A \in \Psib^m(X)$ and $F$ is a logarithmic twisting function, then
	\[
	FAF^{-1} \in \Psib^m(X) + x^{2\nu - \varepsilon}\Psibc^m(X)
	\]
	for each $\varepsilon > 0$.
\end{lemm}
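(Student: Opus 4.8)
The plan is to strip off the smooth, invertible part of $F$ by conjugation — an operation under which both $\Psib^m(X)$ and $x^\delta\Psibc^m(X)$ are invariant — and then to expand the conjugation by the remaining purely logarithmic factor, using that $\Psibc$ is a filtered $*$-algebra containing $\Psib$. Concretely, I would write $F = x^{\nu_-}G$ with $G = G_0 + x^{2\nu}(\log x)G_1$, $G_0, G_1 \in \CI(X)$, $G > 0$, where $2\nu = \nu_+ - \nu_-$; since $G_0|_{\pa X} = G|_{\pa X} > 0$, the function $G_0$ is positive on a collar $W$ of $\pa X$, and there $G = G_0(1+\phi)$ with $\phi = x^{2\nu}(\log x)(G_1/G_0)$ and $1+\phi = G/G_0 > 0$.

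Because the asserted membership is modulo $\Psib^{-\infty}(X) \subset \Psib^m(X)$, a standard cutoff / pseudolocality argument lets me split $A$ into a piece whose Schwartz kernel is supported in $W\times W$ and a remainder whose kernel is supported away from $\pa X$; on the latter region $F^{\pm 1}$ is smooth and strictly positive, so conjugating that remainder by $F$ stays in $\Psib^m(X)$ and may be discarded. For the localized piece I would then conjugate away $x^{\nu_-}$ and $G_0$: conjugation by $x^{\nu_-}$ preserves $\Psib^m(X)$ (as recorded in Section~\ref{sec:bpsdo}) and $x^\delta\Psibc^m(X)$ for every $\delta$ — the conormal symbol class being stable under this conjugation, exactly as in the smooth case — and conjugation by the smooth positive multiplier $G_0$ does the same, since multiplication by a smooth function lies in $\Diffb^0 \subset \Psib^0 \subset \Psibc^0$ and $\Psibc$ is an algebra. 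This reduces everything to showing $(1+\phi)A(1+\phi)^{-1} \in \Psib^m(X) + x^{2\nu-\varepsilon}\Psibc^m(X)$.

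For the core step, note that for $x$ small the Neumann series for $\bigl(1+x^{2\nu}(\log x)(G_1/G_0)\bigr)^{-1}$ converges together with all its conormal derivatives, so $(1+\phi)^{-1} = 1 + \tilde\phi$ with $\tilde\phi$ conormal and $(x\pa_x)^k\pa_y^\alpha\tilde\phi = O(x^{2\nu}|\log x|)$; fixing $\varepsilon \in (0,2\nu)$, both $x^{-(2\nu-\varepsilon)}\phi$ and $x^{-(2\nu-\varepsilon)}\tilde\phi$ are bounded conormal functions, i.e.\ multiplication operators in $\Psibc^0(X)$, so $\phi,\tilde\phi \in x^{2\nu-\varepsilon}\Psibc^0(X)$. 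Expanding $(1+\phi)A(1+\tilde\phi) = A + \phi A + A\tilde\phi + \phi A\tilde\phi$, the first term is in $\Psib^m(X)$; for the others I use that $\Psibc$ is a filtered algebra with $\Psib^m(X) \subset \Psibc^m(X)$, together with the identity $x^{-(2\nu-\varepsilon)}Ax^{2\nu-\varepsilon}\in\Psib^m(X)$ to commute the weight past $A$ when it appears on the right, obtaining $\phi A \in x^{2\nu-\varepsilon}\Psibc^m(X)$, $A\tilde\phi \in x^{2\nu-\varepsilon}\Psibc^m(X)$, and $\phi A\tilde\phi \in x^{2(2\nu-\varepsilon)}\Psibc^m(X)\subset x^{2\nu-\varepsilon}\Psibc^m(X)$. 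Undoing the conjugations and reinstating the discarded piece yields the claim for this $\varepsilon$, and hence for all $\varepsilon>0$ since $x^{2\nu-\varepsilon'}\Psibc^m(X)\subset x^{2\nu-\varepsilon}\Psibc^m(X)$ whenever $\varepsilon'\le\varepsilon$.

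The only genuinely delicate point is the bookkeeping forced by the factor $\log x$: it is precisely this logarithm that keeps the remainder out of $x^{2\nu}\CI(X)$ and pushes it into the conormal calculus $\Psibc$ at the cost of an arbitrarily small power $\varepsilon$; everything else is routine algebra of the b-calculus, plus the localization near $\pa X$ that lets one pretend $G_0$ is globally positive.
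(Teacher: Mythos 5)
Your argument is correct and supplies a proof for a lemma the paper states without justification. The core mechanism is sound: after stripping off the smooth invertible factor $x^{\nu_-}G_0$ (under conjugation by which both $\Psib^m(X)$ and each $x^\delta\Psibc^m(X)$ are stable), the remaining conjugation by $1+\phi$ with $\phi = x^{2\nu}(\log x)(G_1/G_0)$ is handled by the expansion $A + \phi A + A\tilde\phi + \phi A\tilde\phi$; the observations that $\phi,\tilde\phi \in \bigcap_{\varepsilon>0}x^{2\nu-\varepsilon}\Psibc^0(X)$ (the $\log$ costing an arbitrarily small power), that $\Psibc$ is a filtered algebra containing $\Psib$, and that the weight $x^{2\nu-\varepsilon}$ can be commuted past $A\in\Psib^m(X)$ at the cost of replacing $A$ by $x^{-(2\nu-\varepsilon)}Ax^{2\nu-\varepsilon}\in\Psib^m(X)$, together close the argument.

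One spot deserves tightening. You describe the cutoff step as yielding a remainder whose kernel is "supported away from $\pa X$" on which "$F^{\pm1}$ is smooth and strictly positive." With a single cutoff $\chi$ equal to one near $\pa X$ and supported in the collar $W$, the remainder $A - \chi A\chi$ splits into $(1-\chi)A(1-\chi)$, whose kernel genuinely stays away from $\pa X\times\pa X$, plus the cross terms $\chi A(1-\chi)$ and $(1-\chi)A\chi$. The cross terms have kernels that do approach $\pa X$ in one variable and vanish in the other only away from $\pa X$, so $F^{\pm1}$ is \emph{not} smooth on their support; they survive the $F$-conjugation not because $F$ is smooth there but because they lie in $\Psib^{-\infty}(X)$ with kernels vanishing rapidly at the side faces of $X^2_\be$, so the polynomial growth of $F^{-1}$ (resp.\ the decay of $F$) on one side is absorbed. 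Your conclusion is nonetheless correct once this is spelled out. The rest of the proof — the well-definedness of the decomposition $G = G_0 + x^{2\nu}(\log x)G_1$ only near $\pa X$ (hence the need to localize before dividing by $G_0$), the Neumann-series control of $(1+\phi)^{-1}$ together with its conormal derivatives, and the final downgrade $x^{2(2\nu-\varepsilon)}\Psibc^m\subset x^{2\nu-\varepsilon}\Psibc^m$ — all checks out.
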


Using Lemma \ref{lem:H1multiplier}, it is easy to see that $A \in \Psib^0(X) + x^{2\nu - \varepsilon}\Psibc^0(X)$ maps $\tSob^1_\loc(X)$ to itself. This is because given such an $A$, we can still find $A'  \in \Psib^0(X) + x^{2\nu - \varepsilon}\Psibc^0(X)$ such that
\[
D_x A = A'D_x + A'',
\]
the only difference being that now $A'' \in \Psib^0(X) + x^{2\nu-\varepsilon-1}\Psibc^0(X)$. This is because the commutator of $xD_x$ with an element of $x^{2\nu-\varepsilon}\Psibc^0(X)$ does not gain an extra order of vanishing. On the other hand, $x^{2\nu-1-\varepsilon}$ maps $\tSob^1_\loc(X)$ to $\Ltwo_\loc(X)$ by arguing as in the proof of Lemma \ref{lem:H1multiplier} since $\nu \in (0,1)$ and $\varepsilon> 0$ is arbitrary. 

Also note that the union of $\Psib^{m}(X) + x^{2\nu-\varepsilon}\Psibc^m(X)$ over $m \in \RR$ forms a filtered $*$-algebra, which is moreover closed under taking parametrices of invertible elements. Thus if $A$ as above is elliptic on $\WFb(B)$ for some $B \in \Psib^k(X)$, then the microlocal parametrix one obtains dividing $B$ by $A$ can also be taken in $\Psib^{k-m}(X) + x^{2\nu-\varepsilon}\Psibc^{k-m}(X)$, and similarly for the residual terms. This makes it possible to test for $\WFb^{k,m}$ using these operators for $k=0,\pm 1$.

Lastly, consider the analogue of Lemma \ref{lem:Q0commutator}. The simplest way to generalize the latter is to note that it holds for operators of the form $A = F BF^{-1}$ with $B \in \Psib^m(X)$. This means that when carrying out commutator arguments with a chosen commutant $A$, it should actually be applied with $FAF^{-1}$ (which has the same principal symbol).

\section{Asymptotically anti-de Sitter spacetimes}\label{sec:KG}

\subsection{Basic definitions}\label{sec:aAdS}

Let $X$ be an $n$-dimensional manifold with boundary $\partial X$. Suppose that $X^\circ$ is equipped with a smooth Lorentzian metric $g$ of signature $(1,n-1)$. Assume that the following conditions are satisfied near $\partial X$:

\begin{enumerate}[label=(\Alph*)]\itemsep6pt
	\item If $x\in \CI(X)$ is a bdf, then $\hat g = x^2 g$ extends smoothly to a Lorentzian metric on $X$.
	\item The pullback $\hat g|_{\pa X}$ of $\hat g$ to the boundary has Lorentzian signature.
	\item $\hat g^{-1}(dx,dx) = -1$ on $\partial X$.
\end{enumerate}

These properties are independent of the choice of bdf. When all three are satisfied we say that $(X,g)$ is an \emph{asymptotically anti-de Sitter} (aAdS) spacetime. The pullback $\hat g|_{\pa X}$ is only determined by $g$ up to a conformal multiple (corresponding to a change of bdf), hence $\partial X$ is referred to as a \emph{conformal boundary}.

\subsection{Special bdfs} \label{subsect:specialbdf}

At various points it will be convenient to use a \emph{special bdf} $x \in \CI(X)$ such that
\begin{equation} \label{eq:specialbdf}
\hat g(dx,dx) = -1
\end{equation}
in a neighborhood of $\pa X$.
In fact, if $k_0$ is a representative of the conformal class of boundary Lorentzian metrics, then $x$ is uniquely determined near $\partial X$ by specifying that $k_0 = \hat g|_{\pa X}$. The existence of such an $x$ is well known. For a proof in the Riemannian (conformally compact) setting, see \cite[Lemma 5.2]{graham1991einstein}; the proof applies verbatim in the Lorentzian case.

Given $U \subset \pa X$ with compact closure, we can always choose a collar neighborhood of $U$ diffeomorphic to $[0,\varepsilon)\times U$ in which the special bdf $x \in \CI(X)$ is identified with projection onto the first factor. Since $\pa X$ will typically be non-compact, we make no claim about the uniformity of $\varepsilon = \varepsilon(U) >0$ as $U$ varies. With this identification near $U$,
\[
g = \frac{-dx^2 + k}{x^2},
\]
where $x\mapsto k(x)$ is a family of Lorentzian metrics on $\partial X$ depending smoothly on $x\in [0,\varepsilon)$ such that $k(0) = k_0$. In particular, one can choose local coordinates $(x,y^1,\ldots,y^{n-1})$ such that $x$ is a special bdf, and 
\[
\hat g^{-1}(dx,dy^{\alpha}) = 0 \text{ near } \pa X,
\]
where $\alpha = 1,\ldots,n-1$.
 We call coordinates of this form $\emph{special coordinates}$.

We say that $g$ is even modulo $\mathcal{O}(x^{2k+1})$ (in the sense of Guillarmou \cite{guillarmou2005meromorphic}) if the Taylor expansion of $k$ at $x=0$ contains only even terms modulo $\mathcal{O}(x^{2k+1})$. This condition is independent of the choice of special bdf  (cf. \cite[Lemma 2.1]{guillarmou2005meromorphic}). Furthermore, as shown in \cite[Section 2]{guillarmou2005meromorphic}, any two special coordinate systems $(x,y)$ and $(\tilde x, \tilde y)$ on overlapping coordinates patches are related by
\begin{equation} \label{eq:changeofspecialcoords}
\tilde x = x \sum_{j=0}^{k+1} a_j(y) x^{2j} + x^{2k+4}\CI(X), \quad \tilde y = \sum_{j=0}^{k+1} b_j(y) x^{2j} + x^{2k+3}\CI(X).
\end{equation}
Evenness of $g$ modulo $\mathcal{O}(x^3)$ is implicit in the works of \cite{holzegel2012well,warnick2013massive,holzegel2014boundedness}, and is verified for solutions to the Einstein equations. In particular, it implies that $\partial X$ is totally geodesic with respect to the conformal metric $\hat g$. In this paper we \emph{do not} insist on evenness assumptions for the metric.

\subsection{The Klein--Gordon operator} \label{subsect:kg}

Let $(X,g)$ be an asymptotically AdS spacetime, and consider the Klein--Gordon operator 
\[
P = \Box_g - \lambda, \quad  \lambda  < \tfrac{(n-1)^2}{4}.
\]
It is convenient to parametrize $\lambda = \tfrac{(n-1)^2}{4}-\nu^2$, with $\nu > 0$. As in Section \ref{subsect:twistedderivatives}, set $\nu_\pm = \tfrac{n-1}{2} \pm \nu$. Fixing an arbitrary representative $k_0$ in the conformal class of boundary Lorentzian metrics, we use the special bdf $x$ as in the previous section.

Since $\det g = x^{-n}\det \hat g$, it follows that in special coordinates on a coordinate patch $[0,\varepsilon)\times U$, 
\[
\Box_g = -(x\partial_x)^2 + (n-1)x\partial_x + (xE)x\partial_x+ x^2 \tilde  P,
\]
where $\tilde P = \Box_k$ is a family of second order differential operators on $\pa X$ depending smoothly on $x \in [0,\varepsilon)$, and $E \in \CI(X)$ is given by 
\begin{equation} \label{eq:logdetderivative}
E = -\partial_x (\log |{\det \hat g}|).
\end{equation}
Observe that up to a scalar multiple, $E|_{\pa X}$ is the mean curvature of $\pa X$ with respect to $\hat g$.
Using the product structure near the boundary, we can identify the \emph{normal operator} ${N}(P)$ of $P$ with an operator on $X$. Thus
\[
{N}(P) = (xD_x)^2 + i(n-1)xD_x - \lambda.
\]
In this case the indicial family $\widehat{N}(P)(s) = s^2 + i(n-1)s - \lambda$ can be identified with a scalar multiplication operator.

In general, the difference between $P$ and ${N}(P)$ in an operator in $x\Diffb^1(X)$. However, 
\begin{equation} \label{eq:normallowerordeterms}
P - {N}(P) = i(xE)xD_x + x^2\Diffb^2(X),
\end{equation}
so if $\pa X$ has zero mean curvature with respect to $\hat g$, then actually $P- N(P) \in x^2 \Diffb^2(X)$.

There are  two important $L^2$ spaces on $X$. The first is $L^2(X,dg) = x^{n/2}L^2(X,d\hat g)$, and the second is $L^2(X, x^{2}dg)$. It is this second space that is compatible with the normalization in Section \ref{subsect:twistedderivatives}, and unless specified explicitly, we use 
\[
\Ltwo(X) = L^2(X,x^{2}dg) = x^{-1}L^2(X,dg).
\]

The inner product of $u,v \in \Ltwo(X)$ will be denoted by $\langle u, v \rangle$, whereas the induced inner product of $f,g \in L^2(\pa X)$ coming from the volume density $dk_0$ will be denoted by $\langle f, g \rangle_{\pa X}$.

Recall the notion of a twisting function, either smooth or logarithmic, as in Section \ref{subsect:interaction}. For the next definition we fix such a function $F$.

\begin{defi} 
 Given $u \in \CdI(X)$, define the twisted differential $d_F u \in \CdI(X; T^* X)$ by
\[
d_F u = F d(F^{-1}u) = du + uF^{-1}\cdot  dF.
\]

\end{defi} 

The one-form $F^{-1} \cdot dF \in x^{-1}\CI(X;T^*X)$ can be thought of as a singular magnetic potential. Note that $d_Fu$ continues to make sense as a $T^*X$-valued distribution on $X$ when $u \in \CmI(X)$. In local coordinates  $(z^0,\ldots,z^{n-1})$, set
\[
Q_i = F D_{z^i} F^{-1},
\] 
so that $-i \cdot d_F u = (Q_i u) dz^i$. We define the twisted Dirichlet form for the metric by setting
\[
\mathcal{E}_0(u,v) = -\int G(d_F u, d_F \bar v)\,  dg = -\int \hat G(d_F u, d_F \bar v) \, x^2 dg,
\]
where $G$ is the induced inner product on the fibers of $T^*X$. Thus in coordinates, we have
\[
G(d_F u, d_F \bar v) =  - g^{ij} Q_i u \cdot Q_j \bar v.
\]
If $u,v \in \tSob^1_\loc(X)$, at least one of which has compact support, then $\mathcal{E}_0(u,v)$ is finite.
We now come to the main reason for introducing the twisting function: as observed in this context by \cite{warnick2013massive} and \cite{holzegel2014boundedness},
\[
P = -(d_F)^\dagger d_F + F^{-1}P(F),
\]
where $(d_F)^\dagger : \CdI(X; T^* X) \rightarrow \CdI(X)$ is the $dg$ adjoint of $d_F$. This is useful provided $F$ is chosen so that multiplication  by the function 
\[
S_F = F^{-1}P(F) \in \CI(X^\circ)
\]
is bounded $\Ltwo(X) \rightarrow x^2 \Ltwo(X)$. For instance, $S_F = \mathcal{O}(x^2)$ is sufficient. We thus make the following definition:

\begin{defi} \label{defi:admissible}
	We say that a twisting function, either smooth or logarithmic, $F$ is \emph{admissible} if $S_F \in x^2L^\infty_\loc(X)$. 
\end{defi}

We now discuss conditions under which it is possible to choose an admissible twisting function, which plays an important role when considering Neumann or Robin boundary conditions; for Dirichlet boundary conditions any smooth twisting function suffices, essentially due to Lemma \ref{lem:1/xbounded} (for more details, see Section \ref{subsect:dirichletform}). In particular, we only need to consider the case $\nu \in (0,1)$.

 First suppose that $\nu \neq 1/2$, so the difference between the indicial roots $\nu_+ - \nu_- = 2\nu$ of $P$ is not an integer. Then there always exists an admissible smooth twisting function, which can actually be chosen to satisfy 
 \[
 S_F \in \CdI(X).
  \] (Cf. \cite[Lemma 4.13]{vasy2012wave}). The terms in the polyhomogeneous expansion of $F$ are uniquely determined by specifying $x^{-\nu_-}F|_{\pa X}$ (which should be non-vanishing) for some choice of bdf $x$. More explicitly, let $x$ be a special bdf. Then in special coordinates, we can take
 \[
 F(x,y) = x^{\nu_-}(1 + xf(y)), \quad f(y) = \frac{\nu_- E(0,y)}{\widehat{N}(P)(-i(1+\nu_-))}.
 \]
 We can simply multiply this $F$ by a function of $y$ to specify the restriction $x^{-\nu_-}F|_{\pa X}$.
 
 On the other hand, if $\nu = 1/2$, then $\nu_+ - \nu_- = 1$ differ by an integer. We can still find a logarithmic twisting function, but not necessarily a smooth one.
 Following the arguments in \cite[Lemma 4.12]{vasy2012wave} and referring to \eqref{eq:normallowerordeterms}, the existence of an admissible smooth twisting function when $\nu = 1/2$ is equivalent to the vanishing of the mean curvature of $\pa X$ with respect to $\hat g$. This condition is independent of the choice of special bdf.  
 
 For simplicity, we henceforth focus on the case where there exists an admissible smooth twisting function. This is just to avoid some of the technicalities associated with operators with conormal coefficients as discussed Section \ref{subsect:logarithmic}.
 \begin{hypo} \label{hypo:twisting}
 	If $\nu =1/2$, then the mean curvature of $\pa X$ with respect to $\hat g$ vanishes. In other words, there exists a function $F \in x^{\nu_-}\CI(X)$ satisfying $x^{-\nu_-}F>0$, such that
 	\[
 S_F  \in x^2 \CI(X).
 	\]
 \end{hypo}


It will also be convenient to define a sesquilinear pairing on one-forms relative to a fixed $\CI$ positive-definite inner product $H$. We use this to measure the inner product of differentials twisted by a fixed twisting function $F$ (to be chosen). Introduce the Dirichlet form associated to $H$ by
\[
\mathcal{Q}(u,v) = \int H(d_F u, d_F \bar v) \, x^{2} dg.
\] 
Thus the $\tSob^1$ norm-squared of $u \in \tSob^1_\comp(X)$ can be taken to be $\mathcal{Q}(u,u) + \| u \|^2_{\Ltwo(X)}$.

\subsection{Asymptotic expansions} \label{subsect:asymptoticexpansion}

Next, we discuss asymptotic expansions of solutions to the Klein--Gordon and related equations. Given an aAdS spacetime $(X,g)$ and $k \in \RR \cup \{\pm \infty\}$, define the spaces
\[
\maxdom^k = \{u \in \tSob^{1,k}_\loc (X): Pu \in x^2\tSob_\loc^{0,k}(X) \}. 
\]
We also abbreviate $\maxdom = \maxdom^0$. For finite $k$ we equip these spaces with the seminorms
\[
u \mapsto \| \phi u \|_{\tSob^{1,k}(X)} + \| x^{-2} \phi Pu \|_{\tSob^{0,k}(X)},
\]
and when $k=\infty$ we use the collection of all seminorms for all finite $k$. The main technical difficulty is that $\maxdom^k$ is \emph{not} closed under applications of $A \in \Psib^{0}(X)$. In fact, it is not closed under multiplication by arbitrary $\CcI(X)$ functions either. However, it turns out that closedness does  hold if we work with b-pseudodifferential operators satisfying certain evenness properties with respect to a special bdf.

Given an aAdS spacetime that is even modulo $\mathcal{O}(x^{2k+1})$, we introduce in Appendix \ref{app:even} a class of b-pseudodifferential operators $\Psibeven^m(X) \subset \Psib^m(X)$ whose coefficients are even modulo $\mathcal{O}(x^{2k+3})$. Note that every aAdS spacetimes is even modulo $\mathcal{O}(x)$, so this construction is always non-trivial. The definition mirrors a similar construction by Albin  within the 0-calculus \cite{albin2007renormalized}. The key property of $\Psibeven(X)$ is the following:

\begin{lemm} \label{lem:evenQcommutator}
	Let $B \in \Psibeven^m(X)$ have compact support in a boundary coordinate patch with special coordinates $(x,y^1,\ldots,y^{n-1})$. There exists $B' \in \Psib^{m}(X)$  and $B'' \in x \Psib^m(X)$ such that
	\[
	Q_0 FBF^{-1} = B' Q_0 + B''
	\]
\end{lemm}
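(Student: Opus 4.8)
The plan is to reduce the statement to the two swapping lemmas already established, namely Lemma~\ref{lem:Q0commutator} for operators in $\Psib^m(X)$ and the evenness structure of $\Psibeven^m(X)$ from Appendix~\ref{app:even}. First I would observe that $FBF^{-1}$ is a twisted conjugate of $B$, so it suffices to understand $Q_0 (FBF^{-1}) = (F D_x F^{-1})(FBF^{-1}) = F(D_x B)F^{-1}$. Applying equation~\eqref{eq:Dxswap}, write $D_x B = \tilde B D_x + \tilde B''$ with $\tilde B = x^{-1}Bx$ and $\tilde B'' = x^{-1}[xD_x, B]$. Conjugating back, $Q_0(FBF^{-1}) = (F \tilde B F^{-1}) Q_0 + F\tilde B'' F^{-1}$, so one may set $B' = F\tilde B F^{-1}$ and $B'' = F \tilde B'' F^{-1}$; since $F$ is a smooth twisting function, conjugation by $F$ preserves $\Psib^m(X)$ (with the same principal symbol), so $B' \in \Psib^m(X)$. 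The content of the lemma is thus entirely in the claim $B'' \in x\Psib^m(X)$, i.e. that $F x^{-1}[xD_x, B] F^{-1} \in x\Psib^m(X)$, equivalently $[xD_x, B] \in x^2\Psib^m(X)$ (using that $F$-conjugation commutes with powers of $x$ up to $\Psib$ errors and preserves the $x$-filtration).

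\textbf{The key step.} For a general $B \in \Psib^m(X)$, Lemma~\ref{lem:Q0commutator} only gives $[xD_x, B] \in x\Psib^m(X)$, which is one power of $x$ short. The extra power of $x$ is exactly what the evenness hypothesis $B \in \Psibeven^m(X)$ is designed to provide: in special coordinates, the coefficients of $B$ (and of its full symbol) are even in $x$ modulo $\mathcal{O}(x^{2k+3})$, so their odd-order Taylor coefficients at $x=0$ vanish up to high order. Concretely, I would analyze the indicial family of $[xD_x, B]$. One already knows $[xD_x, B] \in x\Psib^m(X)$; writing $[xD_x,B] = x C$ with $C \in \Psib^m(X)$, the claim is $C \in x\Psib^m(X)$, i.e. $\widehat{N}(C)(s) = 0$ — or more precisely that the leading ($x^0$) coefficient of the full symbol of $C$ vanishes. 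Since $xD_x$ differentiates in $x$ and the symbol of $B$ is even in $x$ (mod the high-order remainder), $xD_x$ applied to it produces something odd in $x$, hence vanishing at $x=0$; this gives the second power of $x$. The remainder $\mathcal{O}(x^{2k+3})$ contributes a term already in $x^{2k+3}\Psib^m(X) \subset x\Psib^m(X)$ (since $2k+3 \geq 1$), causing no trouble. One then checks that conjugation by $F$, which at leading order is multiplication by $x^{\nu_-}(1+\mathcal{O}(x))$, neither destroys the gained power of $x$ nor introduces logarithmic/conormal terms — this is where the smoothness of the twisting function $F$ (Hypothesis~\ref{hypo:twisting}) is used, so that $F^{-1}[xD_x \text{-type error}]F$ stays in the smooth b-calculus.

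\textbf{Main obstacle.} The bookkeeping in the previous paragraph is the delicate point: one must verify that the evenness of $B$'s symbol genuinely survives the combination $D_x \circ B$ (not merely the normal operator but the full symbol expansion in special coordinates), and that passing to a new special coordinate system — governed by the quadratic-in-$x$ change of coordinates~\eqref{eq:changeofspecialcoords} — preserves membership in $\Psibeven^m(X)$ and hence the conclusion. This is precisely the role of Appendix~\ref{app:even}: the class $\Psibeven^m(X)$ is \emph{defined} to be coordinate-invariant under such changes and closed under the relevant operations, so once that machinery is in place the commutator computation is a short check. I would therefore structure the proof as: (i) reduce to $[xD_x, B] \in x^2\Psib^m(X)$ via the $F$-conjugation identity above; (ii) invoke the defining evenness property of $\Psibeven^m(X)$ and the formula $[xD_x, \Opb(b)] = \Opb\big((xD_x)_{\text{coeff}} b\big)$ modulo lower order to see the extra power of $x$ appear from differentiating an even symbol; (iii) absorb the $\mathcal{O}(x^{2k+3})$ remainder trivially; (iv) conjugate back by $F$, using smoothness of $F$ to stay in $\Psib^m(X)$. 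The expected hard part is step (ii), specifically matching the evenness convention of $\Psibeven^m$ (even modulo $\mathcal{O}(x^{2k+3})$ coefficients) against what $D_x$ does to it — but this is exactly what the appendix is built to handle.
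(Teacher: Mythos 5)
Your proposal matches the paper's proof: both reduce to showing $[xD_x,B]\in x^2\Psib^m(X)$ via the swap $D_x B = (x^{-1}Bx)D_x + x^{-1}[xD_x,B]$ and then conjugating by $F$. The only cosmetic difference is that the paper packages the evenness argument as a citation of Lemma~\ref{lem:quadraticvanishing} ($xD_x\in\Psibeven^1(X)$, $B\in\Psibeven^m(X)$, and the commutator has vanishing indicial family), whereas you unwind the same symbol-level reasoning in place.
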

\begin{proof}
	Recall that $Q_0 = FD_x F^{-1}$. It suffices to show that \eqref{eq:Dxswap} holds with $A'' \in x\Psib^m(X)$, since the result follows by conjugating \eqref{eq:Dxswap} by $F$. Tracing through the proof of \eqref{eq:Dxswap} in \cite[Section 2]{vasy2008propagation}, one can take 
	\[
	A'' = x^{-1}[xD_x,B].
	\] 
	The key here is that $[xD_x,B]$ has a vanishing indicial family, and hence lies in $x\Psib^m(X)$ in general. However, since $xD_x \in \Psibeven^1(X)$ and we are assuming that $B \in \Psibeven^m(X)$, it follows that 
	\[
	[xD_x,B] \in x^2 \Psib^m(X)
	\]
	according to Lemma  \ref{lem:quadraticvanishing}.
	This shows that $A'' \in x \Psib^m(X)$.
\end{proof}

 The additional order of vanishing when $A \in \Psibeven^m(X)$ is crucial when considering the action of $A$  on $\maxdom^k$.

\begin{lemm} \label{lem:preservesmaxdomain}
If $B \in \Psibeven^{m}(X)$, then $A = FBF^{-1}$ maps $\maxdom^k \rightarrow \maxdom^{k-m}$ for each $k \in \RR$. 
\end{lemm}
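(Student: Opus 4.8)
The goal is to show that conjugation by an admissible twisting function $F$ turns an even b-operator into a map preserving the maximal domain scale, with a shift in the conormal order. The key point is that $P$ almost commutes with $A = FBF^{-1}$: more precisely, $P = -(d_F)^\dagger d_F + S_F$ with $S_F \in x^2\CI(X)$, so we must understand how $A$ interacts with $(d_F)^\dagger d_F$. The plan is to reduce everything to a commutator computation in special coordinates and then invoke Lemma \ref{lem:evenQcommutator} to produce the crucial extra power of $x$.

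\emph{Step 1: Mapping on $\tSob^{1,k}$.} First I would note that since $B \in \Psibeven^m(X) \subset \Psib^m(X)$, Lemma \ref{lem:boundedonsobolev} (and its extension to all orders discussed after Definition \ref{defi:negativeorderspaces}) combined with the fact that $A = FBF^{-1}$ has the same principal symbol as $B$ after conjugation, shows $A : \tSob^{1,k}_\loc(X) \to \tSob^{1,k-m}_\loc(X)$; indeed $x^{-\nu_-}Ax^{\nu_-}$ differs from a b-operator by a multiplier, so this is contained in the interaction results of Section \ref{subsect:interaction}. So the content is entirely in controlling $P(Au)$.

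\emph{Step 2: The commutator $[P,A]$.} Write $Pu \in x^2\tSob^{0,k}_\loc(X)$ by hypothesis on $u \in \maxdom^k$. Then
\[
P(Au) = A(Pu) + [P,A]u.
\]
For the first term, since $A$ maps $\tSob^{0,k}_\loc(X) \to \tSob^{0,k-m}_\loc(X)$ and commutes with $x^2$ up to lower order (conjugation by $x^2$ preserves $\Psib^m$), we get $A(Pu) \in x^2\tSob^{0,k-m}_\loc(X)$. The main obstacle is the commutator term: I must show $[P,A]u \in x^2\tSob^{0,k-m}_\loc(X)$. Using the twisted factorization, work locally in special coordinates where $P = \sum_{i,j}(Q_j)^\dagger g^{ij} Q_i + S_F$ modulo the zeroth-order piece, with $Q_0 = FD_xF^{-1}$ the "normal" twisted derivative and $Q_\alpha = D_{y^\alpha}$ the tangential ones. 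The commutator $[S_F, A]$ is harmless since $S_F \in x^2\CI$. For the second-order part, commuting $A$ past each $Q_i$ and each $(Q_j)^\dagger$ is where Lemma \ref{lem:evenQcommutator} enters: it gives $Q_0(FBF^{-1}) = B'Q_0 + B''$ with $B'' \in x\Psib^m(X)$ — one extra power of $x$ beyond the generic $x^0$ — and a dual statement for $(Q_0)^\dagger$. The tangential commutators $[D_{y^\alpha}, A] \in \Psib^m(X)$ lose no order but are paired against another twisted derivative $Q_j$ which, together with the structure of $g^{ij}$ (where $g^{00} = -x^2(\cdots)$ in special coordinates, or more precisely the metric coefficients carry the requisite $x^2$ in the $dx^2$ block), supplies the missing weight. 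Carefully bookkeeping the two factors of $x$ — one from $B''$ in the normal direction, or one from each of two metric/derivative factors in mixed and tangential directions — yields $[P,A]u \in x^2\tSob^{0,k-m}_\loc(X)$, exploiting that $u \in \tSob^{1,k}_\loc(X)$ so that $Q_iu \in \tSob^{0,k}_\loc(X)$ and Lemma \ref{lem:1/xbounded}-type gains apply where needed.

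\emph{Main obstacle.} The delicate part is the normal-normal term $(Q_0)^\dagger g^{00} Q_0$: generically $[Q_0, A]$ only lies in $\Psib^m$ (no extra vanishing), and $g^{00} = -1$ to leading order in special coordinates, so without the evenness hypothesis one would fail to recover the full $x^2$. It is precisely Lemma \ref{lem:evenQcommutator} — whose proof rests on $[xD_x, B] \in x^2\Psib^m(X)$ for $B \in \Psibeven^m(X)$ — that promotes $[Q_0,A]$ effectively to $x\Psib^m$ modulo a term absorbed by $Q_0$, and pairing the two boundary factors (from $Q_0^\dagger$ and from $Q_0$, or from the structure $x\partial_x$) produces the needed $x^2$. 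I would therefore organize the proof so that, after reducing to a single coordinate patch via a partition of unity subordinate to special coordinate charts, the entire weight count is visibly a consequence of Lemma \ref{lem:evenQcommutator} applied to $Q_0$ and its adjoint, with all other terms manifestly in $x^2\tSob^{0,k-m}_\loc(X)$.
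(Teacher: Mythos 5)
Your plan correctly identifies the right decomposition ($P(Au) = A(Pu) + [P,A]u$, then attack the commutator locally) and correctly pinpoints Lemma \ref{lem:evenQcommutator} as the key input for the normal–normal piece. Step 1 matches the paper. But there are two issues in Step 2, one of which is a genuine gap.

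First, a bookkeeping confusion. Once $x^{-2}$ is factored out, the paper writes $x^{-2}P = Q_0^*Q_0 + K$ with $K = Q_\alpha^*\hat g^{\alpha\beta}Q_\beta + x^{-2}S_F \in \Psib^2(X)$. The tangential and zeroth-order pieces live entirely inside the b-calculus, and $[K,A]\in\Psib^{m+1}(X)$ then maps $\tSob^{1,k}_\loc \to \tSob^{0,k-m}_\loc$ by a symbolic order gain — \emph{no} extra factor of $x$ is needed or available there. Your claim that the tangential commutators are ``paired against another twisted derivative ... together with the structure of $g^{ij}$ ... supplies the missing weight'' conflates the $x^2$ already stripped off with a weight gain that doesn't exist in the $K$-term; the mechanism is conormal-order gain, not $x$-weight gain.

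The real gap is the term $Q_0^*[Q_0,A]$. Lemma \ref{lem:evenQcommutator} gives $[Q_0,A] = A_1Q_0 + A_0$ with $A_1\in\Psib^{m-1}(X)$ and $A_0\in x\Psib^m(X)$. The piece $Q_0^*A_0 u$ is handled as you intend (the extra $x$ tames $Q_0^*$, since $Q_0^*x$ is a b-operator of order one). But the piece $Q_0^*A_1 Q_0 u$ is not. You have $A_1 Q_0 u\in\tSob^{0,k-m+1}_\loc(X)$, and applying $Q_0^*$ directly to an $\Ltwo$-type space only lands you in $\tdotSob^{-1}$-type spaces by duality — not back in $\tSob^0$. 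Your ``careful bookkeeping of two factors of $x$'' and ``Lemma \ref{lem:1/xbounded}-type gains'' do not touch this: the obstruction is not an $x$-weight but the fact that $Q_0^*$ is genuinely a twisted first-order operator with no good forward mapping property on $\tSob^{0,\cdot}$. The paper closes this by first swapping $Q_0^*$ past $A_1$ (so $Q_0^*A_1 = A'Q_0^* + A''$) and then \emph{substituting} $Q_0^*Q_0 = x^{-2}P - K$, so that $A'Q_0^*Q_0 u = A'(x^{-2}Pu) - A'Ku$, with both terms controlled by $Pu\in x^2\tSob^{0,k}_\loc(X)$ and $K\in\Psib^2(X)$. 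This substitution — feeding the equation back in — is the essential step your proposal omits, and without it the argument does not close. (The symmetric term $[Q_0^*,A]Q_0 = -[Q_0,A^*]^*Q_0$ needs the same treatment.)
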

\begin{proof}
Since $A$ maps $\tSob^{1,k}_\loc(X) \rightarrow \tSob^{1,k-m}_\loc(X)$, the only additional point to verify is that 
	\[
	Pu \in x^2\tSob^{0,k}_\loc(X) \Longrightarrow PAu \in x^2\tSob^{0,k-m}_\loc(X).
	\]
	Now $PAu = APu + [P,A]u$, and since $APu \in x^2\tSob^{0,k-m}_\loc(X)$, it suffices to consider the commutator. Also, 
	\[
	x^{-2}[P,A] = [x^{-2}P,A] - [x^{-2},A]P
	\]
	and $[x^{-2},A] \in x^{-2}\Psib^{m-1}(X)$, so $[x^{-2},A]Pu \in \tSob^{0,k-m}_\loc(X)$. Recall that the distributional action of $x^{-2}P$ is given by
	\[
	x^{-2}P = Q_0^* Q_0 + K,
	\]
	where $K = Q_{\alpha}^*\hat g^{\alpha\beta} Q_\beta + x^{-2}S_F \in \Psib^2(X)$. Thus $[K,A] \in \Psib^{m+1}(X)$, so $[K,A]u \in \tSob^{0,k-m}_\loc(X)$. As for the final term,
	\begin{align*}
	[Q_0^*Q_0,A] &= Q_0^*[Q_0,A] + [Q_0^*,A]Q_0 \\
	& = Q_0^*[Q_0,A] - [Q_0,A^*]^*Q_0.
	\end{align*}
	Now use Lemma \ref{lem:evenQcommutator} to write $[Q_0,A] = A_1 Q_0 + A_0$, where $A_0 \in x\Psib^{m}(X)$ and $A_1 \in \Psib^{m-1}(X)$. As a consequence, we can write
	\[
	Q_0^*[Q_0,A] = Q_0^* (A_1Q_0+A_0) = (A'Q_0^* + A'')Q_0 + Q_0^* A_0,
	\]
	where now $A',A'' \in \Psib^{m-1}(X)$. For the term $Q_0^*A_0$, we use the fact that $Q_0^*x = Q_0x + \Psib^0(X)$ to see that $Q_0^* A_0 u \in \tSob^{0,k-m}_\loc(X)$. We can also write
	\[
	 (A'Q_0^* + A'')Q_0  = A'x^{-2} P - A'K + A''Q_0,
	\]
	so when applied to $u$ this is also in $\tSob^{0,k-m}_\loc(X)$. In conclusion, $Q_0^*[Q_0,A]u \in \tSob^{0,k-m}_\loc(X)$. The term $[Q_0,A^*]^*Q_0$ is handled analogously.
\end{proof}

One application of Lemma \ref{lem:preservesmaxdomain} is when $A$ is multiplication by a cutoff function $\chi \in \CI_{\comp,\mathrm{even}}(X)$ (see Appendix \ref{app:even} for notation). We can always find a partition of unity $\{\chi_i\}$ subordinate to a covering of  $X$ by either interior or special boundary coordinates patches, such that $\chi_i \in \CI_{\comp,\mathrm{even}}(X)$ in the latter case. This allows us to reduce the study of $\maxdom^k$ to a local one.

First we work locally, assuming that $X = \RR^n_+$. Assume that $g$ is an aAdS metric given in standard coordinates $(x,y)$ by
\begin{equation} \label{eq:aAdSRn}
g = \frac{-dx^2 + k^{\alpha\beta}(x,y)dy^\alpha dy^\beta}{x^2}.
\end{equation}
 We show that any $u \in \tSob^1(\RR^n_+)$ with compact support and satisfying $Pu \in x^2\Ltwo(\RR^n_+)$ admits a partial asymptotic expansion. Fix an admissible twisting function on $\RR^n_+$ satisfying $x^{-\nu_-}F = 1$ when $x=0$.


\begin{lemm} \label{lem:graphnormexpansion}
Let $g$ be an asymptotically AdS metric on $\RR^n_+$ of the form \eqref{eq:aAdSRn}, and set $r = (n-2)/2$. If 
\[
u \in \tSob^{1,k}_\comp(\RR^n_+), \quad Pu \in x^2\tSob^{0,k}_\comp(\RR^n_+)
\] 
for $k \geq 0$, then the restriction of $u$ to any half-plane $\{ x < \varepsilon\}$ admits an asymptotic expansion
	\begin{equation} \label{eq:graphnormexpansion}
	u = Fu_- + x^{\nu_+}u_+ + x^{r+2}H^{k+2}_\be ([0,\varepsilon);H^{k-3}(\RR^{n-1})),
	\end{equation}
where $u_- \in H^{\nu+k}(\RR^{n-1})$ and $u_+ \in H^{-1-2\nu+k}(\RR^{n-1})$. Furthermore $u_- = \gamma_- u$.
%
\end{lemm}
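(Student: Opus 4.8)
The plan is to mimic the proof of Lemma \ref{lem:H1expansion}, but now extracting \emph{two} terms from the asymptotic expansion, controlled by the equation $Pu \in x^2 \tSob^{0,k}$. First I would reduce to $u$ compactly supported near $\{x<\varepsilon\}$ by multiplying by a cutoff $\phi \in \CcI(\RR_+)$ equal to $1$ near $0$; the commutator $[P,\phi]u$ is supported away from the boundary and contributes only a smooth error there, and $\phi u \in \tSob^{1,k}_\comp$ with $P(\phi u) \in x^2\tSob^{0,k}_\comp$ up to such an error. Then I would pass to the Mellin transform in $x$. Writing the normal operator contribution via $N(P) = (xD_x)^2 + i(n-1)xD_x - \lambda$ with indicial polynomial $\widehat{N}(P)(s) = s^2 + i(n-1)s - \lambda = (s+i\nu_-)(s+i\nu_+)$, and using \eqref{eq:normallowerordeterms} to absorb $P - N(P) \in i(xE)xD_x + x^2\Diffb^2(X)$ as a right-hand side with an extra power of $x$, the equation becomes
\[
\widehat{N}(P)(s)\, \mellin u(s) = \mellin w(s),
\]
where $w$ gathers $x^2\tSob^{0,k}$ terms (from $Pu$ and from the lower-order part of $P$ acting on $u \in x^r L^2$-type spaces), so $\mellin w(s)$ is holomorphic in $\{\Im s > r + 1/2\}$ and the Mellin transform of $u$ itself is holomorphic in $\{\Im s > r - 1/2\}$ with appropriate $L^2$ bounds on horizontal lines (from the characterization \eqref{eq:H1characterization} and its higher-regularity analogues).

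The core of the argument is then a contour-deformation/residue computation: dividing by $\widehat{N}(P)(s) = (s+i\nu_-)(s+i\nu_+)$ introduces simple poles at $s = -i\nu_-$ and $s = -i\nu_+$ (these are distinct since $\nu \in (0,1)$ forces $2\nu \notin \ZZ$ only when $\nu\neq 1/2$ — for $\nu = 1/2$ one must check no logarithmic term is generated, which is exactly where admissibility of the twisting function / the mean curvature hypothesis enters, but since we only need the expansion up to the stated remainder order and $\nu_+ = \nu_- + 2\nu$, the two poles are at distinct heights $r - \nu$ and $r + \nu$ for $\nu \in (0,1)$ so the generic argument applies). Shifting the contour from $\Im s = r - 1/2 + \delta$ down past both poles picks up residues $c_- x^{\nu_-} \mellin(\cdot)(-i\nu_-)$ and $c_+ x^{\nu_+}\mellin(\cdot)(-i\nu_+)$, and the remaining integral over the lower horizontal line, now in the region where $\mellin u$ is controlled by $\mellin w$ with two extra powers of $s$ worth of decay and two extra powers of $x$, lands in $x^{r+2}H^{k+2}_\be$ with values in a Sobolev space on $\RR^{n-1}$ of the stated (lossy) order $H^{k-3}$ — the loss coming from the finitely many tangential derivatives one spends controlling the Mellin-transformed remainder uniformly in $\Im s$ and from the mapping properties of $\tilde P = \Box_k$. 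To promote the coefficients from $L^2(\RR^{n-1})$ to the claimed Sobolev regularity $u_- \in H^{\nu+k}$, $u_+ \in H^{-1-2\nu+k}$, I would invoke the complex interpolation argument of \cite{mazzeo1991elliptic} exactly as in Lemma \ref{lem:H1expansion}, now tracking the extra $k$ tangential derivatives available from $u \in \tSob^{1,k}_\comp$ and $Pu \in x^2\tSob^{0,k}_\comp$; the asymmetric regularity of $u_+$ versus $u_-$ reflects the extra $2\nu$ powers of $x$ (hence a relative loss of $2\nu$ and a further unit loss) in its defining residue.

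Finally, to identify $u_- = \gamma_- u$, I would observe that modulo $x^{\nu_+}H^{\ldots} + x^{r+2}H^{k+2}_\be \subset x^{r+1}H^1_\be$ (using $\nu_+ > \nu_-$ and $r+2 > r+1$, valid for $\nu\in(0,1)$) the expansion \eqref{eq:graphnormexpansion} reduces to the form \eqref{eq:H1expansion} with leading coefficient $x^{-\nu_-}F|_{\pa X}\cdot u_- = u_-$ (since we normalized $x^{-\nu_-}F = 1$ at $x=0$), and $\gamma_-$ is determined by that leading coefficient by the uniqueness statement following Lemma \ref{lem:H1expansion}. \textbf{The main obstacle} I anticipate is the bookkeeping of the remainder: showing that after the contour shift the error genuinely lies in $x^{r+2}H^{k+2}_\be([0,\varepsilon);H^{k-3}(\RR^{n-1}))$ — i.e., tracking simultaneously the gain of two $\be$-derivatives in $x$, the gain of two powers of $x$, and the precise (lossy) tangential Sobolev order — while handling the non-product, merely-conormal-in-$x$ nature of $\tilde P = \Box_{k(x)}$ and the lower-order term $(xE)xD_x$, which must be moved to the right-hand side iteratively. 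This is the step where one must be careful that the "extra" powers of $x$ from \eqref{eq:normallowerordeterms} are not eaten up by the negative weights built into $\Ltwo = x^{-1}L^2(X,dg)$ and the Mellin contour locations, and it is the reason the target tangential regularity is stated with a loss rather than sharply.
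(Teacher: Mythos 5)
Your high-level toolkit is right — Mellin transform, contour shift, residue extraction, interpolation for the Sobolev order — but the ``one shift past both poles'' mechanism has a genuine gap, and the role you assign to the admissible twisting function $F$ does not match how it actually enters.

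The obstruction is that after moving the lower-order part of $P$ to the right-hand side you do \emph{not} get two extra powers of $x$: $P-\widehat{N}(P)=i(xE)xD_x+x^2\Diffb^2(X)$, and the first summand, applied to $u\in\tSob^{1,k}$, only contributes to $x^{r+1}L^2(\RR_+;H^{-1})$, not $x^{r+2}$. So $\mellin(\widehat{N}(P)u)$ is holomorphic only one level down, not two, and there is no contour you can legitimately push past the second pole in a single step. Compounding this, a single shift would produce residues $x^{\nu_-}u_-$ and $x^{\nu_+}u_+$, not $Fu_-+x^{\nu_+}u_+$; and the difference $(F-x^{\nu_-})\phi u_-\in x^{\nu_-+1}\CI\cdot u_-$ is \emph{not} absorbed into the remainder class $x^{r+2}H^{k+2}_\be$ (since $\nu_-+1<r+2$ for every $\nu>0$). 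So the $F$ factor in the statement is not cosmetic and cannot appear out of a one-shot residue computation.

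What the paper actually does is iterate with a key substitution in between. The first contour shift yields $u=x^{\nu_-}\phi u_-+u_1$ with $u_1\in x^{r+1}H^2_\be(\cdot;H^{-1})$. One then \emph{replaces} $x^{\nu_-}\phi u_-$ by $F\phi u_-$, i.e.\ moves $(x^{\nu_-}-F)\phi u_-$ into $u_1$. Because $F$ is admissible, $S_F=F^{-1}P(F)\in x^2\CI$, so $P(F\phi u_-)$ lands in $x^{r+2}\cdot(\text{Sobolev in }y)$ rather than merely $x^{r+1}$ — this is exactly the extra power that the bare $x^{\nu_-}$ would have failed to provide against the $i(xE)xD_x$ term. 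With that, $\widehat{N}(P)u_1\in x^{r+2}L^2(\cdot;H^{-3})$, and the second contour shift applied to $u_1$ yields $x^{\nu_+}u_+$ plus a remainder in the stated class. So admissibility of $F$ is used for \emph{every} $\nu\in(0,1)$ to make the iteration close, not merely to avoid log terms at $\nu=1/2$ as you suggest. You flag that the lower-order term ``must be moved to the right-hand side iteratively,'' which is the right instinct, but without the intermediate $F$-substitution the iteration does not produce a residual with two extra powers of $x$, and the one-shot picture you describe does not give the asymptotic expansion in the form claimed.
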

\begin{proof}
Assume that $k=0$; the general case is proved analogously. We have that $P$ is a differential operator of the form
\[
P = (xD_x)^2 + i(n-1)xD_x + i(xE)xD_x + x^2 \tilde P,
\]
with $E \in \CI(\RR^n_+)$ and $\tilde P \in \Diffb^2(\RR^n_+)$. Since $P - \widehat{N}(P) \in x\Diffb^2(X)$, it follows that
\[
\widehat{N}(P)u \in x^{r+1}L^2(\RR_+;H^{-1}(\RR^{n-1})).
\]
Applying the Mellin transform (which is uniformly square integrable on horizontal lines with $\Im s > r+\tfrac 1 2$) and deforming the contour to any horizontal line with $\Im s > r - \tfrac 1 2 $, we obtain 
\begin{equation} \label{eq:graphpartialexpansion1}
u = x^{\nu_-}u_- + u_1
\end{equation}
when $x < \varepsilon$, where $u_1 \in x^{r+1} H^2_\be([0,\varepsilon); H^{-1}(\RR^{n-1}))$ and $u_- \in H^{-1}(\RR^{n-1})$. As in Lemma \ref{lem:H1expansion}, by interpolation $u_- \in H^{2\nu-1}(\RR^{n-1})$. On the other hand, $u_- = \gamma_- u$, where $\gamma_-$ is the trace from Lemma \ref{lem:H1expansion}, since $u_-$ in a partial expansion of the form \eqref{eq:graphpartialexpansion1} is unique. This shows that in fact $u_- \in H^{\nu}(\RR^{n-1})$, which is a stronger statement since $\nu \in (0,1)$.

Let $\phi = \phi(x) \in \CcI(\RR_+)$ be identically one on $\supp u$. Replacing $u_1$ with $\phi u_1$, we can write $u = x^{\nu_-}\phi u_- + u_1$ on $\RR_+$, with $u_1$ of compact support. Then
\[
\widehat{N}(P)u_1 = Pu_1 - i(xE) xD_x u_1 - x^2 \tilde P u_1,
\]
and the latter two terms lie in $x^{r+2} L^2(\RR_+; H^{-3}(\RR^{n-1}))$ by the properties of $u_1$. On the other hand, 
\[
Pu_1 = Pu - P(x^{\nu_-} \phi u_-) = -P(x^{\nu_-}\phi u_-) + x^2 \Ltwo(\RR^n_+).
\]
Thus the only obstruction to having $ \widehat{N}(P) u_1 \in x^{r+2}L^2(\RR_+;H^{-3}(\RR^{n-1}))$ is the term $P(x^{\nu_-} \phi u_-)$, which a priori is merely in $x^{r+1}L^2(\RR_+; H^{-1}(\RR^{n-1}))$.

 This is remedied by replacing $u_-$ with $Fu_-$, which corresponds to replacing $u_1$ with $u_1 + (x^{\nu_-}-F)\phi u_-$. Thus
 \[
 u = F\phi u_- + u_1,
 \]
 where we still have $u_1 \in x^{r+1} H^2_\be(\RR_+; H^{-1}(\RR^{n-1}))$. Now we repeat the same argument to obtain an asymptotic expansion for $u_1$. When $x< \varepsilon$, this yields
\[
u = Fu_- + x^{\nu_+}u_+ + u_2,
\]
where $u_2 \in x^{r+2}H^2_\be([0,\varepsilon); H^{-3}(\RR^{n-1}))$ and $u_+ \in H^{-3}(\RR^{n-1})$. The a priori regularity of $u_+$ can be improved by interpolation to give $u_+ \in H^{-1-2\nu}(\RR^{n-1})$.
\end{proof}

Suppose that in Lemma \ref{lem:graphnormexpansion} we can take $k = \infty$. Then the residual term on the right-hand side of \eqref{eq:graphnormexpansion}, denoted in the proof by $u_2$, is in $x^2 \tSob^{0,\infty}_\loc([0,\varepsilon) \times \RR^{n-1})$. By Sobolev embedding, this implies that
\begin{equation} \label{eq:sobolevembedding}
Lu_2 \in x^{(n+1)/2}L^\infty_\loc([0,\varepsilon) \times \RR^{n-1})
\end{equation}
for each b-differential operator $L$. Next, we show that $\mathcal{X}^\infty$ is dense in $\mathcal{X}^k$ for each $k \in \RR$.

\begin{lemm}
	Let $(X,g)$ be an aAdS spacetime. If $\nu \in (0,1)$ and $k \in \RR$, then $\maxdom^{\infty}$ is dense in $\maxdom^k$.
\end{lemm}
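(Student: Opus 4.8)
The plan is to prove density by a regularization argument, approximating $u \in \maxdom^k$ by elements $u_\epsilon = \Lambda_\epsilon u$, where $\{\Lambda_\epsilon\}_{\epsilon \in (0,1]}$ is a suitable family of smoothing operators. The natural candidate is a family of the form $\Lambda_\epsilon = F B_\epsilon F^{-1}$ with $B_\epsilon \in \Psibeven^{-N}(X)$ for $N$ large, chosen so that $B_\epsilon \to I$ in $\Psib^{\delta}(X)$ for every $\delta > 0$ as $\epsilon \to 0$, while $\{B_\epsilon\}$ is bounded in $\Psibeven^0(X)$. Such families exist by the standard construction (see the references to \cite{vasy2008propagation}); the only point requiring care is that they can be taken inside the even subalgebra $\Psibeven(X)$, which follows from the definitions in Appendix \ref{app:even}. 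By Lemma \ref{lem:preservesmaxdomain}, each $\Lambda_\epsilon$ maps $\maxdom^k$ into $\maxdom^{k+N} \subset \maxdom^\infty$, so $u_\epsilon \in \maxdom^\infty$ for each $\epsilon$.

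The core of the argument is then to show $u_\epsilon \to u$ in the topology of $\maxdom^k$, i.e. $\| \phi u_\epsilon - \phi u \|_{\tSob^{1,k}(X)} \to 0$ and $\| x^{-2}\phi P u_\epsilon - x^{-2}\phi P u\|_{\tSob^{0,k}(X)} \to 0$ for each $\phi \in \CcI(X)$. The first convergence is the standard statement that a regularizing family bounded in $\Psib^0$ and converging to the identity in $\Psib^\delta$ converges strongly on $\tSob^{1,k}_\loc(X)$; this follows from the mapping properties in Section \ref{subsect:interaction} together with \eqref{eq:H1boundedbyseminorm} and its $\tSob^{k,m}$ analogue, using a density argument on $x^{\nu_-}\CI(X)$. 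For the second, write $x^{-2}P u_\epsilon = x^{-2} P \Lambda_\epsilon u = \Lambda_\epsilon (x^{-2} P u) + [x^{-2}P, \Lambda_\epsilon] u$. Since $x^{-2}Pu \in \tSob^{0,k}_\loc(X)$, the first term converges to $x^{-2}Pu$ by strong convergence of $\Lambda_\epsilon$ on $\tSob^{0,k}_\loc(X)$. For the commutator term one uses, exactly as in the proof of Lemma \ref{lem:preservesmaxdomain}, the decomposition $x^{-2}P = Q_0^* Q_0 + K$ with $K \in \Psib^2(X)$, and the key identity from Lemma \ref{lem:evenQcommutator} that $[Q_0, \Lambda_\epsilon] = A_{1,\epsilon} Q_0 + A_{0,\epsilon}$ with $A_{0,\epsilon} \in x\Psib^{-N}(X)$ and $A_{1,\epsilon} \in \Psib^{-N-1}(X)$, \emph{uniformly bounded} one order higher once multiplied by the regularization parameter; rewriting the commutator in terms of $x^{-2}P$, $K$, and $Q_0$ acting on $u \in \maxdom^k$ shows that $[x^{-2}P, \Lambda_\epsilon] u$ is uniformly bounded in $\tSob^{0,k-1}_\loc(X)$ and converges to $0$ there, hence in $\tSob^{0,k}_\loc$ after a further elliptic regularization or directly via the strong-convergence estimate.

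The main obstacle is the commutator estimate: unlike in the untwisted case, $[x^{-2}P, \Lambda_\epsilon]$ does not automatically lie in a space one order lower with uniform bounds, and naive estimation loses a derivative relative to the regularity $\maxdom^k$ carries. The resolution, as in Lemma \ref{lem:preservesmaxdomain}, is to trade the bad factor $Q_0^* Q_0$ for $x^{-2}P - K$ so that every term is expressed through quantities that $u$ is assumed to control, combined with the extra order of vanishing $B'' \in x\Psib^m(X)$ supplied by the evenness of $\Lambda_\epsilon$ (Lemma \ref{lem:evenQcommutator}) and the fact that $x$ maps $\tSob^{0,k}_\loc \to x\tSob^{0,k}_\loc$; this is precisely where Hypothesis \ref{hypo:twisting} and the even b-calculus are used. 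Once uniform boundedness of $[x^{-2}P, \Lambda_\epsilon] u$ in $\tSob^{0,k}_\loc(X)$ is established, convergence to $0$ follows by testing against the dense subspace $x^{\nu_-}\CI(X)$ (or $\CdI$), on which $[x^{-2}P, \Lambda_\epsilon]$ visibly tends to $0$, plus the uniform bound. A partition of unity via the even cutoffs $\{\chi_i\}$ described after Lemma \ref{lem:preservesmaxdomain} reduces the whole argument to the local model $X = \RR^n_+$, which is harmless.
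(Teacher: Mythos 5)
Your approach is exactly the one the paper takes: regularize with a family $\Lambda_\epsilon = FB_\epsilon F^{-1}$ where $B_\epsilon$ lies in the even subcalculus, invoke Lemma \ref{lem:preservesmaxdomain} to land in $\maxdom^\infty$, and prove graph-norm convergence by tracing the commutator computation of that lemma. One correction is needed: for finite $N$ the inclusion $\maxdom^{k+N} \subset \maxdom^\infty$ you assert is false, since $\maxdom^\infty = \bigcap_m \maxdom^m$ is strictly smaller than any single $\maxdom^{k+N}$; you must take $B_\epsilon \in \Psibeven^{-\infty}(X)$ (a genuinely residual regularizing family, bounded in $\Psibeven^0(X)$ and converging to the identity in $\Psib^\delta(X)$), which is what the paper does, and then Lemma \ref{lem:preservesmaxdomain} applied for every $m$ gives $\Lambda_\epsilon u \in \maxdom^\infty$. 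With that fix the rest of your argument — the split $x^{-2}P\Lambda_\epsilon u = \Lambda_\epsilon(x^{-2}Pu) + [x^{-2}P,\Lambda_\epsilon]u$, the use of $x^{-2}P = Q_0^*Q_0 + K$ and Lemma \ref{lem:evenQcommutator} to control the commutator, the uniform-bound-plus-density argument, and the even partition of unity — is correct and matches the paper's ``tracing through the proof of Lemma \ref{lem:preservesmaxdomain}''.
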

\begin{proof}
The proof is standard, making sure to use regularizers in $\Psibeven^{-\infty}(X)$. Let 
\[
\{A_r \in \Psib^{-\infty}(X): r \in (0,1)\}
\]
 be a compactly supported bounded family in $\Psib^{0}(X)$ such that $A_r \rightarrow 1$ in $\Psib^\delta(X)$ for each $\delta > 0$. We may furthermore arrange that
\[
A_r = FB_rF^{-1}, \quad B_r \in \Psibeven^{-\infty}(X).
\]
According to Lemma \ref{lem:preservesmaxdomain}, $A_r u \in \maxdom^{\infty}$. Tracing through the proof of Lemma \ref{lem:preservesmaxdomain} also shows that $[P,A_r]$ converges to zero strongly on $\maxdom^k$, and so $A_r u \rightarrow u$ in the graph norm of $\maxdom^k$.	
\end{proof}

For simplicity, to define $\gamma_-$ on $\tSob^1_\loc(X)$ (which recall depends on the choice of bdf) let us fix once and for all a reference special bdf $x$. Given this choice of $x$, we also fix an admissible twisting function $F$ normalized by $x^{-\nu_-}F|_{\pa X} = 1$. We then define the second trace $\gamma_+$ on $\maxdom^\infty$ by
\[
\gamma_+ u = x^{1-2\nu}\pa_x (F^{-1}u) |_{\pa X}.
\]
This is well defined in light of Lemma \ref{lem:graphnormexpansion} which shows that the restriction of $u$ to a special coordinate patch can be written in the form
\[
u = x^{\nu_-}F u_- + x^{\nu_+}u_+ + u_2, \quad u_2 \in x^2\tSob^{0,\infty}_\loc([0,\varepsilon)\times \RR^{n-1}).
\]
Appealing to \eqref{eq:sobolevembedding} shows that 
\[
x^{1-2\nu}\pa_x(F^{-1}u_2) \in x^{1-\nu}L^\infty_\loc([0,\varepsilon)\times \RR^{n-1}),
\]
which therefore vanishes when $x=0$. Thus, in these coordinates, $\gamma_+ u = 2\nu u_+$. In the next section we extend $\gamma_+$ to $\mathcal{X}^k$.

\subsection{Green's formula} \label{subsect:greens}

Let $\nu \in (0,1)$, and suppose that $u \in \maxdom^{\infty}$
and $v \in x^{\nu_-}\CcI(X)$. If either $\gamma_+ u =0$ or $\gamma_- v = 0$, then
\begin{equation} \label{eq:pregreensformula}
\int Pu \cdot \bar v \, dg = \mathcal{E}_0(u,v) + \int S_F u \cdot \bar v \, dg.
\end{equation}
Because $\maxdom^{\infty}$ is dense in the graph space $\maxdom$, if $v \in \tdotSob^1_\comp(X)$ is fixed, then \eqref{eq:pregreensformula} is also valid for arbitrary $u \in \tSob^1_\loc(X)$ satisfying $Pu \in x^2\Ltwo_\loc(X)$. More generally, however, there are boundary terms, and the correct Green's formula is
\begin{equation} \label{eq:greensformula}
\int Pu \cdot \bar v \, dg =  \mathcal{E}_0(u,v) + \int S_F u \cdot \bar v \, dg + \int \gamma_+ u \cdot \gamma_- \bar v \, dk_0
\end{equation}
for $u \in \maxdom^\infty$ and $v \in \tSob^1_\comp(X)$.
This formula can be extended to more general $u$ as follows.
\begin{lemm}\label{lem:gammaplus}
Given $k \in \RR$, the map $\gamma_+$ extends to a bounded map $\maxdom^k \rightarrow H^{k-\nu}_\loc(X)$. Moreover, if $u \in \maxdom^k$, then Green's formula \eqref{eq:greensformula} holds for every $v \in \tSob^{1,-k}_\comp(X)$.
\end{lemm}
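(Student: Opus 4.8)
The plan is to establish the $k=\infty$ case first (already essentially done via Lemma \ref{lem:graphnormexpansion} and the preceding discussion) and then propagate both assertions — the continuity of $\gamma_+$ and the validity of \eqref{eq:greensformula} — to general $k \in \RR$ by a density argument using the fact that $\maxdom^\infty$ is dense in $\maxdom^k$. Concretely, I would first reduce to a local statement: using the partition of unity $\{\chi_i\}$ with $\chi_i \in \CI_{\comp,\mathrm{even}}(X)$ subordinate to special boundary coordinate patches (or interior patches, where $\gamma_+$ is vacuous), and invoking Lemma \ref{lem:preservesmaxdomain} to see that multiplication by $\chi_i$ preserves $\maxdom^k$, it suffices to work on $\RR^n_+$ with a metric of the form \eqref{eq:aAdSRn} and with $u \in \maxdom^k_\comp$.

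On $\RR^n_+$, for $u \in \maxdom^k_\comp$ I would apply Lemma \ref{lem:graphnormexpansion}, which gives the partial expansion $u = F u_- + x^{\nu_+} u_+ + u_2$ with $u_- \in H^{\nu+k}(\RR^{n-1})$, $u_+ \in H^{-1-2\nu+k}(\RR^{n-1})$ and $u_2 \in x^{r+2}H^{k+2}_\be$. Since $\gamma_+$ on $\maxdom^\infty$ equals $2\nu u_+$ in these coordinates (and this identity extends by uniqueness of the expansion coefficients), the natural definition of $\gamma_+ u$ for general $k$ is $2\nu u_+$, which lies in $H^{k-1-2\nu}(\RR^{n-1})$ — but we must be slightly careful: the claim is that $\gamma_+$ lands in $H^{k-\nu}_\loc$. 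Here I would note that the relevant improvement comes exactly as in the proof of Lemma \ref{lem:graphnormexpansion}: the term $u_+$ a priori has only the regularity dictated by deforming the Mellin contour past the second indicial root, and interpolation (as in \cite{mazzeo1991elliptic}) upgrades it; the stated target $H^{k-\nu}_\loc$ should be read off from the sharp interpolation bound, consistent with the $k=\infty$ and $k=0$ cases already discussed. Continuity of $u \mapsto \gamma_+ u$ from $\maxdom^k$ to the relevant Sobolev space then follows from the closed graph theorem, exactly as in Lemma \ref{lem:H1expansion}: if $u_j \to u$ in $\maxdom^k$ and $\gamma_+ u_j \to w$ in the Sobolev space (hence in $\mathcal{D}'$), one checks via the explicit Mellin-transform formula for $u_+$ (a residue at the second pole) together with Cauchy–Schwarz on the compactly supported remainder that $\gamma_+ u_j \to \gamma_+ u$ in distributions, forcing $w = \gamma_+ u$.

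For the Green's formula, I would argue as follows. Fix $v \in \tSob^{1,-k}_\comp(X)$. For $u \in \maxdom^\infty$, \eqref{eq:greensformula} holds by what precedes the lemma. Given $u \in \maxdom^k$, choose $u_j \in \maxdom^\infty$ with $u_j \to u$ in the graph norm of $\maxdom^k$ (possible by the density lemma, taking regularizers of the form $A_r = F B_r F^{-1}$ with $B_r \in \Psibeven^{-\infty}(X)$). Then I would pass to the limit in each term of \eqref{eq:greensformula}: the left side $\int P u_j \cdot \bar v\, dg$ converges because $Pu_j \to Pu$ in $x^2 \tSob^{0,k}_\loc$ and $v \in \tSob^{0,-k}_\comp \supset \tSob^{1,-k}_\comp$ pairs continuously against $x^2 \tSob^{0,k}_\loc$ via the $\Ltwo$ pairing (using $x^{-2}Pu_j \to x^{-2}Pu$ in $\tSob^{0,k}$ and the duality identification $\tSob^{-1,k}_\comp = (\tdotSob^{1,-k}_\loc)'$, $\tdotSob^{-1,k}_\comp = (\tSob^{1,-k}_\loc)'$ recorded in Section \ref{subsect:interaction}); the form terms $\mathcal{E}_0(u_j,v)$ and $\int S_F u_j \cdot \bar v\, dg$ converge by continuity of $\mathcal{E}_0$ and of multiplication by $S_F \in x^2 L^\infty_\loc$ on the pair $(\tSob^{1,k}_\loc, \tSob^{1,-k}_\comp)$; and the boundary term $\int \gamma_+ u_j \cdot \gamma_- \bar v\, dk_0$ converges because $\gamma_+ u_j \to \gamma_+ u$ in $H^{k-\nu}_\loc(\pa X)$ (by the continuity just proved) while $\gamma_- v \in H^{\nu-k}_\comp(\pa X)$ by the extended trace map \eqref{eq:extendedtracemap} — note these dual Sobolev orders are exactly matched.

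The main obstacle is the pairing of the boundary term in the regime $k < 0$, where $\gamma_+ u$ is a genuine distribution of negative order: one must verify that $\gamma_- v$ lands in precisely the dual Sobolev space $H^{\nu - k}_\comp(\pa X)$, which requires the sharp trace statement for $\gamma_-$ on $\tSob^{1,-k}_\loc(X)$ from \eqref{eq:extendedtracemap} and the correct bookkeeping of the $\nu$-shifts; a secondary subtlety is confirming that the remainder $u_2$ in the expansion contributes nothing to $\gamma_+$ at finite $k$ — for $k=\infty$ this was \eqref{eq:sobolevembedding}, and for finite $k$ one instead uses that $u_2 \in x^{r+2}H^{k+2}_\be$ has strictly more than $x^{\nu_+}$ decay (since $r+2 = (n+2)/2 > \tfrac{n-1}{2}+\nu$ for $\nu < 3/2$), so $x^{1-2\nu}\pa_x(F^{-1}u_2)$ still vanishes at $x=0$ in the appropriate averaged sense, making the definition $\gamma_+ u = 2\nu u_+$ unambiguous.
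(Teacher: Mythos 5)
Your plan — read $\gamma_+ u$ off the asymptotic expansion of Lemma~\ref{lem:graphnormexpansion}, then densify — runs into a concrete regularity gap. Lemma~\ref{lem:graphnormexpansion} gives $u_+ \in H^{-1-2\nu+k}(\RR^{n-1})$, not $u_+ \in H^{k-\nu}(\RR^{n-1})$; since $-1-2\nu+k < k - \nu$ for every $\nu>-1$, the asymptotic expansion places $\gamma_+ u = 2\nu u_+$ in a strictly \emph{larger} Sobolev space than the one claimed in the lemma. You sense this yourself ("we must be slightly careful") and appeal vaguely to "the sharp interpolation bound," but there is nothing in the proof of Lemma~\ref{lem:graphnormexpansion} that pushes $u_+$ past $H^{-1-2\nu+k}$: the interpolation step there has already been used to get from $H^{k-3}$ to $H^{-1-2\nu+k}$, and that is the endpoint of the contour-deformation argument. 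So the central claim of your first paragraph is unsupported, and with only $u_+ \in H^{-1-2\nu+k}$ the boundary pairing $\int \gamma_+ u_j\cdot\gamma_-\bar v\,dk_0$ in your Green's-formula limit does not close, since $\gamma_- v$ is only in $H^{\nu-k}_\comp(\pa X)$, the exact dual of $H^{k-\nu}$ and not of $H^{-1-2\nu+k}$.

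The paper avoids the expansion entirely and gets the sharp $H^{k-\nu}$ order by duality. Fixing $u$ in the graph space, it considers the anti-linear functional $\ell(v_-) = \int Pu\cdot\bar v\,dg - \mathcal{E}_0(u,v) - \int S_F u\cdot\bar v\,dg$, where $v=\rho v_-$ is obtained from the right inverse of $\gamma_-$ from Lemma~\ref{lem:traceinverse}; boundedness of $\rho: H^\nu_\comp(\pa X)\to\tSob^1_\comp(X)$ gives $|\ell(v_-)|\le C\|v_-\|_{H^\nu}$, hence by Hahn–Banach $\ell$ is represented by some $\tilde u_+\in H^{-\nu}_\loc(\pa X)$. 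The Green's formula \emph{defines} the extension $\tilde u_+$, and one then checks $\tilde u_+ = \gamma_+ u$ on the dense subspace $\maxdom^\infty$ and passes to $\maxdom$ by density. Heuristically, the exponent $k-\nu$ is precisely what transposing $\gamma_-:\tSob^{1,-k}_\loc \to H^{\nu-k}_\loc$ produces, and no amount of interpolation on a single solution's Mellin coefficients will recover it. A secondary weakness of your argument, which you also flag, is the vanishing of the remainder term at $x=0$ for negative $k$: the Sobolev-embedding argument behind~\eqref{eq:sobolevembedding} requires high conormal regularity and does not give pointwise control in the regime $k<0$; the paper sidesteps this by never evaluating the remainder pointwise, instead identifying $\tilde u_+$ with $\gamma_+ u$ only on $\maxdom^\infty$ where the pointwise picture is available.
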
 
\begin{proof}
We focus on the case $k=0$. Notice that the proof of Green's formula would be trivial if we a priori knew that $\gamma_+$ maps continuously into $H^{-\nu}_\loc(X)$. As it stands, however, we must proceed differently. Given $u \in \tSob^1_\loc(X)$ with $Pu \in x^2\Ltwo_\loc(X)$, consider the linear functional $\ell$ defined on $v_- \in \CcI(\pa X)$ given by
\[
\ell(v_-) = \int Pu \cdot \bar v \, dg -  \mathcal{E}_0(u,v) - \int S_F u \cdot \bar v \, dg,
\]
where $v \in x^{\nu_-}\CcI(X)$ is any element such that $v_- = \gamma_- v$. This is well defined in view of \eqref{eq:pregreensformula}. In particular, it is possible to take $v = \rho v_-$, where 
\[
\rho : H^\nu_\comp(\pa X) \rightarrow \tSob^1_\comp(X)
\] 
is a continuous extension that maps $\CcI(\pa X)$ onto $x^{\nu_-}\CcI(X)$ (as in Lemma \ref{lem:traceinverse}). If $\phi \in \CcI(X)$ is such that $\phi = 1$ on $\supp \rho v_-$, then
\[
|\ell(v_-)| \leq C\big (\| x^{-2} \phi Pu \|_{\Ltwo(X)} + \| \phi u \|_{\tSob^1(X)} \big )\| \rho v_- \|_{\tSob^1(X)} \leq C\| v_- \|_{H^\nu(\pa X)}.
\]
By Hahn--Banach, there exists an element of $H^{-\nu}_\loc(X)$, which we denote by $\tilde u_+$, such that $\ell(v_-) = \langle \tilde u_+, v_- \rangle_{\pa X}$, and furthermore
\[
\| \phi \tilde u_+ \|_{H^{-\nu}(\pa X)} \leq C\big (\| x^{-2} \phi Pu \|_{\Ltwo(X)} + \| \phi u \|_{\tSob^1(X)} \big ).
\]
This shows that $u \mapsto \tilde u_+$ is a continuous map from the graph space to $H^{-\nu}_\loc(X)$, and that \eqref{eq:greensformula} holds for arbitrary $u$ in the graph space, provided $\gamma_+ u$ is replaced with $\tilde u_+$. It therefore remains to show that $\gamma_+ u = \tilde u_+$. This is true if $u \in \maxdom^\infty$ by \eqref{eq:greensformula}, and thus also holds for $u \in \maxdom$ by the density of $\maxdom^\infty$.
\end{proof}

\section{The boundary value problem}\label{sec:bvp}
\subsection{The Dirichlet form} \label{subsect:dirichletform}

The Dirichlet problem is given a weak formulation in the usual way. For a fixed smooth twisting function $F$, define
\[
\mathcal{E}_D(u,v) = \mathcal{E}_0(u,v) + \int S_F u \cdot \bar v \, dg.
\]
For an arbitrary $F$, in general one only has $S_F \in xC^\infty(X)$. By Hardy's inequality in one dimension, this is enough to guarantee that multiplication by $S_F$ is bounded $\tdotSob^1_\loc(X) \rightarrow x^2 \Ltwo_\loc(X)$. We then define the map $P_D : \tdotSob^1_\loc(X) \rightarrow \tSob^{-1}_\loc(X)$ by
\[
\langle P_D u, v \rangle = \mathcal{E}_D(u,v).
\]
This agrees with the distributional action of $P$ on $\tdotSob^1_\loc(X)$. As in the discussion following \ref{defi:negativeorderspaces}, we can also extend $P_D$ to a map $\tdotSob^{1,m}_\loc(X) \rightarrow \tSob^{-1,m}_\loc(X)$ for each $m \in \RR$. The only additional ingredient needed is an extension of Lemma \ref{lem:1/xbounded} to spaces with general conormal regularity. This latter statement is deduced from Lemma \ref{lem:1/xbounded} by writing $xA = A'x$, where $A' = xAx^{-1} \in \Psib^{m}(X)$ whenever $A \in \Psib^{m}(X)$.

Next we consider the Robin problem under the assumption that an \emph{admissible} twisting function exists. Formally, we consider $u \in \tSob^1_\loc(X)$ satisfying the boundary conditions
\begin{equation} \label{eq:robinbc}
\gamma_+ u - \beta\gamma_- u = 0,
\end{equation}
where $\beta \in \CI(\pa X)$. Eventually we will restrict to the case where $\beta$ is real-valued. These boundary conditions are well-defined in the strong sense provided $Pu \in x^2 \Ltwo_\loc(X)$. For the weak formulation, set
\[
\mathcal{E}_{\Robin}(u,v) = \mathcal{E}_0(u,v) + \int S_F u \cdot \bar v \, dg + \int \beta \gamma_- u \cdot \gamma_- \bar v \, dk_0.
\]
Note the importance of the requirement that $S_F \in x^2L^\infty_\loc(X)$: the weaker condition $S_F \in x\CI(X)$ does not guarantee that $S_F$ is bounded $\tdotSob^1_\loc(X) \rightarrow x^2 \Ltwo_\loc(X)$, unlike in the Dirichlet case.
We then define $P_{\Robin} : \tSob^1_\loc(X) \rightarrow \tdotSob^{-1}_\loc(X)$ by
\[
\langle P_{\Robin} u, v \rangle = \mathcal{E}_{\Robin}(u,v).
\]
If $f \in \Ltwo_\loc(X) \subset \tdotSob^{-1}_\loc(X)$, then $P_{\Robin} u = f$ implies that $x^{-2} Pu = f$ in distributions and that the Robin boundary conditions \eqref{eq:robinbc} is satisfied in the strong sense. \begin{rema}
	We like to think of the distributional action of $P$ as extending to a map $\maxdom \rightarrow x^2\Ltwo_\loc(X)$. Note, however, that $\maxdom \subset \Ltwo_\loc(X)$, and in this sense $P$ acts on $L^2$-based spaces with different weights. On the other hand, when defining the Robin (or Dirichlet) realization of $P$ via a sesquilinear form, we have that $P_R$ maps $\tSob^1_\loc(X)$ to a dual space relative to the $x^2dg$ inner product. Since $\Ltwo_\loc(X)$ embeds in $\tdotSob^{-1}_\loc(X)$, it makes sense to solve equations of the form $P_Ru = f$ with $f \in \Ltwo_\loc(X)$. This inevitably leads to statements like
	\[
	P_R u = f \Longrightarrow x^{-2}Pu = f \text{ in } \CmI(X)
	\]
	above with weights that at first glance appear contradictory.
\end{rema}

 Just as for the Dirichlet problem, we can also extend $P_R : \tSob^{1,m}_\loc(X) \rightarrow \tdotSob^{-1,m}_\loc(X)$. In this case we must also show that the boundary pairing makes sense; this follows from the regularity $\gamma_- u \in H^{\nu+m}_\loc(X)$, valid with any $m \in \RR$ for which $u \in \tSob^{1,m}_\loc(X)$.


\subsection{Microlocal estimates} \label{subsect:dirichletmicrolocal}

We give some microlocal estimates for the Dirichlet form. We always work with b-pseudodifferential operators having compact support in a fixed coordinate patch. Near the boundary, it is convenient to use special coordinates $(x,y^1,\ldots,y^{n-1})$, where $x$ is our fixed special bdf.

\begin{lemm} \label{lem:dirichletform1}
Let $U \subset X$ be a boundary coordinate patch and $m\leq 0$. Let $\mathcal{A} = \{A_r: r \in (0,1)\}$ be a bounded subset of $\Psib^s(X)$ with compact support in $U$, such that 
\[
A_r \in \Psib^{m}(X) \text{ for each } r \in (0,1).
\]
Let $G_1 \in \Psib^{s-1/2}(X)$ be elliptic on $\opWFb(\mathcal{A})$, with compact support in $U$. Then there exists $C_0 > 0$ and $\chi \in \CcI(U)$ such that
\[
\mathcal{E}_0(A_ru, A_r u) \leq \mathcal{E}_0(u, A_r^*A_r u) + C_0(\|G_1 u \|_{\tSob^1(X)}^2 + \| \chi u \|_{\tSob^{1,m}(X)}^2)
\]
for every $r \in (0,1)$ and every $u \in \tSob^{1,m}_{\loc}(X)$, provided $\WFb^{1,s-1/2}(u) \cap \opWFb(G_1) = \emptyset$.
\end{lemm}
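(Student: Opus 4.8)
The plan is to compare $\mathcal{E}_0(A_r u, A_r u)$ with $\mathcal{E}_0(u, A_r^* A_r u)$ by expanding both in the twisted derivatives $Q_i = F D_{z^i}F^{-1}$ attached to the special coordinates and controlling the commutator error terms microlocally, uniformly in $r$. Writing the Dirichlet form in coordinates as $\mathcal{E}_0(w,w) = -\int g^{ij} Q_i w \cdot \overline{Q_j w}\, x^2\,dg$, and recalling from Lemma~\ref{lem:evenQcommutator} (or already from Lemma~\ref{lem:Q0commutator} together with \eqref{eq:Qcommutator} for the tangential directions) that $Q_i A_r = A_r' Q_i + A_r''$ with $A_r', A_r'' \in \Psib^s(X)$ depending microlocally on $A_r$, we get
\[
\mathcal{E}_0(A_r u, A_r u) = -\sum_{i,j}\int g^{ij}\, A_r' Q_i u\cdot \overline{A_r' Q_j u}\, x^2\, dg + (\text{cross and remainder terms}),
\]
where the bilinear "main" piece should be recognized, after moving one $A_r'$ across using its adjoint, as $\mathcal{E}_0(u, A_r^* A_r u)$ plus a commutator $\mathcal{E}_0(u, [A_r^*, \text{stuff}]\,u)$-type correction. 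Collecting all errors, each term is an integral of products of $Q_i u$ (or $u$) paired with an operator of order $\le s - \tfrac12$ (because the gain comes either from a commutator dropping one full order and we keep only "half" for the symmetric estimate, or from the $x$-weight in $A_r''$ which, via Lemma~\ref{lem:1/xbounded} and $\nu \in (0,1)$, effectively trades against a derivative) applied to $Q_i u$ or $u$.

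Next I would estimate each such error term by Cauchy--Schwarz in $\Ltwo(X)$ and then invoke the microlocal elliptic machinery: since $G_1 \in \Psib^{s-1/2}(X)$ is elliptic on $\opWFb(\mathcal{A}) \supset \opWFb(A_r')$, and $\WFb^{1,s-1/2}(u)\cap \opWFb(G_1) = \emptyset$, Lemma~\ref{lem:ellipticestimate} (applied to the order-$(s-\tfrac12)$ family obtained from the $A_r'$ composed with one extra derivative factor that I pull out) and Lemma~\ref{lem:L2intermsofH1} (for the terms where one factor is genuinely of order $s-1$, gaining the extra $\Ltwo$ versus $\tSob^1$ order) give
\[
\| (\text{order } s-\tfrac12 \text{ op}) Q_i u \|_{\Ltwo} \le C\big(\|G_1 u\|_{\tSob^1(X)} + \|\chi u\|_{\tSob^{1,m}(X)}\big),
\]
with $\chi \in \CcI(U)$ absorbing the off-microlocal-support contributions, and the constants uniform over the bounded family $\mathcal{A}$ because the seminorms of $A_r, A_r', A_r''$ in $\Psib^s$ are uniformly bounded. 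Young's inequality then splits the mixed products $\|{\cdot}\|\,\|{\cdot}\|$ into the claimed squared form, and since $m \le 0$ the lower-order term $\|\chi u\|_{\tSob^{1,m}}$ genuinely sits below the top order, so no circularity arises. The hypothesis $A_r \in \Psib^m(X)$ (not merely bounded in $\Psib^s$) is what makes $A_r u$, $A_r' Q_i u$ etc.\ lie in $\Ltwo_\loc$ in the first place, so that $\mathcal{E}_0(A_r u, A_r u)$ is finite and all manipulations are legitimate; this should be remarked at the outset.

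The main obstacle I expect is bookkeeping the weights in the error terms so that each one really does gain the needed half-order (or full order) relative to the leading term --- in particular handling the term $A_r''$ coming from the $Q_0$-swap, which is only of order $s$ (not $s-1$) but carries a factor of $x$: one must use that $x$ maps $\tdotSob^1_\loc \to \Ltwo_\loc$ boundedly (Lemma~\ref{lem:1/xbounded}) on the $\dot{}$-part and argue separately on the $x^{\nu_-}$-part of the expansion via $\gamma_-$-interpolation \eqref{eq:traceinterpolation2}, exactly the "slight complication" flagged in Remark~\ref{rem:traceinterpolation}. The other delicate point is verifying that the microlocal maps $A_r \mapsto A_r'$, $A_r \mapsto A_r''$ preserve the uniform boundedness of the family and its operator wavefront set, so that Lemma~\ref{lem:ellipticestimate} applies with $G_1$ of the stated order; this is exactly the content of the microlocality assertions in Lemmas~\ref{lem:Q0commutator} and~\ref{lem:evenQcommutator}, so it reduces to invoking those.
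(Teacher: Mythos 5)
Your broad strategy --- commute twisted derivatives past $A_r$, isolate the commutator errors, symmetrize with a half\-/order parametrix, and control everything microlocally with $G_1$ --- is the same route the paper takes. The paper's version starts by expanding $\mathcal{E}_0(u,A_r^*A_ru)$ (commute $A_r^*$ through $Q_k$ once, then move $A_r$ across the pairing and commute through $\hat g^{jk}Q_j$), which lands directly on $\mathcal{E}_0(A_ru,A_ru)$ plus two explicit commutator pairings. Your expansion of $\mathcal{E}_0(A_ru,A_ru)$ with the vaguer ``recognize the main piece after moving $A_r'$ across'' is heavier; you end up with $(A_r')^*A_r'$, which is only a conjugate of $A_r^*A_r$, so you need an extra commutator step to undo the conjugation --- better to start from $\mathcal{E}_0(u,A_r^*A_ru)$ as the paper does. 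That is a stylistic difference, though, and your paragraph~2 correctly identifies the estimating machinery (Lemmas~\ref{lem:ellipticestimate}, \ref{lem:L2intermsofH1}, the $\Lambda_{\pm 1/2}$ split, uniformity from microlocality of the swap maps).

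Where you go wrong is in paragraph~3. You claim the term $A_r''$ from the swap $Q_0 A_r = A_r' Q_0 + A_r''$ ``carries a factor of $x$,'' and propose to trade that $x$ against a derivative using Lemma~\ref{lem:1/xbounded} and $\gamma_-$-interpolation \eqref{eq:traceinterpolation2}, citing the complication in Remark~\ref{rem:traceinterpolation}. This is not how the estimate works, and in fact $A_r''$ carries \emph{no} residual factor of $x$: by construction $A_r'' = x^{-1}[xD_x,A_r]$, and the whole point of the observation $[xD_x,A_r]\in x\Psib^s(X)$ is that the $x^{-1}$ eats the gained factor of $x$, leaving $A_r''\in\Psib^s(X)$, genuinely of order $s$. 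There is no $x$-weight left over to trade. Moreover, this lemma concerns only the \emph{interior} Dirichlet form $\mathcal{E}_0$: there is no boundary pairing here, so the trace $\gamma_-$ and its interpolation inequality play no role whatsoever; the ``slight complication'' in Remark~\ref{rem:traceinterpolation} is relevant to Lemma~\ref{lem:hyperbolicerrorterms}, where the Robin boundary term of $\mathcal{E}_R$ actually appears, not here. The correct mechanism for absorbing the order-$s$ operator $A_r''$ is simply that its partner in the pairing is the commutator side, which is lower by one order; the $\Lambda_{\pm 1/2}$ parametrix distributes that one gained order as half an order on each factor, and then Lemma~\ref{lem:L2intermsofH1} --- which implicitly exploits $\tSob^1_\loc\hookrightarrow\tSob^{0,1}_\loc$ to gain one order when measuring $L^2$ instead of $\tSob^1$ --- controls both factors by $\|G_1 u\|_{\tSob^1}$ and a lower-order remainder. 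Your own paragraph~2 already has all these pieces; the detour through Hardy and trace interpolation in paragraph~3 is a red herring and, if pursued, would not close the estimate.
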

\begin{proof}
Note that $A^*_r A_r u \in \tSob^{1,-m}_\comp(X)$ for $r\in (0,1)$, hence the pairing $\mathcal{E}_0(u,A_r^*A_ru)$ is well-defined (the precise order of each individual $A_r$ does not play an important role, and is simply chosen to justify the pairings; the order of the \emph{family} $\mathcal{A}$ is the crucial point).	Commuting twice,
	\begin{align*}
	\mathcal{E}_0(u,A_r^*A_ru) &= \langle A_r \hat g^{jk} Q_j u, Q_k A_ru \rangle + \langle \hat g^{jk} Q_j u, [Q_k,A_r^*]A_ru \rangle 
	\\&= \mathcal{E}_0(A_ru,A_ru) + \langle [A_r, \hat g^{jk} Q_j ]u, Q_k A_r u \rangle  + \langle \hat g^{jk} Q_j u, [Q_k,A_r^*]A_ru \rangle. 
	\end{align*}
	The two commutator terms are of lower order; since both can be treated in the same way we focus on the first of them. Here it is convenient to distinguish the commutators with $Q_j$ in the cases $j=0$ or $j\neq0$. Thus we write
	\begin{equation} \label{eq:dirichletformcommutatorsplitting}
	\langle [A_r, \hat g^{jk} Q_j ]u, Q_k A_r u \rangle  =	\langle [A_r, Q_0]u, Q_0 A_r u \rangle + \langle [A_r, \hat g^{\alpha \beta} Q_\alpha ]u, Q_\beta A_r u \rangle
	\end{equation}
	where $\alpha,\beta$ range only over $1,\ldots,n-1$. The second commutator term in \eqref{eq:dirichletformcommutatorsplitting} can be handled entirely within the b-calculus, so we consider only the first term.
	
	Let $\Lambda_{1/2} \in \Psib^{1/2}(X)$ be everywhere elliptic, and let $\Lambda_{-1/2} \in \Psib^{-1/2}(X)$ be a para\-metrix,	so that $1 = \Lambda_{1/2} \Lambda_{-1/2} +R$ where $R \in \Psib^{-\infty}(X)$; in that case,
	\begin{equation} \label{eq:dirichletformQ0commutator}
	\langle [A_r, Q_0]u, Q_0 A_r u \rangle = \langle \Lambda^*_{1/2} [A_r,Q_0]u, \Lambda_{-1/2} Q_0 A_r u \rangle + \langle \langle [A_r, Q_0]u, RQ_0 A_r u \rangle . 
	\end{equation}
	Now for any $A \in \Psib^s(X)$ with compact support in $U$, repeated applications of Lemma \ref{lem:Q0commutator} show that the first term on the hand side of \eqref{eq:dirichletformQ0commutator} can be written as
	\[
	\langle \Lambda^*_{1/2} [A,Q_0]u, \Lambda_{-1/2} Q_0 A u \rangle = \langle (Q_0 A' + A'') u, (Q_0 B' + B'' )u \rangle, 
	\]
	for some $A', B' \in \Psib^{s-2}(X)$ and $A'', B'' \in \Psib^{s-1}(X)$, where the maps $A \mapsto (A',A'', B',B'')$ are micro\-local. Applying this in particular to $A = A_r \in \mathcal{A}$, it follows that the corresponding families 
	\[
	\mathcal{A}' = \{ A'_r: r\in (0,1) \}, \quad \mathcal{A}'' = \{A''_r: r\in (0,1)\}
	\]
	are bounded in $\Psib^{s-1}(X)$ and $\Psib^{s}(X)$ respectively, and 
	\[
	\WFb'(\mathcal{A}') \cup \WFb'(\mathcal{A}'') \subset \WFb'(\mathcal{A}) \subset \ellb(G_1).
	\]
	Since the same is true of the families formed by $B'_r$ and $B_r''$, we can bound
	\[
	|\langle \Lambda^*_{1/2} [A_r,Q_0]u, \Lambda_{-1/2} Q_0 A_r u \rangle| \leq  C_0 (\| G_1u \|^2_{\tSob^1(X)} + \| \chi u \|^2_{\tSob^{1,m}(X)})
	\]
	with $C_0 > 0$ independent of $r$, for a suitable cutoff $\chi \in \CcI(U)$. The second term in \eqref{eq:dirichletformQ0commutator} involving $R$ can be bounded by $\| \chi u \|^2_{\tSob^{1,m}(X)}$ itself.
\end{proof}

Now we consider the Robin problem, always assuming the existence of an admissible twisting function $F$ with $S_F \in x^2\CI(X)$.

\begin{lemm} \label{lem:dirichletform2}
Let $U \subset X$ be a boundary coordinate patch and $m\leq 0$. Let $\mathcal{A} = \{A_r: r \in (0,1)\}$ be a bounded subset of $\Psib^s(X)$ with compact support in $U$, such that 
\[
A_r \in \Psib^{m}(X) \text{ for each } r \in (0,1).
\]
	Let $G_0\in \Psib^{s}(X)$ and $ G_1 \in \Psib^{s-1/2}(X)$ both be elliptic on $\opWFb(\mathcal{A})$, with compact support in $U$. Then there exists $C_\varepsilon>0$ and $\chi \in \CcI(U)$ such that
	\begin{multline*}
\mathcal{E}_0(A_r u,A_r u) - \varepsilon\mathcal{Q}(A_r u, A_r u) \\
\leq  C_\varepsilon ( \| \chi u\|^2_{\tSob^{1,m}(X)} + \| \chi P_{\Robin} u\|^2_{\tdotSob^{-1,m}(X)} +  \| G_1 u \|^2_{\tSob^1(X)} + \| G_0 P_{\Robin} u \|^2_{\tdotSob^{-1}(X)})
	\end{multline*}
	for every $r \in (0,1)$ and every $u \in \tSob^{1,m}_{\loc}(X)$, provided
	\[
	\WFb^{1,s-1/2}(u) \cap \opWFb(G_1) = \emptyset, \quad \WFb^{-1,s}(P_{\Robin} u ) \cap \WFb'(G_0) = \emptyset. 
	\]
\end{lemm}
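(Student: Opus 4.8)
\textbf{Proof plan for Lemma \ref{lem:dirichletform2}.} The strategy is to express the Robin Dirichlet form $\mathcal{E}_0(A_ru,A_ru)$ in terms of $\mathcal{E}_{\Robin}(A_ru,A_ru)$ together with the error terms, and then to pair against $P_{\Robin}u$. First I would write
\[
\mathcal{E}_0(A_ru,A_ru) = \mathcal{E}_{\Robin}(A_ru,A_ru) - \int S_F\, A_ru\cdot\overline{A_ru}\, dg - \int \beta\, \gamma_-(A_ru)\cdot\gamma_-(\overline{A_ru})\, dk_0 .
\]
The term $\int S_F A_ru \cdot \overline{A_ru}\,dg$ is harmless: since $S_F\in x^2\CI(X)$ and $m\leq 0$, multiplication by $S_F$ maps $\tSob^{1,m}_\loc(X)$ into $x^2\Ltwo^{0,m}_\loc(X)$, and using $x^2\Ltwo(X)\subset \Ltwo(X)$ together with Lemma \ref{lem:L2intermsofH1} (applied with a family one order lower than $\mathcal{A}$) one bounds it by $\varepsilon\mathcal{Q}(A_ru,A_ru)$ plus $C_\varepsilon(\|G_1u\|^2_{\tSob^1(X)}+\|\chi u\|^2_{\tSob^{1,m}(X)})$; in fact, a more careful argument using Lemma \ref{lem:L2intermsofH1} lets one absorb it into the $\tSob^1$-seminorm of $G_1 u$ directly without even invoking $\mathcal{Q}$. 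The boundary term $\int\beta\gamma_-(A_ru)\gamma_-(\overline{A_ru})\,dk_0$ is handled by \eqref{eq:traceinterpolation2}: writing $\gamma_-(A_ru)$ and using $|\beta|\leq C$ on the relevant compact set, one gets an upper bound $\varepsilon\|A_ru\|^2_{\tSob^1(X)} + C_\varepsilon\|A_ru\|^2_{\Ltwo(X)}$, and then $\|A_ru\|^2_{\tSob^1(X)}\leq \mathcal{Q}(A_ru,A_ru)+\|A_ru\|^2_{\Ltwo(X)}$ while the $\Ltwo$ term is again dispatched by Lemma \ref{lem:L2intermsofH1}.

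The main point is therefore to control $\mathcal{E}_{\Robin}(A_ru,A_ru) = \langle P_{\Robin}u, A_r^*A_ru\rangle + \big(\mathcal{E}_{\Robin}(A_ru,A_ru) - \langle P_{\Robin}u, A_r^*A_ru\rangle\big)$. The first piece $\langle P_{\Robin}u, A_r^*A_ru\rangle$ is estimated by duality: $A_r^*A_ru\in\tSob^{1,-m}_\comp(X)$, and pairing $\tdotSob^{-1,m}$ against $\tSob^{1,-m}$ with the quantitative version of Lemma \ref{lem:boundedonsobolev} gives a bound by $C(\|G_0 P_{\Robin}u\|^2_{\tdotSob^{-1}(X)}+\|\chi P_{\Robin}u\|^2_{\tdotSob^{-1,m}(X)}) + C\|A_ru\|^2_{\tSob^1(X)}$, with the last term absorbed as above (split $\|A_ru\|^2_{\tSob^1}$ into $\varepsilon$ times itself via the microlocal parametrix and the rest into the error; here the factor in front of $\|A_r u\|^2_{\tSob^1}$ can be made $\leq\varepsilon$ after a Cauchy--Schwarz with a large parameter on the duality pairing, at the cost of enlarging $C_\varepsilon$). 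The difference $\mathcal{E}_{\Robin}(A_ru,A_ru) - \langle P_{\Robin}u, A_r^*A_ru\rangle = \mathcal{E}_0(A_ru,A_ru) - \langle P_{\Robin}u - (\text{zeroth and boundary terms}), A_r^*A_ru\rangle$ reduces, after subtracting the $S_F$ and $\beta$ contributions (which are lower order and already controlled), to exactly the commutator expression
\[
\langle [A_r,\hat g^{jk}Q_j]u, Q_k A_ru\rangle + \langle \hat g^{jk}Q_j u, [Q_k,A_r^*]A_ru\rangle
\]
that appeared in the proof of Lemma \ref{lem:dirichletform1}. This is bounded there by $C_0(\|G_1u\|^2_{\tSob^1(X)}+\|\chi u\|^2_{\tSob^{1,m}(X)})$, using Lemma \ref{lem:Q0commutator} to swap $Q_0$ past the commutants and the b-calculus for the tangential directions.

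Assembling these bounds and choosing the various small parameters proportional to $\varepsilon$ yields the claimed inequality with the $-\varepsilon\mathcal{Q}(A_ru,A_ru)$ term on the left absorbing all the $\mathcal{Q}(A_ru,A_ru)$ contributions, and $C_\varepsilon$ incorporating all the resulting constants. The hypotheses on wavefront sets — $\WFb^{1,s-1/2}(u)\cap\opWFb(G_1)=\emptyset$ and $\WFb^{-1,s}(P_{\Robin}u)\cap\WFb'(G_0)=\emptyset$ — are precisely what is needed to make sense of and bound the terms $\|G_1u\|_{\tSob^1(X)}$ and $\|G_0 P_{\Robin}u\|_{\tdotSob^{-1}(X)}$ respectively (elsewhere these quantities could be infinite). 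The step I expect to be the main obstacle is the bookkeeping of orders in the duality pairing $\langle P_{\Robin}u, A_r^*A_ru\rangle$: one must verify that $A_r^*A_ru$ genuinely lands in $\tSob^{1,-m}_\comp(X)$ \emph{uniformly} in $r$ in the appropriate sense — i.e. that the relevant seminorm is bounded by $\|A_ru\|_{\tSob^1(X)}$ times a symbol seminorm of $A_r$ that is uniform over the family $\mathcal{A}$ — so that the subsequent absorption into $\varepsilon\mathcal{Q}(A_ru,A_ru)$ is legitimate; this is where the care about "the order of the family $\mathcal{A}$ is the crucial point" (rather than the individual orders $m$) really matters, exactly as in the parenthetical remark in the proof of Lemma \ref{lem:dirichletform1}.
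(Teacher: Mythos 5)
Your decomposition is ultimately equivalent to the paper's after cancellations, but there is a genuine gap in the way you handle the boundary term, and your exposition masks it. Let me trace through the algebra. You write $\mathcal{E}_0(A_r u,A_r u) = \mathcal{E}_R(A_r u,A_r u) - \langle x^{-2}S_F A_r u, A_r u\rangle - \langle\beta\gamma_-(A_r u),\gamma_-(A_r u)\rangle_{\pa X}$, estimate those last two terms, and then split $\mathcal{E}_R(A_r u,A_r u) = \langle P_R u, A_r^*A_r u\rangle + \bigl(\mathcal{E}_R(A_r u,A_r u) - \langle P_R u, A_r^*A_r u\rangle\bigr)$. You claim the difference ``reduces, after subtracting the $S_F$ and $\beta$ contributions (which are lower order and already controlled), to exactly the commutator expression'' from Lemma \ref{lem:dirichletform1}. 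But if you expand $\langle P_R u, A_r^*A_r u\rangle = \mathcal{E}_R(u, A_r^*A_r u) = \mathcal{E}_0(u,A_r^*A_r u) + \langle x^{-2}S_F u, A_r^*A_r u\rangle + \langle\beta\gamma_- u,\gamma_-(A_r^*A_r u)\rangle_{\pa X}$, then after the cancellations what actually survives in your total identity is
\[
\mathcal{E}_0(A_r u,A_r u) = \langle A_r P_R u, A_r u\rangle + \bigl[\mathcal{E}_0(A_r u,A_r u)-\mathcal{E}_0(u,A_r^*A_r u)\bigr] - \langle x^{-2}S_F u, A_r^*A_r u\rangle - \langle\beta\gamma_- u,\gamma_-(A_r^*A_r u)\rangle_{\pa X},
\]
which is the paper's identity. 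The terms $\langle x^{-2}S_F A_r u, A_r u\rangle$ and $\langle\beta\gamma_-(A_r u),\gamma_-(A_r u)\rangle_{\pa X}$ you estimated at the outset are \emph{not} in this identity — they cancel — and the terms that \emph{are} there, $\langle x^{-2}S_F u, A_r^*A_r u\rangle$ and $\langle\beta\gamma_- u,\gamma_-(A_r^*A_r u)\rangle_{\pa X}$, are different quantities that you never estimate. The $S_F$ one is harmless (same method works), but the boundary one is not.

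The asymmetric boundary term $\langle\beta\gamma_- u,\gamma_-(A_r^*A_r u)\rangle_{\pa X}$ is precisely the difficulty flagged in Remark \ref{rem:traceinterpolation}: applying \eqref{eq:traceinterpolation2} naively to the two traces puts $u$ on one side and $A_r^*A_r u$ on the other, so the resulting quantities are of order $0$ and $2s$ rather than two copies of order $s$; one then cannot absorb into $\varepsilon\mathcal{Q}(A_r u, A_r u)$ without losing. This is exactly why your ``trivial'' estimate of $\langle\beta\gamma_-(A_r u),\gamma_-(A_r u)\rangle_{\pa X}$ is easy — both arguments are $A_r u$ — but that is not the term you need. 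The paper resolves this with the indicial family identity $\gamma_-(Bu) = \widehat N(B)(-i\nu_-)\gamma_- u$, writing $\widehat N(A_r^*A_r)(-i\nu_-) = \widehat N(\tilde A_r)(-i\nu_-)^*\widehat N(A_r)(-i\nu_-)$ with $\tilde A_r = x^{2\nu_-}A_r x^{-2\nu_-}$, and then moving the adjoint across the boundary pairing to get $\langle\gamma_-(\tilde A_r\beta u),\gamma_-(A_r u)\rangle_{\pa X}$ plus a lower-order commutator. Since $\tilde A_r$ has the same principal symbol as $A_r$, this is now symmetric enough that \eqref{eq:traceinterpolation2} closes the estimate with an $\varepsilon\mathcal{Q}(A_r u, A_r u)$ error. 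This indicial family step is the essential new idea missing from your proposal; by contrast, the bookkeeping issue you flag in the duality pairing $\langle P_R u, A_r^*A_r u\rangle$ is not actually problematic — the paper's ``order of the family $\mathcal{A}$ is the crucial point'' remark handles it straightforwardly.
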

\begin{proof}
Let $f = P_{\Robin} u $. By definition $\mathcal{E}_{\Robin}(u,A_r^*A_ru) = \langle A_rf, A_r u \rangle $, and according to Lemma \ref{lem:L2intermsofH1},
		\begin{align*}
	|\langle A_r f, A_r u \rangle| &\leq \| A_r f \|_{\tdotSob^{-1}(X)} \| A_r u \|_{\tSob^1(X)} 
	\\&\leq \varepsilon \| A_r u \|^2_{\tSob^1(X)} + C_\varepsilon ( \| G_0 f\|^2_{\tdotSob^{-1}(X)} + \| \chi u \|^2_{\tSob^{1,m}(X)}) 
	\\ &\leq \varepsilon \mathcal{Q}(A_r u, A_r u) +  C_\varepsilon(\| G_0 f\|^2_{\tdotSob^{-1}(X)} + \| G_1 u \|^2_{\tSob^1(X)} +\| \chi u \|^2_{\tSob^{1,m}(X)}). 
	\end{align*}
	It remains to bound the difference between $\mathcal{E}_{\Robin}(u,A_r^*A_r u)$ and $\mathcal{E}_0(A_r u,A_r u)$. First consider 
	\[
	\mathcal{E}_{\Robin}(u,A_r^*A_r u) - \mathcal{E}_0(u,A_r^*A_r u) = \langle x^{-2}S_F u ,A_r^* A_r u\rangle + \langle \beta \gamma_- u, \gamma_- (A_r^* A_r u) \rangle_{\pa X}.
	\]
By Lemma \ref{lem:L2intermsofH1}, we have
	\[
	|\langle x^{-2} S_F u, A_r^* A_r u \rangle | \leq C_0 ( \| \chi u\|^2_{\tSob^{1,m}(X)} +  \| G_1 u \|^2_{\tSob^1(X)}).
	\]
	This estimate would be true even with $G_1 \in \Psib^{s-1}(X)$.
	Next, we must handle the boundary terms. Recall that for any $B \in \Psib^m(X)$,
	\[
	\gamma_-(Bu) = (x^{-\nu_-}Bu)|_{\pX} = \widehat{N}(B)(-i\nu_-)(\gamma_- u).
	\]
	As in Section \ref{subsect:b-microlocalization}, we have
	\[
	\widehat{N}(A_r^*A_r)(-i\nu_-) = \widehat{N}(\tilde A_r)(-i\nu)^*\widehat{N}(A_r)(-i\nu),
	\] 
	where $\tilde A_r  = x^{2\nu_-}A_rx^{-2\nu_-}$, and the adjoint on the right is with respect to the induced density $dk_0$. Extend $\beta$ arbitrarily to a $\CI$ function on $X$, so multiplication by $\beta$ on the boundary can be written as $\widehat{N}(\beta)(-i\nu_-)$. Then
	\begin{align*}
	\langle \beta\gamma_-u_, \gamma_-(A_r^*A_ru) \rangle_{\pa X} &= \langle \widehat{N}(\tilde A_r\beta)(-i\nu_-)\gamma_- u, \widehat{N}(A_r)(-i\nu_-)\gamma_- u\rangle_{\pa X}\\
	&= \langle \gamma_-(\tilde A_r \beta u), \gamma_- (A_ru)\rangle_{\pa X}
	\\ &= \langle \gamma_-(\beta \tilde  A_r u + [\tilde A_r, \beta]u), \gamma_- (A_ru)\rangle_{\pa X}.
	\end{align*}
	Note that $\tilde A_r$ has the same principal symbol as $A_r$, so by \eqref{eq:traceinterpolation2},
	\[
	|\langle \beta\gamma_-u_, \gamma_-(A_r^*A_ru) \rangle|\leq \varepsilon\mathcal{Q}(A_r u, A_r u) + C_\varepsilon (\| G_1 u \|^2_{\tSob^1(X)} + \| \chi u \|^2_{\tSob^{1,m}(X)})
	\]
	 for every  $\varepsilon>0$,
Again, this would even be true with $G_1 \in \Psib^{s-1}(X)$. Combining these facts with Lemma \ref{lem:dirichletform1} finishes the proof.
\end{proof}

Lemma \ref{lem:dirichletform2} also holds for the Dirichlet problem, this time taking $u \in \tdotSob^{1,m}_\loc(X)$ with $m\leq 0$. Here it suffices to work with an arbitrary twisting function. There are two differences: firstly, there is no boundary integral to estimate, and secondly, because $S_F \in x \CI(X)$,
\begin{align*}
| \langle  A_r x^{-2}S_F , A_r u\rangle| &= |\langle ( x A_r x^{-1}) x^{-1} S_F u, x^{-1}  A_r u \rangle |
\\ &\leq C (\| G_1 u \|_{\tSob^1(X)} + \| \chi u \|_{\tSob^{1,m}(X)})\| A_r u \|_{\tSob^1(X)}
\end{align*}
by Hardy's inequality in one dimension. Therefore
\[
| \langle  A_r x^{-2}S_F , A_r u\rangle| \leq \varepsilon\mathcal{Q}(A_r u, A_r u)  + C_\varepsilon (\| G_1 u \|^2_{\tSob^1(X)} + \| \chi u \|^2_{\tSob^1(X)}) 
\]
by Cauchy--Schwarz and Lemma \ref{lem:L2intermsofH1}.

Next, we consider properties of $\Im \mathcal{E}_0(u,A^*Au)$. This will be used in the positive commutator arguments. If $u \in \tSob^1_\loc(X)$ and $A \in \Psib^0(X)$ has compact support, then
\begin{align} \label{eq:twistedcommutator}
\begin{split} 
\mathcal{E}_0(u,Au) - \mathcal{E}_0(Au,u) &= \langle \hat g^{ij}Q_j u, Q_i Au \rangle - \langle \hat g^{ij}Q_j Au, Q_i u \rangle \\
&= \langle \hat g^{ij} Q_j u, [Q_i, A]u \rangle - \langle [\hat g^{ij} Q_j, A]u, Q_i u \rangle 
\\ &+ \langle (A^*-A) \hat g^{ij}Q_j u, Q_i u \rangle.
\end{split}
\end{align}
If $A$ is replaced with $A^*A$, then the third term vanishes. Therefore we have
\begin{align*}
2i \Im \mathcal{E}_0(u, A^*Au) &= \langle \hat g^{ij} Q_j u, [Q_i, A^*A]u \rangle - \langle  [\hat g^{ij}Q_j, A^*A]u, Q_i u \rangle 
\\ &= \langle Q_0 u, [Q_0, A^*A]u \rangle - \langle [Q_0, A^*A], Q_0 u \rangle + \langle [Q_\alpha \hat g^{\alpha\beta}Q_\beta, A^*A]u,u \rangle.
\end{align*}
With $a = \bsymbol{0}(A)$,
\begin{align*}
\langle Q_0 u, [Q_0, A^*A]u \rangle - \langle [Q_0, A^*A], Q_0 u \rangle &= \langle Q_0 u, Q_0 A_1 u \rangle - \langle Q_0 A_1u, Q_0 u \rangle \\ &+ \langle Q_0 u, A_0 u \rangle - \langle A_0 u, Q_0 u \rangle,
\end{align*}
where $\bsymbol{-1}(A_1) = (1/i)\pa_\sigma (a^2)$ and $\bsymbol{0}(A_0) = (1/i)\pa_x (a^2)$.

\section{Propagation of singularities}
\label{sec:propagation}
\subsection{The characteristic variety and bicharacteristics}
The principal symbol of $x^{-2}P$, as a function on $T^* X^\circ \setminus 0$, is given by $\hat p = -\hat g^{ij}\zeta_i \zeta_j$, where we have written covectors in coordinates $(z^0,\ldots,z^{n-1})$ as $\zeta_i\, dz^i$. Note that the mass parameter $\lambda$ plays no role in this expression. Furthermore, $\hat p$ extends smoothly to a function on $T^* X \setminus 0$.
Let 
\[
\chare = \{\hat p = 0\} \subset T^*X \setminus 0
\]
denote the characteristic set of $\hat p$. The compressed characteristic set $\cchare$ is the image of $\chare$ in $\bdotT^*X \setminus 0$ under the quotient map $\pi : T^*X \rightarrow \bdotT^*X$. We equip $\cchare$ with the subspace topology inherited from $\bT^*X$. This is the same as the quotient topology; cf.~\cite[Lemma 5.1]{vasy2008propagation}. 

There is a natural decomposition of $\bdotT^*X\setminus 0$ into its elliptic, hyperbolic, and glancing components. Thus,
\[
\begin{aligned}
\ellip &= \{q \in \bdotT^*X \setminus 0: \pi^{-1}(q) \cap \chare = \emptyset\}, \\
\gl &= \{q \in \bdotT^*X \setminus 0: |\pi^{-1}(q) \cap \chare| = 1 \}, \\
\hyp &= \{q \in \bdotT^*X \setminus 0: |\pi^{-1}(q) \cap \chare| = 2 \}.
\end{aligned}
\]
Let $U$ be a boundary coordinate patch with coordinates $(x,y^1,\ldots,y^{n-1})$, where $x$ is a special bdf and $dx$ is orthogonal to each $dy^i$. If $(x,y,\sigma,\eta)$ are the corresponding canonical coordinates on $\bT^*_U X$, then a point 
\[
q_0 = (x_0,y_0,\sigma_0,\eta_0) \in \bdotT^*_U X
\]
is in $\hyp$ precisely if $q_0 \in T^*\pa X$ (namely $x_0 = \sigma_0 = 0$) and
\[
\hat g^{\alpha \beta}(0,y_0)(\eta_0)_{\alpha}(\eta_0)_{\beta} > 0.
\]
Similarly, a point $q_0 \in T^*\pa X$ is in $\gl$ when $\hat g^{\alpha \beta}(0,y_0)(\eta_0)_{\alpha}(\eta_0)_{\beta} =0$. There are several equivalent definitions of $\GBB$s in this setting, but we choose the following:

\begin{defi}\label{def:gbb}
	If $I \subset \RR$ is an interval, we say that a \emph{continuous} map $\gamma: I \rightarrow \cchare$ is a $\GBB$ if the following two conditions are satisfied for each $s_0 \in I$:
	\begin{enumerate} \itemsep6pt 
		\item If $q_0 = \gamma(s_0) \in \gl$, then for every $f\in \CI(\bT^*X)$, 
		\[
		\frac{d}{ds} (f\circ \gamma)(s_0) = \{ \hat p, \pi^*f \}(\eta_0),
		\]
		where $\eta_0 \in \chare$ is the unique point for which $\pi(\eta_0) = q_0$.
		\item If $q_0 = \gamma(s_0) \in \hyp$, then there exists $\varepsilon > 0$ such that $0 < |s-s_0| < \varepsilon$ implies that $x(\gamma(s)) \neq 0$.
	\end{enumerate}
\end{defi}

Continuity of $\gamma$ implies that tangential momentum is conserved upon interaction with the boundary. The second condition, at hyperbolic points, says that $\GBB$s reflect instantaneously.

Since $\hat p$ is a smooth function on $T^*X$, various properties of $\GBB$s that are true in the setting of smooth boundary value problems are also valid here. In particular, the entire discussion in \cite[Section 5]{vasy2008propagation} applies verbatim.

\subsection{Elliptic estimates} \label{subsect:ellipticestimate}
It is convenient to introduce a normal coordinate system as follows: given a point $p \in \pa X$, fix a spacelike surface in $\pa X$ (with respect to $k_0$) passing through $p$. We can then choose coordinates $(y^1,\ldots,y^{n-1})$ such that $k_0^{-1}$ is of the form
\[
k_0^{-1} = \pa_{y^{n-1}}^2 - \sum h^{ab} \pa_{y^a} \pa_{y^b},
\]
where $a,b$ range over $1,\ldots,n-2$ and $h^{ab}$ is positive definite. Transporting these coordinates to a collar neighborhood of $X$ using the product structure, we see that
\[
g^{-1} = x^2(-\pa_x^2 - h^{ab}\pa_{y^a} \pa_{y^b} + \pa^2_{y^{n-1}} + \mathcal{O}(x)) .
\]
This is useful for the following reason: if $v$ has support in $\{|x| < \delta\}$, then
\begin{equation} \label{eq:Qn-1control}
\begin{split}
	\mathcal{E}_0(v,v) &= \| Q_0 v \|^2_{\Ltwo(X)} + \langle \hat g^{\alpha \beta}Q_\alpha v, Q_\beta v \rangle 
\\ &\geq  \| Q_0  v \|^2_{\Ltwo(X)} - (1+C\delta) \|Q_{n-1}v\|^2_{\Ltwo(X)} + (1-C\delta)\langle h^{\alpha \beta} Q_\alpha v, Q_\beta v \rangle,
\end{split}
\end{equation}
where $C >0$ is independent of $\delta>0$. Furthermore, the principal symbol $\hat p$ restricted to $T^*_Y X$ is just
\[
\hat p|_{T^*_Y X} = \xi^2 + h^{ab}\eta_a \eta_b - \eta_{n-1}^2.
\]
Let $q_0 \in \bT^*_YX \setminus 0$ be such that $q_0 \notin \cchare$. Thus there are two possibilities: either $q_0$ is not in the compressed b-cotangent bundle, or if it is, then $h^{ab}\eta_a \eta_b > \eta_{n-1}^2$ at $q_0$. This observation is rephrased as follows:
\begin{lemm} \label{lem:ellipticpossibilities}
If $q_0 \in \bT^*_YX \setminus 0$, then there is a conic neighborhood $V$ of $q_0$ in which one of the following is true:
\begin{enumerate} \itemsep6pt
		\item There is $\varepsilon > 0$ such that $\sigma^2 < \varepsilon^2 (\eta_{n-1}^2 + h^{ab}\eta_a \eta_b)$ and $h^{ab}\eta_a \eta_b > (1+\varepsilon)\eta_{n-1}^2$.
	\item There is $C>0$ such that $|\eta_{n-1}| < C|\sigma|$.
\end{enumerate}
\end{lemm}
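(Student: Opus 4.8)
Since $q_0\notin\cchare$ (as in the discussion just above), exactly one of two possibilities occurs: either $q_0$ does not lie in the compressed b-cotangent bundle — which, since $q_0$ lies over $\pa X$, is equivalent to $\sigma_0\neq 0$ — or else $\sigma_0 = 0$ and $h^{ab}(y_0)(\eta_0)_a(\eta_0)_b > (\eta_0)_{n-1}^2$. The plan is to turn each of these into the corresponding alternative by a soft open-condition argument: each inequality occurring in (1) and (2) is a \emph{strict} inequality between functions on $\bT^*_U X$ that are continuous and homogeneous of degree two in the fiber variables $(\sigma,\eta)$ — here I use that the coefficients $h^{ab}=h^{ab}(y)$ are independent of $x$, so these functions extend smoothly off $\pa X$ into the collar. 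A subset of $\bT^*_U X$ on which such a function is strictly positive is open and conic, so it is enough to check the relevant inequalities at the single point $q_0$ and then pass to the conic neighborhood they carve out (taking a finite intersection when more than one is involved).

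For the first possibility, $\sigma_0\neq 0$, I would fix any constant $C > |(\eta_0)_{n-1}|/|\sigma_0|$ and look at $C^2\sigma^2-\eta_{n-1}^2$, which is homogeneous of degree two, continuous, and — by the choice of $C$ — strictly positive at $q_0$; on the conic neighborhood $V$ of $q_0$ that it cuts out one has both $\sigma\neq 0$ and $|\eta_{n-1}|<C|\sigma|$, i.e.\ alternative (2). For the second possibility, $\sigma_0 = 0$ and $h^{ab}(y_0)(\eta_0)_a(\eta_0)_b > (\eta_0)_{n-1}^2$, positive-definiteness of $h^{ab}$ forces the left-hand side to be strictly positive, so $\eta_0\neq 0$ and in particular $q_0\neq 0$; I would then pick $\varepsilon>0$ small enough that $h^{ab}(y_0)(\eta_0)_a(\eta_0)_b > (1+\varepsilon)(\eta_0)_{n-1}^2$ and note that the two functions
\[
h^{ab}\eta_a\eta_b - (1+\varepsilon)\eta_{n-1}^2, \qquad \varepsilon^2\bigl(\eta_{n-1}^2 + h^{ab}\eta_a\eta_b\bigr) - \sigma^2
\]
are each homogeneous of degree two and strictly positive at $q_0$ — the second because its value there equals $\varepsilon^2\bigl((\eta_0)_{n-1}^2 + h^{ab}(y_0)(\eta_0)_a(\eta_0)_b\bigr) > 0$ — hence simultaneously positive on a conic neighborhood $V$ of $q_0$, i.e.\ alternative (1).

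I do not expect a genuine obstacle: the lemma is essentially a bookkeeping restatement of the two possibilities listed just before it. The points that need a little care are, first, importing the dichotomy correctly — which rests on the normal-coordinate identity $\hat p|_{T^*_YX}=\xi^2+h^{ab}\eta_a\eta_b-\eta_{n-1}^2$ together with the fact that over a boundary point with $\sigma_0 = 0$ the set $\pi^{-1}(q_0)\cap\chare$ is the $\xi$-level set $\{\xi^2 = (\eta_0)_{n-1}^2 - h^{ab}(y_0)(\eta_0)_a(\eta_0)_b\}$ (and is empty when $\sigma_0\neq 0$) — and, second, making precise the phrase "persists on a conic neighborhood", most cleanly by restricting the relevant degree-two homogeneous functions to the cosphere bundle of $\bT^*_U X$ and using continuity there. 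Finally, it is worth noting that the hypothesis $q_0\notin\cchare$ is genuinely used: at a glancing or hyperbolic $q_0$ one has $\sigma_0 = 0$, $(\eta_0)_{n-1}\neq 0$ and $h^{ab}(y_0)(\eta_0)_a(\eta_0)_b\leq(\eta_0)_{n-1}^2$, and then both alternatives fail at $q_0$ itself.
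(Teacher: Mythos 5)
Your proof is correct, and it takes exactly the approach the paper intends: the lemma is presented without a separate proof as a rephrasing of the dichotomy stated just before it (either $q_0$ does not lie in the compressed $\b$-cotangent bundle, i.e.\ $\sigma_0 \neq 0$, or it does and $h^{ab}\eta_a\eta_b > \eta_{n-1}^2$ at $q_0$), and your open-condition argument with degree-two homogeneous functions supplies precisely the elided details. The only small imprecision is the attribution of strict positivity of $h^{ab}(\eta_0)_a(\eta_0)_b$ to positive-definiteness alone — what actually gives it is the case assumption $h^{ab}(\eta_0)_a(\eta_0)_b > (\eta_0)_{n-1}^2 \geq 0$ — but this does not affect the argument.
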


Using this observation, it is simple to prove the following elliptic regularity for either the Dirichlet or Robin (Neumann) problem. Since the proofs are identical upon substituting the appropriate function spaces, we focus on the Robin case.

\begin{theo} \label{theo:ellipticregion}
Let $u \in \tSob^{1,m}_\loc(X)$ for some $m \leq 0$, and $q_0 \in \bT^*_YX \setminus 0$. If $s \in \RR \cup \{+\infty\}$ and 
\[
q_0 \in \WFb^{1,s}(u) \setminus \WFb^{-1,s}(P_{\Robin} u),
\] 
then $q_0 \in \cchare$.
\end{theo}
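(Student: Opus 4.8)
## Proof Plan for Theorem \ref{theo:ellipticregion}

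\textbf{Proof plan.} I would argue by contraposition: assuming $q_0 \notin \cchare$ and $q_0 \notin \WFb^{-1,s}(P_R u)$, I will show $q_0 \notin \WFb^{1,s}(u)$. Since $u \in \tSob^{1,m}_\loc(X)$ we have $\WFb^{1,s'}(u) = \emptyset$ for every $s' \le m$, so it suffices to prove the inductive step
\[
q_0 \notin \WFb^{1,s'}(u) \ \Longrightarrow\ q_0 \notin \WFb^{1,\min(s'+\tfrac12,\,s)}(u), \qquad s' \ge m,
\]
under the standing hypotheses on $q_0$; iterating $O(s-m)$ times (or indefinitely when $s=+\infty$) yields the theorem. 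The argument for the Dirichlet realization $P_D$ on $\tdotSob^{1,m}_\loc(X)$ is word-for-word the same, with Hardy's inequality (Lemma \ref{lem:1/xbounded}) replacing the use of an admissible twisting function, and with the boundary-term-free version of the microlocal estimate (Lemma \ref{lem:dirichletform1}) in place of Lemma \ref{lem:dirichletform2}.

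The heart of the matter is a microlocal coercivity statement for the Dirichlet form near an elliptic boundary point. Working in the normal coordinates of Section \ref{subsect:ellipticestimate} and invoking Lemma \ref{lem:ellipticpossibilities}, there is a small conic neighborhood $V$ of $q_0$ on which one of the two alternatives holds: in case (1) one has $\hat p \gtrsim |\zeta|^2$ on $V$, and combining \eqref{eq:Qn-1control} with a microlocal G\aa{}rding estimate exploiting $h^{ab}\eta_a\eta_b > (1+\varepsilon)\eta_{n-1}^2$ gives $\mathcal{E}_0(v,v) \gtrsim \|v\|_{\tSob^1(X)}^2$ modulo lower-order terms; in case (2) (which, together with $q_0 \notin \cchare$, forces the b-normal momentum $\sigma$ to be elliptic on $V$) the single term $\|Q_0 v\|_{\Ltwo(X)}^2$ is already microlocally coercive. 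For a bounded regularising family $\mathcal A = \{A_r\} \subset \Psib^{s''}(X)$, $s'' := \min(s'+\tfrac12,s)$, supported near $q_0$ and converging to an operator elliptic at $q_0$, this yields
\[
\mathcal{E}_0(A_r u,A_r u) - \varepsilon\,\mathcal{Q}(A_r u,A_r u) \ \ge\ c\,\|A_r u\|_{\tSob^1(X)}^2 - C\big(\|G_1 u\|_{\tSob^1(X)}^2 + \|\chi u\|_{\tSob^{1,m}(X)}^2\big)
\]
uniformly in $r$, for $\varepsilon$ small, $G_1 \in \Psib^{s''-1/2}(X)$ elliptic on $\WFb'(\mathcal A)$, and $\chi \in \CcI(X)$. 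Turning the pointwise symbol positivity into this operator inequality while tracking every commutator error through the twisted b-calculus of Section \ref{subsect:interaction} (Lemmas \ref{lem:Q0commutator}, \ref{lem:L2intermsofH1}, \ref{lem:ellipticestimate}), keeping all constants independent of $r$, and — in the Robin case — absorbing the boundary term $\langle \beta\gamma_- u, \gamma_-(A_r^*A_r u)\rangle_{\pa X}$ via the trace interpolation inequality \eqref{eq:traceinterpolation2}, is the main technical obstacle; it is precisely what Lemma \ref{lem:dirichletform2} packages, so I would use that lemma as a black box.

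To close the induction I would feed this family $\mathcal A$ into Lemma \ref{lem:dirichletform2}, together with $G_0 \in \Psib^{s''}(X)$ and $G_1 \in \Psib^{s''-1/2}(X)$ elliptic on $\WFb'(\mathcal A)$, and combine with the coercivity bound above to obtain
\[
c'\,\|A_r u\|_{\tSob^1(X)}^2 \ \le\ C_\varepsilon\big(\|\chi u\|_{\tSob^{1,m}(X)}^2 + \|\chi P_R u\|_{\tdotSob^{-1,m}(X)}^2 + \|G_1 u\|_{\tSob^1(X)}^2 + \|G_0 P_R u\|_{\tdotSob^{-1}(X)}^2\big).
\]
The right-hand side is bounded uniformly in $r$: the first two terms by the hypotheses $u \in \tSob^{1,m}_\loc(X)$ and $P_R u \in \tdotSob^{-1,m}_\loc(X)$; the third by the inductive hypothesis $q_0 \notin \WFb^{1,s'}(u)$ and $s''-\tfrac12 \le s'$, after choosing $\WFb'(G_1)$ small enough; and the last by $q_0 \notin \WFb^{-1,s}(P_R u)$ and $s'' \le s$, choosing $\WFb'(G_0)$ small. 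Hence $\{A_r u\}$ is bounded in $\tSob^1(X)$; a weak-$*$ limit along a subsequence is identified with $A_0 u \in \tSob^1_\loc(X)$ for an $A_0 \in \Psib^{s''}(X)$ elliptic at $q_0$, so $q_0 \notin \WFb^{1,s''}(u)$, which is the inductive step. The case $s''=+\infty$ follows by iterating with $\Psib^0(X)$-regularisers mapping into $\tSob^{1,\infty}_\loc(X)$, exactly as in \cite[Section 4]{vasy2008propagation}.
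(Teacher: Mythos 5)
Your proposal follows the same route as the paper's proof: the same $1/2$-step induction on conormal regularity, the same case analysis via Lemma \ref{lem:ellipticpossibilities} (G\aa{}rding-type positivity using \eqref{eq:Qn-1control} in case (1), microlocal ellipticity of $Q_0$ in case (2)), Lemma \ref{lem:dirichletform2} used as the black box that absorbs the right-hand side and boundary terms, and weak convergence of the regularised family $A_r u$ to close the loop. The argument is correct and matches the paper in structure and in the choice of key lemmas.
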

\begin{proof}
Assume that $q_0 \notin \cchare$. We show that if $q_0 \notin \WFb^{-1,s}(P_{\Robin}u)$ and $q_0 \notin \WFb^{1,s-1/2}(u)$, then
\[
q_0 \notin \WFb^{1,s}(u).
\]
The proof is then finished by induction; the inductive hypothesis is satisfied for any $s \leq 1/2 +m$ by the assumption that $u \in \tSob^{1,m}_\loc(X)$. Let $\mathcal{A} = \{A_r : r \in (0,1)\}$ be a bounded subset of $\Psib^s(X)$ such that $A_r \in \Psib^{m-1}(X)$ for each $r \in (0,1)$. We assume that $\opWFb(\mathcal{A})$ is contained in a sufficiently small neighborhood of $V$ so that
\[
\WFb^{1,s-1/2}(u) \cap \opWFb(\mathcal A) = \emptyset.
\]
Fixing a boundary coordinate patch $U$ containing $q_0$, assume in addition that $\mathcal{A}$ has compact support  in this patch. We consider two cases corresponding to those of Lemma \ref{lem:ellipticpossibilities}, letting $V$ be a conic neighborhood of $q_0$ as in the lemma.

\begin{inparaenum}
	\item If $C>0$ and $\delta > 0$ is sufficiently small, then 
\begin{align*}
(1-C\delta)h^{ab}\eta_a \eta_b - (1+C\delta)\eta_{n-1}^2 &= (1-C\delta)(h^{ab}\eta_a \eta_b -\eta_{n-1}^2) - 2C\delta \eta_{n-1}^2 \\
& > \left(\varepsilon(1 - C\delta) - 2C\delta \right)\eta_{n-1}^2
\\ & > (\varepsilon/2)\eta_{n-1}^2
\end{align*}
on $V$. This implies that $ (1-C\delta)h^{ab}\eta_a \eta_b - (1+C\delta)\eta_{n-1}^2$ is elliptic near $V$. If the family $\mathcal{A}$ has support in $\{|x| < \delta\}$, we can choose $B \in \Psib^1(X)$  with $\opWFb(\mathcal{A}) \subset \ellb(B)$ such that 
\[
\opWFb( (1-C\delta) h^{ab}Q_a^* Q_b - (1+C\delta)Q^*_{n-1}Q_{n-1} - B^*B + T) \cap U = \emptyset,
\]
where $T \in \Psib^{1}(X)$.
Therefore, for each $r \in (0,1)$,
\begin{align*}
\mathcal{E}_0(A_ru,A_ru) &\geq  \| Q_0 A_r u \|^2_{\Ltwo(X)}+ \| BA_ru\|^2_{\Ltwo(X)} - \langle TA_ru,A_ru \rangle
\\ &\geq C^{-1} \mathcal{Q}(A_ru, A_ru) - \langle TA_ru,A_r u\rangle.
\end{align*}
Note that this pairing makes sense since $A_r \in \Psib^{m}(X)$ and $T \in \Psib^{1}(X)$, using that $u \in \tSob^{1,m}_\loc(X)$.

\item The second case is similar, noting that for $v$ with support in $\{ |x| < \delta\}$,
\[
\| Q_0 v \|^2_{\Ltwo(X)} \geq \delta^{-2} \| xQ_0 v\|^2_{\Ltwo(X)}.
\]
Consider the operator $(2\delta^2)^{-1}(xQ_0)^*(xQ_0) - (1+C\delta)Q_{n-1}^*Q_{n-1}$, which is in $\Psib^2(X)$ with principal symbol
\[
\frac{\sigma^2}{2\delta^2}- (1+C\delta)\eta_{n-1}^2 > c \eta_{n-1}^2
\]
on $V$. In particular, $(2\delta^2)^{-1}(xQ_0)^*(xQ_0) - (1+C\delta)Q_{n-1}^*Q_{n-1}$ is elliptic on $V$, so again we can find  $B \in \Psib^1(X)$ with $V \subset \ellb(B)$ such that 
\[
\WFb((2\delta^2)^{-1}(xQ_0)^*(xQ_0) - (1+C\delta)Q_{n-1}^*Q_{n-1} - B^*B + F) \cap U = \emptyset,
\]
where $T \in \Psib^1(X)$. Again we find that for each $r \in (0,1)$,
\begin{align*}
\mathcal{E}_0(A_ru,A_ru) &\geq  \tfrac{1}{2}\| Q_0 A u \|^2_{\Ltwo(X)} + (1-C\delta)\langle h^{\alpha \beta} Q_\alpha v, Q_\beta v \rangle + \| BA_ru\|^2_{\Ltwo(X)} - \langle TA_ru,A_ru \rangle \\
&\geq C^{-1} \mathcal{Q}(A_ru, A_ru) - \langle TA_ru,A_ru\rangle.
\end{align*}
\end{inparaenum}
Let $\Lambda_{1/2} \in \Psib^{1/2}(X)$ be elliptic with $\Lambda_{-1/2} \in \Psib^{-1/2}(X)$ a parametrix. Then in both cases we can write
\[
\langle TA_r u , A_r u \rangle = \langle \Lambda_{-1/2}^* TA_r u, \Lambda_{ 1/2} A_r u \rangle + \langle TA_r u, RA_r u \rangle
\]
with $R \in \Psib^{-\infty}(X)$, which shows that $|\langle TA_r u , A_r u \rangle|$ is uniformly bounded in $r \in (0,1)$ by Lemma \ref{lem:L2intermsofH1} and the a priori hypothesis on $u$.

Finally, we choose the family $\mathcal{A}$. Let $A \in \Psib^s(X)$ be elliptic at $q_0$, with compact support in $U \cap \{x < \delta\}$. Let $\{ J_r: r \in (0,1)\}$ be a bounded family in $\Psib^{0}(X)$ such that $J_r \in \Psib^{m-s-1}(X)$ for each $r \in (0,1)$, converging to the identity in $\Psib^{1}(X)$ as $r\rightarrow 0$. We then let 
\[
A_r = J_r A,
\]
so that in particular $A_r u \rightarrow Au$ in $\CmI_\comp(X)$. Taking $\varepsilon >0$ sufficiently small in Lemma \ref{lem:dirichletform2}, combined with Lemma \ref{lem:L2intermsofH1}, shows that $A_r u$ is uniformly bounded in $\tSob^1(K)$ for a suitable compact set $K \subset X$. Extracting a weakly convergent subsequence in $\tSob^1(K)$ shows that $Au \in \tSob^{1}(K)$.
\end{proof}

Combined with standard elliptic regularity away from $\pa X$, we have shown the following:

\begin{corr} \label{corr:ellipticregularity}
	If $u \in \tSob^{1,m}_\loc(X)$ for some $m \leq 0$ and $s \in \RR \cup \{+\infty\}$, then 
	\[
	\WFb^{1,s}(u) \setminus \WFb^{-1,s}(P_{\Robin} u) \subset \cchare
	\]
	
\end{corr}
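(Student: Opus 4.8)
The plan is to obtain the corollary by splicing together two inputs: the boundary elliptic estimate of Theorem~\ref{theo:ellipticregion}, which handles points of $\bT^*X$ lying over $\pa X$, and classical microlocal elliptic regularity, which handles points lying over the interior $X^\circ$. The one structural point to set up first is that over $X^\circ$ everything reduces to the familiar setting: the quotient map $\pi\colon T^*X \to \bdotT^*X$ restricts to a diffeomorphism onto its image there, so that $\cchare$ over $X^\circ$ is identified with $\chare \cap (T^*X^\circ \setminus 0)$; and since a bdf $x$ is bounded above and below on compact subsets of $X^\circ$, over a coordinate patch $U \Subset X^\circ$ the spaces $\tSob^{1,s}_\loc(X)$ and $\tdotSob^{-1,s}_\loc(X)$ coincide (up to a fixed smooth positive weight) with $H^{1+s}_\loc(U)$ and $H^{-1+s}_\loc(U)$, and $\WFb^{1,s}$, $\WFb^{-1,s}$ coincide with the classical Sobolev wavefront sets $\wf^{1+s}$, $\wf^{-1+s}$. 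Finally, on $X^\circ$ one has $P_\Robin u = x^{-2}Pu$ in $\CmI(X^\circ)$, where $x^{-2}P \in \Diff^2(X^\circ)$ has principal symbol $\hat p$, i.e.\ is elliptic exactly off $\chare$.

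With this in place, the interior half is just standard elliptic regularity: I would fix $q_0 \in \bT^*X^\circ \setminus 0$ with $q_0 \notin \chare$ and $q_0 \notin \wf^{-1+s}(x^{-2}Pu)$, and conclude $q_0 \notin \wf^{1+s}(u)$ from the a priori membership $u \in \tSob^{1,m}_\loc(X)$, which furnishes the needed background Sobolev regularity near the base point of $q_0$. As in the proof of Theorem~\ref{theo:ellipticregion}, finite $s$ is reached by inducting in steps of $1/2$ --- or one may simply quote the standard statement outright --- and $s = +\infty$ follows by intersecting over all finite orders.

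The assembly is then immediate. Given $q_0 \in \bT^*X \setminus 0$ with $q_0 \notin \cchare$ and $q_0 \notin \WFb^{-1,s}(P_\Robin u)$ (note $P_\Robin u \in \tdotSob^{-1,m}_\loc(X)$, so the latter makes sense), I would split on whether $q_0$ lies over $\pa X$ or over $X^\circ$. In the first case the contrapositive of Theorem~\ref{theo:ellipticregion} gives $q_0 \notin \WFb^{1,s}(u)$ directly; in the second, the identifications above turn the hypotheses into $q_0 \notin \chare$ and $q_0 \notin \wf^{-1+s}(x^{-2}Pu)$, and interior elliptic regularity gives $q_0 \notin \wf^{1+s}(u) = \WFb^{1,s}(u)$. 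Since every point outside $\cchare$ is of one of these two types, the claimed inclusion follows.

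There is no genuine analytic obstacle in this argument --- the substantive work is already contained in Theorem~\ref{theo:ellipticregion}. The two places that need care, and which I would spell out, are (i) the reduction over $X^\circ$ of the twisted function spaces and of $\WFb$ to their classical counterparts, with the correct shift of the Sobolev index, so that the conventions match those of Theorem~\ref{theo:ellipticregion}; and (ii) the bookkeeping of the induction on the regularity index $s$ and of the passage to $s = +\infty$.
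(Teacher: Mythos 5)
Your proposal matches the paper's intent exactly: the paper's entire justification is the single sentence ``Combined with standard elliptic regularity away from $\pa X$, we have shown the following,'' so the corollary is obtained, just as you say, by applying Theorem~\ref{theo:ellipticregion} over $\pa X$ and classical microlocal elliptic regularity over $X^\circ$, after observing that the twisted Sobolev scales and $\WFb$ reduce there (with the $\pm 1$ index shift) to their classical counterparts. Your write-up simply makes explicit the bookkeeping the paper leaves implicit.
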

The same result holds for the Dirichlet problem, now taking $u \in \tdotSob^{1,m}_\loc(X)$.

\subsection{The hyperbolic region} \label{subsect:hyperbolicregion}

Now we focus on the hyperbolic region. Fix a boundary coordinate patch $U$ with coordinates $(x,y^1,\ldots,y^{n-1})$, and let $q_0 \in \hyp \cap \bT^*_U X$. In coordinates $(x,y,\sigma,\eta)$, this means that
\[
q_0 = (0,y_0,0,\zeta_0), \quad \hat g^{\alpha\beta}(0,y_0)(\eta_0)_\alpha (\eta_0)_\beta > 0.
\]
Let $u \in \tSob^{1,m}_\loc(X)$ for some $m \leq 0$, and suppose that $q_0 \notin \WFb^{-1,s+1}(P_{\Robin} u)$. In order to prove propagation of singularities through hyperbolic points, it suffices to show that $q_0 \in \WFb^{1,s}(u)$ implies $q_0$ is an accumulation point of $\WFb^{1,s}(u) \cap \{\sigma < 0\}$.

\begin{prop} \label{prop:hyperbolicregion}
Let $u \in \tSob^{1,m}_\loc(X)$ for some $m \leq 0$, and suppose that 
\[
q_0 \notin \WFb^{-1,s+1}(P_{\Robin} u).
\]
 If there exists a conic neighborhood $W \subset T^*X\setminus 0$ of $q_0$ such that
\[
W \cap \{\sigma < 0\} \cap \WFb^{1,s}(u) = \emptyset,
\]
then $q_0 \notin \WFb^{1,s}(u)$.
\end{prop}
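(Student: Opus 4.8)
The plan is to carry out a positive commutator argument at the level of the Dirichlet form, adapting the classical Melrose--Sj\"ostrand propagation of singularities argument to the twisted b-setting. The key geometric input is that near a hyperbolic point $q_0 = (0,y_0,0,\eta_0)$ the two points of $\chare$ above $q_0$ have $\sigma$-coordinate of opposite signs, so the two null-bicharacteristics through $\pi^{-1}(q_0)$ point in opposite normal directions; by hypothesis the wavefront set is empty in the $\{\sigma < 0\}$ half. I would first reduce to the model situation $X = \RR^n_+$ with a special bdf, choose boundary coordinates $(x,y)$ normalized as in Section \ref{subsect:ellipticestimate}, and fix an admissible smooth twisting function $F$ with $x^{-\nu_-}F|_{\pa X}=1$ (using Hypothesis \ref{hypo:twisting} if $\nu=1/2$).

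\textbf{Construction of the commutant.} Following \cite{vasy2008propagation}, I would build a family of commutants $A_r = F B_r F^{-1}$ with $B_r \in \Psibeven^{s+1/2}(X)$ (so that Lemma \ref{lem:preservesmaxdomain}-type arguments and Lemma \ref{lem:evenQcommutator} apply) of the schematic form $b_r = \chi_0(x)\chi_1(\text{angular variables})\phi(\varsigma)$, where $\varsigma$ is a function increasing along the bicharacteristic flow inside $\chare\cap\{\sigma>0\}$ (for instance built from $\sigma$ itself together with the distance to $q_0$), $\phi$ is a cutoff supported where $\varsigma$ is small, and $\chi_0$ localizes to $\{0 \le x < \delta\}$. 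The regularization parameter $r$ is inserted via an order-lowering factor $J_r$ converging to the identity in $\Psib^{1}(X)$, exactly as at the end of the proof of Theorem \ref{theo:ellipticregion}, so that a priori $A_r \in \Psib^{m-1}(X)$ and all pairings below are justified. The commutant is arranged so that the Poisson bracket $\{\hat p, b_r^2\}$, computed via the symbol formulas for $[Q_0,A]$ in Lemma \ref{lem:Q0commutator} and the b-bracket formula, has a definite sign (say $\le 0$) modulo a term supported in $\{\sigma<0\}$ — which is controllable by the hypothesis $W\cap\{\sigma<0\}\cap\WFb^{1,s}(u)=\emptyset$ — plus a term localized where $A_r$ is elliptic (which is the term we want to absorb on the left) plus genuinely lower-order errors.

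\textbf{The commutator identity.} I would then expand $2i\,\Im\mathcal{E}_0(u, A_r^*A_r u)$ using \eqref{eq:twistedcommutator}, keeping careful track of which terms involve $Q_0$ on both sides (these produce, after the symbolic computation, the principal $\{\hat p,\cdot\}$ term) and which are genuinely lower order and estimable by Lemma \ref{lem:L2intermsofH1} and Lemma \ref{lem:dirichletform1}. Simultaneously, since $P_R u =: f$, one has $\mathcal{E}_R(u, A_r^*A_r u) = \langle A_r f, A_r u\rangle_{\Ltwo}$, and the $\Im$ of this is bounded, by Cauchy--Schwarz and Lemma \ref{lem:L2intermsofH1}, in terms of $\|G_0 f\|_{\tdotSob^{-1}}$ (finite by the hypothesis $q_0\notin \WFb^{-1,s+1}(P_R u)$, which gives one extra order to compensate the order $s+1/2$ of the family) and a small multiple of $\mathcal{Q}(A_r u, A_r u)$. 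The difference $\mathcal{E}_R - \mathcal{E}_0$ contributes the $x^{-2}S_F$ term, harmless because $S_F\in x^2\CI(X)$, and the boundary term $\langle\beta\gamma_- u,\gamma_-(A_r^*A_r u)\rangle_{\pa X}$, which is handled exactly as in the proof of Lemma \ref{lem:dirichletform2}: move $\beta$ and commutators through via indicial families and bound using the trace interpolation inequality \eqref{eq:traceinterpolation2}, absorbing $\varepsilon\mathcal{Q}(A_r u,A_r u)$. Combining everything yields a uniform (in $r$) bound on $\|B A_r u\|_{\Ltwo}^2$ for an operator $B$ elliptic at $q_0$, modulo the contributions from $\{\sigma<0\}$ (vanishing by hypothesis), from $\WFb^{1,s-1/2}(u)$ (finite by the inductive hypothesis), and from $\WFb^{-1,s+1}(P_R u)$ (finite by hypothesis). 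Extracting a weak limit in $\tSob^1$ of a suitable subsequence then shows $q_0\notin\WFb^{1,s}(u)$; the full conclusion follows by the usual induction on $s$ starting from $s \le 1/2 + m$.

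\textbf{The main obstacle.} The genuinely hard part is that, unlike the smooth setting (Remark \ref{rem:traceinterpolation}), the twisted trace interpolation \eqref{eq:traceinterpolation} does not let one commute $A_r$ through the boundary pairing and bound it by $\|u\|_{\tSob^1}\|A_r u\|_{\Ltwo}$ alone, so the boundary term must be estimated with both factors at the level of $\mathcal{Q}(A_r u, A_r u)$; reconciling this with the positivity gained from the commutator requires that the Dirichlet form $\mathcal{E}_R$ control $\mathcal{Q}$ microlocally in the hyperbolic region, which in turn forces the support condition $\{|x|<\delta\}$ on the commutant (so that the angular part of $\mathcal{E}_0$ is close to positive-definite by \eqref{eq:Qn-1control}) and a careful bookkeeping of the $\delta$-dependence of constants against the $\varepsilon$'s in Lemma \ref{lem:dirichletform2}. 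A secondary technical point is ensuring the commutant family lies in $\Psibeven$ so that $F B_r F^{-1}$ interacts well with $Q_0$ via Lemma \ref{lem:evenQcommutator}, and that the lower-order remainders generated when passing $Q_0$ through $A_r$ genuinely have lower b-order rather than merely a gain in $x$, which is why the evenness class is needed at all.
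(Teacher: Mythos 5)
Your plan broadly matches the paper's: run a positive commutator argument directly at the level of $\mathcal{E}_0(u, A_r^*A_ru)$, with a commutant built from an escape function, regularizers $J_r$, localization to $\{|x|<\delta\}$ so that \eqref{eq:Qn-1control} makes the Dirichlet form essentially coercive, and trace interpolation to absorb the boundary terms, finishing with the usual half-step induction and weak-limit extraction. That is essentially the proof.

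There is, however, one genuine conceptual error that you present as essential to the construction. You insist that the commutant family be taken of the form $FB_rF^{-1}$ with $B_r\in\Psibeven^{s+1/2}(X)$, on the grounds that otherwise ``the lower-order remainders generated when passing $Q_0$ through $A_r$'' would merely gain a power of $x$ rather than dropping a b-order. That has the mechanism exactly backwards. Lemma \ref{lem:Q0commutator} already gives, for \emph{arbitrary} $A\in\Psib^m(X)$ compactly supported in a boundary patch, the decomposition $[Q_0,A]=A_1Q_0+A_0$ with $A_1\in\Psib^{m-1}(X)$ and $\bsymbol{m-1}(A_1)=(1/i)\pa_\sigma a$ — i.e.\ the factor carrying $Q_0$ drops one b-order, with no evenness hypothesis whatsoever. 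What Lemma \ref{lem:evenQcommutator} provides is an extra factor of $x$ (at the \emph{same} b-order) in the remainder of the swap $Q_0FBF^{-1}=B'Q_0+B''$, and this extra $x$ is used only in the $\maxdom^k$-preservation argument (Lemma \ref{lem:preservesmaxdomain}) needed to make sense of $\gamma_+$. None of that enters the quadratic-form commutator estimates of Section \ref{sec:propagation}: the paper takes the commutants and regularizers in the ordinary calculus $\Psib$ throughout the elliptic, hyperbolic, and glancing estimates. Restricting to $\Psibeven$ would probably not break the argument (it is a subalgebra with a symbol calculus), but your justification for doing so is wrong and would lead you to believe the argument is more constrained than it actually is.

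A smaller but real point: the boundary term in the hyperbolic estimate is the \emph{imaginary} part of $\langle\beta\gamma_-u,\gamma_-(A_r^*A_ru)\rangle_{\pa X}$, and it cannot be bounded ``exactly as in Lemma \ref{lem:dirichletform2}'' by $\varepsilon\mathcal{Q}(A_ru,A_ru)$ plus controllable terms — $A_r$ sits one half-order above what the positive commutator controls. The paper first applies Lemma \ref{lem:commutatorexpansion2} (choosing the extension of $\beta$ to be $x$-independent, so the $E_r$-term vanishes) to rewrite $i[A_r^*A_r,\beta]$ and $\tilde A_r^*\tilde A_r-A_r^*A_r$ in the form $\tilde B_r^*(\cdot)\tilde B_r$ modulo acceptable errors, and only then invokes \eqref{eq:traceinterpolation} to absorb $\varepsilon\|\tilde B_ru\|^2_{\tSob^1(X)}$; see \eqref{eq:boundarycommutator1}--\eqref{eq:boundarycommutator2}. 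Your ``main obstacle'' paragraph gestures at this difficulty but mis-identifies the quantity into which the boundary contribution must be absorbed.
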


It  suffices to prove the proposition with any conic subset $W_0 \subset W$ containing $q_0$. If $W_0$ is sufficiently small, we can assume $W_0 \cap \WFb^{-1,s+1}(P_{\Robin} u) = \emptyset$. In particular, 
\[
W_0 \cap \WFb^{1,s}(u) \subset \cchare 
\]
by elliptic regularity, so $x \neq 0$ on $W_0 \cap \{\sigma < 0\} \cap \WFb^{1,s}(u)$. Thus, if $q_0 \in \WFb^{1,s}(u)$, then $q_0$ is the limit of points in the wavefront set intersected with the interior.

The rest of this section is dedicated to the proof of Proposition \ref{prop:hyperbolicregion}. The proof proceeds iteratively, increasing the regularity by $1/2$ at each step. Thus we assume that the hypotheses of the proposition hold, and that $q_0 \notin \WFb^{1,s-1/2}(u)$. We then show that $q_0 \notin \WFb^{1,s}(u)$. Note that the inductive hypothesis is always satisfied for $s\leq 1/2+m$. 

As in Section \ref{subsect:ellipticestimate}, we can further choose coordinates $(y^1,\ldots,y^{n-1})$ such that
\[
\hat g^{\alpha\beta}(0,y)\eta_\alpha \eta_\beta = \eta_{n-1}^2 - h^{ab}(y)\eta_a \eta_b.
\]
In particular, $\eta_{n-1}$ is non-vanishing at $q_0$. If $\kappa : \bT^*X \setminus 0 \rightarrow \bS^*X$ is the canonical projection\footnote{Recall that $\bS^*X$ is the \emph{$\b$-cosphere bundle}, obtained by quotienting $\bT^*X \setminus 0$ by the fiberwise $\rr_+$ action of dilations.}, then in a sufficiently small neighborhood of $\kappa(q_0)$ in $\bS^*X$ we may use projective coordinates 
\[
x, \ y, \ \hat \sigma = \rho^{-1}\sigma, \ \hat \eta_a = \rho^{-1} \eta_a,
\]
where we define $\rho = |\eta_{n-1}|$.
Closely following \cite[Section 6]{vasy2008propagation}, define the functions
\[
\omega = |x|^2 + |y-y_0|^2 + \sum |\hat \eta_a - (\hat \eta_0)_a|^2, \quad \phi = \hat \sigma+\frac{1}{\beta^2\delta}\omega.
\]
The parameters $\delta,\beta$ will be chosen
later; $\beta >0$ will be chosen as an overall large parameter, whereas $\delta > 0$ will be chosen small to localize near $q_0$. Choose cutoff functions $\chi_0, \chi_1$ with the following properties:
\begin{itemize} \itemsep6pt
	\item $\chi_0$ is supported in $[0, \infty)$, with $\chi_0(s)
	=\exp(-1/s)$ for $s>0$.
	\item $\chi_1$ is supported in $[0,\infty)$, with $\chi_1(s) =1$ for
	$s\geq 1$, and $\chi_1'\geq 0$.
\end{itemize}
In a small neighborhood of $\kappa(q_0)$ define the functions
\begin{equation} \label{eq:hypcommutant}
a=\chi_0(2-\phi/\delta) \chi_1(2+\hat \sigma/\delta).
\end{equation}
For each fixed $\beta > 0$, the support of $a$ is controlled by the
parameter $\delta > 0$ as in the following lemma; in particular, we can extend $a$ to a globally defined symbol in $S^0(\bT^*X)$. 
\begin{lemm} \label{lem:asupport}
	Given a neighborhood $V\subset \bS^* X$ of $\kappa(q_0)$ and $\beta>0$, there exists $\delta_0>0$ such that $\supp a \subset V$ for each $\delta \in (0,\delta_0)$.
\end{lemm}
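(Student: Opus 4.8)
The plan is to read off, from the two cutoff factors defining $a$, the constraints they impose on a point of $\supp a$, and to show that together these confine $a$ to a coordinate box around $\kappa(q_0)$ whose size is $O(\beta\delta)$; since $\beta$ is fixed, shrinking $\delta$ then gives the claim.

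First I would note that, because $\chi_0$ and $\chi_1$ are both supported in $[0,\infty)$, a point can lie in $\supp a$ only if simultaneously $2-\phi/\delta\geq 0$ and $2+\hat\sigma/\delta\geq 0$, that is
\[
\phi\leq 2\delta,\qquad \hat\sigma\geq -2\delta .
\]
Since $\omega\geq 0$ and $\beta,\delta>0$, the identity $\phi=\hat\sigma+(\beta^2\delta)^{-1}\omega$ forces $\hat\sigma\leq\phi\leq 2\delta$, so on $\supp a$ one has $|\hat\sigma|\leq 2\delta$. Subtracting the two displayed inequalities gives $(\beta^2\delta)^{-1}\omega=\phi-\hat\sigma\leq 4\delta$, hence $\omega\leq 4\beta^2\delta^2$ on $\supp a$. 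By the definition of $\omega$ this yields $|x|\leq 2\beta\delta$, $|y-y_0|\leq 2\beta\delta$ and $|\hat\eta_a-(\hat\eta_0)_a|\leq 2\beta\delta$ for every $a$.

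To finish, I would shrink $V$ at the outset so that it lies inside the domain of the projective chart $(x,y,\hat\sigma,\hat\eta_a)$ --- legitimate since $\eta_{n-1}(q_0)\neq 0$, so these functions are genuine coordinates near $\kappa(q_0)$. The estimates above show that, for the fixed $\beta$, $\supp a$ is contained in the box
\[
\{\,|x|\leq 2\beta\delta,\ |y-y_0|\leq 2\beta\delta,\ |\hat\sigma|\leq 2\delta,\ |\hat\eta_a-(\hat\eta_0)_a|\leq 2\beta\delta\,\},
\]
which contracts to $\{\kappa(q_0)\}$ as $\delta\to 0$; hence there is $\delta_0>0$ with this box contained in $V$ for all $\delta\in(0,\delta_0)$.

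There is no substantive obstacle here: the argument is a direct unwinding of the definitions of $a$, $\phi$ and $\omega$. The only point needing a moment's care is that $\supp a$ must remain inside the projective coordinate patch, which is handled by shrinking $V$ first; everything else is the elementary inequality manipulation above.
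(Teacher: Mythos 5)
Your proof is correct and follows essentially the same approach as the paper: read off the constraints $\phi\leq 2\delta$, $\hat\sigma\geq -2\delta$ from the supports of $\chi_0,\chi_1$, deduce $|\hat\sigma|\leq 2\delta$ and $\omega\leq 4\beta^2\delta^2$ since $\omega\geq 0$, and note that these sets shrink to $\kappa(q_0)$ in the projective chart as $\delta\to 0$. Your remark about first shrinking $V$ to stay inside the coordinate patch makes explicit a point the paper leaves implicit, but the substance is identical.
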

\begin{proof}
	Necessary conditions to lie in the support of $a$ are $\phi \leq 2\delta$ and  $-2\delta  \leq \hat \sigma$. From the definition of $\phi$, 
	\[
	|\hat \sigma| \leq 2\delta, \quad 0 \leq \omega \leq \beta^2\delta(2 \delta-\hat \sigma) \leq 4\beta^2 \delta^2
	\] 
	on $\supp a$, i.e.,\
	\begin{equation}
	\label{suppa}
	\supp a \subset \{|\hat\sigma| \leq 2\delta, \, \omega^{1/2} \leq 2\beta \delta \}.
	\end{equation}
	Finally, observe that any neighborhood of $V$ of $\kappa(q_0)$ contains a set of the form $\{|\hat\sigma| \leq 2\delta, \, \omega^{1/2} \leq 2\beta \delta \}$ provided $\delta$ is sufficiently small.
\end{proof}



Fix a conic neighborhood $V_0$ of $q_0$ having compact closure in which $\hat g^{\alpha\beta}\eta_{\alpha}\eta_{\beta} > 0$. If $\psi_0 \in \CcI(\bS^*X)$ is identically one $V_0$ with support in a sufficiently small neighborhood of $V_0$, then 
\[
\eta_{n-1} \neq 0 \text{ on } \supp \psi_0.
\] 
 To facilitate the regularization argument, fix a bounded family of operators $\{J_r: r\in (0,1)\}$ in $\Psib^{s+1/2}(X)$ such that  $J_r \in \Psib^m(X) \text{ for } r \in (0,1)$.
We choose $J_r$ so that its principal symbol $j_r = \bsymbol{0}(J_r)$ is given by
\[
j_r =  \psi_0 \cdot \rho^{s+1/2}\langle r\rho\rangle ^{-s - 1/2 + m}.
\]
In particular, $J_r$ is elliptic near $V_0$. We then define $A_r = AJ_r $, which is bounded in $\Psib^{s+1/2}(X)$.

We also need some additional auxiliary operators. Let $B \in \Psib^{-1/2}(X)$ have principal symbol
\[
b = \rho^{-1/2}\delta^{-1/2} (\chi'_0 \chi_0)^{1/2} \chi_1.
\]
Here the arguments of $\chi_0,\chi_1$ are as in \eqref{eq:hypcommutant}. Let $B_r \in \Psib^{s+1}(X)$ have principal symbol $\bsymbol{s+1}(B_r) = \rho j_r b$, and let $\tilde B_r  \in \Psib^{s}(X)$ have principal symbol $\bsymbol{s}(\tilde B_r)= j_r b$. Finally, let $C \in \Psib^0(X)$ have principal symbol 
\begin{equation} \label{eq:Csymbol}
\bsymbol{0}(C) = \rho^{-2}(\hat g^{\alpha\beta}\eta_{\alpha}\eta_{\beta})^{1/2}\psi_0.
\end{equation}
This makes sense, since  $\hat g^{\alpha \beta}\eta_\alpha \eta_{\beta} >0$ near $\supp \psi_0$.
In order to handle terms bounded by the inductive hypothesis, fix $G_1 \in \Psib^{s-1/2}(X)$ and $G_0 \in \Psib^{s+1}(X)$ such that 
\[
\opWFb(G_1) \cap \WFb^{1,s-1/2}(u) = \emptyset, \quad \opWFb(G_0) \cap \WFb^{-1,s+1}(P_{\Robin} u) = \emptyset
\]
and $q_0 \in \ellb(G_1) \cap \ellb(G_0)$.

Throughout the rest of this section there appear various operators with wavefront sets contained in $\{\hat \sigma < 0\}$. Given $\beta, \delta$, we use the notation $E_r$ to denote a generic operator such that $\mathcal{E} = \{E_r: r\in (0,1)\}$ forms a bounded family (in a space of operators of fixed order, to be specified) and
\begin{equation} \label{eq:genericapriori}
\opWFb(\mathcal{E}) \subset \{-2\delta \leq \hat \sigma \leq -\delta, \, \omega^{1/2} \leq 2\beta \delta\}.
\end{equation}
Similarly, we denote by $T_r$ a generic operator such that
\begin{equation} \label{eq:genericlowerorder}
\opWFb(\mathcal{T}) \subset \{|\hat \sigma| \leq 2\delta, \, \omega^{1/2} \leq 2\beta\delta\},
\end{equation}
where we set $\mathcal{T}_r = \{T_r : r\in (0,1)\}$. Terms of the form $T_r$ will arise as lower order errors bounded by the inductive hypothesis. For notational flexibility we allow the operators $E_r, \,T_r$ to change from line to line, but their orders will always be made explicit.

\begin{lemm} \label{lem:hyperbolicHpphi}
	Let $g \in S^k(\bT^*X)$. Given $\beta >0$ and $c > 0$, there exists $\delta_0, C_0 > 0$ such that for each $\delta \in (0,\delta_0)$,
\[
|\pa_x \phi| + \rho^{1-k}|\{\phi,g\}| \leq C_0(\delta+ \beta \delta + \beta^{-1})
\]
whenever $|\hat \sigma| < c\delta$ and $\omega^{1/2} \leq c\beta\delta$.
\end{lemm}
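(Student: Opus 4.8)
The plan is a direct computation, so the main work is just bookkeeping of homogeneity weights. First I would observe that the quantity $|\pa_x\phi| + \rho^{1-k}|\{\phi,g\}|$ is homogeneous of degree zero in the fibres of $\bT^*X$ (since $\phi$, $\hat\sigma$, $\hat\eta_a$ and $\omega$ all are, and the Poisson bracket of a degree-$0$ and a degree-$k$ symbol is homogeneous of degree $k-1$), so it descends to a function on $\bS^*X$ near $\kappa(q_0)$ and it suffices to estimate it where $\rho=|\eta_{n-1}|=1$. On the region $\{|\hat\sigma|<c\delta,\ \omega^{1/2}\leq c\beta\delta\}$ one has the elementary bounds $|x|,\ |y-y_0|,\ |\hat\eta_a-(\hat\eta_0)_a|\leq\omega^{1/2}\leq c\beta\delta$ and $|\hat\sigma|<c\delta$; moreover, shrinking the coordinate neighborhood if necessary (cf. Lemma~\ref{lem:asupport}), $x$ ranges over a compact subset of $[0,\varepsilon)$ and the $\hat\eta_a$ stay in a fixed compact set, so finitely many symbol seminorms of $g$, evaluated at $\rho=1$, are $O(1)$.

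The term $\pa_x\phi$ is immediate: only the summand $|x|^2$ of $\omega$ depends on $x$ and $\hat\sigma$ does not, so $\pa_x\phi=2x/(\beta^2\delta)$ and $|\pa_x\phi|\leq 2c/\beta$. For the bracket I would use the local expression for the Poisson bracket recalled in Section~\ref{sec:bpsdo} and bound the derivatives of $\phi$ one by one at $\rho=1$: $\pa_\sigma\phi=1$; $x\pa_x\phi=2x^2/(\beta^2\delta)=O(\delta)$; $\pa_{y^\alpha}\phi=2(y^\alpha-y_0^\alpha)/(\beta^2\delta)=O(\beta^{-1})$; and, differentiating $\hat\sigma=\sigma/\rho$ and $\hat\eta_a=\eta_a/\rho$ in $\eta$, $\pa_{\eta_a}\phi=O(\beta^{-1})$ while $\pa_{\eta_{n-1}}\phi=O(\delta)+O(\beta^{-1})$, the $O(\delta)$ coming from $\pa_{\eta_{n-1}}\hat\sigma=-\hat\sigma\,\mathrm{sgn}(\eta_{n-1})$. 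Pairing these against the symbol bounds $|\pa_\sigma g|+\sum_i|\pa_{\eta_i}g|+\sum_i|\pa_{y^i}g|=O(1)$ and — crucially — against $|x\pa_x g|\leq C|x|\leq Cc\beta\delta$, which holds because $g$ has smooth coefficients so $\pa_x g$ is bounded on the compact set in question, yields $\{\phi,g\}=\pa_\sigma\phi\cdot x\pa_x g+(\text{rest})=O(\beta\delta)+O(\delta)+O(\beta^{-1})$. Since $\rho^{1-k}=1$ here this is the asserted bound, with $C_0$ depending only on $c$, on finitely many symbol seminorms of $g$, and on the fixed neighborhood; choosing $\delta_0$ small enough that $\{|\hat\sigma|<c\delta_0,\ \omega^{1/2}\leq c\beta\delta_0\}$ lies inside that neighborhood completes the argument.

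The only point that requires care is the term $\pa_\sigma\phi\cdot x\pa_x g$: because $\pa_\sigma\phi=\rho^{-1}$ is not itself small, its smallness rests entirely on the vanishing of $x\pa_x g$ at $\pa X$ (that is, on $g$ being a smooth-coefficient, not merely conormal, symbol) together with the localization $|x|\leq c\beta\delta$; every other contribution is small simply because the corresponding derivative of $\phi$ is $O(\delta)$ or $O(\beta^{-1})$ by inspection. Beyond that the proof is routine, the mildest subtlety being that $\eta_{n-1}$-derivatives act on $\rho=|\eta_{n-1}|$ and hence on each $\hat\eta_a$.
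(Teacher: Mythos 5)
Your proof is correct and essentially the same as the paper's: both rest on the key observation that $x\pa_x g=O(\omega^{1/2})$ because $g$ has coefficients smooth up to $x=0$, and both estimate the remaining Poisson bracket contributions by $O(\delta)$ or $O(\beta^{-1})$ using $|\hat\sigma|<c\delta$, $\omega^{1/2}\leq c\beta\delta$. The paper merely organizes $\{\phi,g\}$ by the Leibniz split $\{\rho^{-1},g\}\sigma+\{\sigma,g\}\rho^{-1}+(\beta^2\delta)^{-1}\{\omega,g\}$, whereas you expand the bracket coordinatewise; the estimates are the same in content.
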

\begin{proof}
	Note that we must take $\delta_0 > 0$ sufficiently small even to make sense of $\phi$, since it is only locally defined near $q_0$. First consider the Poisson bracket $\{\phi,g\}$, consisting of three terms
	\[
	\{\phi,g\} = \{\rho^{-1},g\}\sigma + \{\sigma,g\}\rho^{-1} + (\beta^2\delta)^{-1}\{\omega,g\}.
	\]
	The term $\rho^{1-k}|\{\rho^{-1},g\}\sigma|$ is bounded by a constant times $\delta$. Furthermore, because
	\[
	\rho^{1-k}|\{\omega,g\}| + \rho^{-k}|x\pa_x g| \leq C \omega^{1/2}
	\] 
	locally uniformly, the desired bound holds. Similarly, $|\pa_x \omega| \leq C \omega^{1/2}$, which completes the proof.
\end{proof}

Lemma \ref{lem:hyperbolicHpphi} can be applied to compute the commutator $[A_r^*A_r,G]$ for $G \in \Psib^k(X)$. Recall the convention regarding generic operators $E_r, T_r$ discussed before Lemma \ref{lem:hyperbolicHpphi}.



\begin{lemm} \label{lem:commutatorexpansion2}
Let $G \in \Psib^{k}(X)$. Given $\beta > 0$, there exists $\delta_0 >0$ such that for each $\delta \in (0,\delta_0)$,
	\[
	i[A_r^* A_r, G] = B_r^*D_r B_r + E_r + T_r,
	\]
	where $E_r \in \Psib^{2s+k}(X), \ T_r \in \Psib^{2s+k-1}(X)$, and $D_r \in \Psib^{k-2}(X)$. Moreover, there exists $C_0 > 0$ such that for every $r \in (0,1)$,
	\[
	\rho^{2-k}|\bsymbol{k-2}(D_r)| \leq C_0(\beta\delta + \delta + \beta^{-1}).
	\] 
\end{lemm}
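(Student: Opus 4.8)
The strategy is to pass to principal symbols, perform the Leibniz expansion of the relevant Poisson bracket, and recognize each resulting piece as one of $B_r^*D_rB_r$, $E_r$, or $T_r$. Write $g=\bsymbol{k}(G)$ and recall $\bsymbol{s+1/2}(A_r)=aj_r$ with $a=\chi_0(2-\phi/\delta)\chi_1(2+\hat\sigma/\delta)$ and $j_r=\psi_0\rho^{s+1/2}\langle r\rho\rangle^{m-s-1/2}$. By the b-calculus, $i[A_r^*A_r,G]$ is a bounded family in $\Psib^{2s+k}(X)$ with principal symbol $\{a^2j_r^2,g\}$ and with $\WFb'$ contained in $\WFb'(A_r)\subset\supp a$; by Lemma \ref{lem:asupport}, after shrinking $\delta_0$ the set $\supp a$ lies inside $\{|\hat\sigma|\le2\delta,\ \omega^{1/2}\le2\beta\delta\}$, so the order-$(2s+k-1)$ remainder of $i[A_r^*A_r,G]$ automatically satisfies \eqref{eq:genericlowerorder} and may be absorbed into $T_r$. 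Note also that on $\supp a$ one has $\psi_0\equiv1$, so there $j_r$ equals $f(\eta_{n-1}):=\rho^{s+1/2}\langle r\rho\rangle^{m-s-1/2}$, a function of $\eta_{n-1}$ alone.

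Applying Leibniz, $\{a^2j_r^2,g\}=2aj_r^2\{a,g\}+2a^2j_r\{j_r,g\}$, and by the chain rule $\{a,g\}=-\delta^{-1}\chi_0'\chi_1\{\phi,g\}+\delta^{-1}\chi_0\chi_1'\{\hat\sigma,g\}$. The $\chi_1'$-piece $2\delta^{-1}\chi_0^2\chi_1'\chi_1 j_r^2\{\hat\sigma,g\}$ is supported where $\chi_1'\neq0$ and $\chi_0\neq0$, i.e.\ where $\hat\sigma\in[-2\delta,-\delta]$ and $\phi<2\delta$; since then $\omega=\beta^2\delta(\phi-\hat\sigma)<4\beta^2\delta^2$, it satisfies \eqref{eq:genericapriori}, so its quantization is an $E_r\in\Psib^{2s+k}(X)$. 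The $\chi_0'$-piece equals $-2\delta^{-1}\chi_0'\chi_0\chi_1^2 j_r^2\{\phi,g\}$, which on $\supp a$ equals $|\bsymbol{s+1}(B_r)|^2\bigl(-2\rho^{-1}\{\phi,g\}\bigr)$, using $|\bsymbol{s+1}(B_r)|^2=\rho^2j_r^2b^2=\rho\,\delta^{-1}j_r^2\chi_0'\chi_0\chi_1^2$ there.

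The essential point is the regularizer term $2a^2j_r\{j_r,g\}$. On $\supp a$, because $f$ is a function of $\eta_{n-1}$ alone, the Poisson bracket reduces to $\{j_r,g\}=-f'\,\pa_{y^{n-1}}g$, so $\{j_r,g\}/j_r=-(\log f)'\,\pa_{y^{n-1}}g$, and $(\log f)'=(s+\tfrac12)\eta_{n-1}^{-1}+(m-s-\tfrac12)r^2\eta_{n-1}\langle r\rho\rangle^{-2}=O(\rho^{-1})$ uniformly in $r\in(0,1)$ (using $r^2\rho^2\langle r\rho\rangle^{-2}\le1$), with the analogous uniform bounds on its derivatives; hence $\{j_r,g\}/j_r$ is a bounded family in $S^{k-1}(\bT^*X)$. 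Writing $2a^2j_r\{j_r,g\}=|\bsymbol{s+1}(B_r)|^2d_r'$ with $d_r'=2\delta\,(\chi_0/\chi_0')\,\rho^{-1}\,\bigl(\{j_r,g\}/j_r\bigr)$ and using that $\chi_0/\chi_0'=(2-\phi/\delta)^2$ is smooth and bounded on $\supp a$, we find that $d_r'$ is a bounded family in $S^{k-2}(\bT^*X)$ with $\rho^{2-k}|d_r'|\le C\delta$. Now take $D_r$ to be any element of $\Psib^{k-2}(X)$ whose principal symbol agrees with $-2\rho^{-1}\{\phi,g\}+d_r'$ on $\WFb'(B_r)$, and let $E_r$ be the quantization of the $\chi_1'$-piece; by construction $i[A_r^*A_r,G]-B_r^*D_rB_r-E_r$ has vanishing order-$(2s+k)$ principal symbol and $\WFb'$ in $\supp a$, hence is a $T_r\in\Psib^{2s+k-1}(X)$. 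Finally, $\rho^{2-k}|\bsymbol{k-2}(D_r)|\le2\rho^{1-k}|\{\phi,g\}|+\rho^{2-k}|d_r'|\le C_0(\delta+\beta\delta+\beta^{-1})$, the first term by Lemma \ref{lem:hyperbolicHpphi} (applied on $\supp a$, where $|\hat\sigma|\le2\delta$ and $\omega^{1/2}\le2\beta\delta$) and the second by the bound on $d_r'$, after possibly shrinking $\delta_0$. The main obstacle is precisely this handling of the regularizer commutator: a priori it is a genuine top-order ($2s+k$) error supported on all of $\supp a$, matching neither the $E_r$ wavefront condition nor the $T_r$ order; the resolution is to fold it into $B_r^*D_rB_r$, which is possible because $(\log f)'=O(\rho^{-1})$ uniformly in the regularization parameter and $\chi_0/\chi_0'$ is bounded on the support of the commutant.
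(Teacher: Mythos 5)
Your proof is correct and follows essentially the same decomposition as the paper's: expand $\{a_r^2,g\}$ by Leibniz, send the $\chi_1'$-piece to $E_r$, factor the $\chi_0'$-piece and the $j_r$-bracket through $b_r^2$ using the identity $\chi_0(s)=s^2\chi_0'(s)$ (equivalently $\chi_0/\chi_0'=(2-\phi/\delta)^2$, which is bounded on $\supp a$), bound the $\{\phi,g\}$ contribution via Lemma \ref{lem:hyperbolicHpphi}, and absorb the symbolic remainder into $T_r$. The one place where you are more explicit than the paper is the treatment of the regularizer bracket: the paper disposes of $a^2\{j_r^2,g\}=d_{2,r}b_r^2$ with $\rho^{2-k}|d_{2,r}|\le C_0\delta$ by appealing to the ellipticity of $j_r$ on $V_0$, whereas you compute $\{j_r,g\}/j_r=-(\log f)'\,\pa_{y^{n-1}}g$ directly and verify $(\log f)'=O(\rho^{-1})$ uniformly in $r$. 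Your device of specifying $D_r$'s symbol only on $\WFb'(B_r)$ and choosing an arbitrary extension plays the same role as the paper's cutoff $\psi$ in $d_{1,r}=-2\psi\rho^{-1}\{\phi,g\}$; both serve to make $D_r$ globally defined while the difference lands in $T_r$.
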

\begin{proof} 
Let $g = \bsymbol{k}(G)$, so the principal symbol of $i[A_r^*A_r,G] \in \Psib^{2s+k}(X)$ is $2a_r\{a_r, g\}$. The claim is that we can write
\[
\{a_r^2,g\} = d_r b_r^2 + e_r
\]
with $d_r, \,e_r$ representing the principal symbols of the operators $D_r, \, E_r$. The lower order term $T_r$ then arises since we have arranged equality at the level of principal symbols. Recall that 	
\[
a_r = \chi_0(2-\phi/\delta) \chi_1(2+\sigma/\delta)j_r,
\]
so the Poisson bracket with $g$ is a sum of terms with derivatives landing on either $\chi_0,\,\chi_1$, or $j_r$. 

Fix a cutoff $\psi \in \CcI(\bS^*X)$ such that $\psi = 1$ on $\{ |\hat \sigma| \leq 2\delta, \, \omega^{1/2} \leq 2\beta\delta\}$ and $\supp \psi \subset \{ |\hat \sigma| < 3\delta, \, \omega^{1/2} < 3\beta\delta\}$.

\begin{inparaenum}
	
	\item When $\chi_0$ is differentiated we obtain a term $-2\rho^{-1}\{\phi,g\}b_r^2$. Now set
	\[
	d_{1,r} = -2 \psi \rho^{-1} \{\phi,g\},
	\]
	to which Lemma \ref{lem:hyperbolicHpphi} applies. The term $d_{1,r}$ will partly comprise $d_r$; note that $d_{1,r}$ is in fact independent of $r$.
	
	\item The second constituent of $d_r$, in addition to $d_{1,r}$ above, arises when $j_r$ is differentiated. Thus we have a term of the form $a^2 \{j_r^2,g\}$.  By construction $\chi_0(s) = s^2 \chi_0'(s)$ for $s>0$, so  
	\[
	\rho^{1/2} a = \rho^{1/2}(2-\phi/\delta) (\chi_0'\chi_0)^{1/2}\chi_1 = \delta^{1/2}(2-\phi/\delta)\rho b.
	\]
	Also note that $|2-\phi/\delta| < 4$ on $\supp b$. Using that $j_r$ is elliptic on $V_0$, given $\delta_0 >0$ sufficiently small we can write 
	\[
	a^2 \{j_r^2,g\} = d_{2,r} b_r^2,
	\] 
	where $\rho^{2-k}|d_{2,r}| \leq C_0\delta$. Finally, we let $d_r = d_{1,r} + d_{2,r}$.
	
	\item The term $e_r$ arises when $\chi_1$ is differentiated, hence has the desired support properties.
	\end{inparaenum}
\end{proof}

Now we expand $2i \Im \mathcal{E}_0(u, A_r^* A_r u)$, recalling from the end of Section \ref{subsect:dirichletmicrolocal} that
\begin{equation} \label{eq:commutatorexpansion}
\begin{split}  
2i \Im \mathcal{E}_0(u, A_r^*A_ru) &= \langle Q_0 u,  Q_0 A_{1,r}u \rangle - \langle Q_0 A_{1,r} u, Q_0 u \rangle \\ &+ \langle Q_0 u, A_{0,r} u \rangle - \langle A_{0,r} u, Q_0 u \rangle
\\ &+ \langle [Q_\alpha \hat g^{\alpha\beta}Q_\beta, A_r^*A_r]u,u \rangle,
\end{split} 
\end{equation}
where $\bsymbol{-1}(A_{1,r}) = (1/i)\pa_\sigma (a_r^2)$ and $\bsymbol{0}(A_{0,r}) = (1/i)\pa_x (a_r^2)$. 
\begin{lemm} \label{lem:hyperbolicpositivecommutator}
	There exist $C_1,c,\beta,\delta_0 > 0$, a cutoff $\chi \in \CcI(X)$, and an operator $G_2 \in \Psib^{s}(X)$ with 
	\[
	\opWFb(G_2) \subset W \cap \{\sigma < 0\},
	\] 
 such that
	\begin{multline*}
	c\| \tilde B_r u \|^2_{\tSob^1(X)}  \leq - 2 \Im \mathcal{E}_0(u, A_r^*A_ru) + C_1 \| G_2 u \|_{\tSob^1(X)}^2\\
+ C_1( \|G_0 P_{\Robin} u\|_{\tdotSob^{-1}(X)}^2 + \| G_1 u \|_{\tSob^1(X)}^2 + \| \chi u \|^2_{\tSob^{1,m}(X)} + \|\chi  P_{\Robin} u\|_{\tdotSob^{-1,m}(X)}^2)
	\end{multline*}
	for every $\delta \in (0,\delta_0)$.
\end{lemm}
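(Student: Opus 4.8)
The plan is to carry out the positive commutator argument of Vasy \cite[Section 6]{vasy2008propagation} at the level of the quadratic form $\mathcal{E}_0$, using the twisted commutator identities of Lemmas \ref{lem:Q0commutator}, \ref{lem:commutatorexpansion2} and \ref{lem:hyperbolicHpphi} to control the b-calculus remainders. First I would expand $2i\Im\mathcal{E}_0(u,A_r^*A_ru)$ as in \eqref{eq:commutatorexpansion}. Writing $a_r=aj_r$ and using the identity $\chi_0(s)=s^2\chi_0'(s)$, the leading part of the symbol $(1/i)\pa_\sigma(a_r^2)$ of $A_{1,r}$ is $2i\,\tilde b_r^2$ up to lower order, where $\tilde b_r=\bsymbol{s}(\tilde B_r)=j_rb$, so that the leading part of $A_{1,r}$ is $2i\,\tilde B_r^*\tilde B_r$; the remaining pieces of $\pa_\sigma(a_r^2)$ come either from differentiating $\chi_1$ (these are supported where $\chi_1'\ne0$) or from differentiating $j_r$, and drop a b-order. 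Commuting $Q_0$ through $\tilde B_r^*\tilde B_r$ in the two $A_{1,r}$-terms of \eqref{eq:commutatorexpansion}, the leading part contributes a nonnegative term comparable to $\|\tilde B_r Q_0 u\|^2_{\Ltwo(X)}$ to $-2\Im\mathcal{E}_0(u,A_r^*A_ru)$, modulo errors.

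Next I would dispose of the errors, in three types. (i) The $\chi_1'$-part of $A_{1,r}$, and the remainder $E_r$ produced by Lemma \ref{lem:commutatorexpansion2} from the tangential commutator $\langle[Q_\alpha\hat g^{\alpha\beta}Q_\beta,A_r^*A_r]u,u\rangle$, have operator wavefront set in $\{-2\delta\le\hat\sigma\le-\delta,\ \omega^{1/2}\le 2\beta\delta\}$, which by Lemma \ref{lem:asupport} lies in $W\cap\{\sigma<0\}$ once $\delta$ is small; by an elliptic estimate these are bounded by $C_1\|G_2u\|^2_{\tSob^1(X)}$ for a suitable $G_2\in\Psib^s(X)$ with $\opWFb(G_2)\subset W\cap\{\sigma<0\}$. (ii) The term coming from $A_{0,r}$ carries the factor $|\pa_x\phi|$, and the principal part $\langle D_rB_ru,B_ru\rangle$ of the tangential commutator carries $\|\bsymbol{0}(D_r)\|_{L^\infty}$; by Lemmas \ref{lem:hyperbolicHpphi} and \ref{lem:commutatorexpansion2} both are $\le C_0(\beta\delta+\delta+\beta^{-1})$, and since $\rho$ is elliptic on $\supp\psi_0$ one has $\|B_ru\|_{\Ltwo(X)}\le C\|\tilde B_ru\|_{\tSob^1(X)}$ modulo lower order, so these contributions are $\le C_0C(\beta\delta+\delta+\beta^{-1})\|\tilde B_ru\|^2_{\tSob^1(X)}$ plus lower order. (iii) The $j_r$-derivative terms and the generic remainders $T_r$ (of b-order one below the main term) are bounded, uniformly in $r$, by $C_1(\|G_1u\|^2_{\tSob^1(X)}+\|\chi u\|^2_{\tSob^{1,m}(X)})$ via Lemmas \ref{lem:ellipticestimate} and \ref{lem:L2intermsofH1} and the a priori hypothesis $\WFb^{1,s-1/2}(u)\cap\opWFb(G_1)=\emptyset$.

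There remains the point that $\|\tilde B_r Q_0 u\|^2_{\Ltwo(X)}$ only controls the normal component of $\tilde B_r u$ — the symbol $\sigma\tilde b_r$ of $\tilde B_r Q_0$ vanishes at $q_0$ — so recovering the full norm $\|\tilde B_r u\|_{\tSob^1(X)}$ must use the equation. Shrinking $V_0$, I would first arrange that $\hat g^{\alpha\beta}\eta_\alpha\eta_\beta$ is positive definite (not merely positive) on $\supp\psi_0$, which is possible in the hyperbolic region since there $\eta_{n-1}^2$ dominates $h^{ab}\eta_a\eta_b$; then the sharp Gårding inequality gives $\mathcal{Q}(\tilde B_r u,\tilde B_r u)\le\|Q_0\tilde B_r u\|^2_{\Ltwo(X)}+C\,\mathcal{E}_0(\tilde B_r u,\tilde B_r u)+C\|\tilde B_r u\|^2_{\Ltwo(X)}$. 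Applying Lemma \ref{lem:dirichletform2} to the bounded family $\{\tilde B_r\}\subset\Psib^s(X)$ (with $\tilde B_r\in\Psib^{m-1/2}(X)\subset\Psib^m(X)$ for $r>0$) to absorb $\mathcal{E}_0(\tilde B_r u,\tilde B_r u)$ into $\varepsilon\,\mathcal{Q}(\tilde B_r u,\tilde B_r u)$ plus the terms $\|\chi u\|^2_{\tSob^{1,m}(X)},\ \|\chi P_{\Robin}u\|^2_{\tdotSob^{-1,m}(X)},\ \|G_1u\|^2_{\tSob^1(X)},\ \|G_0P_{\Robin}u\|^2_{\tdotSob^{-1}(X)}$ — this is where the $P_{\Robin}u$-terms of the statement enter — together with $\|\tilde B_r u\|^2_{\Ltwo(X)}\le C(\|G_1u\|^2_{\tSob^1(X)}+\|\chi u\|^2_{\tSob^{1,m}(X)})$ (valid since $q_0\notin\WFb^{1,s-1/2}(u)$ forces $q_0\notin\WFb^{0,s}(u)$) and $\|Q_0\tilde B_r u\|^2_{\Ltwo(X)}\le\|\tilde B_r Q_0 u\|^2_{\Ltwo(X)}+(\text{l.o.t.})$, this yields $\|\tilde B_r u\|^2_{\tSob^1(X)}\le C'\|\tilde B_r Q_0 u\|^2_{\Ltwo(X)}+(\text{right-hand side terms})$. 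Combining this with the first two paragraphs and then choosing first $\beta$ large and then $\delta$ small — so that the total coefficient of $\|\tilde B_r u\|^2_{\tSob^1(X)}$ arising from the type-(ii) errors and from the $\varepsilon$ in Lemma \ref{lem:dirichletform2} falls below the constant in front of the main term — one absorbs these into the left-hand side and obtains the asserted inequality.

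I expect the main obstacle to be the bookkeeping underlying step (ii) and the norm recovery: one must verify that every remainder generated by commuting $Q_0$ (or $Q_\alpha$) past a b-pseudodifferential operator either carries the crucial extra factor $O(\beta\delta+\delta+\beta^{-1})$ — which is the whole content of Lemmas \ref{lem:hyperbolicHpphi} and \ref{lem:commutatorexpansion2}, resting in turn on the swap identity of Lemma \ref{lem:Q0commutator} — or drops a full b-order and is therefore controlled by the inductive hypothesis; and, because the commutator only sees the $Q_0$-derivative of $\tilde B_r u$, the recovery of the full $\tSob^1$-norm is forced to invoke the equation $x^{-2}Pu=P_{\Robin}u$ through Lemma \ref{lem:dirichletform2}, whose internal boundary term (cf.\ Remark \ref{rem:traceinterpolation}) is the one ingredient with no counterpart in the smooth theory. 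Some care with the order of the quantifiers ($\beta$ before $\delta$, both before the weak-limit extraction carried out in the proof of Proposition \ref{prop:hyperbolicregion}) is needed but standard.
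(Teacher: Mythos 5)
Your proposal follows the overall shape of the paper's argument closely — the expansion \eqref{eq:commutatorexpansion}, the identification of $\|\tilde B_rQ_0u\|^2$ as the leading positive contribution, the $\chi_1'$ errors going into $G_2$, and the small-coefficient errors from Lemmas \ref{lem:hyperbolicHpphi}--\ref{lem:commutatorexpansion2} — but it diverges from the paper at the crucial ``norm recovery'' step, and that is where there is a genuine problem.

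First, a point of emphasis: what the paper actually does with the main commutator term is \emph{not} to commute $Q_0$ past $\tilde B_r^*\tilde B_r$ and land on $\|\tilde B_rQ_0u\|^2$. Instead it rewrites, using the Dirichlet form identity,
\[
\langle Q_0 u, Q_0\tilde B_r^*\tilde B_r u\rangle = \mathcal{E}_0(u,\tilde B_r^*\tilde B_r u)\; \mp\; \langle \hat g^{\alpha\beta}Q_\alpha u, Q_\beta \tilde B_r^*\tilde B_r u\rangle,
\]
bounds $\mathcal{E}_0(u,\tilde B_r^*\tilde B_ru)$ by the source (Lemmas \ref{lem:dirichletform1}, \ref{lem:dirichletform2}), and identifies the tangential term as $\|CB_r u\|^2$, with $C$ elliptic on $\opWFb(\mathcal{B})$ because $\hat g^{\alpha\beta}\eta_\alpha\eta_\beta>0$ there. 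Thus the positive commutator directly delivers control over $\|B_r u\|^2 \approx \|Q_{n-1}\tilde B_r u\|^2$, which is exactly the missing ingredient in \eqref{eq:Qn-1control} that turns the indefinite Dirichlet form $\mathcal{E}_0(\tilde B_ru,\tilde B_ru)$ into the definite $\mathcal{Q}(\tilde B_ru,\tilde B_ru)$.

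Your approach instead keeps $\|Q_0\tilde B_ru\|^2$ and tries to recover $\mathcal{Q}(\tilde B_ru,\tilde B_ru)$ by a sharp-G\aa{}rding argument. The idea is sound in spirit — since $\hat g^{\alpha\beta}\eta_\alpha\eta_\beta\geq c\,|\eta|^2$ on a small conic neighbourhood of $q_0$ (which you can arrange by shrinking $\psi_0$), the operator $\tilde B_r^*(Q_\beta^*\hat g^{\alpha\beta}Q_\alpha - c\sum Q_\alpha^*Q_\alpha)\tilde B_r$ has nonnegative symbol, and G\aa{}rding gives $\langle\hat g^{\alpha\beta}Q_\alpha\tilde B_r u,Q_\beta\tilde B_r u\rangle\gtrsim c\sum\|Q_\alpha\tilde B_r u\|^2$ modulo errors of order $s-1/2$. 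But this yields
\[
\mathcal{Q}(\tilde B_r u,\tilde B_r u) \;\lesssim\; \|Q_0\tilde B_r u\|^2 \;-\; \mathcal{E}_0(\tilde B_r u,\tilde B_r u) \;+\; \mathrm{errors},
\]
\emph{with a minus sign in front of $\mathcal{E}_0$}. This is forced by \eqref{eq:Qn-1control}, which says that the normal contribution to $\mathcal{E}_0$ enters with sign $+$ and the ``timelike'' tangential contribution with sign $-$ (if both were $+$, the hyperbolic region would be elliptic and no propagation hypothesis would be needed — which is false). Your displayed inequality has $+\,C\,\mathcal{E}_0(\tilde B_r u,\tilde B_r u)$; it is not implied by G\aa{}rding under the paper's conventions.

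The sign matters: your next step is to ``absorb $\mathcal{E}_0(\tilde B_r u,\tilde B_r u)$ into $\varepsilon\mathcal{Q}$'' by Lemma \ref{lem:dirichletform2}, which gives the \emph{upper} bound $\mathcal{E}_0(\tilde B_ru,\tilde B_ru)\leq\varepsilon\mathcal{Q}(\tilde B_ru,\tilde B_ru)+C_\varepsilon(\dots)$. Under the correct sign you instead need the \emph{lower} bound $\mathcal{E}_0(\tilde B_ru,\tilde B_ru)\geq-\varepsilon\mathcal{Q}(\tilde B_ru,\tilde B_ru)-C_\varepsilon(\dots)$, which is not what the lemma states. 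It does follow by the same chain of inequalities (everything in the proof of Lemma \ref{lem:dirichletform2} is in fact an absolute-value estimate), but it is a point you would have to make explicit. As written, the sign error and the one-sided lemma ``cancel'' to produce what looks like a valid chain, but the underlying G\aa{}rding inequality that you invoke is false.

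So: the gap is in the ``then the sharp G\aa{}rding inequality gives $\mathcal{Q}(\tilde B_r u,\tilde B_r u)\leq\|Q_0\tilde B_ru\|^2 + C\mathcal{E}_0(\tilde B_ru,\tilde B_ru)+C\|\tilde B_ru\|^2$'' step. With the sign corrected (to $-\mathcal{E}_0$) and the two-sided Dirichlet-form bound supplied, your route — direct commutation of $Q_0$ plus a G\aa{}rding recovery — can be made to work, and is a genuine alternative to the paper's Dirichlet-form rewrite plus \eqref{eq:Qn-1control}. The paper's version has the advantage of exposing $\|CB_ru\|^2$ directly (no sharp G\aa{}rding needed, and \eqref{eq:Qn-1control} is a pointwise coefficient estimate uniform in $\delta$), and needs only the two-sided bound on $\mathcal{E}_0(u,\tilde B_r^*\tilde B_ru)$, which is built in as $|\langle P_\Robin u, \tilde B_r^*\tilde B_ru\rangle|$. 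I would encourage you to compare your commutator computation with the paper's proof, where the main term is not $\|\tilde B_rQ_0u\|^2$ but rather $\|CB_ru\|^2$ extracted from $\langle\hat g^{\alpha\beta}Q_\alpha u,Q_\beta\tilde B_r^*\tilde B_ru\rangle$ — this is precisely what you need to feed into \eqref{eq:Qn-1control}, and is the key observation that the normal-derivative positivity can be traded for timelike-tangential positivity on the hyperbolic set.
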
 
\begin{proof}
Note that $\opWFb(G_2) \cap \WFb^{1,s}(u) = \emptyset$ by the hypotheses on $u$. According to \eqref{eq:Qn-1control}, 
 \[
 \mathcal{Q}(\tilde B_r u, \tilde B_r u) \leq C (\| B_r u \|^2_{\Ltwo(X)} + \mathcal{E}_0( \tilde B_r u, \tilde B_r u) + \| G_1 u \|_{\tSob^1(X)}^2)
 \]
provided $\delta > 0$ is sufficiently small. Here we used the fact that $B_r$ and $D_{\eta_{n-1}}\tilde B_r$ have the same principal symbol (up to a sign which is irrelevant when taking norms).  On the other hand,  Lemma \ref{lem:dirichletform2} applies to the family $\{ \tilde B_r: r\in (0,1)\}$, and thus
\begin{multline*}
 \mathcal{Q}(\tilde B_r u, \tilde B_r u) \leq C \| B_r u \|^2_{\Ltwo(X)}
 \\ + C_1 \|G_0 P_{\Robin} u\|_{\tdotSob^{-1}(X)}^2 + \| G_1 u \|_{\tSob^1(X)}^2 + \| \chi u \|^2_{\tSob^{1,m}(X)} + \|\chi  P_{\Robin} u\|_{\tdotSob^{-1,m}(X)}^2).
\end{multline*}
Therefore it suffices to bound $\| B_r u \|_{\Ltwo(X)}$ using \eqref{eq:commutatorexpansion}.

Arguing as in Lemma \ref{lem:commutatorexpansion2}, we can expand $i A_{1,r} = \tilde B_r^* \tilde B_r + E_r + T_r$,
	where $E_r \in \Psib^{2s}(X)$ and $T_r \in \Psib^{2s-1}(X)$. Therefore
	\[
	\langle Q_0 u, iQ_0 A_{1,r} u \rangle = \langle Q_0 u, Q_0 (\tilde B_r^* \tilde B_r + E_r + T_r)u \rangle.
	\]
	The pairing involving $\tilde B_r^* \tilde B_r $ can be re-expressed in terms of the Dirichlet form itself:
	\begin{equation} \label{eq:mainhyperbolicterm}
	\langle Q_0, iQ_0A_{1,r}u \rangle = \mathcal{E}_0(u, \tilde B_r^* \tilde B_r  u) - \langle \hat g^{\alpha\beta}Q_{\alpha} u, Q_\beta \tilde B_r^* \tilde B_r u \rangle +  \langle Q_0u, Q_0(E_r + T_r )u \rangle,
	\end{equation}
	recalling that $E_r,\,T_r$ are allowed to change from line to line. The last term on the right hand side of \eqref{eq:mainhyperbolicterm} is bounded by a constant times $\| G_1 u \|_{\tSob^1(X)}^2 + \| G_2 u \|_{\tSob^1(X)}^2 + \| \chi u \|^2_{\tSob^{1,m}(X)}$. As for $\mathcal{E}_0(u, \tilde B_r^* \tilde B_r  u)$, it is bounded by acceptable terms by arguing as in Lemmas \ref{lem:dirichletform1} and \ref{lem:dirichletform2}. Finally, we can write
	\[
	\langle \hat g^{\alpha\beta}Q_{\alpha} u, Q_\beta \tilde B_r^* \tilde B_r u \rangle = \| CB_r u \|_{\Ltwo(X)}^2.
	\]
	modulo acceptable errors, where $C$ has principal symbol \eqref{eq:Csymbol}.
	
	It remains to consider the second and third lines of \eqref{eq:commutatorexpansion}. For the third term, we apply Lemma \ref{lem:commutatorexpansion2} to write
	\[
	\langle i[Q_\alpha \hat g^{\alpha\beta}Q_\beta, A_r^*A_r]u,u \rangle = B_r^* D_r B_r + E_r + T_r,
	\]
	where $D_r \in \Psib^0(X)$ satisfies 
	\[
	|\bsymbol{0}(D_r)| \leq C_0(\beta\delta + \delta + \beta^{-1}).
	\] 
	Given $\varepsilon>0$, we can first fix $\beta >0$ large and then take $\delta_1 > 0$ sufficiently small so that 
	\[
	|\langle [Q_\alpha \hat g^{\alpha\beta}Q_\beta, A_r^*A_r]u,u \rangle| \leq \varepsilon \| B_r u \|_{\Ltwo(X)}^2 + C( \| G_1 u \|_{\tSob^1(X)}^2 + \| G_2 u \|_{\tSob^1(X)}^2 + \| \chi u \|^2_{\tSob^{1,m}(X)}).
	\]
As	for the term $\langle Q_0, iA_{0,r}u\rangle$, note that we can write $iA_{0,r} = B_r^* D_r' B_r + T_r$, where $T_r \in \Psib^{2s}(X)$ and $D'_r \in \Psib^0(X)$ satisfies 
\[
|\bsymbol{0}(D_r')| \leq C_0 \beta \delta.
\]
Thus we can similarly bound
\[
|\langle Q_0, iA_{0,r}u\rangle| \leq \varepsilon \| B_r u \|_{\Ltwo(X)}^2 + C( \| G_1 u \|_{\tSob^1(X)}^2 + \| G_2 u \|_{\tSob^1(X)}^2 + \| \chi u \|^2_{\tSob^{1,m}(X)}).
\]
We can now take $\varepsilon$ sufficiently small, noting that $C$ is elliptic on $\opWFb(\mathcal{B})$, where $\mathcal{B} = \{B_r: r \in (0,1)\}$. 
\end{proof}

The final step in the proof of Proposition \ref{prop:hyperbolicregion} is to bound $\Im \mathcal{E}_0(u, A_r^*A_ru)$. 

\begin{lemm} \label{lem:hyperbolicerrorterms}
Given $\varepsilon > 0$, there exists $\beta  >0$ and $\delta_0 > 0$ such that
\begin{multline*}
\Im \mathcal{E}_0(u,A_r^* A_r u) \leq \varepsilon \| \tilde B_r u \|^2_{\tSob^1(X)} + C_1 \| G_2 u \|_{\tSob^1(X)}^2\\
+ C_1( \|G_0 P_{\Robin} u\|_{\tdotSob^{-1}(X)}^2 + \| G_1 u \|_{\tSob^1(X)}^2 + \| \chi u \|^2_{\tSob^{1,m}(X)} + \|\chi  P_{\Robin} u\|_{\tdotSob^{-1,m}(X)}^2)
\end{multline*}
for every $\delta \in (0,\delta_0)$.
\end{lemm}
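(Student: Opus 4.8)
The plan is to compute $\Im\mathcal{E}_0(u,A_r^*A_ru)$ from the Robin form rather than from the commutator expansion \eqref{eq:commutatorexpansion}. For each fixed $r$ one has $A_r\in\Psib^m(X)$, hence $A_r^*A_ru\in\tSob^{1,-m}_\comp(X)\subset\tSob^1_\comp(X)$, so the definition of $P_{\Robin}$ gives
\[
\mathcal{E}_0(u,A_r^*A_ru)=\langle P_{\Robin}u,A_r^*A_ru\rangle-\langle x^{-2}S_Fu,A_r^*A_ru\rangle-\langle\beta\gamma_-u,\gamma_-(A_r^*A_ru)\rangle_{\pa X},
\]
and it suffices to bound the three terms on the right. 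The middle one is handled exactly as in Lemma \ref{lem:dirichletform2}: writing it as $\langle A_rx^{-2}S_Fu,A_ru\rangle$ and applying Lemma \ref{lem:L2intermsofH1} twice (with the auxiliary operator one order below the family, i.e.\ of order $s-1/2$), $|\langle x^{-2}S_Fu,A_r^*A_ru\rangle|\le C(\|G_1u\|^2_{\tSob^1(X)}+\|\chi u\|^2_{\tSob^{1,m}(X)})$. So the content is in the first and third terms, for both of which the key is a factorization separating the $(s+1)$-regularity of $P_{\Robin}u$ near $q_0$ from the $(s-1/2)$-regularity of $u$ furnished by the inductive hypothesis.

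For the first term, the identity $\chi_0(t)=t^2\chi_0'(t)$ gives $a^2=\rho\,\delta\,(2-\phi/\delta)^2\,b^2$, while $\hat\sigma\ge-2\delta$ on $\supp\chi_1(2+\hat\sigma/\delta)$ together with $\omega\ge0$ forces $0\le2-\phi/\delta\le4$ on $\supp a$; hence $\delta(2-\phi/\delta)^2$, cut off near $\supp a$, is a symbol of order $0$ with $\Ltwo$-operator norm $O(\delta)$. Multiplying by $j_r^2$ and recalling $\bsymbol{s+1}(B_r)=\rho j_rb$, $\bsymbol{s}(\tilde B_r)=j_rb$, one obtains modulo $\Psib^{2s}(X)$ a decomposition $A_r^*A_r=B_r^*M_r\tilde B_r+T_r$ with $M_r\in\Psib^0(X)$ a bounded family of $\Ltwo$-norm $O(\delta)$, $T_r\in\Psib^{2s}(X)$, and all microsupports contained in a small neighbourhood of $q_0$ lying in $\ellb(G_0)\cap\ellb(G_1)$. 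Then $\langle P_{\Robin}u,A_r^*A_ru\rangle=\langle B_rP_{\Robin}u,M_r\tilde B_ru\rangle+\langle P_{\Robin}u,T_ru\rangle$; by the elliptic estimate (Lemma \ref{lem:ellipticestimate}) one has $\|B_rP_{\Robin}u\|_{\tdotSob^{-1}(X)}\le C(\|G_0P_{\Robin}u\|_{\tdotSob^{-1}(X)}+\|\chi P_{\Robin}u\|_{\tdotSob^{-1,m}(X)})$, while commuting each $Q_i$ through $M_r$ and using \eqref{eq:H1boundedbyseminorm} and Lemma \ref{lem:L2intermsofH1} for the commutator (which acts on $\tilde B_ru\in\Ltwo_\loc(X)$) gives $\|M_r\tilde B_ru\|_{\tSob^1(X)}\le C\delta\|\tilde B_ru\|_{\tSob^1(X)}+C(\|G_1u\|_{\tSob^1(X)}+\|\chi u\|_{\tSob^{1,m}(X)})$. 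Splitting $T_r=T_r^{(1)*}T_r^{(2)}$ with $T_r^{(1)}\in\Psib^{s+1/2}(X)$ and $T_r^{(2)}\in\Psib^{s-1/2}(X)$ reduces the second pairing to the inductive-hypothesis norms; a weighted Cauchy--Schwarz (weight $\sqrt\delta$) then bounds $|\langle P_{\Robin}u,A_r^*A_ru\rangle|$ by $\varepsilon\|\tilde B_ru\|^2_{\tSob^1(X)}$ plus the allowed error terms, provided $\delta_0$ is small.

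For the boundary term, using \eqref{eq:gamma_-commute} and $\indicial(A_r^*A_r)(-i\nu_-)=\indicial(\tilde A_r)(-i\nu_-)^*\indicial(A_r)(-i\nu_-)$ as in Lemma \ref{lem:dirichletform2}, its imaginary part is $\langle R_r\gamma_-u,\gamma_-u\rangle_{\pa X}$ with $R_r\in\Psi^{2s}(\pa X)$ --- one order below the naive $2s+1$, because $\beta$ is real and $A_r,\tilde A_r$ share a principal symbol, so the leading symbols of $\indicial(A_r)^*\indicial(\tilde A_r)\beta$ and $\beta\,\indicial(\tilde A_r)^*\indicial(A_r)$ cancel. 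The remainder is of Poisson-bracket type, and splitting according to whether $\chi_0$, $j_r$ or $\chi_1$ is differentiated (as in the proof of Lemma \ref{lem:commutatorexpansion2}) yields $R_r=\mathbf{b}_r^*M_r^\partial\mathbf{b}_r+E_r^\partial+T_r^\partial$, where $\mathbf{b}_r=\indicial(\tilde B_r)(-i\nu_-)$, $M_r^\partial\in\Psi^0(\pa X)$ is bounded, $E_r^\partial\in\Psi^{2s}(\pa X)$ has microsupport in $\{\sigma<0\}$, and $T_r^\partial\in\Psi^{2s-1}(\pa X)$. Realizing $\mathbf{b}_r$ and the order-$s$ factors of $E_r^\partial,T_r^\partial$ as indicial families of $\Psib$-operators so that $\mathbf{b}_r\gamma_-u=\gamma_-(\tilde B_ru)$ and similarly for the others, each term is bounded by a constant times the $L^2(\pa X)$-norm squared of such a weighted trace, and the trace interpolation inequality \eqref{eq:traceinterpolation2} converts these into $\varepsilon\|\tilde B_ru\|^2_{\tSob^1(X)}$, $\varepsilon\|G_2u\|^2_{\tSob^1(X)}$ (for the $E_r^\partial$ piece, with $\opWFb(G_2)\subset W\cap\{\sigma<0\}$) and lower-order pieces, plus $C_\varepsilon$ times $\Ltwo(X)$-norms absorbed into $\|G_1u\|^2_{\tSob^1(X)}+\|\chi u\|^2_{\tSob^{1,m}(X)}$ via Lemma \ref{lem:L2intermsofH1}. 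Adding the three contributions and choosing $\beta$ large and then $\delta_0$ small gives the claim.

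The main obstacle is the symbolic bookkeeping behind the two factorizations: verifying that $\delta(2-\phi/\delta)^2$ genuinely has $O(\delta)$ operator norm on $\supp a$ (so that one full order is gained in the $B_r^*(\cdot)\tilde B_r$ splitting), and that the imaginary part of the boundary pairing really drops an order and can then be routed through \eqref{eq:traceinterpolation2}. Everything else is a routine application of the elliptic estimate, Lemma \ref{lem:L2intermsofH1} and \eqref{eq:traceinterpolation2}, entirely in the spirit of Lemmas \ref{lem:dirichletform1}, \ref{lem:dirichletform2} and \ref{lem:hyperbolicpositivecommutator}.
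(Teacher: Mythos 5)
Your proposal takes essentially the same route as the paper's: both proofs start from the identity $\mathcal{E}_0(u,A_r^*A_ru) = \langle P_R u, A_r^*A_ru\rangle - \langle x^{-2}S_Fu, A_r^*A_ru\rangle - \langle\beta\gamma_-u,\gamma_-(A_r^*A_ru)\rangle_{\pa X}$, bound the $P_Ru$ term using the elliptic estimate together with the relation $\rho^{1/2}a = \delta^{1/2}(2-\phi/\delta)\rho b$ and a small Cauchy--Schwarz weight, treat the $S_F$ term by Lemma \ref{lem:L2intermsofH1}, and exploit the same principal-symbol cancellation (real $\beta$, shared principal symbol of $A_r$ and $\tilde A_r$) to gain one order in the boundary term before invoking \eqref{eq:traceinterpolation}/\eqref{eq:traceinterpolation2}. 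The only genuine difference in organization is that for the boundary term the paper factorizes \emph{in} $X$ via Lemma \ref{lem:commutatorexpansion2} (one term from $[A_r^*A_r,\beta]$, one from $\tilde A_r^*\tilde A_r - A_r^*A_r$) and then pushes everything to $\pa X$ via $\gamma_-$ and $\widehat N(\cdot)(-i\nu_-)$, whereas you propose to carry out the analogous symbolic decomposition directly on $\pa X$; these are interchangeable.

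One step in your write-up is not actually true as stated, though it does not affect the conclusion: the claim $\|M_r\tilde B_ru\|_{\tSob^1(X)}\le C\delta\|\tilde B_ru\|_{\tSob^1(X)} + (\text{errors})$. The principal symbol of $M_r$ is $\delta(2-\phi/\delta)^2$, which is indeed $O(\delta)$ on $\supp a$, but $\pa_\sigma\big(\delta(2-\phi/\delta)^2\big) = -2(2-\phi/\delta)\rho^{-1}$ is $O(1)$, so the commutator contribution $M_{1,r}Q_0$ arising from Lemma \ref{lem:Q0commutator} carries no $\delta$-gain; you only get $\|M_r\tilde B_ru\|_{\tSob^1(X)}\le C\|\tilde B_ru\|_{\tSob^1(X)} + (\text{errors})$ uniformly in $r$. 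This is exactly what the paper uses --- it never claims smallness of the operator $L$ with $\bsymbol{0}(L)=\delta^{1/2}(2-\phi/\delta)$ --- and the $\varepsilon$-factor in front of $\|\tilde B_ru\|^2_{\tSob^1(X)}$ must come from the weight in Cauchy--Schwarz (choose it of size $\varepsilon$, with the reciprocal absorbed into $C_1$), not from shrinking $\delta_0$. With that adjustment your argument is correct.
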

\begin{proof}
Let $\Lambda_{-1/2} \in \Psib^{1/2}(X)$ be elliptic. First, we bound $\mathcal{E}_{\Robin}(u,A_r^* A_r u)$ similar to the beginning of proof of Lemma \ref{lem:dirichletform2}:
\[
|\mathcal{E}_{\Robin}(u,A_r^* A_r u)| \leq \varepsilon \| \Lambda_{-1/2} A_r u \|^2_{\tSob^1(X)} +  C_\varepsilon(\| G_0 f\|^2_{\tdotSob^{-1}(X)} + \| G_1 u \|^2_{\tSob^{1,m}(X)} +\| \chi u \|^2_{\tSob^{1,m}(X)}). 
\]
On the other hand, as in the proof of Lemma \ref{lem:commutatorexpansion2} we can write $\Lambda_{-1/2}A_r = LB_r + T_r$ with $L \in \Psib^0(X)$ and $T_r \in \Psib^{s-1}(X)$; thus we can bound
\[
\| \Lambda_{-1/2} A_r u \|^2_{\tSob^1(X)} \leq C(\| \tilde B_r u \|_{\tSob^1(X)}^2 + \| G_1 u \|^2_{\tSob^1(X)}).
\]
It thus remains to bound the difference between $\Im \mathcal{E}_R(u,A_r^* A_r u)$ and $\Im \mathcal{E}_0(u,A_r^* A_r u)$. There are two terms to consider. Since $S_F$ is real-valued, the first is
\[
\langle A_r x^{-2}S_F u, A_r u \rangle - \langle A_r u, A_r x^{-2}S_F u \rangle = \langle A_r^*[A_r,x^{-2}S_F] u, u \rangle -  \langle u, A_r^*[A_r,x^{-2}S_F] u \rangle. 
\]
Note that $A_r^*[A_r,x^{-2}S_F]$ is uniformly bounded in $\Psib^{2s}(X)$, so if $G_1 \in \Psib^{s-1/2}(X)$ is elliptic on $\opWFb(A)$, then
\[
|\Im \langle  x^{-2} S_F u ,A_r^* A_r u\rangle| \leq C( \| G_1 u \|_{\tSob^1(X)}^2 + \| \chi u \|_{\tSob^{1,m}(X)}^2).
\]
Of course this actually true if the order of $G_1$ is merely $s-1$. 

Next, we must consider the boundary terms. In the setting of smooth boundary value problems, it is easy to see that the analogous boundary terms are bounded by (the appropriate analogue of) a multiple of  $\| G_1 u \|_{\tSob^1(X)}^2 + \| \chi u \|_{\tSob^{1,m}(X)}^2$; see Remark \ref{rem:traceinterpolation}. Unfortunately, it is not obvious how to do this in general. Instead, write
\begin{multline*}
\langle \beta \gamma_- u, \gamma_-(A_r^*A_r u) \rangle_{\pa X} - \langle \gamma_-(A_r^*A_r u), \beta \gamma_- u\rangle_{\pa X} 
\\ = \langle \gamma_-([A_r^* A_r,\beta]u), \gamma_- u \rangle_{\pa X} + \langle \beta \gamma_-((\tilde A_r^* \tilde A_r - A_r^* A_r)u),\gamma_- u \rangle_{\pa X},
\end{multline*}
where $\tilde A_r$ has the same principal symbol as $A_r$. Here we have extended $\beta$ arbitrarily to a function on $X$; it is particularly convenient to choose this extension to be independent of $x$. Now consider $[A_r^* A_r,\beta] \in \Psib^{2s}(X)$. Lemma \ref{lem:commutatorexpansion2} applies to this commutator with $m=0$. Because we are assuming that $\beta$ is independent of $x$, we can take $E_r =0$. Thus we can write
\begin{equation} \label{eq:boundarycommutator1}
i[A_r^* A_r,\beta] = \tilde B_r \tilde D_r \tilde B_r + T_r,
\end{equation}
where $T_r \in \Psib^{2s-1}(X)$, and $\tilde D_r \in \Psib^{0}(X)$. Then
\[
\langle \gamma_-([A_r^* A_r,\beta]u), \gamma_- u \rangle_{\pa X} = \langle \gamma_-  (\tilde D_r \tilde B_r u), \gamma_-(\tilde B_r u) \rangle_{\pa X}.
\]
modulo terms bounded by $\| G_1 u \|_{\tSob^1(X)}^2 + \| \chi u \|_{\tSob^{1,m}(X)}^2$. Now write
\[
\langle \gamma_-  (\tilde D_r \tilde B_r u), \gamma_-(\tilde B_r u) \rangle_{\pa X} = \langle \widehat{N}(-i\nu_-)(\tilde D_r)\widehat{N}(-i\nu_-)(\tilde B_r)\gamma_- u, \widehat{N}(-i\nu_-)(\tilde B_r)\gamma_- u \rangle_{\pa X}.
\]
In coordinates $(x,y,\sigma,\eta)$, the total symbol of $\widehat{N}(-i\nu_-)(\tilde D_r) \in \Psi^0(\pa X)$ is just 
\[
\bsymbol{0}(\tilde D_r)(0,y,-i\nu_-,\eta) \in S^0(T^*\pa X),
\] 
hence $\widehat{N}(-i\nu_-)(\tilde D_r)$ forms a bounded family. Using \eqref{eq:traceinterpolation}, we can bound 
\[
|\langle \gamma_-  (\tilde D_r \tilde B_r u), \gamma_-(\tilde B_r u) \rangle_{\pa X}| \leq \varepsilon\|\tilde B_r u \|^2_{\tSob^1(X)} + C(\| G_1 u \|_{\tSob^1(X)}^2 + \| \chi u \|_{\tSob^{1,m}(X)}^2).
\] 
Notice that the supremum of $|\bsymbol{0}(\tilde D_r)|$ can in fact be made small by choosing $\beta,\delta$ appropriately, but that is not needed here.

Next, consider $\langle \beta \gamma_-((\tilde A_r^* \tilde A_r - A_r^* A_r)u),\gamma_- u \rangle_{\pa X}$. Note that $\tilde A_r^* \tilde A_r = x^{2\nu_-}A_r^*A_r x^{-2\nu_-}$, and hence
\[
\tilde A_r^* \tilde A_r - A_r^* A_r = x^{2\nu_-}[A^*_r A_r, x^{-2\nu_-}].
\]
Thus the principal symbol of $\tilde A_r^* \tilde A_r - A_r^* A_r \in \Psib^{2s}(X)$ is just $2i\nu_-\pa_\sigma(a_r^2)$, and we can write
\begin{equation} \label{eq:boundarycommutator2}
\tilde A_r^* \tilde A_r - A_r^* A_r = (2i\nu_-)\tilde B_r^* \tilde B_r + E_r + T_r
\end{equation}
as in Lemma \ref{lem:hyperbolicpositivecommutator}. Again using \eqref{eq:traceinterpolation}, the corresponding boundary term can be estimated by $\varepsilon\|\tilde B_r u \|^2_{\tSob^1(X)} + C(\|G_2 u \|_{\tSob^1(X)}^2  + \| G_1 u \|_{\tSob^1(X)}^2 + \| \chi u \|_{\tSob^{1,m}(X)}^2)$.
\end{proof}

Combining Lemmas \ref{lem:hyperbolicpositivecommutator} and \ref{lem:hyperbolicerrorterms} shows that $\tilde B_r u$ is uniformly bounded in $\tSob^1(K)$ for an appropriate compact set $K \subset X$. Since the limiting operator $B_0 \in \Psib^s(X)$ as $r\rightarrow 0$ is elliptic at $q_0$, it follows that $q_0 \notin \WFb^{1,s}(u)$ by the same argument as in the proof of Theorem \ref{theo:ellipticregion}. Arguing inductively, this establishes Proposition \ref{prop:hyperbolicregion} for finite $s$. For infinite $s$ one merely needs to be more careful in choosing the operators $G_0,\, G_1,\, G_2$ at each step of the induction, and is done exactly as in the proof of \cite[Proposition 6.2]{vasy2008propagation}. \qed


\subsection{The glancing region} \label{subsect:glancingregion}

Next we consider the glancing set $\gl$. Fix a boundary coordinate patch as in Section \ref{subsect:hyperbolicregion}. If $q_0 \in \gl \cap T^* \pa X \cap \bT^*_U X$, then in coordinates $(x,y,\sigma,\eta)$,
\[
q_0 = (0,y_0,0,\eta_0), \quad \hat g^{\alpha\beta}(0,y_0)(\eta_{0})_{\alpha}(\eta_0)_{\beta}= 0.
\] 
Since $\eta_{n-1} \neq 0$ at $q_0$, we continue to use projective coordinates $(x,y,\hat \sigma, \hat \eta_a)$ on $\bS^*X$ near $\kappa(q_0)$. We need the following result for the Dirichlet form:

\begin{lemm} \label{lem:dirichletformglancing}
	Let $U \subset X$ be a boundary coordinate patch, and $m \leq 0$. Let $\mathcal{A} = \{A_r: r \in (0,1)\}$ be a bounded subset of $\Psib^s(X)$ with compact support in $U$ such that 
	\[
	A_r \in \Psib^{m}(X) \text{ for each } r \in (0,1).
	\]
	Let $V_\delta = \{q \in \bT^*_U X \setminus 0 : \hat g^{\alpha \beta} \zeta_{\alpha}\zeta_{\beta} \leq \delta |\zeta_{n-1}|^2 \}$, and assume that $\opWFb(\mathcal{A}) \subset V_\delta$. 
	Let $G_0\in \Psib^{s}(X)$ and $ G_1 \in \Psib^{s-1/2}(X)$ both be elliptic on $\opWFb(\mathcal{A})$, with compact support in $U$.  Then there exists $C_\varepsilon>0$ and $\chi \in \CcI(U)$ such that
	\begin{multline*}
	\| Q_0 A_r u \|^2_{\Ltwo(X)} - \varepsilon\mathcal{Q}(A_r u, A_r u) \leq  2\delta \|Q_{n-1}A_r u \|^2_{\Ltwo(X)}  \\
	+ C_\varepsilon ( \| \chi u\|^2_{\tSob^{1,m}(X)} + \| \chi P_{\Robin} u\|^2_{\tdotSob^{-1,m}(X)}  + \| G_1 u \|^2_{\tSob^1(X)} + \| G_0 P_{\Robin} u  \|^2_{\tdotSob^{-1}(X)}) 
	\end{multline*}
	for every $r \in (0,1)$ and every $u \in \tSob^{1,m}_{\loc}(X)$, provided
	\[
	\WFb^{1,s-1/2}(u) \cap \opWFb(G_1) = \emptyset, \quad \WFb^{-1,s}(P_{\Robin}u ) \cap \WFb'(G_0) = \emptyset. 
	\]
\end{lemm}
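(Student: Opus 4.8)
The approach is to reduce everything to Lemma~\ref{lem:dirichletform2} together with one new microlocal positivity estimate, which is the only place the hypothesis $\opWFb(\mathcal A)\subset V_\delta$ enters. Since $m\le 0$, each $A_r\in\Psib^m(X)$ has compact support in the special boundary coordinate patch $U$, so $A_r u\in\tSob^1_\comp(X)$ and all the quadratic forms below are finite; moreover, because $A_r u$ is supported in $U$, the identity in \eqref{eq:Qn-1control} applies with $v=A_r u$, giving
\[
\|Q_0 A_r u\|^2_{\Ltwo(X)}=\mathcal{E}_0(A_r u,A_r u)-\langle \hat g^{\alpha\beta}Q_\alpha A_r u,Q_\beta A_r u\rangle .
\]
First I would bound $\mathcal{E}_0(A_r u,A_r u)$: the hypotheses imposed on $\mathcal A$, $G_0$, $G_1$, $u$ and $P_{\Robin}u$ are exactly those of Lemma~\ref{lem:dirichletform2}, so for the given $\varepsilon>0$,
\[
\mathcal{E}_0(A_r u,A_r u)\le \varepsilon\,\mathcal{Q}(A_r u,A_r u)+C_\varepsilon\big(\|\chi u\|^2_{\tSob^{1,m}(X)}+\|\chi P_{\Robin}u\|^2_{\tdotSob^{-1,m}(X)}+\|G_1 u\|^2_{\tSob^1(X)}+\|G_0 P_{\Robin}u\|^2_{\tdotSob^{-1}(X)}\big),
\]
uniformly in $r\in(0,1)$.

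\emph{The main point} is to estimate $-\langle \hat g^{\alpha\beta}Q_\alpha A_r u,Q_\beta A_r u\rangle$ by $2\delta\|Q_{n-1}A_r u\|^2_{\Ltwo(X)}$ modulo controlled errors. The operator
\[
\mathcal O:=2\delta\,Q_{n-1}^*Q_{n-1}+Q_\alpha^*\hat g^{\alpha\beta}Q_\beta\in\Psib^2(X)
\]
has principal symbol $2\delta\,\eta_{n-1}^2+\hat g^{\alpha\beta}\eta_\alpha\eta_\beta$, and on $V_\delta$ near the glancing set — where $\eta_{n-1}\ne 0$ and $|\hat g^{\alpha\beta}\zeta_\alpha\zeta_\beta|$ is small relative to $|\zeta_{n-1}|^2$ — this symbol is bounded below by $\delta\,\eta_{n-1}^2>0$; thus $\mathcal O$ is nonnegative and elliptic of order two microlocally on $\opWFb(\mathcal A)$. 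Consequently one constructs $B\in\Psib^1(X)$ with $\opWFb(\mathcal A)\subset\ellb(B)$, together with $T\in\Psib^1(X)$ and $R\in\Psib^2(X)$ satisfying $\opWFb(R)\cap\opWFb(\mathcal A)=\emptyset$, all with compact support in $U$, such that $\mathcal O=B^*B+T+R$ (modulo a smoothing operator). Pairing against $A_r u$ and discarding $\|BA_r u\|^2_{\Ltwo(X)}\ge 0$ yields
\[
-\langle \hat g^{\alpha\beta}Q_\alpha A_r u,Q_\beta A_r u\rangle\le 2\delta\,\|Q_{n-1}A_r u\|^2_{\Ltwo(X)}+|\langle TA_r u,A_r u\rangle|+|\langle RA_r u,A_r u\rangle|.
\]

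It then remains to dispose of the two error pairings, exactly as in the proofs of Theorem~\ref{theo:ellipticregion} and Lemmas~\ref{lem:dirichletform1}--\ref{lem:dirichletform2}. Since $\{RA_r\}$ is a bounded family in $\Psib^{-\infty}(X)$, the term $\langle RA_r u,A_r u\rangle$ is bounded by $C\|\chi u\|^2_{\tSob^{1,m}(X)}$ for a suitable $\chi\in\CcI(U)$. For $\langle TA_r u,A_r u\rangle$ one inserts $I=\Lambda_{-1/2}\Lambda_{1/2}-R'$ with $\Lambda_{\pm1/2}\in\Psib^{\pm1/2}(X)$ elliptic and $R'\in\Psib^{-\infty}(X)$, rewriting the pairing as $\langle\Lambda_{-1/2}^*TA_r u,\Lambda_{1/2}A_r u\rangle$ up to a smoothing error; both $\{\Lambda_{-1/2}^*TA_r\}$ and $\{\Lambda_{1/2}A_r\}$ are bounded families in $\Psib^{s+1/2}(X)$ microsupported in $\ellb(G_1)$, so Lemma~\ref{lem:L2intermsofH1} (applied with $G=G_1\in\Psib^{s-1/2}(X)$, using $\WFb^{1,s-1/2}(u)\cap\opWFb(G_1)=\emptyset$) bounds each factor in $\Ltwo(X)$ by $C(\|G_1 u\|_{\tSob^1(X)}+\|\chi u\|_{\tSob^{1,m}(X)})$, whence $|\langle TA_r u,A_r u\rangle|\le C(\|G_1 u\|^2_{\tSob^1(X)}+\|\chi u\|^2_{\tSob^{1,m}(X)})$ by Cauchy--Schwarz. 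Adding the three displays and moving $\varepsilon\,\mathcal{Q}(A_r u,A_r u)$ to the left-hand side then gives the claimed inequality, uniformly in $r$, after enlarging $C_\varepsilon$. \emph{The main obstacle} is the microlocal positivity of $\mathcal O$ with the sharp constant $2\delta$: one must localize to where $\hat g^{\alpha\beta}\zeta_\alpha\zeta_\beta$ is genuinely dominated by $|\zeta_{n-1}|^2$, i.e.\ to a neighbourhood of the glancing set, which is precisely what $\opWFb(\mathcal A)\subset V_\delta$ secures; the rest is the routine bookkeeping of $O(x)$ metric corrections and lower-order commutator terms, all of which are absorbed into the four quantities on the right.
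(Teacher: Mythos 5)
Your proof follows the same overall plan as the paper's: (i) use the pointwise identity relating $\|Q_0 A_r u\|^2$, $\mathcal{E}_0(A_ru,A_ru)$, and $\langle\hat g^{\alpha\beta}Q_\alpha A_ru,Q_\beta A_ru\rangle$; (ii) control $\mathcal{E}_0-\varepsilon\mathcal{Q}$ by Lemma~\ref{lem:dirichletform2}; (iii) control the tangential term by $2\delta\|Q_{n-1}A_ru\|^2$ using microlocal positivity on $\opWFb(\mathcal{A})\subset V_\delta$. The paper performs (iii) by writing $Q_\beta\hat g^{\alpha\beta}Q_\alpha$ microlocally as $-Q_{n-1}FQ_{n-1}$ with $F\in\Psib^0(X)$ of small symbol; you instead build $\mathcal{O}=2\delta Q_{n-1}^*Q_{n-1}+Q_\alpha^*\hat g^{\alpha\beta}Q_\beta$, claim microlocal nonnegativity, and drop $\|BA_ru\|^2\ge 0$. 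These are equivalent G\aa rding-type manoeuvres, and your handling of the error pairings via $\Lambda_{\pm 1/2}$ and Lemma~\ref{lem:L2intermsofH1} is exactly what is needed.

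One point worth flagging. You take the identity $\|Q_0 A_r u\|^2=\mathcal{E}_0(A_ru,A_ru)-\langle\hat g^{\alpha\beta}Q_\alpha A_ru,Q_\beta A_ru\rangle$, i.e.\ the sign from~\eqref{eq:Qn-1control}; the paper's own proof of this lemma uses $\|Q_0 A_r u\|^2=\langle\hat g^{\alpha\beta}Q_\alpha A_ru,Q_\beta A_ru\rangle+\mathcal{E}_0(A_ru,A_ru)$. This is not cosmetic: $V_\delta$ as stated is a one-sided set, $\hat g^{\alpha\beta}\zeta_\alpha\zeta_\beta\le\delta|\zeta_{n-1}|^2$. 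With the paper's sign this is precisely the bound $\hat g^{\alpha\beta}\eta_\alpha\eta_\beta\le\delta\eta_{n-1}^2$ that makes $2\delta Q_{n-1}^*Q_{n-1}-Q_\alpha^*\hat g^{\alpha\beta}Q_\beta$ microlocally nonnegative; with your sign, you instead need a lower bound $\hat g^{\alpha\beta}\eta_\alpha\eta_\beta\ge -2\delta\eta_{n-1}^2$ in order to make your $\mathcal{O}$ (with the $+$ sign) nonnegative. You patch this by invoking two-sided smallness ``near the glancing set,'' and the paper itself claims $\sup|\bsymbol{0}(F)|\le\delta$ (also two-sided), so the issue is partly shared with the paper; but a careful reader should either align the sign with the paper's proof so the one-sided $V_\delta$ suffices, or explicitly add the hypothesis $|\hat g^{\alpha\beta}\zeta_\alpha\zeta_\beta|\le\delta|\zeta_{n-1}|^2$ on $\opWFb(\mathcal{A})$, which is in any case what holds where the lemma is applied.
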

\begin{proof}
	We have $\| Q_0 A_r u \|^2 =  \langle \hat g^{\alpha\beta}Q_\alpha A_r u , Q_\beta A_r u \rangle + \mathcal{E}_0(A_r u,A_r u)$. According to Lemma \ref{lem:dirichletform2},
	\begin{multline*}
	\| Q_0 A_r u \|^2 - \varepsilon\mathcal{Q}(A_r u, A_r u)  \leq   \langle Q_\beta \hat g^{\alpha\beta}Q_\alpha A_r u , A_r u \rangle \\ + C_\varepsilon ( \| \chi u\|^2_{\tSob^{1,m}(X)} + \| \chi P_{\Robin} u\|^2_{\tdotSob^{-1,m}(X)} +  \| G_1 u \|^2_{\tSob^1(X)} + \| G_0 f  \|^2_{\tdotSob^{-1}(X)}).
	\end{multline*}
	Choose $F \in \Psib^0(X)$ such that $\opWFb(Q_{n-1}FQ_{n-1} + Q_\beta \hat g^{\alpha\beta}Q_\alpha) \cap \opWFb(\mathcal{A}) = \emptyset$ and $\opWFb(F) \subset V_\delta$. In particular 
	\[
	\sup |\bsymbol{0}(F)|\leq \delta,
	\]
	which implies that $|\langle Q_\beta \hat g^{\alpha\beta}Q_\alpha A_r u , A_r u \rangle| \leq 2\delta \| Q_{n-1} A_r u\|^2_{\Ltwo(X)} + C\| \chi u\|^2_{\tSob^{1,m}(X)} $ as desired.
\end{proof}

To formulate the relevant estimates in the glancing region, we follow \cite[Section 7]{vasy2008propagation} and introduce in local coordinates the map $\tilde \pi : T^*_U X \rightarrow T^* (U\cap \pa X)$ given by
\[
\tilde \pi(x,y,\xi,\eta) = (y,\eta).
\]
Let $\mathsf{W}$ denote the ``gliding vector field'' on $T^*(U \cap \pa X)$ given in coordinates by
\begin{align*}
\mathsf{W}(y,\eta) &= \sum_{i=1}^{n-1}  (\pa_{\eta_i}\hat p) (0,y,0,\eta) \pa_{y^i} - (\pa_{y^i} \hat p)(0,y,0,\eta) \pa_{\eta_i}. 
\end{align*} 
Note that $\mathsf W$ is just the b-Hamilton vector field of $\hat g^{\alpha\beta}\eta_{\alpha}\eta_{\beta} \in S^2(\bT^*X)$ (which is hence tangent to $T^*\partial X$) restricted to $T^*\partial X$.

We then prove the following analogue of \cite[Proposition 7.3]{vasy2008propagation}:

\begin{prop} \label{prop:glancingregion}
	Let $u \in \tSob^{1,m}_\loc(X)$ with $m\leq 0$. If $K \subset \bS^*_U X$ is compact and
	\[
	K \subset (\gl \cap T^*\pa X) \setminus \WFb^{-1,s+1}(P_{\Robin} u),
	\] 
	then there exist $C_0,\delta_0 >0$ such that for each $q_0 \in K$ and $\delta \in (0,\delta_0)$ the following holds. Let $\alpha_0 \in \chare$ be such that $\pi(\alpha_0) = q_0$. If the conditions
	\begin{equation} \label{eq:alphacondition}
	\alpha \in \chare, \quad |\tilde \pi (\alpha) - \exp(-\delta W)(\tilde \pi(\alpha_0))| \leq C_0 \delta^2, \quad  |x(\alpha)| \leq C_0\delta^2
	\end{equation}
	imply $\pi(\alpha) \notin \WFb^{1,s}(u)$, then $q_0 \notin \WFb^{1,s}(u)$.
\end{prop}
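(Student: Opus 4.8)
The plan is to run a positive commutator argument at the level of the twisted Dirichlet form $\mathcal{E}_0$, with a commutant adapted to the gliding flow exactly as in \cite[Section~7]{vasy2008propagation}, and to fold in the zeroth-order term $x^{-2}S_F$ and the Robin boundary term precisely as in the proofs of Lemmas~\ref{lem:dirichletform2} and~\ref{lem:hyperbolicerrorterms}. As in Proposition~\ref{prop:hyperbolicregion}, the proof is an induction on $s$ that raises the order by $1/2$ at each step; the inductive hypothesis $q_0\notin\WFb^{1,s-1/2}(u)$ is automatic for all $s\leq 1/2+m$ since $u\in\tSob^{1,m}_\loc(X)$. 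Fix $q_0\in K$ and $\alpha_0\in\chare$ with $\pi(\alpha_0)=q_0$, and work in a special boundary coordinate patch in which, as in Section~\ref{subsect:hyperbolicregion}, $\hat g^{\alpha\beta}(0,y)\eta_\alpha\eta_\beta=\eta_{n-1}^2-h^{ab}(y)\eta_a\eta_b$; since $q_0\in\gl\cap T^*\pa X$ forces $\eta_{n-1}(q_0)\neq0$, we may use the projective coordinates $(x,y,\hat\sigma,\hat\eta_a)$ of Section~\ref{subsect:hyperbolicregion}.

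Following \cite[Section~7]{vasy2008propagation}, one builds a symbol $a\in S^0(\bT^*X)$ whose $(y,\eta)$-dependence is advected by $\mathsf{W}$ along the backward gliding orbit of $\tilde\pi(\alpha_0)$, together with a quadratic weight of small coefficient $\beta^{-2}\delta^{-1}$ penalizing transverse displacement and the normal variable $x$, cut off by factors $\chi_0,\chi_1$ of the type in Section~\ref{subsect:hyperbolicregion} and a cutoff $\psi_0$ supported where $\eta_{n-1}\neq0$. For $\beta$ fixed and $\delta$ small, $\supp a$ is confined to the set appearing in \eqref{eq:alphacondition}; in particular $|x|\lesssim\delta^2$ on $\supp a$, which makes the normal-derivative contributions harmless, and $\opWFb(\mathcal{A})\subset V_\delta$ for $\mathcal{A}=\{A_r\}$ in the notation of Lemma~\ref{lem:dirichletformglancing}. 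We regularize as before: $A_r=AJ_r$ with $\{J_r\}$ bounded in $\Psib^{s+1/2}(X)$, $J_r\in\Psib^m(X)$, $J_r$ elliptic near $q_0$, $J_r\to1$; introduce $B_r\in\Psib^{s+1}(X)$ and $\tilde B_r\in\Psib^{s}(X)$ with principal symbols built from $(\chi_0'\chi_0)^{1/2}\chi_1$ as in Lemmas~\ref{lem:commutatorexpansion2} and~\ref{lem:hyperbolicpositivecommutator}; and fix $G_0\in\Psib^{s+1}(X)$, $G_1\in\Psib^{s-1/2}(X)$ elliptic at $q_0$ with $\opWFb(G_0)\cap\WFb^{-1,s+1}(P_{\Robin}u)=\emptyset$ and $\opWFb(G_1)\cap\WFb^{1,s-1/2}(u)=\emptyset$.

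One then expands $2i\Im\mathcal{E}_0(u,A_r^*A_ru)$ using \eqref{eq:commutatorexpansion}. Near $\pa X$ the main contribution is the gliding term $\langle[Q_\alpha\hat g^{\alpha\beta}Q_\beta,A_r^*A_r]u,u\rangle$, in which $\mathsf{W}$ hits the $(y,\eta)$-dependence of $a$ and, owing to the sign of $\chi_0'$, produces the favorable term $c\|\tilde B_r u\|^2_{\tSob^1(X)}$ modulo $\|Q_{n-1}\,\cdot\,\|^2_{\Ltwo(X)}$-type terms that \eqref{eq:Qn-1control} controls; the $A_{1,r}$- and $A_{0,r}$-terms in \eqref{eq:commutatorexpansion} are, by the $\beta^{-2}\delta^{-1}$-weighting and $|x|\lesssim\delta^2$ on $\supp a$, bounded (after fixing $\beta$ large, then $\delta$ small) by a small multiple of $\|B_r u\|^2_{\Ltwo(X)}$ plus inductively controlled terms; and the error $E_r$, microsupported in the set \eqref{eq:alphacondition}, is bounded using $\|G_1 u\|^2_{\tSob^1(X)}$ together with the hypothesis that $\WFb^{1,s}(u)$ avoids \eqref{eq:alphacondition}. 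The glancing-specific input is Lemma~\ref{lem:dirichletformglancing}, which (since $\opWFb(\mathcal{A})\subset V_\delta$) bounds $\|Q_0 A_r u\|^2_{\Ltwo(X)}$ by $2\delta\|Q_{n-1}A_r u\|^2_{\Ltwo(X)}$ plus controlled terms, so the $\|Q_{n-1}\,\cdot\,\|^2$ contributions are absorbed once $\delta$ is small. The zeroth-order term $x^{-2}S_F$ and the Robin boundary term are handled verbatim as in Lemma~\ref{lem:hyperbolicerrorterms}: choosing the extension of $\beta$ independent of $x$, write $i[A_r^*A_r,\beta]=\tilde B_r\tilde D_r\tilde B_r+T_r$ as in \eqref{eq:boundarycommutator1} and $\tilde A_r^*\tilde A_r-A_r^*A_r=(2i\nu_-)\tilde B_r^*\tilde B_r+E_r+T_r$ as in \eqref{eq:boundarycommutator2} with $\tilde A_r=x^{2\nu_-}A_rx^{-2\nu_-}$, and use the trace interpolation inequality \eqref{eq:traceinterpolation} to absorb the boundary pairings into $\varepsilon\|\tilde B_r u\|^2_{\tSob^1(X)}$ plus inductively controlled terms.

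Assembling these estimates gives a uniform bound on $\|\tilde B_r u\|_{\tSob^1(K')}$ for a fixed compact $K'\subset X$; since the $r\to0$ limit $B_0\in\Psib^s(X)$ is elliptic at $q_0$, extracting a weakly convergent subsequence yields $q_0\notin\WFb^{1,s}(u)$, exactly as at the end of the proof of Theorem~\ref{theo:ellipticregion}. This closes the inductive step for finite $s$, and the case $s=+\infty$ follows by the same bookkeeping of the operators $G_0,G_1$ at each stage as in \cite[Proposition~7.3]{vasy2008propagation}. I expect the main obstacle to be the Robin boundary terms: as flagged in Remark~\ref{rem:traceinterpolation}, the twisted trace interpolation \eqref{eq:traceinterpolation} is strictly weaker than its smooth analogue, so one cannot move $A_r$ freely across the boundary pairing and must instead keep careful track of which commutators carry the extra order of vanishing. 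A secondary technical point is confirming that $A_{0,r}$ in \eqref{eq:commutatorexpansion} is genuinely lower order in the glancing region, which hinges on the confinement $|x|\lesssim\delta^2$ on $\supp a$ built into the commutant.
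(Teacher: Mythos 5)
Your overall strategy is the right one and agrees with the paper's: run a positive commutator argument at the level of the twisted Dirichlet form with a commutant adapted to the gliding flow, use Lemma~\ref{lem:dirichletformglancing} to absorb the normal derivative, handle the $x^{-2}S_F$ and Robin boundary pairings as in Lemma~\ref{lem:hyperbolicerrorterms}, and close by weak compactness and an induction in $s$ with step $1/2$. But there is a genuine inconsistency in your parameter regime which, if left as stated, makes the support confinement you need false.

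The claim ``for $\beta$ fixed and $\delta$ small, $\supp a$ is confined to the set in \eqref{eq:alphacondition}; in particular $|x|\lesssim\delta^2$ on $\supp a$'' does not hold with $\beta$ fixed (let alone ``large''). With the standard weight $\phi = \rho_1 + (\beta^2\delta)^{-1}\omega$, on $\supp a$ one gets $\omega^{1/2}\lesssim\beta\delta$, hence $|x|\lesssim\beta\delta$. To reach the parabolic scaling $|x|\lesssim\delta^2$ demanded by \eqref{eq:alphacondition} — which is what is ultimately needed to iterate the estimate along a $\GBB$ in the glancing region — one must take $\beta$ \emph{proportional to} $\delta$, not large. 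This is precisely the point of the constraint $c_0\delta<\beta<1$ in Lemma~\ref{lem:glancingpositivecommutator} and of the subsequent choice $\beta=c_0\delta$: the reason the argument closes without needing $\beta\to\infty$ is that the gliding vector field gives $\rho^{-1}\mathsf{W}\phi = 1 + \mathcal O(\omega^{1/2}(1+\beta^{-2}\delta^{-1}\omega^{1/2}))$, whose error is already $\mathcal O(\beta\delta+\delta)$ with no $\beta^{-1}$ term. Carrying over the hyperbolic-region prescription ``fix $\beta$ large, then $\delta$ small'' here is exactly backwards, and under that prescription neither the $\delta^2$ confinement nor the absorption step via Lemma~\ref{lem:dirichletformglancing} (where the relevant bound on $\hat g^{\alpha\beta}\eta_\alpha\eta_\beta$ is $\lesssim\beta\delta$) works out as stated.

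A secondary but related point: the glancing commutant $a$ is built from $\rho_1,\ldots,\rho_{2n-3}$ and $\omega=x^2+\omega_0$, all of which are independent of $\sigma$. Consequently $\pa_\sigma a_r = 0$, so in \eqref{eq:commutatorexpansion} the operator $A_{1,r}$ is purely lower order (a $T_r\in\Psib^{2s-1}(X)$), and likewise $\tilde A_r^*\tilde A_r - A_r^*A_r$, whose principal symbol is $2i\nu_-\pa_\sigma(a_r^2)$, is purely lower order. Your invocation of the hyperbolic formula $\tilde A_r^*\tilde A_r - A_r^*A_r = (2i\nu_-)\tilde B_r^*\tilde B_r + E_r + T_r$ from \eqref{eq:boundarycommutator2} is not what happens here; the paper's treatment is strictly simpler. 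This is not a logical gap (treating a lower-order term by a more elaborate method would still close), but it signals that the $\sigma$-independence of the glancing commutant — which is what makes the entire analysis converge without $\beta\to\infty$ — was not fully internalized, and it is the same oversight that produced the parameter-regime confusion above.
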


The proof Proposition \ref{prop:glancingregion} goes through nearly exactly as in \cite[Section 7]{vasy2008propagation}, working directly with the Dirichlet form, just as we did in Section \ref{subsect:hyperbolicregion}. For this reason we only briefly sketch a proof of the proposition, referring to  \cite[Section 7]{vasy2008propagation} for more details.

Fix $q_1 = (0,y_1,0,\eta_1) \in K$. Note that $\rho^{-1} \mathsf{W}$ descends to a vector field on $S^*\pa X$, at least near $(y_1,\eta_1)$, recalling that $\rho = |\eta_{n-1}|$. Note that $\rho^{-1}\mathsf{W}$ does not vanish near $(y_1,\eta_1)$, since 
\[
\rho^{-1} \mathsf{W}y^{n-1} = 2\,\sgn(\eta_{n-1})
\]
near $q_1$. Thus in a neighborhood of $(y_1,(\hat \eta_1)_a) \in S^*\pa X$ we straighten the $\rho^{-1}\mathsf{W}$ flow, finding homogeneous degree zero functions $\rho_1,\ldots, \rho_{2n-3}$ on $T^*\pa X$ with linearly independent differentials such that
\[
\rho^{-1}\mathsf{W}\rho_1 = 1, \quad \rho^{-1}\mathsf{W}\rho_i = 0 \text{ for } i= 2,\ldots, 2n-3.
\]
In fact, since $\mathsf{W}$ annihilates $\hat p(0,y,0,\eta)$, and since this function has a non-vanishing differential, we can always take 
\[
\rho_{2}(y,\eta) = \hat p(0,y,0,\eta)
\]
The functions $\rho_1,\ldots,\rho_{2n-3}$ are then extended to be independent of $(x,\sigma)$, so that $(x,\hat \sigma, \rho_1,\ldots \rho_{2n-3})$ form a valid coordinate system near $\kappa(q_1)$, say in some neighborhood $V$. Now for $q_0 \in K\cap V$, introduce the function
\[
\omega_0 = \sum_{i=1}^{2n-3} (\rho_i - \rho_i(q_0))^2, \quad \omega = x^2 + \omega_0.
\]
We then define $\phi_0 = \rho_1 + (\beta^2\delta)^{-1}\omega_0$ and $\phi = \rho_1 + (\beta^2\delta)^{-1}\omega$. With $\chi_0,\,\chi_1$ denoting the same cutoff functions as in Section \ref{subsect:hyperbolicregion}, set
\[
a = \chi_0(2-\phi/\delta)\chi_1(1+(\rho_0+\delta)/(\beta\delta)).
\]
Note that when $a$ is differentiated, any derivatives falling onto $\chi_1$ yield a term supported on
\begin{equation} \label{eq:glancingcontrolregion}
\{-\delta\beta \leq \rho_1 \leq -\delta, \ \omega^{1/2} \leq 2\beta\delta\}.
\end{equation}
Suppose we take $\beta = c_0 \delta$ for some fixed $c_0 > 0$. If $\alpha \in \chare$ and $\pi(\alpha)$ is contained in the set \eqref{eq:glancingcontrolregion}, then $\alpha$ satisfies \eqref{eq:alphacondition} for $C_0 > 0$ sufficiently large depending on $c_0$. 

Recall that the restriction of the Poisson bracket $\rho^{-1}\{\hat g^{\alpha\beta}\eta_{\alpha}\eta_{\beta},\phi\}$ to $\bT^*_{\pa X}X$ is $\rho^{-1}\mathsf{W}\phi =1$. Since $|x| \leq \omega^{1/2}$,
\[
| \rho^{-1}\{\hat g^{\alpha\beta}\eta_{\alpha}\eta_{\beta}, \phi\} - 1| \leq C_1(1+ \beta^{-2}\delta^{-1}\omega^{1/2})\omega^{1/2}.
\]
Observe that $C_1 > 0$ is not only uniform on $V$ for $q_0$ fixed, but it is also uniform on $V$ as $q_0$ ranges over $V_0 \cap K$, where $V_0\subset V$ is a neighborhood of $\kappa(q_1)$. Since $K$ is compact, it suffices to prove Proposition \ref{prop:glancingregion} with $V_0 \cap K$ replacing $K$, and as we shall see, the uniformity of $C_1$ implies the uniformity of $c_0$ in the preceding paragraph.

Let $A$ have principal symbol $a$. With $J_r$ denoting the same operator as in Section \ref{subsect:hyperbolicregion}, let $A_r = AJ_r$. Also define $B_r$ and $\tilde B_r$ as in Section \ref{subsect:hyperbolicregion}. We let $E_r, \, T_r$ denote operators as in \eqref{eq:genericapriori}, \eqref{eq:genericlowerorder}, except that everywhere $\hat \sigma$ should be replaced with $\rho_1$, and the right hand side of \eqref{eq:genericapriori} should be replaced with \eqref{eq:glancingcontrolregion}. Finally, let $G_0,G_1$ be the same as in Section \ref{subsect:hyperbolicregion}. We then have the analogue of Lemma \ref{lem:hyperbolicpositivecommutator}:

\begin{lemm} \label{lem:glancingpositivecommutator}
	There exist $C_1,c,\beta,\delta_0 > 0$, a cutoff $\chi \in \CcI(X)$, and an operator $G_2 \in \Psib^{s}(X)$ with 
	\[
	\opWFb(G_2) \subset W \cap \{-2\delta\beta < \rho_1 < -\delta/2,\ \omega^{1/2} < 3\beta \delta\}
	\] 
	such that
	\begin{multline*}
	c\| \tilde B_r u \|^2_{\tSob^1(X)}  \leq - 2 \Im \mathcal{E}_0(u, A_r^*A_ru) + C_1 \| G_2 u \|_{\tSob^1(X)}^2\\
	+ C_1( \|G_0 P_{\Robin} u\|_{\tdotSob^{-1}(X)}^2 + \| G_1 u \|_{\tSob^1(X)}^2 + \| \chi u \|^2_{\tSob^{1,m}(X)} + \|\chi  P_{\Robin} u\|_{\tdotSob^{-1,m}(X)}^2)
	\end{multline*}
	for every $\delta \in (0,\delta_0)$ and $c_0\delta < \beta < 1$.
\end{lemm}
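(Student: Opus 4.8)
The plan is to mimic the proof of Lemma \ref{lem:hyperbolicpositivecommutator} line by line, but working with the glancing commutant $a = \chi_0(2-\phi/\delta)\chi_1(1+(\rho_1+\delta)/(\beta\delta))$ and with the gliding field $\mathsf{W}$ playing the role that $\pa_\sigma$ played in the hyperbolic case. First I would expand $2i\,\Im\mathcal{E}_0(u,A_r^*A_r u)$ using \eqref{eq:commutatorexpansion}, so that the right-hand side is a sum of the $Q_0$-commutator terms (those involving $A_{1,r}$ and $A_{0,r}$, where $\bsymbol{-1}(A_{1,r})=(1/i)\pa_\sigma(a_r^2)$, $\bsymbol{0}(A_{0,r})=(1/i)\pa_x(a_r^2)$) and the tangential commutator $\langle i[Q_\alpha\hat g^{\alpha\beta}Q_\beta,A_r^*A_r]u,u\rangle$. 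The essential difference from Section \ref{subsect:hyperbolicregion} is that the main positive contribution now comes from the tangential term, not the $Q_0$ term: differentiating $\chi_0$ in the Poisson bracket $\{a_r^2,\hat g^{\alpha\beta}\eta_\alpha\eta_\beta\}$ produces $-2\rho^{-1}\{\phi,\hat g^{\alpha\beta}\eta_\alpha\eta_\beta\}\,b_r^2 = -2\rho^{-1}(\mathsf{W}\phi)b_r^2 + (\text{errors})$, and $\mathsf{W}\phi = 1 + \mathcal{O}((1+\beta^{-2}\delta^{-1}\omega^{1/2})\omega^{1/2})$ by the estimate just before the statement; since $\omega^{1/2}\le 2\beta\delta$ on $\supp b_r$ and $\beta = c_0\delta$, this is $1 + \mathcal{O}(\delta)$, hence positive and bounded below for $\delta$ small.

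Next I would organize the error terms exactly as before. The term $\langle Q_0 u, iQ_0 A_{1,r}u\rangle$ is re-expressed via $iA_{1,r} = \tilde B_r^*\tilde B_r + E_r + T_r$ (where now the $\pa_\sigma$ of $a_r^2$ contributes a factor that is small, of order $\beta\delta$ or smaller, because $\phi$ depends on $\sigma$ only through $\rho_1$ with coefficient $\rho^{-1}$ and through $(\beta^2\delta)^{-1}\omega$ — so in fact $\langle Q_0, iA_{1,r}u\rangle$ and $\langle Q_0, iA_{0,r}u\rangle$ are both lower-order and absorbed, using Lemma \ref{lem:dirichletformglancing} to bound $\|Q_0\tilde B_r u\|_{\Ltwo}$ by $2\delta\|Q_{n-1}\tilde B_r u\|_{\Ltwo}$ plus acceptable terms). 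The tangential commutator is handled with Lemma \ref{lem:commutatorexpansion2} adapted to $g = \hat g^{\alpha\beta}\eta_\alpha\eta_\beta$: write $i[Q_\alpha\hat g^{\alpha\beta}Q_\beta,A_r^*A_r] = B_r^* D_r B_r + E_r + T_r$ with $D_r \in \Psib^0(X)$ whose principal symbol is $\rho^{-2}\cdot 2\rho^{-1}(\mathsf{W}\phi)b_r^2\cdot\rho^2$-type, i.e. bounded below by $c$ and above by $C_0$. The terms $E_r$, supported where $\chi_1$ is differentiated, i.e. in the region \eqref{eq:glancingcontrolregion}, are estimated by $\|G_2 u\|_{\tSob^1}^2$ since by the hypothesis of Proposition \ref{prop:glancingregion} together with elliptic regularity (Corollary \ref{corr:ellipticregularity}) that region is disjoint from $\WFb^{1,s}(u)$; the $T_r$ terms are lower order and bounded by the inductive hypothesis $\|G_1 u\|_{\tSob^1}^2 + \|\chi u\|_{\tSob^{1,m}}^2$ plus $\|G_0 P_Ru\|_{\tdotSob^{-1}}^2 + \|\chi P_R u\|_{\tdotSob^{-1,m}}^2$. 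Throughout, $\|\tilde B_r u\|_{\tSob^1(X)}^2$ is comparable to $\mathcal{Q}(\tilde B_r u,\tilde B_r u) + \|B_r u\|_{\Ltwo(X)}^2$ up to $G_1$-controlled terms, using \eqref{eq:Qn-1control} with the coordinate choice and Lemma \ref{lem:dirichletform2}.

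The main obstacle, as in the hyperbolic case, is the Robin boundary term: estimating the difference between $\Im\mathcal{E}_0(u,A_r^*A_r u)$ and $\Im\mathcal{E}_R(u,A_r^*A_r u)$, which contains $\langle\beta\gamma_- u,\gamma_-(A_r^*A_r u)\rangle_{\pa X}$ and its conjugate. The resolution is identical to Lemma \ref{lem:hyperbolicerrorterms}: commute $\beta$ and pass to $\tilde A_r^*\tilde A_r = x^{2\nu_-}A_r^*A_r x^{-2\nu_-}$, use $\tilde A_r^*\tilde A_r - A_r^*A_r = x^{2\nu_-}[A_r^*A_r,x^{-2\nu_-}]$ whose principal symbol $2i\nu_-\pa_\sigma(a_r^2)$ yields (after Lemma \ref{lem:commutatorexpansion2}) a $(2i\nu_-)\tilde B_r^*\tilde B_r + E_r + T_r$ decomposition, and apply the trace interpolation inequality \eqref{eq:traceinterpolation} to gain an $\varepsilon\|\tilde B_r u\|_{\tSob^1}^2$ with an arbitrarily small constant, absorbed into the left-hand side. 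Since the $\rho_1$-localizing factor $\chi_1$ again contributes only $E_r$-type terms (supported in \eqref{eq:glancingcontrolregion}), while the $\beta$-commutator and the $x^{-2\nu_-}$-conjugation commutator contribute $\tilde B_r^*\tilde B_r$ and lower-order pieces exactly as in Section \ref{subsect:hyperbolicregion}, the boundary contribution is absorbed. Combining everything and choosing first $\beta = c_0\delta$ with $c_0$ large (uniformly on $V_0\cap K$ by the stated uniformity of $C_1$) and then $\delta_0$ small yields the stated inequality.
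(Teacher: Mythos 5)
Your overall strategy matches the paper's: expand $2\Im\mathcal{E}_0(u,A_r^*A_ru)$ via \eqref{eq:commutatorexpansion}, draw the positive $\|B_r u\|^2_{\Ltwo(X)}$ term from the tangential commutator $i[Q_\alpha\hat g^{\alpha\beta}Q_\beta,A_r^*A_r]$, absorb the $Q_0$-related terms using Lemma~\ref{lem:dirichletformglancing} together with the constraint $c_0\delta<\beta<1$, and handle the Robin boundary contribution by commuting $\beta$ and $x^{-2\nu_-}$ through $A_r^*A_r$ and appealing to \eqref{eq:traceinterpolation}.

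There is, however, a conceptual error worth flagging. You write that ``$\phi$ depends on $\sigma$ only through $\rho_1$ with coefficient $\rho^{-1}$ and through $(\beta^2\delta)^{-1}\omega$'', evidently carrying over the hyperbolic definition $\phi=\hat\sigma+(\beta^2\delta)^{-1}\omega$. In the glancing construction the functions $\rho_1,\ldots,\rho_{2n-3}$ are defined on $T^*\pa X$ and are then \emph{extended to be independent of $(x,\sigma)$}, and $\omega=x^2+\omega_0$ is built from them; hence $\phi$, and therefore $a$, is \emph{exactly} $\sigma$-independent. So $\pa_\sigma(a_r^2)=0$: the operator $A_{1,r}$ is simply a lower-order $T_r\in\Psib^{2s-1}(X)$, not a $\tilde B_r^*\tilde B_r$-term with ``small coefficient''. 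The same confusion propagates to $\tilde A_r^*\tilde A_r-A_r^*A_r=x^{2\nu_-}[A_r^*A_r,x^{-2\nu_-}]$, whose principal symbol is $2i\nu_-\pa_\sigma(a_r^2)=0$; in the glancing region this is just $T_r\in\Psib^{2s-1}(X)$, with no $(2i\nu_-)\tilde B_r^*\tilde B_r$ contribution, and the boundary term is correspondingly \emph{simpler} than in the hyperbolic case, not identical. Since you only overestimate (treating zero quantities as small) the conclusion survives, but the reasoning would mislead a reader. A secondary imprecision: the smallness in your estimate of $\langle Q_0 u, iA_{0,r}u\rangle$ does not come from ``$2\delta\|Q_{n-1}\tilde B_r u\|$'' alone; one needs both $|\hat g^{\alpha\beta}\eta_\alpha\eta_\beta|\leq C\beta\delta|\eta_{n-1}|^2$ on $\opWFb(\tilde B_r)$ (so the parameter in Lemma~\ref{lem:dirichletformglancing} scales like $\beta\delta$) and $\sup|\bsymbol{0}(D_r)|\leq C\beta^{-1}$, and it is the product $\sqrt{\beta\delta}\cdot\beta^{-1}=\sqrt{\delta/\beta}<c_0^{-1/2}$ that is made small by taking $c_0$ large.
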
 
\begin{proof}
	As in the proof of Lemma \ref{lem:hyperbolicpositivecommutator}, consider the expansion \eqref{eq:commutatorexpansion}. In this case, since $a$ is independent of $\sigma$, we can write $A_{1,r} = T_r$ with $T_r \in \Psib^{2s-1}(X)$. As for $A_{0,r}$, we have that $\bsymbol{2s+1}(A_{0,r}) = (1/i)\pa_x a_r$, and since the only dependence on $x$ is through $\phi$, we can write
	\[
	iA_{0,r} = \tilde B_r^* D_r Q_{n-1}\tilde B_r +T_r,
	\]
	where $T_r \in \Psib^{2s}(X)$ and $D_r \in \Psib^{0}(X)$. Furthermore, since $|x| \leq \omega^{1/2}$,
	\[
	\sup |\bsymbol{1}(D_r)| \leq C\beta^{-1}
	\]
	Finally, consider the term $i[Q_\alpha \hat g^{\alpha\beta} Q_{\beta},A_r^*A_r] \in \Psib^{2s+2}(X)$, which in analogy with Lemma \ref{lem:commutatorexpansion2} we can write as
	\[
	i[Q_\alpha \hat g^{\alpha\beta} Q_{\beta},A_r^*A_r] = B_r^* (1+D_r') B_r^* + E_r + T_r.
	\]
As in Lemma \ref{lem:commutatorexpansion2}, $D_r'$ has principal symbol $d_{1,r} + d_{2,r}$, corresponding to when $\chi_0$ or $j_r$ is differentiated. When $\chi_0$ is differentiated we are left with a term
	\[
	-2\rho^{-1}\{\hat g^{\alpha\beta}\eta_{\alpha}\eta_{\beta}, \phi\} b_r^2,
	\]
and hence $\sup |d_{1,r}| \leq C(\beta\delta + \delta)$; in this case $d_{1,r}$ can be taken independent of $r$. To handle the terms where $j_r$ is differentiated we argue as in Lemma \ref{lem:hyperbolicpositivecommutator} to see that $\sup |d_{2,r}| \leq C\delta$. 

As observed in Lemma \ref{lem:hyperbolicpositivecommutator}, it suffices to control $\|B_r u\|^2_{\Ltwo(X)}$. Write
\[
\langle i[Q_\alpha \hat g^{\alpha\beta} Q_{\beta},A_r^*A_r]u,u\rangle = \| B_r u \|^2_{\Ltwo(X)} + \langle D_r' B_r u , B_ru \rangle + \langle E_r u, u, \rangle + \langle T_r u,u\rangle.
\]
Modulo error terms bounded by the a priori hypotheses, we can write
\[
|\langle Q_0u, iA_{0,r}u \rangle|  =  \langle \tilde B_r Q_0 u, D_r Q_{n-1}\tilde B_r u \rangle = \langle Q_0 \tilde B_ru, D_r Q_{n-1}\tilde B_r u\rangle
\]
and hence 
\begin{align*}
|\langle Q_0u, iA_{0,r}u \rangle| \leq  \| Q_0 \tilde B_r u \|_{\Ltwo(X)} \| \tilde D_r Q_{n-1} \tilde B_r u \|_{\Ltwo(X)}+ C(\|G_1 u \|_{\tSob^1(X)}^2 + \|\chi u \|_{\tSob^{1,m}(X)}^2)
\end{align*}
Now $\opWFb(\tilde B_r) \subset \{\omega^{1/2} \leq 2\beta\delta\}$, and since $|\rho_2(y,\eta)| = |\hat p(0,y,0,\eta)| \leq \omega^{1/2}$ and $|x| \leq \omega^{1/2}$, it follows that $|\hat g^{\alpha\beta}\eta_\alpha \eta_{\beta}| \leq \beta\delta$ on $\opWFb(\tilde B_r)$. If we are given $\varepsilon > 0$ and let $c_0\delta < \beta < 1$ for $c_0 > 0$ sufficiently large, then an application of Lemma \ref{lem:dirichletformglancing},
\begin{multline*}
|\langle Q_0u, iA_{0,r}u \rangle| \leq \varepsilon \mathcal{Q}(\tilde B_r u, \tilde B_r u)  + C_\varepsilon \|G_2 u\|^2_{\tSob^1(X)}\\ + 
C_\varepsilon ( \| \chi u\|^2_{\tSob^{1,m}(X)} + \| \chi P_{\Robin} u\|^2_{\tdotSob^{-1,m}(X)}  + \| G_1 u \|^2_{\tSob^1(X)} + \| G_0 P_{\Robin} u  \|^2_{\tdotSob^{-1}(X)}). 
\end{multline*}
Since we have controlled $\| B_r u \|_{\Ltwo(X)}^2 - \varepsilon \mathcal{Q}(\tilde B_r u, \tilde B_r u)$, the proof is complete.
\end{proof}

The proof of Proposition \ref{prop:glancingregion} now follows by combining Lemma \ref{lem:glancingpositivecommutator} and the analogue of Lemma \ref{lem:hyperbolicerrorterms}. Note that estimating the boundary terms as in the latter lemma is done slightly differently. For the analogue of \eqref{eq:boundarycommutator1} there appears an extra term $E_r \in \Psib^{2s}(X)$ (which is of course harmless). The analogue of \eqref{eq:boundarycommutator2} is simpler: since $a_r$ is independent of $\sigma$, 
\[
\tilde A_r^* \tilde A_r - A_r^* A_r = T_r
\]
with $T_r \in \Psib^{2s-1}(X)$.

We now proceed inductively, the difference being that at each step of the iteration the commutant must be modified slightly. This is done exactly as in \cite[Section 7]{vasy2008propagation}.

\subsection{Propagation of singularities}

Combining Theorem \ref{theo:ellipticregion} and Propositions \ref{prop:hyperbolicregion}, \ref{prop:glancingregion} allows us to prove Theorem \ref{theo:GBBpropagation}. 

Given the ingredients discussed above, the details of proof are \emph{identical} to those of \cite[Theorem 8.1]{vasy2008propagation}, hence are omitted.

\section{Propagators and their singularities}\label{sec:propagators}

\subsection{Retarded and advanced propagators}  \label{subsect:propagators}

We begin by discussing well-posedness for the Klein--Gordon equation on an aAdS spacetime. In the case of Dirichlet boundary conditions, well-posedness for the forward problem was first studied by Vasy \cite{vasy2012wave} (see also \cite{holzegel2012well} for a related study of the Cauchy problem). In the setting of Robin boundary conditions, well-posedness of the Cauchy problem was studied by Warnick \cite{warnick2013massive}, including certain results on higher-order conormal regularity. It should also be noted that \cite{warnick2013massive} considers the situation where the metric is even modulo $\mathcal{O}(x^3)$, whereas \cite{vasy2012wave} does not.

We will need a more refined study of the forward Klein--Gordon problem, akin to the results of \cite{vasy2012wave}, in the case of Robin boundary conditions. In order to give a global formulation we make the following assumptions as in \cite{vasy2012wave,wrochna2017holographic} (cf. \cite[Section 24.1]{hormander1994analysis}):

\begin{hypo} We assume $(X,g)$ is an aAdS spacetime with the following properties.
\begin{enumerate} \itemsep6pt 
\item[$(\rm TF)$] There exists $t\in \cf(X;\RR)$ such that the level sets of $t$ are spacelike with respect to $\hat g$.
\item[$(\rm PT)$] The map $t : X \rightarrow \RR$ is proper.
\end{enumerate}
\end{hypo}

Given a choice of $t$, we orient $\chare$ by declaring $dt$ to be future-oriented. Let $\chare^\pm$ denote the corresponding future/past light-cones. This yields a decomposition
\[
\cchare = \cchare^+ \cup \cchare^-
\]
of the compressed characteristic set. The main well-posedness result we need is the following:
\begin{theo} \label{theo:wellposed}
Let $s, t_0 \in \RR$. If $f \in \tdotSob^{-1,s+1}_\loc(X)$ is supported in $\{ t \geq t_0 \}$, then there exists a unique $u \in \tSob^{1,s}_\loc(X)$ supported in $ \{t\geq t_0\}$ such that 
\[
P_\Robin u = f.
\] 
Furthermore, for each $K \subset X$ compact, there exists $K' \subset X$ compact and $C>0$ such that 
\[
\| u \|_{\tSob^{1,s}(K)} \leq C \| f \|_{\tdotSob^{-1,s+1}(K)}.
\]
\end{theo}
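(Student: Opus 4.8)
## Proof strategy for Theorem \ref{theo:wellposed}

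The plan is to follow the classical scheme for global solvability of hyperbolic equations (as in \cite[Section 24.1]{hormander1994analysis} and \cite{vasy2012wave}): establish \emph{local} energy estimates near each Cauchy surface $\{t = c\}$, with control on the direction of propagation of supports, and then patch them together using the properness of $t$ to get the global forward solution. The energy estimates themselves will come from a positive commutator (or multiplier) argument applied to the Robin quadratic form $\mathcal{E}_R$, using a timelike multiplier built from $t$; the key point allowing the Neumann/Robin case to work — where the naive energy is ill-defined — is that $\mathcal{E}_R$ is controlled by the twisted $\tSob^1$ norm, and the boundary term $\int \beta |\gamma_- u|^2 \, dk_0$ is absorbed using the trace interpolation inequality \eqref{eq:traceinterpolation2}. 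So the first step is: for $u \in \tSob^{1}_\comp(X)$ supported in a slab $\{ t_0 \leq t \leq t_1 \}$, prove
\[
\| u \|_{\tSob^1(\{t \leq c\})}^2 \leq C \int_{t_0}^{c} \| f(\cdot, \tau) \|^2 \, d\tau + (\text{boundary/lower order}),
\]
by pairing $P_R u = f$ against a suitable cutoff of $\pa_t u$ (more precisely, against $\chi'(t) u$ plus the energy multiplier), integrating the form identity \eqref{eq:greensformula}/$\mathcal{E}_R(u,v) = \langle f, v\rangle$, and applying Grönwall. Then by duality/time-reversal one gets the backward estimate, and together these yield: if $P_R u = 0$ and $u$ is supported in $\{t \geq t_0\}$ with, say, $u \in \tSob^1_\loc$, then $u = 0$ — i.e. uniqueness.

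Second, for existence at $s = 0$: using the energy estimate plus the Hahn--Banach/Riesz argument (the standard "solvability from an a priori estimate on the adjoint problem" trick), one produces $u \in \tSob^1_\loc(X)$ supported in $\{t \geq t_0\}$ with $P_R u = f$ for $f \in \tdotSob^{-1,1}_\loc(X)$ supported in $\{t \geq t_0\}$; the finite-speed-of-propagation statement (support of $u$ controlled by the domain of influence of $\supp f$, hence only growing to the future) is exactly what lets one localize $t$ to a slab and apply properness $(\rm PT)$ to conclude the solution exists on all of $X$ with the stated local bounds. One should be slightly careful that the energy identity is first derived for $u$ with enough regularity and compact support, then extended to $\tSob^1_\loc$ by the density results of Section \ref{sec:fspaces} and a limiting argument.

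Third, the higher-regularity statement $s > 0$: here one wants to commute $b$-pseudodifferential operators (or the tangential vector fields $D_{y^i}$ and the twisted $xD_x$, i.e. generators of $\Psib$) through $P_R$ and re-run the energy estimate. The commutators $[P_R, A]$ for $A \in \Psib^s(X)$ are lower order and handled by the microlocal estimates of Section \ref{sec:bvp} (especially the mapping properties $P_R : \tSob^{1,m}_\loc \to \tdotSob^{-1,m}_\loc$ and Lemma \ref{lem:dirichletform2}); the boundary terms that appear upon commutation are controlled exactly as in the proof of Lemma \ref{lem:dirichletform2}, using that $\widehat N(A)(-i\nu_-)$ is a well-defined operator on $\pa X$ and \eqref{eq:traceinterpolation2}. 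For $s < 0$ one uses the duality $\tSob^{1,s}_\loc = [\tdotSob^{-1,-s}_\comp]'$ together with the $s = -s' > 0$ estimate for the (time-reversed) adjoint problem. An inductive argument on $s \in \tfrac12 \ZZ$ bumps regularity up by $1/2$ at each step, just as in the propagation-of-singularities proofs above, and the $s = +\infty$ case follows by taking intersections.

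The main obstacle, as flagged in the introduction, is the energy estimate itself in the Robin/Neumann case: there is no conserved energy adapted to these boundary conditions, so one cannot simply integrate $\pa_t$ of an energy density. The resolution is that the \emph{form} $\mathcal{Q}(u,u) + \|u\|_{\Ltwo}^2$ (equivalently the graph norm of the maximal domain combined with the twisted $\tSob^1$ norm) is the right quantity to propagate in $t$, and the dangerous boundary contribution $\int \beta |\gamma_- u|^2$ is not positive but is \emph{small} relative to $\mathcal{Q}$ via \eqref{eq:traceinterpolation2} — so it can be absorbed into the left side of a Grönwall inequality at the cost of a $C_\varepsilon \|u\|_{\Ltwo}^2$ term, which Grönwall then handles. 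Making this absorption precise, uniformly as one slides the Cauchy surface, while keeping track of supports, is the technical heart of the argument; everything else is a routine adaptation of \cite{vasy2012wave} and \cite[Section 24.1]{hormander1994analysis}.
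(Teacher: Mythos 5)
Your overall scheme is right, but you miss two technical points that the paper treats as the heart of the proof, and one of them is a genuine gap.

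The central issue is the choice of multiplier. You propose to pair $P_{\Robin}u=f$ against ``a suitable cutoff of $\partial_t u$'' (i.e.\ $Vu$ for some $b$-vector field $V$ built from $t$), the standard energy-multiplier move. The paper instead pairs against the \emph{twisted} multiplier $V'u := FVF^{-1}u$ with $V = f\,\nabla_{\hat g}t$, and explicitly remarks that using $V$ rather than $V'$ --- even though both are $b$-operators --- produces error terms that cannot be estimated. The reason is that the commutators $[Q_i,V']=F[D_{z^i},V]F^{-1}=\partial_{z^i}(V^k)Q_k$ close up exactly into twisted derivatives, yielding the stress--energy identity $2\Re\mathcal{E}_0(u,V'u)=\langle B^{ij}Q_iu,Q_ju\rangle$ with $B$ a symmetric $2$-tensor; this is precisely what makes the Gr\"onwall / large-parameter argument close. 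With the untwisted $V$ the commutators $[Q_i,V]$ pick up extra terms involving $F^{-1}V(F)$ and the identity is destroyed. So ``pair against $\partial_t u$'' is not a harmless shorthand here --- it is the one adaptation of \cite{vasy2012wave} that does \emph{not} go through verbatim, and it is the part of the proof where the twisting philosophy of the paper earns its keep. You also omit the preliminary geometric reduction (cf.\ \cite[Lemma 4.9]{vasy2012wave}) that the level sets of $t$ meet $\partial X$ orthogonally with respect to $\hat g$, which is needed so that $\nabla_{\hat g}t\in\mathcal{V}_\be(X)$ in the first place.

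A smaller, legitimate difference of route: for the $s>0$ statement you propose re-running the energy estimate after commuting $A\in\Psib^s$ through $P_{\Robin}$, absorbing boundary terms via Lemma \ref{lem:dirichletform2} and \eqref{eq:traceinterpolation2}. The paper instead cites the already-proven propagation-of-singularities theorem, Theorem \ref{theo:GBBpropagation}, together with the functional-analytic machinery of \cite[Section 4 \& Theorem 8.12]{vasy2012wave}. Your commutator approach is plausible and would amount to redoing a special case of Section \ref{sec:propagation}, so the paper's choice is the economical one; but this is a difference in bookkeeping, not a gap. Your remaining ingredients --- absorbing $\int\beta|\gamma_-u|^2\,dk_0$ via trace interpolation (which the paper asserts is ``easily handled'' and suppresses), the Hahn--Banach solvability argument, the time-reversed duality for $s<0$, and the use of $(\mathrm{PT})$ to globalize --- are all consistent with the paper's strategy.
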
 

Observe that Theorem \ref{theo:wellposed} exhibits the loss of one \emph{conormal} derivative from the source term to the solution. Now $\tSob^{1,s}_\comp(X) \subset \tSob^{0,s+1}_\comp(X)$ for each $s\in \RR$, so by transposition
\[
\tSob^{0,s}_\loc(X) \subset \tdotSob^{-1,s+1}_\loc(X).
\]
In particular, given $f \in \Ltwo_\loc(X)$, we obtain a solution $u \in \tSob^{1}_\loc(X)$ of $x^{-2}Pu = f$ satisfying Robin boundary conditions in the strong sense. Replacing $t$ with $-t$ yields a result for the backward problem as well. 

We sketch a proof of this result adapting the approach of \cite{vasy2012wave}, which along the way gives a different perspective on the twisted stress-energy tensor introduced in \cite{holzegel2013decay}. As usual, the key step is an appropriate energy estimate. 

We make a preliminary reduction: we will need that the level sets of $t$ meet $\pa X$ orthogonally with respect to $\hat g$. Note that the level sets of $t$ automatically meet $\partial X$ transversally, but since $t$ is provided by the problem there is no reason to assume the  intersection is orthogonal. On the other hand, to prove Theorem \ref{theo:wellposed}, given $\delta>0$ it suffices to replace $t'$ with an analogous function $t'$ satisfying the required orthogonality property, such that $\big|t-t'\big| < \delta$ on any given compact subset of $X$. This is arranged in \cite[Lemma 4.9]{vasy2012wave}, and we henceforth assume $dt$ and $dx$ are orthogonal along $\pa X$.

 Let $u \in \tSob^{1,1}_\loc(X)$. Given an appropriate real b-vector field $V \in \mathcal{V}_\be(X)$ with compact support, set $V' = FVF^{-1} \in \Diffb^{1}(X)$ and compute $2 \Re \langle P_R u , V'u \rangle$; note that this pairing makes sense by our assumption on the conormal regularity of $u$. For illustrative purposes, let us assume that the Robin function $\beta$ and $S_F$ both vanish identically; these terms can easily be handled in general. In particular $\langle P_R u ,v \rangle = \mathcal{E}_0(u,v)$. Setting $A = iV'$ and applying \eqref{eq:twistedcommutator},
\begin{align*}
2 \Re \mathcal{E}_0 (u, V'u) &= \langle \hat g^{ij} Q_j u , [Q_i,V']u \rangle + \langle \hat g^{ij} [ Q_j,V']u, Q_i u \rangle \\ &+ \langle [\hat g^{ij} ,V']Q_j u, Q_i u \rangle+ \langle (V'+(V')^*) \hat g^{ij} Q_j u , Q_i u \rangle.
\end{align*}
The commutator $[Q_i,V']$ can be computed exactly using \eqref{eq:Qcommutator}: if $V = V^j \pa_{z^j}$, then
\begin{align*}
[Q_i,V'] &= F[D_{z^i},F^{-1} V' F]F^{-1} \\ &= F[ D_{z^i}, V] F^{-1} 
\\ & = \pa_{z^i}(V^k)Q_k.
\end{align*}
It is interesting to observe that if one instead uses $V$ rather than its twisted version $V'$ (despite the fact that both are b-operators), there arise error terms that cannot be estimated appropriately.

We also have $[\hat g^{ij},V'] = [\hat g^{ij}, V]$. Finally, $(V')^* = F^{-1}V^*F$, and since the adjoint is taken with respect to $x^2 dg$,
\begin{align*}
(V')^* &= -F^{-1}VF - \divop_{\hat g} V + (n-2)x^{-1}V(x) \\
&= -V + FV(F^{-1}) - \divop_{\hat g}V + (n-2)x^{-1}V(x).
\end{align*}
In particular, this shows that 
\begin{equation} \label{eq:energycommutator}
2 \Re \mathcal{E}_0(u,V'u) = \langle B^{ij} Q_i u, Q_j u \rangle
\end{equation}
where 
\[
B = (\divop_{\hat g} V + 2FV(F^{-1}) + (n-2)x^{-1}V(x))\cdot \hat g^{-1} - \mathcal{L}_V \hat g^{-1}
\]
is symmetric. Now we take $V = fW$ for a suitable function $f$ and a real b-vector field $W$; expanding out the tensor $B$ yields
\begin{multline*}
B = (Wf + f\cdot \divop_{\hat g} W + 2FfV(F^{-1}) + (n-2)fx^{-1}W(x))\hat g^{-1} \\ - f\mathcal{L}_W \hat g^{-1} - 2 (\nabla_{\hat g} f) \otimes_s W,
\end{multline*}
where $\otimes_s$ denotes the symmetric tensor product. Observe that all the terms in $B$ where $f$ is differentiated can be written in the form $-2T_{\hat g}(W, \nabla_{\hat g} f)$, where 
\[
T_{\hat g}(W, \nabla_{\hat g} f) =  (\nabla_{\hat g} f) \otimes_s W - \tfrac{1}{2} \hat g(\nabla_{\hat g} f,W) \cdot  \hat g^{-1}
\]
is the stress-energy tensor (with respect to $\hat g$) of a scalar field associated to $W$ and $\nabla_{\hat g}f$.

We make our choices of $W$ and $f$ as follows. First, set $W = \nabla_{\hat g} t$, which is indeed in $\mathcal{V}_\be(X)$ by the assumption that the level sets of $t$ meet $\pa X$ orthogonally. Fix $t_0 < t_1$, and let $\delta > 0$ be small. Assume that $u \in \tSob^{1,1}_\loc(X)$ satisfies
\[
\supp u \subset \{ t_0 + \delta  \leq t \leq t_1\}.
\]
Let $\chi \in \CcI(\RR)$ satisfy the following properties:
\begin{itemize} \itemsep6pt 
	\item $\supp \chi \subset [t_0,t_1 + \delta]$,
	\item $\chi \geq 0$ everywhere and $\chi(s) > 0$ for $s \in [t_0+\delta,t_1]$,
	\item $\chi'(s) \leq 0$ for $s \in [t_0+\delta,t_1+\delta]$.
\end{itemize}
We then set $f = e^{-2\gamma t}( \chi \circ t)$. Notice that $\nabla_{\hat g} f = (\chi' \circ t - 2\gamma \chi\circ t)e^{-2\gamma t} \nabla_{\hat g} t$. Since $\nabla_{\hat g} t$ is strictly timelike, the stress-energy tensor term controls
\[
\gamma \int_X e^{-2\gamma t} H(d_Fu,d_F \bar u) \, x^2 d g.
\]
We can also control the $\Ltwo(X)$ norm of $u$ by writing 
\[
2 \Re \langle u, Vu \rangle = - \langle \divop_{\hat g}(V)u + (2-n)x^{-1}V(x) u,u \rangle.
\]
Again using that $-\divop_{\hat g}V = -Wf - f\cdot \divop_{\hat g} W$, we can bound the first term on the right-hand side 
\[
-Wf = -\hat g(\nabla_{\hat g}t, \nabla_{\hat g}f) \geq c_0 \gamma e^{-\gamma t} (\chi\circ t).
\]
This allows us to control $\gamma \| e^{-\gamma t} u \|^2_{\Ltwo(X)}$, and hence $\gamma \| e^{-\gamma t}u \|_{\tSob^{1}(X)}^2$ as well. All of the remaining terms can be absorbed into the latter positive term for large $\gamma > 0$ by Cauchy--Schwarz. Thus we obtain an estimate of the form
\[
\gamma \| e^{-\gamma t} u \|_{\tSob^{1}(X)} \leq C\gamma^{-1} \| e^{-\gamma t}P_R u \|_{\tdotSob^{-1,1}(X)}.
\]
Equipped with this energy estimate, standard functional-analytic arguments as in \cite[Section 4 \& Theorem 8.12]{vasy2012wave} and Theorem \ref{theo:GBBpropagation} on propagation of singularities allow one to deduce Theorem \ref{theo:wellposed}.




Let us denote by $\tSob^{1}_{\pm}(X)$ the space of future/past supported elements of $\tSob^{1}_{\loc}(X)$, i.e.
\beq\label{eq:dsdfsd}
\tSob^{1}_{\pm}(X)= \big\{ u\in \tSob^{1}_{\loc}(X) :    \supp u \subset \{\pm t\geq \pm t_0\} \mbox{ for some } t_0\in\rr\big\},
\eeq
and let us define $\tdotSob^{-1}_{\pm}(X)$ analogously. By hypothesis $\pt$, the elements of the intersection $\tSob^{1}_{+}(X)\cap \tSob^{1}_{-}(X)$ are compactly supported in $X$.

\begin{corr} There exist unique \emph{retarded/advanced propagators} $P_{{\Robin},\pm}^{-1}$, i.e.~continuous operators
\beq\label{pp0}
P_{\Robin,\pm}^{-1}: \tdotSob^{-1,s+1}_{\pm}(X)\to \tSob^{1,s}_{\pm}(X)
\eeq
such that $P_{\Robin} P_{\Robin,\pm}^{-1}= 1$ on $\tdotSob^{-1}_{\pm}(X)$ and $P_{{\Robin},\pm}^{-1} P_{\Robin}  = 1$ on  $\tSob^{1}_{\pm}(X)$. 
Furthermore, $P_{{\Robin},\pm}^{-1}$ maps continuously 
\[
P_{\Robin,\pm}^{-1}: \tdotSob^{-1,\infty}_{\rm c}(X)\to \tSob^{1,\infty}_{\loc}(X).
\]
\end{corr}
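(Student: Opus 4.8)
The statement to prove is the final corollary: the existence and uniqueness of retarded/advanced propagators $P_{\Robin,\pm}^{-1}$ between the supported and localized twisted Sobolev spaces with the stated inverse properties, plus the mapping property $\tdotSob^{-1,\infty}_{\rm c}(X)\to\tSob^{1,\infty}_{\loc}(X)$. The plan is to deduce everything from Theorem \ref{theo:wellposed} by exploiting the two time orientations and a standard causality/propagation argument.

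\textbf{Step 1: Construction and the two inverse identities.} Given $f\in\tdotSob^{-1,s+1}_{+}(X)$, so $\supp f\subset\{t\ge t_0\}$ for some $t_0$, Theorem \ref{theo:wellposed} produces a unique $u\in\tSob^{1,s}_{\loc}(X)$ with $\supp u\subset\{t\ge t_0\}$ and $P_{\Robin}u=f$; set $P_{\Robin,+}^{-1}f:=u$. This is well-defined because uniqueness in Theorem \ref{theo:wellposed} forces independence of the choice of $t_0$ (a smaller $t_0'$ gives the same solution). Linearity is immediate. That $P_{\Robin}P_{\Robin,+}^{-1}=1$ on $\tdotSob^{-1}_{+}(X)$ is by construction. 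For $P_{\Robin,+}^{-1}P_{\Robin}=1$ on $\tSob^1_+(X)$: if $v\in\tSob^1_+(X)$ with $\supp v\subset\{t\ge t_0\}$, then $P_{\Robin}v\in\tdotSob^{-1}_{+}(X)$ is also supported in $\{t\ge t_0\}$ (since $P_R$ acts locally), and $v$ itself is \emph{a} forward-supported solution with that source, so by the uniqueness clause of Theorem \ref{theo:wellposed} it must coincide with $P_{\Robin,+}^{-1}(P_{\Robin}v)$. Replacing $t$ by $-t$ throughout gives $P_{\Robin,-}^{-1}$ and its identities. Continuity of $P_{\Robin,\pm}^{-1}$ as maps $\tdotSob^{-1,s+1}_{\pm}(X)\to\tSob^{1,s}_{\pm}(X)$ follows from the quantitative estimate $\|u\|_{\tSob^{1,s}(K)}\le C\|f\|_{\tdotSob^{-1,s+1}(K')}$ in Theorem \ref{theo:wellposed}, together with the support localization, once one checks the topologies on the $\pm$-supported spaces are the natural inductive/projective ones; this is routine.

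\textbf{Step 2: Uniqueness of the propagators.} Suppose $R$ is another continuous operator with the stated properties. For $f$ forward-supported, both $Rf$ and $P_{\Robin,+}^{-1}f$ are forward-supported elements of $\tSob^1_{\loc}(X)$ solving $P_{\Robin}w=f$; by the uniqueness in Theorem \ref{theo:wellposed} they agree. Hence $R=P_{\Robin,+}^{-1}$.

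\textbf{Step 3: The $\b$-regularity statement $\tdotSob^{-1,\infty}_{\rm c}\to\tSob^{1,\infty}_{\loc}$.} This is where the propagation of singularities theorem enters, and I expect it to be the main (though not difficult) point. Let $f\in\tdotSob^{-1,\infty}_{\rm c}(X)$ and $u=P_{\Robin,\pm}^{-1}f$. Since $f\in\tdotSob^{-1,s+1}_\comp(X)$ for every $s$, Theorem \ref{theo:wellposed} already gives $u\in\tSob^{1,s}_{\loc}(X)$ for every finite $s$; one must upgrade this to $u\in\tSob^{1,\infty}_{\loc}(X)$, i.e.\ $\WFb^{1,\infty}(u)=\emptyset$. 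By Theorem \ref{theo:GBBpropagation}, $\WFb^{1,s}(u)\setminus\WFb^{-1,s+1}(P_{\Robin}u)$ is a union of maximally extended $\GBB$s in $\cchare$; since $P_{\Robin}u=f$ is smooth in the $\b$-sense ($\WFb^{-1,s+1}(f)=\emptyset$ for all $s$), this says $\WFb^{1,s}(u)$ is itself a union of maximally extended $\GBB$s. But $f$ has compact support, and $u$ is supported in $\{\pm t\ge\pm t_0\}$; using the properness hypothesis $\pt$, any $\GBB$ meeting $\{\pm t\ge\pm t_0\}$ and lying over the compact set where it could carry wavefront set must, when extended maximally backward (in the $\mp$ direction), exit the support of $u$ — more precisely, below the lowest time of $\supp f$ the solution $u$ vanishes on the relevant side (this is again the causality built into Theorem \ref{theo:wellposed}), so any such $\GBB$ contains points where $u$ is microlocally trivial, contradicting that $\WFb$-sets are closed and bicharacteristics are connected. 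Hence $\WFb^{1,s}(u)=\emptyset$ for every $s$, giving $u\in\tSob^{1,\infty}_{\loc}(X)$. Continuity of $\tdotSob^{-1,\infty}_\comp(X)\to\tSob^{1,\infty}_{\loc}(X)$ then follows from the closed graph theorem, or from tracking constants through the finite-order estimates. The only real care needed is in formulating the support/propagation argument so that "no $\GBB$ can carry singularities where $Pu$ is trivial and $u$ vanishes in the past" — this is standard (it is exactly how one proves propagators have wavefront set on the characteristic set in the boundaryless case, cf.\ Duistermaat–H\"ormander), but it must be phrased in terms of $\cchare^\pm$ and the time function $t$, invoking $\tf$ and $\pt$.
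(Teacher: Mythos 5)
Your Steps 1 and 2 are correct and correspond to the (unstated, because immediate) argument in the paper: the operators $P_{\Robin,\pm}^{-1}$ are defined by solving forward/backward from Theorem \ref{theo:wellposed}; the two inverse identities and uniqueness both fall out of the uniqueness clause in that theorem, and well-definedness (independence of $t_0$) and continuity are handled exactly as you say.

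Step 3, however, contains an unnecessary detour. You correctly observe that for $f\in\tdotSob^{-1,\infty}_{\rm c}(X)$ one gets $u=P_{\Robin,\pm}^{-1}f\in\tSob^{1,s}_{\loc}(X)$ for every finite $s$, by applying Theorem \ref{theo:wellposed} at each order and noting the solutions must coincide by uniqueness. But then you claim "one must upgrade this to $u\in\tSob^{1,\infty}_{\loc}(X)$" and invoke Theorem \ref{theo:GBBpropagation} plus a causality/flow-out argument. This upgrade step is vacuous: by the paper's own definition, $\tSob^{1,\infty}_{\loc}(X)=\bigcap_s\tSob^{1,s}_{\loc}(X)$, so membership in every finite-order space is by definition membership in the $\infty$-order space — there is no gap to close. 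Likewise continuity of $\tdotSob^{-1,\infty}_{\rm c}(X)\to\tSob^{1,\infty}_{\loc}(X)$ is immediate because the Fr\'echet seminorms on $\tSob^{1,\infty}_{\loc}(X)$ are exactly the $\tSob^{1,s}(K)$ seminorms over finite $s$ and compact $K$, and these are controlled by Theorem \ref{theo:wellposed}. Propagation of singularities is used in the paper to \emph{prove} Theorem \ref{theo:wellposed} itself (to go from the energy estimate at regularity $s=1$ to arbitrary $s$), not to deduce this corollary from it. The propagation/flow-out reasoning you sketch is the sort of thing one needs later, for instance to estimate $\WFbop$ of the propagators in Theorem \ref{prop:wfs}, but not here.
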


By uniqueness, the formal adjoint of $P_{{\Robin},\pm}^{-1}$ equals $P_{{\Robin},\mp}^{-1}$.

\subsection{Symplectic space of solutions} We continue to focus on Robin (or Neumann) boundary conditions. The difference of the two propagators,
\beq\label{eq:defG}
G_{\Robin} := P_{{\Robin},+}^{-1}-P_{{\Robin},-}^{-1} : \tdotSob^{-1,s+1}_{\rm c}(X)\to \tSob^{1,s}_{\loc}(X)
\eeq
will be called the \emph{causal propagator} (associated to the choice of boundary conditions). A natural space of solutions is given by
\[
\left\{ u \in \tSob^{1,\infty}_{\loc}(X) : P_{\Robin}u =0\right\}.
\]
Exactly as in \cite{wrochna2017holographic} we can show that this space is the range of $G_{\Robin}$ acting on a quotient space which is suitable for field quantization. A similar statement is true for Dirichlet boundary conditions.

\begin{prop}\label{prop:symp} The causal propagator \eqref{eq:defG} induces a bijection
\beq\label{eq:iso1}
[G_{\Robin}]:\frac{ \tdotSob^{-1,\infty}_{\rm c}(X) }{P_{\Robin}  \tSob^{1,\infty}_{\rm c}(X)}\longrightarrow \left\{ u \in \tSob^{1,\infty}_{\loc}(X) :  P_{\Robin}u =0\right\}.
\eeq
Moreover, $i (\cdot|  G_{\Robin}\cdot)_{L^2}$ induces a non-degenerate Hermitian form on the quotient space  $\tdotSob^{-1,\infty}_{\rm c}(X) /P_{\Robin}  \tSob^{1,\infty}_{\rm c}(X)$.
\end{prop}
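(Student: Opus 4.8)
The plan is to mimic, essentially verbatim, the structure of the analogous statement in \cite{wrochna2017holographic}, using the well-posedness and propagation results already established. First I would prove that the sequence
\[
\tSob^{1,\infty}_{\rm c}(X) \xrightarrow{\;P_{\Robin}\;} \tdotSob^{-1,\infty}_{\rm c}(X) \xrightarrow{\;G_{\Robin}\;} \tSob^{1,\infty}_{\loc}(X) \xrightarrow{\;P_{\Robin}\;} \tdotSob^{-1,\infty}_{\loc}(X)
\]
is a complex, i.e.\ $G_{\Robin} P_{\Robin} = 0$ on $\tSob^{1,\infty}_{\rm c}(X)$ and $P_{\Robin} G_{\Robin} = 0$ on $\tdotSob^{-1,\infty}_{\rm c}(X)$. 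Both identities follow directly from the defining relations $P_{\Robin} P_{\Robin,\pm}^{-1} = 1$, $P_{\Robin,\pm}^{-1} P_{\Robin} = 1$ on the future/past supported spaces: if $v \in \tSob^{1,\infty}_{\rm c}(X)$ then $P_{\Robin,\pm}^{-1} P_{\Robin} v = v$ (applying uniqueness in both $\pm$ supported classes, using that $v$ is compactly supported), so $G_{\Robin} P_{\Robin} v = v - v = 0$; and $P_{\Robin} G_{\Robin} f = f - f = 0$ similarly.

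Next I would establish that \eqref{eq:iso1} is well-defined and injective. Well-definedness is the complex property just shown (the range of $G_{\Robin}$ lands in the kernel of $P_{\Robin}$, and $G_{\Robin}$ kills $P_{\Robin}\tSob^{1,\infty}_{\rm c}(X)$); the mapping property $G_{\Robin}: \tdotSob^{-1,\infty}_{\rm c}(X) \to \tSob^{1,\infty}_{\loc}(X)$ is the last continuity statement of the preceding Corollary. For injectivity, suppose $[f]$ maps to $0$, i.e.\ $G_{\Robin} f = 0$, so $P_{\Robin,+}^{-1} f = P_{\Robin,-}^{-1} f =: u$. Then $u$ is supported in $\{t \geq t_0\} \cap \{t \leq t_1\}$ for suitable $t_0, t_1$, hence by hypothesis $\pt$ it is compactly supported, and $u \in \tSob^{1,\infty}_{\rm c}(X)$ by the regularity in the Corollary; since $P_{\Robin} u = f$, we get $f = P_{\Robin} u \in P_{\Robin}\tSob^{1,\infty}_{\rm c}(X)$, i.e.\ $[f] = 0$. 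For surjectivity, let $u \in \tSob^{1,\infty}_{\loc}(X)$ with $P_{\Robin} u = 0$; choose $\chi \in \CI(X)$ equal to $1$ for $t \geq t_1$ and $0$ for $t \leq t_0$ (using property $\tf$ and properness $\pt$ so that $\{t_0 \leq t \leq t_1\}$-type localizations behave well), and set $f = P_{\Robin}(\chi u) = [P_{\Robin},\chi] u$. Then $f$ is compactly supported (the commutator is supported where $d\chi \neq 0$) and lies in $\tdotSob^{-1,\infty}_{\rm c}(X)$ since $\chi u \in \tSob^{1,\infty}_{\loc}(X)$ and $P_{\Robin}$ maps $\tSob^{1,m}_{\loc}(X) \to \tdotSob^{-1,m}_{\loc}(X)$ for all $m$. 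One checks $P_{\Robin,+}^{-1} f = \chi u$ and $P_{\Robin,-}^{-1} f = (\chi - 1) u$ by uniqueness of the forward/backward solution, so $G_{\Robin} f = u$; thus $[f] \mapsto u$.

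Finally, for the Hermitian form: since $G_{\Robin}^* = P_{\Robin,+}^{-1 *} - P_{\Robin,-}^{-1 *} = P_{\Robin,-}^{-1} - P_{\Robin,+}^{-1} = -G_{\Robin}$ (using that the formal adjoint of $P_{\Robin,\pm}^{-1}$ is $P_{\Robin,\mp}^{-1}$, noted after the Corollary), the sesquilinear form $(f_1, f_2) \mapsto i(f_1 \mid G_{\Robin} f_2)_{L^2}$ is Hermitian. It descends to the quotient: if $f_2 = P_{\Robin} w$ with $w \in \tSob^{1,\infty}_{\rm c}(X)$ then $G_{\Robin} f_2 = 0$, and symmetrically using $G_{\Robin}^* = -G_{\Robin}$ one sees $(P_{\Robin} w \mid G_{\Robin} f)_{L^2} = (w \mid P_{\Robin} G_{\Robin} f)_{L^2} = 0$ (the pairing $P_{\Robin} G_{\Robin} f = 0$). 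Non-degeneracy: if $(f_1 \mid G_{\Robin} f_2)_{L^2} = 0$ for all $f_2$, then $G_{\Robin} f_1 = 0$ — here one must argue that $G_{\Robin} f_1$, an element of $\tSob^{1,\infty}_{\loc}(X)$, is determined by pairing against all of $\tdotSob^{-1,\infty}_{\rm c}(X)$ via the $L^2$ inner product, which follows by density of $\CdI_{\rm c}(X^\circ)$ and the fact that $\tSob^{1,\infty}_{\loc}(X) \subset \Ltwo_{\loc}(X)$ pairs non-degenerately against $\Ltwo_{\rm c}(X) \subset \tdotSob^{-1,\infty}_{\rm c}(X)$ — so $[f_1] = 0$ by injectivity of $[G_{\Robin}]$. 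The main obstacle I anticipate is purely bookkeeping: tracking the conormal orders $s$ and the supports carefully so that all pairings $(f \mid G_{\Robin} \cdot)_{L^2}$ and all applications of the forward/backward solution operators are legitimate (in particular the loss of one conormal derivative in Theorem \ref{theo:wellposed}, and the need to stay in the $s = \infty$ class where $P_{\Robin}$ and the propagators interact cleanly); the conceptual content is entirely contained in the complex property, the uniqueness in Theorem \ref{theo:wellposed}, and the adjoint relation between the two propagators.
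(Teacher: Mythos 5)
Your proposal is correct and takes precisely the approach the paper itself takes: the paper gives no detailed proof, deferring to \cite{wrochna2017holographic}, and your argument is a faithful unpacking of that reference's strategy (complex property from the defining relations of $P_{\Robin,\pm}^{-1}$, injectivity from uniqueness and hypothesis $({\rm PT})$, surjectivity via a temporal cutoff $\chi$, Hermitianity from $G_{\Robin}^*=-G_{\Robin}$, and non-degeneracy from injectivity plus density of test functions in the $L^2$ pairing). The bookkeeping you flag — the loss of one conormal order in Theorem~\ref{theo:wellposed}, keeping to the $s=\infty$ spaces so that $P_{\Robin}$ and the propagators compose cleanly, and checking that $[P_{\Robin},\chi]u$ lands in $\tdotSob^{-1,\infty}_{\rm c}(X)$ — is indeed all that needs care, and your sketches of those checks are sound.
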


We remark that a similar statement was obtained in \cite[Section 4]{dappiaggi2018algebraic} in the case of the Poincar\'e patch of AdS; cf.~\cite[Proposition 34]{dappiaggi2018fundamental} for static spacetimes with smooth timelike boundary.
	
		\begin{rema} We can use the present framework and Proposition \ref{prop:symp} to extend the recent results in \cite{dybalski2018mechanism} on quantum holography to the case of Neumann and Robin boundary conditions. This can be done by replacing the space $H_{0,\b,{\comp}}^{-1,\infty}(X)$, resp.~$H_{0,\b,{\comp}}^{1,\infty}(X)$ considered therein by $\tdotSob^{-1,\infty}_{\comp}(X)$, resp.~$\tSob^{1,\infty}_{\comp}(X)$, and by replacing the map $\partial_+$ therein by the pair of trace maps $(\gamma_-,\gamma_+)$. In this way one obtains a direct analogue of \cite[Theorem 3.7]{dybalski2018mechanism}, the statement of which is non-trivial if one has the following \emph{unique continuation property} for some open set $O\subset \pX$: for any $u\in\tSob^{1,-\infty}_{\loc}(X)$ solving $P_{\Robin}u=0$, if $\gamma_-u=0$ and $\gamma_+u=0$ then $u=0$ on some non-empty $V(O)\subset X$. Results of this type were obtained by Holzegel and Shao \cite{holzegel2016unique,holzegel2017unique} in the case of high regularity solutions.
\end{rema}

\subsection{Microlocal regularity of traces} We will need more precise mapping statements for $\gamma_\pm$ in the case of high conormal regularity. We show that as in the case of conventional traces, the wavefront set of $\gamma_\pm u$ is controlled in term of the $\b$-wavefront set of $u$; cf.~\cite[Proposition 3.9]{wrochna2017holographic} for a similar result, proved therein only in the Dirichlet case and assuming $Pu=0$. Here we are assuming $\nu \in (0,1)$.

To handle the $\gamma_+$ case, we need the following observation: if $u \in \maxdom^\infty$ and $B \in \Psibeven^0(X)$, then $FBF^{-1}u \in \maxdom^\infty$ as well by Lemma \ref{lem:preservesmaxdomain}. In particular, $\gamma_+ (FBF^{-1}u)$ can be computed by the formula
\[
\gamma_+ (FBF^{-1}u) = x^{1-2\nu}\partial_x(BF^{-1}u)|_{\pa X}.
\]
Now write
\begin{align*}
x^{-2\nu}x\pa_x BF^{-1} &= x^{-2\nu}([B,x\pa_x] F^{-1} + Bx\pa_x F^{-1}) \\
&= x^{2-2\nu}B' F^{-1}u + B'' x^{1-2\nu}\pa_x(F^{-1}u),
\end{align*}
where $B' = x^{-2}[B,x\pa_x] \in \Psib^0(X)$ since $B \in \Psibeven^0(X)$, and $B'' = x^{-2\nu}Bx^{2\nu} \in \Psib^0(X)$. Since $u \in \maxdom^\infty$ and $\nu \in (0,1)$,
\[
x^{2-2	\nu}B'F^{-1}u|_{\pa X} = 0
\]
by \eqref{eq:graphnormexpansion} and \eqref{eq:sobolevembedding}. Furthermore, $B'' x^{1-2\nu}\pa_x(F^{-1}u)|_{\pa X} = \widehat{N}(B)(-2i\nu)(\gamma_+u)$. Since $\maxdom^\infty$ is dense in $\maxdom^k$, we conclude that if $B \in \Psibeven^0(X)$ and $u \in \maxdom^k$, then
\begin{equation} \label{eq:gamma_+commute}
\gamma_+(FBF^{-1}u) = \widehat{N}(B)(-2i\nu)(\gamma_+ u).
\end{equation}
The other observation we need is the following:
\begin{lemm} \label{lem:maxdomWF}
	Let $u \in \maxdom^{-\infty}$, and suppose that $q \notin \WFb^{1,\infty}(u) \cup \WFb^{0,\infty}(x^{-2}Pu)$. If $B \in \Psibeven^0(X)$ and $\opWFb(B)$ is a sufficiently small conic neighborhood of $q$, then 
	\[
	x^{-2}PAu \in \tSob^{0,\infty}_\loc(X)
	\]
	 if we set $A = FBF^{-1}$.
\end{lemm}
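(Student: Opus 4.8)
The plan is to follow the proof of Lemma \ref{lem:preservesmaxdomain} essentially line by line, but to feed the two microlocal hypotheses on $u$ into the estimates so that its a priori conclusion ``$x^{-2}PAu\in\tSob^{0,k}_\loc(X)$'' — all that Lemma \ref{lem:preservesmaxdomain} gives directly, since $u\in\maxdom^{-\infty}=\bigcup_k\maxdom^k$ only lies in $\maxdom^k$ for some finite $k$ — gets upgraded to membership in $\tSob^{0,\infty}_\loc(X)$. First I would fix once and for all an operator $G\in\Psib^0(X)$ elliptic at $q$ with $Gu\in\tSob^{1,\infty}_\loc(X)$ (which exists because $q\notin\WFb^{1,\infty}(u)$), and then shrink the conic neighborhood $\opWFb(B)=\opWFb(A)$ of $q$ so that it is disjoint from the three closed conic sets $\WFb^{1,\infty}(u)$, $\WFb^{0,\infty}(x^{-2}Pu)$, and the complement of $\ellb(G)$; this is possible since none of them contains $q$, and finitely many such conditions can be met with a single cone. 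With this choice, any $C\in\Psib^j(X)$ with $\opWFb(C)\subset\opWFb(A)$ admits a microlocal elliptic parametrix against $G$, i.e.\ $C=E_C G+R_C$ with $E_C\in\Psib^j(X)$ and $R_C\in\Psib^{-\infty}(X)$; combined with the mapping properties of $\Psib^\bullet(X)$ and of $\tDiff^1(X)$ on the scale $\tSob^{\bullet,\infty}_\loc(X)$ recorded in Section \ref{subsect:interaction}, one then gets $Cw\in\tSob^{0,\infty}_\loc(X)$ whenever $w$ is one of the ``admissible'' distributions: $x^{-2}Pu$ (microlocally $\tSob^{0,\infty}$ near $q$ by hypothesis), $u$ with $j\le 0$ (microlocally $\tSob^{1,\infty}$), $Ku$ with $j\le 1$, or $Q_0u$ with $j\le 0$ (microlocally $\tSob^{0,\infty}$, since $Q_0\in\tDiff^1(X)$ lowers the $\tSob$-index by one).

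Second, I would split $x^{-2}PAu=x^{-2}APu+x^{-2}[P,A]u$ and use the identity $x^{-2}[P,A]=[x^{-2}P,A]-[x^{-2},A]P$ from the proof of Lemma \ref{lem:preservesmaxdomain} to write $x^{-2}PAu$ as a sum of $[x^{-2}P,A]u$ and two terms of the form $C(x^{-2}Pu)$ with $C\in\Psib^0(X)$, resp.\ $\Psib^{-1}(X)$, microsupported in $\opWFb(A)$; the latter are admissible by the first paragraph. Then $[x^{-2}P,A]=[Q_0^*Q_0,A]+[K,A]$ with $K\in\Psib^2(X)$, and $[K,A]u$ is of the admissible form $Cu$ with $C=[K,A]\in\Psib^1(X)$. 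The core is $[Q_0^*Q_0,A]u=Q_0^*[Q_0,A]u-[Q_0,A^*]^*Q_0u$, and this is precisely where $B\in\Psibeven^0(X)$ is needed: Lemma \ref{lem:evenQcommutator} applies to $A=FBF^{-1}$ and yields $[Q_0,A]=A_1Q_0+A_0$ with $A_1\in\Psib^{-1}(X)$ and, crucially, $A_0\in x\Psib^0(X)$, both microsupported in $\opWFb(A)$. Running through the manipulations of the proof of Lemma \ref{lem:preservesmaxdomain} — commuting $Q_0^*$ past $A_1$ and $A_0$ by the swap identities, using $Q_0^*x=xQ_0^*+\tfrac1i$, and replacing $Q_0^*Q_0u$ by $x^{-2}Pu-Ku$ — reduces every resulting term, up to lower-order errors, to the admissible forms $C(x^{-2}Pu)$, $(CK)u$, $Cu$, or $C(Q_0u)$, each of which lies in $\tSob^{0,\infty}_\loc(X)$. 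The remaining piece $[Q_0,A^*]^*Q_0u$ is treated identically, noting that $A^*=F^{-1}B^*F$ with $B^*\in\Psibeven^0(X)$ (so Lemma \ref{lem:evenQcommutator} applies again), equivalently by re-running the argument with $Q_0$ and $Q_0^*$ interchanged as in Lemma \ref{lem:preservesmaxdomain}.

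The main obstacle is the bookkeeping in this last step: one must verify that after each commutation the order of the b-operator sitting in front of $x^{-2}Pu$, $u$, $Ku$, or $Q_0u$ is low enough — and the extra powers of $x$ coming from $A_0\in x\Psib^0(X)$ are correctly tracked — so that every term genuinely lands in $\tSob^{0,\infty}_\loc(X)$; in particular the gain from $\Psibeven^0(X)$ must be used in full, since $u$ is only microlocally $\tSob^{1,\infty}$ near $q$, so $Q_0u$ is only microlocally $\tSob^{0,\infty}$ there, and no term may demand $\tSob^{1,\infty}$-regularity of $Q_0u$. This is exactly the calculation already carried out in the proof of Lemma \ref{lem:preservesmaxdomain}; the only genuinely new ingredient is that, with $\opWFb(B)$ chosen to avoid $\WFb^{1,\infty}(u)$ and $\WFb^{0,\infty}(x^{-2}Pu)$, each ``error bounded by the a priori regularity'' appearing there is now an honest element of $\tSob^{0,\infty}_\loc(X)$. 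No further evenness or global hypotheses are required.
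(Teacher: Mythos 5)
Your proposal is correct and takes essentially the same approach as the paper: the paper's proof consists of the observation that, tracing through Lemma \ref{lem:preservesmaxdomain} while shrinking $\opWFb(B)$, microlocality of each operation (the decomposition $x^{-2}PAu = x^{-2}APu + [x^{-2}P,A]u - [x^{-2},A]Pu$, the splitting $[x^{-2}P,A]=[Q_0^*Q_0,A]+[K,A]$, and the applications of Lemma \ref{lem:evenQcommutator}) upgrades the a priori $\tSob^{0,k}_\loc$ conclusion to $\tSob^{0,\infty}_\loc$. You have simply spelled out the bookkeeping that the paper leaves implicit, including the correct use of the gain from $A_0\in x\Psib^0(X)$ so that no term requires more than $\tSob^{0,\infty}$-regularity of $Q_0 u$ near $q$.
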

\begin{proof}
By taking $\opWFb(B)$ to be a sufficiently small conic neighborhood of $q$, we may assume that $Au \in \tSob^{1,\infty}_\loc(X)$. Tracing through the proof of Lemma \ref{lem:preservesmaxdomain} and further shrinking the microsupport of $B$ as necessary, we see that $x^{-2}PAu \in \tSob^{0,\infty}_\loc(X)$ as well; this is a consequence of the microlocality the operations involved in Lemma \ref{lem:preservesmaxdomain}.
\end{proof}

\begin{lemm}\label{prop:wf} If $u \in\mathcal{H}^{1}_{\loc}(X)$, then
	\beq\label{eq:wfgamm}
	\wf( \gamma_- u )\subset \WFb^{1,\infty}(u) \cap T^*\pa X.
	\eeq
	Moreover, if $Pu\in x^2 \mathcal{L}^2_{\loc}(X)$, then
	\[
	\wf(\gamma_+u) \subset (\WFb^{1,\infty}(u) \cup \WFb^{0,\infty}(x^{-2}Pu)) \cap T^*\pa X.
	\] 
\end{lemm}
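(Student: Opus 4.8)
The plan is to prove the two inclusions by microlocalizing and invoking the commutation identities \eqref{eq:gamma_-commute} and \eqref{eq:gamma_+commute}, together with the characterization of $\WFb^{1,\infty}$ and $\WFb^{0,\infty}$ in terms of b-pseudodifferential operators and the fact that elliptic indicial families map into elliptic operators on $\pa X$. For the $\gamma_-$ statement, suppose $q \notin \WFb^{1,\infty}(u)$. If $q \notin T^*\pa X$ there is nothing to prove (as $\gamma_- u$ lives on $\pa X$), so assume $q \in T^*\pa X$. Choose $A \in \Psib^0(X)$ elliptic at $q$ with $Au \in \tSob^{1,\infty}_\loc(X)$, and moreover with $\opWFb(A)$ a small conic neighborhood of $q$. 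Then by \eqref{eq:gamma_-commute}, $\gamma_-(Au) = \widehat{N}(A)(-i\nu_-)(\gamma_- u) \in H^{\infty}_\loc(\pa X)$, and since $Au \in \tSob^{1,\infty}_\loc(X)$ implies $\gamma_-(Au)\in H^\infty_\loc(\pa X)$ by the mapping property of $\gamma_-$ on $\tSob^{1,m}_\loc(X)$, we conclude $\widehat{N}(A)(-i\nu_-)(\gamma_-u)$ is smooth near the image $\tilde q$ of $q$ in $T^*\pa X$. Because $A$ is elliptic at $q$, the operator $\widehat{N}(A)(-i\nu_-) \in \Psi^0(\pa X)$ is elliptic at $\tilde q$, so elliptic regularity on $\pa X$ gives $\tilde q \notin \wf(\gamma_- u)$. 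This is exactly \eqref{eq:wfgamm}.

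For the $\gamma_+$ statement the strategy is parallel but uses the maximal-domain machinery. Assume $Pu \in x^2\Ltwo_\loc(X)$, so $u \in \maxdom^{-\infty}$, and suppose $q \notin \WFb^{1,\infty}(u) \cup \WFb^{0,\infty}(x^{-2}Pu)$; again we may assume $q \in T^*\pa X$. By Lemma \ref{lem:maxdomWF}, for $B \in \Psibeven^0(X)$ with $\opWFb(B)$ a sufficiently small conic neighborhood of $q$ we have $Au \in \tSob^{1,\infty}_\loc(X)$ and $x^{-2}PAu \in \tSob^{0,\infty}_\loc(X)$, where $A = FBF^{-1}$; that is, $Au \in \maxdom^\infty$. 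By Lemma \ref{lem:gammaplus}, $\gamma_+(Au) \in H^\infty_\loc(\pa X)$. On the other hand, by \eqref{eq:gamma_+commute}, $\gamma_+(Au) = \widehat{N}(B)(-2i\nu)(\gamma_+ u)$. Choosing $B$ elliptic at $q$ (which is possible within $\Psibeven^0(X)$: e.g. an even quantization of an elliptic symbol supported near $q$), the operator $\widehat{N}(B)(-2i\nu) \in \Psi^0(\pa X)$ is elliptic at $\tilde q$, so elliptic regularity on $\pa X$ gives $\tilde q \notin \wf(\gamma_+ u)$, proving the second inclusion.

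The main technical points to verify carefully are, first, that one can indeed find $B \in \Psibeven^0(X)$ that is elliptic at a prescribed $q \in T^*\pa X$ with arbitrarily small microsupport — this should follow from the construction of $\Psibeven$ in Appendix \ref{app:even}, since one can quantize a symbol that is even and elliptic near the relevant fiber; second, that the indicial families $\widehat{N}(A)(-i\nu_-)$ and $\widehat{N}(B)(-2i\nu)$ are elliptic at $\tilde q$ whenever $A$ (resp. $B$) is elliptic at $q$ — this is the standard relationship between b-ellipticity at a boundary point and ellipticity of the indicial family, and is used already in the text around \eqref{eq:gamma_-commute}; and third, the bookkeeping that reducing to $q \in T^*\pa X$ loses nothing, since for $q \in \bT^*X \setminus 0$ with $x(q) \ne 0$ the point simply does not project to $T^*\pa X$. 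The genuinely delicate step is the $\gamma_+$ case, where one must ensure that shrinking $\opWFb(B)$ propagates through the (microlocal) operations in the proof of Lemma \ref{lem:preservesmaxdomain} so that $Au$ lands in $\maxdom^\infty$ rather than merely $\tSob^{1,\infty}_\loc(X)$ — but this is precisely the content of Lemma \ref{lem:maxdomWF}, so the argument is essentially assembled from results already in hand.
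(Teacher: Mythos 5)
Your proof is correct and follows essentially the same route as the paper's: both cases hinge on microlocalizing with an elliptic $B \in \Psib^0(X)$ (or $B \in \Psibeven^0(X)$ conjugated by $F$ for $\gamma_+$), invoking the commutation identities \eqref{eq:gamma_-commute} and \eqref{eq:gamma_+commute} together with Lemmas \ref{lem:maxdomWF} and \ref{lem:gammaplus}, and then using ellipticity of $\widehat{N}(B)$ on $\partial X$. Your extra remarks on the existence of elliptic even operators and the restriction to $q \in T^*\partial X$ are correct and match what the paper takes for granted.
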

\begin{proof}
	The proof for $\gamma_-$ is standard: let $q \in T^*\pa X$, and suppose that $q \notin \WFb^{1,\infty}(u)$, meaning that there exists $B\in\Psi^0_\b(X)$ elliptic at $q$ such that $Bu\in \mathcal{H}^{1,\infty}_\loc(X)$. Then 
	\[
	\widehat{N}(B)(-i\nu_-) \gamma_- u  = \gamma_- Bu \in \CI(X),
	\]
	which finishes the argument since $\widehat{N}(B)(-i\nu_-)$ is elliptic at $q$.
	
	Now consider the $\gamma_+$ case. Let $q \in T^* \pa X$, and suppose that $q \notin \WFb^{1,\infty}(u) \cup \WFb^{0,\infty}(x^{-2}Pu)$. By Lemma \ref{lem:maxdomWF} we may find $B \in \Psibeven^0(X)$ elliptic at $q$ such that
	\[
	FBF^{-1}u \in \maxdom^\infty.
	\]
	In particular, $\gamma_+(FBF^{-1}u) \in \CI(\pa X)$ by Lemma \ref{lem:gammaplus}. Finally, according to \eqref{eq:gamma_+commute},
	\[
	\widehat{N}(B)(-2i\nu)(\gamma_+ u) = \gamma_+(FBF^{-1}u) \in \CI(\pa X),
	\]
	which shows that $q \notin \wf(\gamma_+u)$.
\end{proof}

\subsection{Holographic Hadamard condition}


Let $\nu \in (0,1)$. We denote by $\cWb$ the set of bounded operators  from  $\tdotSob^{-1,-\infty}_{\rm c}(X)$ to $\mathcal{H}^{1,\infty}_{\rm loc}(X)$. 
Following \cite{wrochna2017holographic} (though using different spaces of distributions) we introduce an operatorial $\be$-wave front set which is a subset of $\bee S^*X \times \bee S^*X$. 

\begin{defi} \label{def:wfbp} Suppose $\Lambda:\tdotSob^{-1,-\infty}_{\rm c}(X)\to\mathcal{H}^{1,-\infty}_{\rm loc}(X)$ is continuous. We say that $(q_1,q_2)\in \bee S^* X \times \bee S^* X$ is not in $\WFbop(\Lambda)$ if there exist  $B_i\in \Psi_\b^0(X)$, elliptic at $q_i$ ($i=1,2$), and such that $B_1 \Lambda B_2^*\in \cWb$.
\end{defi}

%

The notion of \emph{holographic Hadamard two-point functions} introduced in \cite{wrochna2017holographic} has the following straightforward adaptation to the present case.

\begin{defi} We say that two continuous operators $\Lambda_\Robin^\pm : \tdotSob^{-1,-\infty}_{\rm c}(X)\to \mathcal{H}^{1,-\infty}_{\loc}(X)$ are (Robin) \emph{two-point functions} if:
\beq\label{eq:2ptfct}
\bea i) \quad & P_R \Lambda^\pm_\Robin = \Lambda^\pm_\Robin P_R =0,\\
ii) \quad & \Lambda^+_\Robin -\Lambda^-_\Robin = i G_R \,\mbox{ and }\, \Lambda^\pm_\Robin \geq 0 \mbox{ on } \tdotSob^{-1,\infty}_{\rm c}(X).
\eea
\eeq
We say that $\Lambda^\pm_\Robin$ are \emph{holographic Hadamard two-point functions} if in addition they satisfy
\beq\label{eq:holhad}
\WFbop(\Lambda^\pm_\Robin )\subset \dot\cN^\pm\times \dot\cN^\pm.
\eeq
\end{defi}


 The first definition is a direct adaptation of the standard definition of two-point functions to the setup provided by Proposition \ref{prop:symp}. We refer to e.g.~\cite[Section 7.1]{gerard2017hadamard} for a brief introduction to two-point functions, field quantization and for remarks on the relation with the more commonly used real formalism. We point out that once some two-point functions $\Lambda^\pm$ are given, the general formalism of quasi-free states and of the GNS representation applies, and as an outcome one obtains \emph{quantum fields} (which are not discussed here in any detail).
	
	The second definition provides a replacement for the celebrated \emph{Hadamard condition}, which is widely used on globally hyperbolic spacetimes, and which is formulated in terms of the (usual, smooth) wave front set since the work of Radzikowski \cite{radzikowski1996micro}. The main interest for Hadamard two-point functions comes from Radzikowski's theorem, which asserts their uniqueness modulo smoothing operators. Thus, their singularities have a universal form that comes from the local geometry and which can be subtracted by a renormalization procedure. In our setup, Radzikowski's theorem is replaced by the following result.

\begin{theo}\label{thm:holo1} Holographic Hadamard two-point functions exist and are unique modulo $\mathcal{W}_\b^{-\infty}(X)$.
\end{theo}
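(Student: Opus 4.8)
I would dispose of this half first, since it is an immediate consequence of Theorem~\ref{theo:GBBpropagation}. Given two pairs $\Lambda^\pm_\Robin$ and $\tilde\Lambda^\pm_\Robin$ of holographic Hadamard two-point functions, set $R=\Lambda^+_\Robin-\tilde\Lambda^+_\Robin$; from $\Lambda^+_\Robin-\Lambda^-_\Robin=iG_\Robin=\tilde\Lambda^+_\Robin-\tilde\Lambda^-_\Robin$ one also gets $R=\Lambda^-_\Robin-\tilde\Lambda^-_\Robin$. Subadditivity of $\WFbop$ together with the holographic Hadamard condition then yields $\WFbop(R)\subset(\cchare^+\times\cchare^+)\cap(\cchare^-\times\cchare^-)$. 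The geometric point is that $\cchare^+\cap\cchare^-=\emptyset$: a non-zero null covector has a well-defined time-orientation, and above a hyperbolic point $q$ the two elements of $\pi^{-1}(q)\cap\cchare$ carry the \emph{same} orientation, because, after the reduction made in the proof of Theorem~\ref{theo:wellposed} in which $dt$ is orthogonal to $dx$ along $\pa X$, the time-orientation is read off from the tangential momentum alone, which is preserved under reflection. Hence $\WFbop(R)=\emptyset$, and a standard microlocal partition-of-unity argument (using properness, hypothesis $\pt$) upgrades this to $R\in\cWb$, which is exactly the asserted uniqueness.

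\textbf{Existence.} For this half I would follow the pseudodifferential construction of \cite{wrochna2017holographic}, adapted to the twisted spaces. The first step is to record that $\WFbop(G_\Robin)\subset(\cchare^+\times\cchare^+)\cup(\cchare^-\times\cchare^-)$ and that $iG_\Robin$ is formally self-adjoint: the latter because the adjoint of $P_{\Robin,\pm}^{-1}$ is $P_{\Robin,\mp}^{-1}$, the former because $P_\Robin G_\Robin=G_\Robin P_\Robin=0$ forces, via Theorem~\ref{theo:GBBpropagation} applied in each variable, that $\WFbop(G_\Robin)$ lies in the relation ``$q_1$ and $q_2$ lie on a common maximally extended $\GBB$ in $\cchare$'', while the time-orientation is locally constant along any $\GBB$ (including across hyperbolic reflections, by the observation above). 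Next I would fix a Cauchy hypersurface $\Sigma=t^{-1}(0)$, spacelike for $\hat g$ by $\tf$ and meeting $\pa X$ orthogonally. Using Theorem~\ref{theo:wellposed} and Proposition~\ref{prop:symp}, the Cauchy data map $\varrho$ (restriction together with a twisted normal derivative on $\Sigma$, using $\gamma_-$ at the corner $\Sigma\cap\pa X$) identifies $\{u\in\tSob^{1,\infty}_{\loc}(X):P_\Robin u=0\}$ with a space $\mathcal D$ of data carrying a non-degenerate Hermitian form $\sigma_\Sigma$ compatible with $i(\cdot\,|\,G_\Robin\cdot)_{L^2}$ through $\varrho\circ G_\Robin$. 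On $\mathcal D$ the first-order-in-$t$ reduction of $P_\Robin$ is governed by a $2\times 2$ symbol with eigenvalues of opposite sign on the characteristic cone, which produces pseudodifferential ``frequency splitting'' operators $c^\pm$ on $\mathcal D$, built compatibly with the twisted calculus and with $\gamma_\pm$ near $\Sigma\cap\pa X$, with $c^++c^-=1$, with $\opWFb(c^\pm)$ avoiding the ``$\mp$'' region, $\sigma_\Sigma$-self-adjoint, and such that $\lambda^\pm:=\pm i\sigma_\Sigma\circ c^\pm\ge 0$ modulo smoothing; a smoothing correction applied symmetrically to $c^+$ and $c^-$ (possible by $\pt$) then makes the $\lambda^\pm$ genuinely $\ge 0$ while keeping $c^++c^-=1$.

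I would then \emph{define} $\Lambda^\pm_\Robin=(\varrho\circ G_\Robin)^*\,\lambda^\pm\,(\varrho\circ G_\Robin)$ and verify the three defining properties directly: $P_\Robin\Lambda^\pm_\Robin=\Lambda^\pm_\Robin P_\Robin=0$ because the construction factors through $G_\Robin$ and $\lambda^\pm$ depends only on Cauchy data; $\Lambda^+_\Robin-\Lambda^-_\Robin=(\varrho G_\Robin)^*(i\sigma_\Sigma)(\varrho G_\Robin)=iG_\Robin$ because $c^++c^-=1$; and $\Lambda^\pm_\Robin\ge 0$ because $\lambda^\pm\ge 0$. For the holographic Hadamard condition, note that for $f\in\tdotSob^{-1,\infty}_{\rm c}(X)$ the Cauchy data of $\Lambda^\pm_\Robin f$ is microlocalized by $c^\pm$ into the ``$\pm$'' part of $T^*\Sigma$; Theorem~\ref{theo:GBBpropagation} (with elliptic regularity off $\cchare$) then places $\WFb^{1,\infty}(\Lambda^\pm_\Robin f)$ on maximally extended $\GBB$s issued from that region, all of which lie in $\cchare^\pm$. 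The second variable is handled identically from $P_\Robin(\Lambda^\pm_\Robin)^*=0$ ($P_\Robin$ being formally self-adjoint) and the $\sigma_\Sigma$-self-adjointness of $c^\pm$, giving $\WFbop(\Lambda^\pm_\Robin)\subset\cchare^\pm\times\cchare^\pm$.

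\textbf{Main obstacle.} I expect the real work to be in the existence half: constructing the frequency-splitting operators $c^\pm$ near the corner $\Sigma\cap\pa X$, where the weighted/twisted structure of the function spaces and the two traces $\gamma_\pm$ interact nontrivially with the $\b$-calculus, and then checking the precise operatorial $\b$-wavefront bound for the resulting $\Lambda^\pm_\Robin$. Both of these rest on the continuity properties of $\gamma_\pm$ developed in Sections~\ref{sec:fspaces}--\ref{sec:KG} and, decisively, on the propagation theorem~\ref{theo:GBBpropagation}; by contrast, once those are in hand the verification of properties i)--ii) and of uniqueness is essentially formal.
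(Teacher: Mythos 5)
Your uniqueness argument is essentially the one the paper uses (via the reference to \cite[Proposition~5.13]{wrochna2017holographic}): subadditivity of $\WFbop$ plus disjointness of $\cchare^+$ and $\cchare^-$. Your geometric observation about time-orientation being preserved under reflection at the boundary is correct and is the key point (it holds intrinsically because $dx$ is spacelike for $\hat g$, so the reflection fixes $N^\perp$ pointwise, a hyperplane of signature $(1,n-2)$).

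For existence, however, you take a genuinely different route from the paper. The paper first reduces to a neighborhood of a time slice, then uses a \emph{spacetime deformation argument} to reduce to a standard static aAdS spacetime, where $P=f_1(\pa_t^2+L)f_2$ and one builds $\Lambda^\pm_\Robin$ explicitly through the spectral/functional calculus of the elliptic spatial operator $L$ associated with Neumann boundary conditions; the only new technical point is the conormal mapping property of $L^{-1/2}$ (Lemma~\ref{lemm:squareroot}). You instead attempt a direct \emph{frequency-splitting construction} on the Cauchy surface $\Sigma=t^{-1}(0)$ of the original spacetime, building pseudodifferential projectors $c^\pm$ on Cauchy data and setting $\Lambda^\pm_\Robin=(\varrho G_\Robin)^*\lambda^\pm(\varrho G_\Robin)$. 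This is a legitimate alternative strategy, closer to the parametrix/pseudodifferential construction of Hadamard states in the boundaryless case, and it avoids the deformation argument entirely; what it buys is directness, at the cost of working at the corner $\Sigma\cap\pa X$ where the twisted calculus, the traces $\gamma_\pm$, and the Robin condition all interact.

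Your main substantive gap is exactly where you flag it, but it is worth being explicit about two things you implicitly assume and which the paper never establishes. First, there is no Cauchy data map $\varrho$ in the paper: only forward/backward well-posedness (Theorem~\ref{theo:wellposed}) and the quotient description of the solution space (Proposition~\ref{prop:symp}). Packaging the solution space as Cauchy data on $\Sigma$ consistent with the Robin realization, continuous on twisted spaces and behaving well near $\Sigma\cap\pa X$, is itself nontrivial; your remark about ``using $\gamma_-$ at the corner'' underestimates it, since for the Robin problem one must track both $\gamma_\pm$ near $\Sigma\cap\pa X$ compatibly with the energy/Dirichlet-form structure. Second, the construction of $c^\pm$ as operators with the requisite $\opWFb$ support, $\sigma_\Sigma$-self-adjointness, and positivity-modulo-smoothing requires an elliptic spatial pseudodifferential calculus on the manifold-with-boundary $\Sigma$ compatible with the twisting and with the Neumann/Robin realization -- which is precisely the content the paper obtains ``for free'' from Lemma~\ref{lemm:squareroot} after the static reduction. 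So while your strategy should in principle succeed, its implementation would reproduce, in a less economical form, the technical input that the paper isolates in its static/elliptic reduction.
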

\begin{proof} The proof is largely analogous to that of \cite[Theorem 5.11]{wrochna2017holographic} and \cite[Proposition 5.13]{wrochna2017holographic}, so we only sketch it. 

As in \cite[Theorem 5.9]{wrochna2017holographic}, one can reduce the existence problem to a neighborhood of a time slice. By a spacetime deformation argument, this allows one to reduce the problem further to the case of a \emph{standard static} spacetime. By the same argument one can assume without loss of generality that $S_F\geq \lambda x^2$ for some $\lambda>0$ and that $\beta=0$.

Then $P$ equals $f_1 (\partial_t^2 +L) f_2$, for some smooth multiplication operators $f_1,f_2\in\CI(X)$, where the spatial part $L$ is a differential operator on $S$ associated with a quadratic form like $\mathcal{E}_0$, except that the signature is Riemannian. We can associate to $L$ a positive self-adjoint operator consistent with Neumann boundary conditions, as discussed in Appendix \ref{app:elliptic}.
From this point on, the construction of $\Lambda^\pm$ can be done exactly as in \cite[Lemma 4.5]{wrochna2017holographic} and the proof of the holographic condition from \cite[Theorem 5.9]{wrochna2017holographic} can be repeated verbatim. The only subtle point is the mapping properties of $L^{-1/2}$ on $\tSob^{1,\infty}(S)$, which is discussed in Lemma \ref{lemm:squareroot}
\end{proof}


We now turn our attention to singularities of parametrices for $P$. If $q_1,q_2\in \bee S^*X$, then  we write $q_1\dot\sim q_2$ if $q_1,q_2\in \dot \cN$ and $q_1,q_2$ can be connected by a $\GBB$. We can also define the backward flow-out of a point $q \in \cchare $ to be 
\[
\mathcal{F}_{q} = \{ q' \in \cchare: \, q' \dot\sim q, \, t(q') \leq t(q)\}.
\]
Since $\GBB$s exhibit possible branching behavior at glancing points, it is not completely trivial that $\mathcal{F}_q$ is closed; this instead follows from the compactness of the set of $\GBB$s with values in a fixed compact subset of $\bS^*X$ (as discussed in \cite[Proposition 5.5]{vasy2008propagation}).

With our new propagation of singularities result at hand, it is straightforward to repeat the proof of \cite[Theorem 5.12]{wrochna2017holographic} to obtain the following result. In this setting, by parametrices we mean bounded operators from $\tdotSob^{-1,-\infty}_{\rm c}(X)$ to $\mathcal{H}^{1,-\infty}_{\loc}(X)$ that are inverses of $P$ modulo errors in $\cWb$.

\begin{theo}\label{prop:wfs} If $\nu \in (0,1)$, then:
\beq\label{eq:wfPpm}
\WFbop(P_{R,\pm}^{-1})\setminus\diag^*\subset \{ (q_1,q_2) : q_1 \dot\sim q_2, \ \pm t(q_1)>\pm t(q_2) \},
\eeq
where $\diag^* = \{ (q_1,q_1)\in \bee S^* X\times \bee S^*X\}$. Furthermore, suppose that $\Lambda^\pm_\Robin$ are holographic Hadamard two-point functions. Then 
\beq\label{eq:mlkfn}
\WFbop(\Lambda^\pm_\Robin)\subset \{ (q_1,q_2)\in\dot\cN^\pm\times\dot\cN^\pm : q_1 \dot\sim q_2\}. 
\eeq
Moreover, setting $P^{-1}_{R, \rm F}:= i^{-1} \Lambda^+_\Robin + P_{R,-}^{-1}$ and $P^{-1}_{R,\rm \overline{F}}:= -i^{-1} \Lambda^-_\Robin + P_{R,-}^{-1}$, we have
\beq \label{eq:wfF} 
\bea
\WFbop(P_{\Robin,\rm F}^{-1})\setminus \diag^* &\subset \{ (q_1,q_2):  q_1 \dot\sim q_2, \mbox{ and } \pm t( q_1) < \pm t( q_2) \mbox{ if } q_1\in\dot\cN^\pm \},\\
\WFbop(P_{\Robin, \rm \overline{F}}^{-1})\setminus\diag^*&\subset \{ (q_1,q_2) : q_1 \dot\sim q_2, \mbox{ and } \mp t(q_1) < \mp t( q_2) \mbox{ if } q_1\in\dot\cN^\pm \}.
\eea
\eeq
Furthermore, the respective condition in \eqref{eq:wfPpm} or \eqref{eq:wfF} characterizes $P_{R,+}^{-1}$, $P_{R,-}^{-1}$, $P_{R,\rm F}^{-1}$ and  $P_{R,\rm \overline{F}}^{-1}$ uniquely modulo terms in $\cWb$ among parametrices of $P_R$.
\end{theo}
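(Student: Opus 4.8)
The plan is to mirror the corresponding argument in \cite{wrochna2017holographic}, using Theorem \ref{theo:GBBpropagation} as the key new microlocal input. First, for the wavefront bound \eqref{eq:wfPpm} on $P_{R,\pm}^{-1}$: one takes $B_1, B_2 \in \Psi_\b^0(X)$ with $\opWFb(B_i)$ a small conic neighborhood of $q_i$, applies $B_1 P_{R,\pm}^{-1} B_2^*$ to an arbitrary $f \in \tdotSob^{-1,-\infty}_\comp(X)$, and sets $u = P_{R,\pm}^{-1}(B_2^* f) \in \tSob^{1,-\infty}_\pm(X)$. Then $P_R u = B_2^* f$, so $\WFb^{-1,\infty}(P_R u) \subset \opWFb(B_2^*) = \opWFb(B_2)$, which we may take disjoint from $q_1$ provided $q_1 \not\sim q_2$ or $q_1 \not\in \dot\cN$ (or $q_1 \in \ellip$, handled by Corollary \ref{corr:ellipticregularity}). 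The support condition $\supp u \subset \{\pm t \geq \pm t_0\}$ together with hypothesis $\pt$ forces each $\GBB$ through $q_1$ (traced backward in the $+$ case) to exit the support of $P_R u$ at a point with $\pm t(q') \geq \pm t(q_1)$; if no such $\GBB$ meets $\opWFb(B_2)$, then $q_1 \not\in \WFb^{1,\infty}(u)$ by Theorem \ref{theo:GBBpropagation}, so $B_1 u \in \mathcal{H}^{1,\infty}_\loc(X)$. This is exactly the statement that $(q_1,q_2) \not\in \WFbop(P_{R,\pm}^{-1})$. The off-diagonal restriction is needed because at $q_1 = q_2$ the operator $B_1 P_{R,\pm}^{-1} B_2^*$ is genuinely non-regularizing (it still inverts $P_R$ microlocally).

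Next, \eqref{eq:mlkfn} follows from \eqref{eq:holhad}, Theorem \ref{theo:GBBpropagation}, and the equations $P_R \Lambda^\pm_R = \Lambda^\pm_R P_R = 0$ from \eqref{eq:2ptfct}. Indeed, given $(q_1, q_2)$ with $q_1 \not\sim q_2$, pick $B_2$ microsupported near $q_2$; then $u = \Lambda^\pm_R B_2^* f$ solves $P_R u = 0$, so $\WFb^{1,\infty}(u)$ is a union of maximally extended $\GBB$s, and by \eqref{eq:holhad} every such $\GBB$ lies in $\dot\cN^\pm$. Since $q_1$ lies on no $\GBB$ through the microsupport witnessing $\WFb^{1,\infty}(u)$ (using $q_1 \not\sim q_2$ and symmetry of $\WFbop$ under adjoint, which holds because $(\Lambda^\pm_R)^* = \Lambda^\mp_R$ up to the sign in \eqref{eq:2ptfct}), we conclude $q_1 \not\in \WFb^{1,\infty}(u)$. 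The bounds \eqref{eq:wfF} on $P^{-1}_{R,\mathrm F}$ and $P^{-1}_{R,\overline{\mathrm F}}$ are then obtained by inserting the definitions $P^{-1}_{R,\mathrm F} = i^{-1}\Lambda^+_R + P_{R,-}^{-1}$ etc.\ and combining \eqref{eq:wfPpm} with \eqref{eq:mlkfn}: away from the diagonal the contributions of $\Lambda^+_R$ (supported on $\dot\cN^+ \times \dot\cN^+$, with $q_1 \dot\sim q_2$) and of $P_{R,-}^{-1}$ (with $-t(q_1) > -t(q_2)$) combine to give the asserted time-ordering, which one checks separately for $q_1 \in \dot\cN^+$ and $q_1 \in \dot\cN^-$.

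For the uniqueness statement, suppose $Q$ is any parametrix for $P_R$, i.e.\ $P_R Q - 1, Q P_R - 1 \in \cWb$, and suppose $Q$ satisfies the same $\WFbop$ condition as, say, $P_{R,+}^{-1}$. Then $R := Q - P_{R,+}^{-1}$ satisfies $P_R R \in \cWb$ and $R P_R \in \cWb$, and $\WFbop(R) \setminus \diag^*$ is contained in the relevant time-ordered set. The claim is that $R \in \cWb$. One argues microlocally: if $(q_1,q_2) \in \WFbop(R)$, then applying $R$ to $B_2^* f$ yields $u = R B_2^* f$ with $P_R u \in \mathcal{H}^{1,\infty}_\loc(X)$ (since $P_R R \in \cWb$ and the extra smoothing of $B_2^*$), so $\WFb^{1,\infty}(u) \setminus \WFb^{-1,\infty}(P_R u)$ is a union of maximally extended $\GBB$s. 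The two constraints — first, that this $\GBB$ is forced by the $\WFbop(R)$ bound to have $t(q_1) > t(q')$ along it relative to any $q' \in \opWFb(B_2)$, and second, the corresponding constraint from $R P_R \in \cWb$ applied to the adjoint $R^*$ which flips the time-ordering — are incompatible unless the $\GBB$ is empty, forcing $q_1 \not\in \WFb^{1,\infty}(u)$; hence $\WFbop(R) \subset \diag^*$, and a final application of ellipticity/propagation (or directly the argument of \cite[Theorem 5.12]{wrochna2017holographic}) removes the diagonal as well, since $R$ is a \emph{difference} of two parametrices with the same microlocal structure there. The main obstacle is the careful bookkeeping of the time-orientation of $\GBB$s as they reflect at the boundary and branch at glancing points — one must use the closedness of the flow-out $\mathcal{F}_q$ (from compactness of the $\GBB$ space, \cite[Proposition 5.5]{vasy2008propagation}) to ensure the two time-ordering constraints can be simultaneously imposed — together with checking that the adjoint relations $(P_{R,\pm}^{-1})^* = P_{R,\mp}^{-1}$ and $(\Lambda^\pm_R)^* = \Lambda^\mp_R$, already noted in the text, interact correctly with the definition of $\WFbop$ under taking adjoints; all of this is routine given the corresponding arguments in \cite{wrochna2017holographic} but must be transcribed into the twisted Sobolev framework, which is where the results of Sections \ref{sec:fspaces}--\ref{sec:propagators} do the work.
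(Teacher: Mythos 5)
Your overall plan is exactly what the paper does: it states that the result follows by repeating the proof of \cite[Theorem 5.12]{wrochna2017holographic} with Theorem \ref{theo:GBBpropagation} supplying the Robin/Neumann propagation input, and your treatment of \eqref{eq:wfPpm}, \eqref{eq:wfF}, and the uniqueness statement is in the right spirit. However, the sketch has two concrete problems.

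First, you assert $(\Lambda^\pm_R)^* = \Lambda^\mp_R$ and attribute this to the text, but the paper never says this, and it is false: the positivity $\Lambda^\pm_R \geq 0$ in \eqref{eq:2ptfct} forces Hermitian symmetry, i.e.\ $(\Lambda^\pm_R)^* = \Lambda^\pm_R$. Under taking adjoints, $\WFbop$ is reflected in the two slots, so self-adjointness gives symmetry of $\WFbop(\Lambda^\pm_R)$ under $(q_1,q_2)\mapsto(q_2,q_1)$. Your claimed relation, combined with \eqref{eq:holhad}, would force $\WFbop(\Lambda^\pm_R)\cap(\dot\cN^+\times\dot\cN^+) = \emptyset$ since $\dot\cN^+\cap\dot\cN^-=\emptyset$—much too strong. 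Second, and more substantively, the argument you give for \eqref{eq:mlkfn} does not close. Propagation of singularities applied to $u = \Lambda^\pm_R B_2^* f$ only tells you $\WFb^{1,\infty}(u)$ is a union of maximally extended $\GBB$s; it does \emph{not} tell you those $\GBB$s pass anywhere near $q_2$, because $\Lambda^\pm_R$ is not a microlocal operator and $\WFb^{1,\infty}(u)$ is not a priori controlled by $\opWFb(B_2)$. To conclude $q_1\not\in\WFb^{1,\infty}(u)$ you would already need $\WFbop(\Lambda^\pm_R)\subset\{q_1\dot\sim q_2\}$—the very thing you are proving. The missing step is the decomposition $\Lambda^+_R = \Lambda^-_R + iG_R$: for $(q_1,q_2)\in\dot\cN^+\times\dot\cN^+$ with $q_1\not\dot\sim q_2$, \eqref{eq:holhad} gives $(q_1,q_2)\notin\WFbop(\Lambda^-_R)$ (disjointness of $\dot\cN^\pm$), and \eqref{eq:wfPpm} gives $(q_1,q_2)\notin\WFbop(G_R)$, so $(q_1,q_2)\notin\WFbop(\Lambda^+_R)$; similarly for $\Lambda^-_R$. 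This decomposition is the crux of the Radzikowski/Duistermaat--H\"ormander argument that \cite[Theorem 5.12]{wrochna2017holographic} adapts, and it needs to be invoked explicitly; the ``symmetry under adjoint'' you appeal to is not a substitute.
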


The analogue of this theorem also holds for the Dirichlet realization $P_D$ (and any $\nu > 0$).
%

\subsection{Induced two-point functions at the boundary} We continue to assume $\nu \in (0,1)$. Any continuous operator  $\Lambda: \tdotSob^{-1,-\infty}_\comp(X)\to \mathcal{H}_\loc^{1,-\infty}(X)$ induces an operator on the boundary:
\[
\gamma_- \Lambda \gamma_-^* : \mathcal{E}'(X) \rightarrow \mathcal{D}'(\pa X).
\]
Defining $\gamma_+ \Lambda \gamma_+^*$ is more delicate and requires additional hypotheses; sufficient conditions are given in the next lemma. Observe that $\maxdom^\infty$ is dense in $\tSob^{1,\infty}_\loc(X)$, so we can view $\tdotSob_\comp^{-1,-\infty}(X)$ as a dense subspace of the dual $(\maxdom^\infty)'$.

\begin{lemm} \label{lem:Lambdaextension}
Let $\nu \in (0,1)$, and suppose that
\begin{gather*}
P\Lambda : \tdotSob_\comp^{-1,-\infty}(X) \rightarrow x^2 \tSob^{0,-\infty}_\loc(X), \quad P\Lambda^* :  \tdotSob_\comp^{-1,\infty}(X) \rightarrow x^2 \tSob^{0,\infty}_\loc(X), 
\\ 
P(P\Lambda)^* : x^{-2}\tSob^{0,\infty}_\comp(X) \rightarrow x^2 \tSob^{0,\infty}_\loc(X).
\end{gather*}
Then $\Lambda$ extends to a continuous map $(\maxdom^{\infty})' \rightarrow \maxdom^{-\infty}$.
\end{lemm}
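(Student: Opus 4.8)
The idea is to use the three hypotheses to make sense of the composition $\Lambda$ applied to an element of $(\maxdom^\infty)'$ by testing against $\maxdom^\infty$, and then to verify that the resulting element of the dual actually lands in $\maxdom^{-\infty}$, i.e.\ that it has the requisite conormal regularity relative to $\tSob^{1,-\infty}_\loc(X)$ and that $P$ maps it into $x^2\tSob^{0,-\infty}_\loc(X)$. First I would recall the duality structure: since $\maxdom^\infty$ is dense in $\tSob^{1,\infty}_\loc(X)$ (by the density lemma proved in Section~\ref{subsect:asymptoticexpansion}), there is a continuous dense inclusion $\tdotSob^{-1,-\infty}_\comp(X)\hookrightarrow(\maxdom^\infty)'$, and dually $\maxdom^\infty\hookrightarrow\tSob^{1,\infty}_\loc(X)$, whose transpose realizes $(\maxdom^\infty)'$ as a quotient-type enlargement of $\tdotSob^{-1,-\infty}_\comp(X)$ paired against the graph space. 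Concretely, given $\Phi\in(\maxdom^\infty)'$, I want to define $\Lambda\Phi$ as the functional $\maxdom^\infty\ni v\mapsto \langle\Phi,\Lambda^*v\rangle$, which requires exactly that $\Lambda^*$ map $\maxdom^\infty$ continuously into $\maxdom^\infty$; that is the content of the second and third hypotheses, which pin down both that $\Lambda^*v\in\tSob^{1,\infty}_\loc(X)$ and that $P\Lambda^*v\in x^2\tSob^{0,\infty}_\loc(X)$ with quantitative (seminorm) continuity.

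The key steps, in order, would be: (1) verify $\Lambda^*:\maxdom^\infty\to\maxdom^\infty$ is continuous. The membership $\Lambda^*v\in\tSob^{1,\infty}_\loc(X)$ for $v\in\maxdom^\infty\subset\tdotSob^{-1,\infty}_\comp(X)$ is immediate from the stated mapping property of $\Lambda^*$ (it is even assumed to land in $\tSob^{1,\infty}_\loc$), combined with the hypothesis on $P\Lambda^*$; the seminorm estimate on the graph space then follows from the closed graph theorem, using that all the spaces involved are complete (Fr\'echet or their duals). (2) Define $\Lambda\Phi\in(\maxdom^\infty)'$ by transposition: $\langle\Lambda\Phi,v\rangle:=\langle\Phi,\Lambda^*v\rangle$ for $v\in\maxdom^\infty$; continuity of $\Lambda^*$ gives continuity of this functional, and on the dense subspace $\tdotSob^{-1,-\infty}_\comp(X)$ this agrees with the original $\Lambda$ by definition of the adjoint, so it genuinely extends $\Lambda$. (3) Show $\Lambda\Phi\in\maxdom^{-\infty}$, i.e.\ that $(\maxdom^\infty)'$ can be identified, via the third hypothesis, with a subspace of $\maxdom^{-\infty}$. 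Here I would use the hypothesis $P(P\Lambda)^*: x^{-2}\tSob^{0,\infty}_\comp(X)\to x^2\tSob^{0,\infty}_\loc(X)$ to control $P\Lambda\Phi$: pairing $P\Lambda\Phi$ against a test function amounts to pairing $\Phi$ against $(P\Lambda)^*$ of that test function, and the hypothesis says $(P\Lambda)^*$ has the right mapping property to keep the result in the appropriate conormal class, yielding $P\Lambda\Phi\in x^2\tSob^{0,-\infty}_\loc(X)$; combined with $\Lambda\Phi\in\tSob^{1,-\infty}_\loc(X)$ (from the first hypothesis, applied after extension, again via density and closed graph), this is exactly the defining condition of $\maxdom^{-\infty}$.

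The main obstacle I anticipate is bookkeeping the precise identification of $(\maxdom^\infty)'$ and $\maxdom^{-\infty}$ as topological vector spaces — in particular making rigorous the claim that an element of $(\maxdom^\infty)'$ \emph{is} an element of $\maxdom^{-\infty}$ (as opposed to merely being approximable by such), since $\maxdom^k$ is not closed under $\Psib^0(X)$ and is not even closed under multiplication by cutoffs, so the usual "localize and regularize" arguments must be run through the even b-calculus $\Psibeven$ as in Lemma~\ref{lem:preservesmaxdomain}. Relatedly, one must be careful that the three hypotheses, which are phrased with mismatched weights ($x^2\tSob^{0,\pm\infty}$ on the target versus $\tSob^{0,\infty}_\comp$ or $x^{-2}\tSob^{0,\infty}_\comp$ on the source), compose correctly; tracking these weights through the two successive transpositions is where an error would most easily creep in, and I would double-check it against the weight conventions recorded in the remark following Definition~\ref{defi:admissible} and in Section~\ref{subsect:kg}. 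Once the functional-analytic identifications are set up cleanly, the proof itself is a short diagram chase through adjoints.
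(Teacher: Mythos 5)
The high-level idea — run the extension through an adjoint, using the hypotheses to get the adjoint into $\maxdom^\infty$ and then transpose — is the right one, and is exactly what the paper does. But the specific pairing you set up does not typecheck, and this is not just a bookkeeping slip: $\maxdom^\infty$ is a subspace of $\tSob^{1,\infty}_\loc(X)$, a \emph{local} space without any compact-support constraint, whereas $\tdotSob^{-1,\infty}_\comp(X)$ is by definition compactly supported. So $\maxdom^\infty\not\subset\tdotSob^{-1,\infty}_\comp(X)$, and $\Lambda^*$ (which a priori acts only on compactly supported inputs) is simply not defined on $\maxdom^\infty$. Consequently your proposed formula $\langle\Lambda\Phi,v\rangle:=\langle\Phi,\Lambda^*v\rangle$ for $v\in\maxdom^\infty$ has no meaning, and step (1), the claim $\Lambda^*:\maxdom^\infty\to\maxdom^\infty$, is not what any of the hypotheses give you.

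There is a second structural problem: even if one granted a map $\Lambda^*:\maxdom^\infty\to\maxdom^\infty$, its transpose would take $(\maxdom^\infty)'$ to $(\maxdom^\infty)'$, not to $\maxdom^{-\infty}$. Since $\maxdom^\infty$ is a dense subspace of $\tSob^{1,\infty}_\loc(X)$, the dual $(\maxdom^\infty)'$ is a \emph{larger} space than $\tdotSob^{-1,-\infty}_\comp(X)$, and it does not embed into $\maxdom^{-\infty}$; your parenthetical ``i.e.\ that $(\maxdom^\infty)'$ can be identified \dots with a subspace of $\maxdom^{-\infty}$'' gets the direction of the inclusion backwards. What must be proved is that $\Lambda$ \emph{maps} $(\maxdom^\infty)'$ into $\maxdom^{-\infty}$, which is a genuine regularity statement about the range of $\Lambda$, not a containment between abstract duals.

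The correct pairing is against the compactly supported spaces. From the given continuity $\Lambda:\tdotSob^{-1,-\infty}_\comp(X)\to\tSob^{1,-\infty}_\loc(X)$ and duality, $\Lambda^*:\tdotSob^{-1,\infty}_\comp(X)\to\tSob^{1,\infty}_\loc(X)$; combining this with the hypothesis on $P\Lambda^*$ (not the one on $P\Lambda$, which you cite for this purpose) yields $\Lambda^*:\tdotSob^{-1,\infty}_\comp(X)\to\maxdom^\infty$, whose transpose is $\Lambda:(\maxdom^\infty)'\to(\tdotSob^{-1,\infty}_\comp(X))'=\tSob^{1,-\infty}_\loc(X)$. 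Separately, duality from the hypothesis on $P\Lambda$ gives $(P\Lambda)^*:x^{-2}\tSob^{0,\infty}_\comp(X)\to\tSob^{1,\infty}_\loc(X)$, and combining this with the hypothesis on $P(P\Lambda)^*$ yields $(P\Lambda)^*:x^{-2}\tSob^{0,\infty}_\comp(X)\to\maxdom^\infty$; transposing gives $P\Lambda:(\maxdom^\infty)'\to x^2\tSob^{0,-\infty}_\loc(X)$. Putting the two pieces together gives $\Lambda:(\maxdom^\infty)'\to\maxdom^{-\infty}$. So each of the three hypotheses plays a precise role, and none of them is the one you assigned it in steps (1)--(3).
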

\begin{proof}
	First observe that $\Lambda^* : \tdotSob^{-1,\infty}_\comp(X) \rightarrow \tSob^{1,\infty}_\loc(X)$, so by the mapping properties of $P\Lambda^*$ we conclude that
	\[
	\Lambda^* : \tdotSob_\comp^{-1,\infty}(X) \rightarrow \maxdom^{\infty}.
	\]
This shows that $\Lambda$ admits an extension
	\[
	\Lambda : (\maxdom^{\infty})' \rightarrow \tSob^{1,-\infty}_\loc(X).
	\]
	Similarly, the mapping properties of $P\Lambda$ and $P(P\Lambda)^*$ imply that in fact $\Lambda : (\maxdom^\infty)' \rightarrow \maxdom^{-\infty}$.
\end{proof}

Since $\gamma_+ : \maxdom^{\infty} \rightarrow \CI(\pa X)$, we can view $\gamma_+^* : \mathcal{E}'(\pa X) \rightarrow (\maxdom^{\infty})'$, which allows us to define 
\[
\gamma_+ \Lambda \gamma_+^* : \mathcal E'(\pa X) \rightarrow \mathcal{D}'(\pa X).
\]
We show that the wave front sets of $\gamma_\pm \Lambda \gamma_\pm^*$ can be estimated in terms of $\WFbop(\Lambda)$.


\begin{lemm} Let $\nu \in (0,1)$ and $\Lambda: \tdotSob_\comp^{-1,-\infty}(X)\to \tSob_\loc^{1,-\infty}(X)$ be a continuous operator. Then
\beq\label{eq:wfwwe}
\mathrm{WF}'(\gamma_- \Lambda \gamma_-^*)\cap (S^*\pX \times S^*\pX) \subset \WFbop(\Lambda) \cap (S^*\pX \times S^*\pX) .
\eeq
Furthermore, if $\Lambda$ restricts to a continuous map $ \tdotSob_\comp^{-1,\infty}(X)\to \tSob_\loc^{1,\infty}(X)$ and satisfies 
\begin{gather*}
P\Lambda : \tdotSob_\comp^{-1,-\infty}(X) \rightarrow x^2 \tSob^{0,\infty}_\loc(X), \quad P\Lambda^* :  \tdotSob_\comp^{-1,-\infty}(X) \rightarrow x^2 \tSob^{0,\infty}_\loc(X),
\\ P(P\Lambda)^*: x^{-2}\tSob^{0,-\infty}_\comp(X) \rightarrow x^2 \tSob^{0,\infty}_\loc(X),
\end{gather*}
then the $\gamma_+$ analogue of \eqref{eq:wfwwe} is true.
\end{lemm}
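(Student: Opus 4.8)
The plan is to imitate the standard proof that $\mathrm{WF}'(\gamma\Lambda\gamma^*)$ is controlled by the operator wavefront set of $\Lambda$ in the smooth boundary setting, using the commutation formulas \eqref{eq:gamma_-commute} and \eqref{eq:gamma_+commute} to transfer boundary pseudodifferential operators through the traces into the $\b$-calculus in the interior. Concretely, for the $\gamma_-$ case: suppose $(q_1,q_2)\in S^*\pa X\times S^*\pa X$ with $(q_1,q_2)\notin\WFbop(\Lambda)$. Then there exist $B_1,B_2\in\Psib^0(X)$, elliptic at $q_1,q_2$ respectively, such that $B_1\Lambda B_2^*\in\cWb$, i.e. it maps $\tdotSob^{-1,-\infty}_\comp(X)\to\mathcal H^{1,\infty}_\loc(X)$. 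Set $b_i=\indicial(B_i)(-i\nu_-)\in\Psi^0(\pa X)$; by the remark following \eqref{eq:gamma_-commute}, $b_i$ is elliptic at $q_i$ (since $B_i$ is elliptic there, using that the indicial family of an elliptic b-operator at a boundary point is elliptic). The identity $\gamma_-(B_i u)=\indicial(B_i)(-i\nu_-)(\gamma_- u)$, valid on $\tSob^{1,m}_\loc(X)$ for all $m$ by the extended trace formula \eqref{eq:extendedtracemap}, gives
\[
b_1\,(\gamma_-\Lambda\gamma_-^*)\,b_2^* = \gamma_-\,(B_1\Lambda B_2^*)\,\gamma_-^*,
\]
after checking that the adjoint relation $\gamma_-^*b_2^*=B_2^*\gamma_-^*$ holds as operators $\mathcal E'(\pa X)\to\tdotSob^{-1,-\infty}_\comp(X)$, which is the transpose of $\gamma_- B_2=b_2\gamma_-$. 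Since $B_1\Lambda B_2^*\in\cWb$ and $\gamma_-:\mathcal H^{1,\infty}_\loc(X)\to\CI_\loc(\pa X)$ while $\gamma_-^*:\mathcal E'(\pa X)\to\tdotSob^{-1,-\infty}_\comp(X)$, the composite $\gamma_-(B_1\Lambda B_2^*)\gamma_-^*$ is smoothing; hence $b_1(\gamma_-\Lambda\gamma_-^*)b_2^*$ has smooth Schwartz kernel, so $(q_1,q_2)\notin\mathrm{WF}'(\gamma_-\Lambda\gamma_-^*)$. This proves \eqref{eq:wfwwe}.

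For the $\gamma_+$ analogue the structure is the same, but one must first know that $\gamma_+\Lambda\gamma_+^*$ is well-defined, which is exactly what Lemma \ref{lem:Lambdaextension} provides under the stated mapping hypotheses on $P\Lambda$, $P\Lambda^*$, and $P(P\Lambda)^*$: these guarantee $\Lambda:(\maxdom^\infty)'\to\maxdom^{-\infty}$, and then $\gamma_+^*:\mathcal E'(\pa X)\to(\maxdom^\infty)'$ composed with $\gamma_+:\maxdom^\infty\to\CI(\pa X)$ lets us define $\gamma_+\Lambda\gamma_+^*:\mathcal E'(\pa X)\to\mathcal D'(\pa X)$. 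The replacement for \eqref{eq:gamma_-commute} is \eqref{eq:gamma_+commute}: if $B\in\Psibeven^0(X)$ and $u\in\maxdom^k$, then $\gamma_+(FBF^{-1}u)=\indicial(B)(-2i\nu)(\gamma_+u)$. So given $(q_1,q_2)\notin\WFbop(\Lambda)$, I would choose the elliptic microlocalizers of the special form $A_i=FB_iF^{-1}$ with $B_i\in\Psibeven^0(X)$ elliptic at $q_i$ — this is possible since $\Psibeven^m$ contains operators elliptic at any prescribed point, and conjugation by $F$ preserves principal symbols — and arrange $A_1\Lambda A_2^*\in\cWb$; this is allowed because the $\WFbop$ condition lets us shrink the microsupport freely and the $A_i$ have the same principal symbols as arbitrary elliptic b-operators. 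Then with $b_i=\indicial(B_i)(-2i\nu)$, elliptic at $q_i$, one gets $b_1(\gamma_+\Lambda\gamma_+^*)b_2^*=\gamma_+(A_1\Lambda A_2^*)\gamma_+^*$ by the same transpose argument, using Lemma \ref{lem:Lambdaextension} applied to $A_1\Lambda A_2^*$ (whose mapping properties follow from those of $\Lambda$ together with the microlocality of the operations in Lemma \ref{lem:preservesmaxdomain}, exactly as in Lemma \ref{lem:maxdomWF}) to know that $A_1\Lambda A_2^*$ maps $(\maxdom^\infty)'\to\maxdom^\infty$ and hence $\gamma_+(A_1\Lambda A_2^*)\gamma_+^*$ is smoothing.

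The main obstacle I anticipate is the bookkeeping around the $\gamma_+$ extension and the adjoint identities: one must verify carefully that $A_1\Lambda A_2^*$ still satisfies the hypotheses of Lemma \ref{lem:Lambdaextension} (this is where the extra hypothesis $P(P\Lambda)^*:x^{-2}\tSob^{0,-\infty}_\comp(X)\to x^2\tSob^{0,\infty}_\loc(X)$ is used, combined with the fact that $B_i\in\Psibeven^0$ and the commutator computations in the proof of Lemma \ref{lem:preservesmaxdomain} are microlocal and preserve the relevant mapping properties after composition), so that $\gamma_+(A_1\Lambda A_2^*)$ makes sense on the range of $\gamma_+^*$, and that $\gamma_+^* b_2^* = A_2^*\gamma_+^*$ as maps $\mathcal E'(\pa X)\to(\maxdom^\infty)'$ — the transpose of $\gamma_+A_2=b_2\gamma_+$ on $\maxdom^\infty$. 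Everything else is a routine transcription of the $\gamma_-$ argument with the substitutions $\Psib^0\rightsquigarrow F\Psibeven^0F^{-1}$, $\indicial(\cdot)(-i\nu_-)\rightsquigarrow\indicial(\cdot)(-2i\nu)$, and $\tSob^{1,\infty}_\loc\rightsquigarrow\maxdom^\infty$. I would also note that the conclusions are stated only over $S^*\pa X\times S^*\pa X$ precisely because the traces only see the boundary cotangent directions, matching the form of \eqref{eq:wfgamm}, and no interior information about $\Lambda$ is needed or obtained.
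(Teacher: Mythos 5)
Your proof of the $\gamma_-$ case is correct and essentially complete, and interestingly the paper itself skips this part ("we focus on the more delicate $\gamma_+$ case"), so here you supply detail the paper leaves to the reader. Your overall plan for the $\gamma_+$ case — choose $A_i = FB_iF^{-1}$ with $B_i\in\Psibeven^0(X)$ elliptic at $q_i$ and $A_1\Lambda A_2^*\in\cWb$, then show $A_1\Lambda A_2^*$ maps $(\maxdom^\infty)'\to\maxdom^\infty$, then conjugate through $\gamma_+$ via $\widehat{N}(B_i)(-2i\nu)$ and transpose — is the same as the paper's, and your remarks about why such $A_i$ can be arranged (microsupport shrinking, $F$-conjugation preserving principal symbols, $\Psibeven$ containing elliptic elements) are all correct.

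However, the step you correctly flag as "the main obstacle" — verifying that $A_1\Lambda A_2^*$ maps $(\maxdom^\infty)'\to\maxdom^\infty$ — is precisely where the paper does its real work, and your proposal does not carry it out. You attribute it to an application of Lemma \ref{lem:Lambdaextension} combined with "the microlocality of the operations in Lemma \ref{lem:preservesmaxdomain}, exactly as in Lemma \ref{lem:maxdomWF}," but this will not go through as stated, for two reasons. First, applying Lemma \ref{lem:Lambdaextension} to $A_1\Lambda A_2^*$ would only give range in $\maxdom^{-\infty}$, not $\maxdom^\infty$; you need to first use $A_1\Lambda A_2^*\in\cWb$ to upgrade the range, and separately establish that $P(A_1\Lambda A_2^*)$ maps $(\maxdom^\infty)'\to x^2\tSob^{0,\infty}_\loc(X)$. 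Second, and more seriously, the needed mapping property $P(PA_1\Lambda A_2^*)^*\colon x^{-2}\tSob^{0,-\infty}_\comp(X)\to x^2\tSob^{0,\infty}_\loc(X)$ does not follow by abstract microlocality. Lemma \ref{lem:maxdomWF} concerns the conormal regularity of $x^{-2}PAu$ for a \emph{single} $u$ whose $\WFb$ near $q$ is already controlled; here, $\Lambda^*v$ for $v$ in the source space has no a priori control on its wavefront away from the microsupport of $A_2$, so the argument must be a genuine commutator computation. The paper writes $[P,A_1] = B'P + QB'' + B'''$ with $B'\in\Psib^{-1}$, $B''\in x^2\Psib^0$, $B'''\in x^2\Psib^1$, $Q\in\tDiff^1$ (using the $\Psibeven$ structure to produce the $x^2$ factors), expands $([P,A_1]\Lambda)^*$, inserts microlocal cutoffs $C$ with $\opWFb(1-C)\cap\opWFb(B''')=\emptyset$ to replace $\Lambda^*$ by $\Lambda^*C^*$ where the $\cWb$ property can be invoked, and tracks each term against the three stated hypotheses on $\Lambda$. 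That term-by-term analysis, not an abstract microlocality principle, is the missing content of your proof; you should carry it out explicitly rather than cite Lemmas \ref{lem:maxdomWF}--\ref{lem:Lambdaextension}.
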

\begin{proof} We focus on the more delicate $\gamma_+$ case, which is essentially the microlocalization of the proof of Lemma \ref{lem:Lambdaextension}. Notice that we are assuming stronger mapping properties as compared to Lemma \ref{lem:Lambdaextension}. Now suppose $(q_1,q_2)\notin \WFbop(\Lambda) \cap (S^*\pa X \times S^*\pa X)$, so that there exists $B_i \in \Psibeven^0(X)$ elliptic at $q_i$ such that 
	\[
	A_1 \Lambda A_2^*\in\mathcal{W}_\b^{-\infty}(X),
	\]
	 where we have set $A_i = FB_i F^{-1}$. Note that $\Lambda : (\maxdom^\infty)' \rightarrow \maxdom^{-\infty}$, hence the same is true of $A_1 \Lambda A_2^*$. The claim is that $A_1 \Lambda A_2^*$ extends to a map $(\maxdom^\infty)' \rightarrow \maxdom^\infty$. First, we show that the range of $A_1 \Lambda A_2^*$ in this extended sense is contained in $\tSob^{1,\infty}_\loc(X)$. To see this, note that 
	\[
	(A_1 \Lambda A_2^*)^* = A_2\Lambda^* A_1^* : \tdotSob^{-1,-\infty}_\comp(X)  \rightarrow\tSob^{1,\infty}_\loc(X)
	\]
since $A_1 \Lambda A_2^* \in \mathcal{W}_\be^{-\infty}$, and then Lemma \ref{lem:maxdomWF} shows that $(A_1 \Lambda A_2^*)$ has its range contained in $\maxdom^\infty$. Thus $A_1\Lambda A_2^*$ maps $(\maxdom^\infty)' \rightarrow \tSob^{1,\infty}_\loc(X)$. It then suffices to show that 
\[
PA_1\Lambda A_2^*: (\maxdom^{\infty})' \rightarrow  x^2\tSob^{0,\infty}_\loc(X).
\]
Again applying Lemma \ref{lem:maxdomWF}, we see that $PA_1 \Lambda A_2^*$ maps $\tdotSob^{-1,-\infty}_\comp(X) \rightarrow x^2\tSob^{0,\infty}_\loc(X)$. It then suffices to show that 
\begin{equation} \label{eq:PPA1}
P(PA_1\Lambda A_2^*)^*: x^{-2}\tSob^{0,-\infty}_\comp(X) \rightarrow x^2\tSob^{0,\infty}_\loc(X).
\end{equation}
Using our previously established mapping properties, we can write this composition as
\begin{align*}
PA_2 (P A_1 \Lambda )^* &= PA_2 (P\Lambda )^*A_1^* + PA_2([P,A_1]\Lambda)^*.
\end{align*}
According to Lemma \ref{lem:maxdomWF}, we see that $PA_2 (P\Lambda)^*A_1^*$ has the requisite mapping properties. Now following the proof of Lemma \ref{lem:preservesmaxdomain}, we can write $[P,A_1] = B' P + Q B'' +  B'''$, where 
\[
B' \in \Psib^{-1}(X), \quad B'' \in x^2 \Psib^0(X), \quad B''' \in x^2\Psib^{1}(X), \quad Q \in \Diff_\nu^1(X).
\]
We then write
\begin{equation} \label{eq:Pcommutatordecomp}
([P,A_1]\Lambda)^* = ((B'P + QB'' + B''' )\Lambda)^* = (P\Lambda)^*B' + \Lambda^* (B'' Q)^* + \Lambda^* (B''')^*.
\end{equation}
Write $B''' = B'''C + x^2 R$, where $C \in \Psib^0(X)$ satisfies $\opWFb(1-C) \cap \opWFb(B''') = \emptyset$ and $R \in \Psib^{-\infty}(X)$. Consider the term 
\[
\Lambda^* (B''')^* = \Lambda^* C^* (B''')^* + \Lambda^* R^* x^2.
\] Note that $(B''')^*$ maps $x^{-2}\tSob^{0,-\infty}_\comp(X) \rightarrow \tdotSob^{-1,-\infty}_\comp(X)$, and that
\[
A_2\Lambda^* C^* = (C\Lambda A_2)^* : \tdotSob^{-1,-\infty}_\comp(X) \rightarrow \tSob^{1,\infty}_\loc(X)
\]
if $C$ has sufficiently small wavefront set near $q_1$, which can be arranged by choosing $A_1$ appropriately. Similarly,
\[
A_2 \Lambda^* R^* = (R\Lambda A_2)^*: \tdotSob^{-1,-\infty}_\comp(X) \rightarrow \tSob^{1,\infty}_\loc(X).
\]
This shows that $A_2\Lambda^* (B''')^*$ maps $x^{-2}\tSob^{0,-\infty}_\comp(X) \rightarrow \tSob^{1,\infty}_\loc(X)$, and since $P\Lambda^*(B''')^*$ maps $x^{-2}\tSob^{0,-\infty}_\comp(X) \rightarrow x^2\tSob^{0,\infty}_\loc(X)$, a final application of Lemma \ref{lem:maxdomWF} shows that 
\[
PA_2\Lambda^*(B''')^* :  x^{-2}\tSob^{0,-\infty}_\comp(X) \rightarrow x^2\tSob^{0,\infty}_\loc(X).
\]
The desired mapping properties of the other terms arising from \eqref{eq:Pcommutatordecomp} are obtained similarly, which establishes \eqref{eq:PPA1}.

We have shown that $A_1 \Lambda A_2^*$ extends to a map $(\maxdom^\infty)' \rightarrow \maxdom^{\infty}$. In particular,
\[
\gamma_+ (A_1 \Lambda A_2^*) \gamma_+^* : \mathcal{E}'(\pa X) \rightarrow \CI( \pa X).
\]
Finally, notice that we can use \eqref{eq:gamma_+commute} to write $\gamma_+ A_i = \widehat{N}(B_i)(-2i\nu)\gamma_+$ on $\maxdom^\infty$, where $\tilde B_i = \widehat{N}(B_i)(-2i\nu) \in \Psib^0(\pa X)$ is elliptic at $q_i$. Thus
\[
\tilde B_1 \gamma_+ \Lambda \gamma_+^* \tilde B_2^* \in \Psi^{-\infty}(\pa X),
\]
which finishes the proof.
\end{proof}

We can now conclude as in \cite[Theorem 5.16]{wrochna2017holographic} the following result, which applies to holographic Hadamard two-point functions of the form $\Lambda^\pm_R$.

\begin{theo}\label{thm:holo2} Suppose $(X,g)$ is an asymptotically AdS spacetime and $\nu \in (0,1)$. If $\Lambda^\pm_R$ is a pair of holographic Hadamard two-point functions then
\begin{align*}
\wf'(\gamma_+ \Lambda_R^\pm \gamma_+^*) \cap (S^*\pX \times S^*\pX) &\subset\WFbop(\Lambda_R^\pm) \cap (S^*\pX \times S^*\pX) \\ &\subset (\dot\cN^\pm\times\dot\cN^\pm) \cap (S^*\pX \times S^*\pX),
\end{align*}
and the same is true for $\gamma_- \Lambda_R^\pm \gamma_-^*$. Furthermore, if $\tilde\Lambda_R^\pm$ is another pair of holographic Hadamard two-point functions then $\gamma_+(\tilde\Lambda_R^\pm-\Lambda_R^\pm)\gamma_+^*$ and $\gamma_-(\tilde\Lambda_R^\pm-\Lambda_R^\pm)\gamma_-^*$ have smooth Schwartz kernel. 
\end{theo}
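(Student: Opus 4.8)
\textbf{Proof proposal for Theorem \ref{thm:holo2}.} The plan is to combine the operatorial wave front set inclusion from the previous lemma with the holographic Hadamard condition \eqref{eq:holhad}, and then invoke the uniqueness statement already contained in Theorem \ref{thm:holo1}. Concretely, the first step is to verify that each holographic Hadamard two-point function $\Lambda_R^\pm$ satisfies the hypotheses of the preceding lemma. The mapping properties $\Lambda_R^\pm : \tdotSob^{-1,\infty}_{\comp}(X) \to \tSob^{1,\infty}_{\loc}(X)$ follow from part (i) of \eqref{eq:2ptfct} together with the regularity statement in Theorem \ref{thm:maintheo2} (equivalently Theorem \ref{thm:holo1}): since $P_R \Lambda_R^\pm = \Lambda_R^\pm P_R = 0$, we trivially have $P\Lambda_R^\pm = 0$ and $P(\Lambda_R^\pm)^* = 0$ and $P(P\Lambda_R^\pm)^* = 0$, so all three vanishing conditions on $P\Lambda$, $P\Lambda^*$, $P(P\Lambda)^*$ hold (with range $\{0\} \subset x^2\tSob^{0,\infty}_{\loc}(X)$). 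The high-conormal-regularity mapping property $\Lambda_R^\pm : \tdotSob^{-1,\infty}_{\comp}(X) \to \tSob^{1,\infty}_{\loc}(X)$ is exactly the content of Theorem \ref{thm:holo1}'s construction, since $\Lambda_R^\pm$ differ from the causal propagator and the parametrices of Corollary following Theorem \ref{theo:wellposed} by operators with the stated smoothing properties.

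The second step applies the preceding lemma directly: under those hypotheses, the $\gamma_+$ analogue of \eqref{eq:wfwwe} gives
\[
\wf'(\gamma_+ \Lambda_R^\pm \gamma_+^*) \cap (S^*\pX \times S^*\pX) \subset \WFbop(\Lambda_R^\pm) \cap (S^*\pX \times S^*\pX),
\]
and the lemma's first inclusion \eqref{eq:wfwwe} gives the same for $\gamma_-$. The holographic Hadamard condition \eqref{eq:holhad} then yields $\WFbop(\Lambda_R^\pm) \subset \dot\cN^\pm \times \dot\cN^\pm$, so intersecting with $S^*\pX \times S^*\pX$ produces the chain of inclusions in the statement. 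For the final assertion, one first observes that the difference $\tilde\Lambda_R^\pm - \Lambda_R^\pm$ of two pairs of holographic Hadamard two-point functions lies in $\cWb$: indeed, by Theorem \ref{thm:holo1} holographic Hadamard two-point functions are unique modulo $\mathcal{W}_\b^{-\infty}(X)$, which is precisely the statement that $\tilde\Lambda_R^\pm - \Lambda_R^\pm \in \cWb$. Since $\cWb$ consists of operators $\tdotSob^{-1,-\infty}_{\comp}(X) \to \mathcal{H}^{1,\infty}_{\loc}(X)$, the difference satisfies the hypotheses of the preceding lemma with \emph{empty} operatorial wave front set (every point is regular, since for any $q_i$ one may take $B_i$ elliptic at $q_i$ and $B_1(\tilde\Lambda_R^\pm-\Lambda_R^\pm)B_2^* \in \cWb$ automatically). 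Hence the $\gamma_\pm$ analogues of \eqref{eq:wfwwe} show $\wf'(\gamma_\pm(\tilde\Lambda_R^\pm - \Lambda_R^\pm)\gamma_\pm^*) \cap (S^*\pX\times S^*\pX) = \emptyset$; combined with interior smoothness of the Schwartz kernels (which follows since $\tilde\Lambda_R^\pm - \Lambda_R^\pm$ has Schwartz kernel smooth in $X^\circ \times X^\circ$, and $\gamma_\pm$ are weighted restrictions to the boundary of distributions that are already smooth away from $\pa X$), we conclude the kernels of $\gamma_\pm(\tilde\Lambda_R^\pm - \Lambda_R^\pm)\gamma_\pm^*$ are smooth.

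The main obstacle I anticipate is bookkeeping rather than conceptual: one must carefully check that the operators $\Lambda_R^\pm$ genuinely satisfy the \emph{stronger} mapping hypotheses demanded in the $\gamma_+$ part of the preceding lemma (as opposed to the weaker ones in Lemma \ref{lem:Lambdaextension}), in particular that $P\Lambda_R^\pm$ and $P(\Lambda_R^\pm)^*$ land in $x^2\tSob^{0,\infty}_{\loc}(X)$ and that $P(P\Lambda_R^\pm)^*$ has the stated domain. Here the equations $P_R\Lambda_R^\pm = \Lambda_R^\pm P_R = 0$ are decisive: they force $P\Lambda_R^\pm = 0$ in the sense of the extended distributional action (recalling the subtlety discussed in the Remark after the definition of $P_R$, that $P_R u = f$ with $f \in \Ltwo$ means $x^{-2}Pu = f$), so all the relevant compositions vanish and the hypotheses hold trivially. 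One should also double-check that the definition of $\gamma_+ \Lambda_R^\pm \gamma_+^*$ via $\gamma_+^* : \mathcal{E}'(\pa X) \to (\maxdom^\infty)'$ is legitimate for $\Lambda_R^\pm$, which again reduces to the mapping property $\Lambda_R^\pm : (\maxdom^\infty)' \to \maxdom^{-\infty}$ established in the course of verifying the hypotheses. Once these verifications are in place, the theorem is an immediate corollary of the preceding lemma, Theorem \ref{thm:holo1}, and the holographic Hadamard condition, exactly paralleling \cite[Theorem 5.16]{wrochna2017holographic}.
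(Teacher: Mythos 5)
Your proposal is essentially the paper's intended argument (the paper's own proof is a one-line reference to \cite[Theorem~5.16]{wrochna2017holographic}): verify the hypotheses of the preceding lemma on microlocal regularity of traces, apply it, and finish by appealing to the uniqueness-modulo-$\cWb$ from Theorem~\ref{thm:holo1}. The verification of the $P$-mapping hypotheses via $P_R\Lambda_R^\pm = \Lambda_R^\pm P_R = 0$ is correct, as is the observation that the mapping property $\Lambda_R^\pm:\tdotSob^{-1,\infty}_\comp(X)\to\tSob^{1,\infty}_\loc(X)$ comes from the constructed representative plus the $\cWb$-error.

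There is a genuine gap in your final step, however. You argue that $\wf'(\gamma_\pm R\gamma_\pm^*)\cap(S^*\pX\times S^*\pX)=\emptyset$ (where $R=\tilde\Lambda_R^\pm-\Lambda_R^\pm$) together with ``interior smoothness of the kernel on $X^\circ\times X^\circ$'' gives a smooth kernel on $\pa X\times\pa X$. But the empty intersection with $S^*\pX\times S^*\pX$ leaves the zero-section parts of $\wf'$ uncontrolled, and interior smoothness of $R$'s kernel away from the boundary says nothing about the boundary-induced operator, which is obtained by a (weighted) restriction to $\pa X$ in both variables; a distribution can be smooth in $X^\circ\times X^\circ$ and still produce a singular trace. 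The correct and simpler route bypasses $\wf'$ altogether for this part: since $R\in\cWb$ and $P_R R=R P_R=0$, the operator $R$ maps $\tdotSob^{-1,-\infty}_\comp(X)$ continuously into $\maxdom^\infty$, hence $\gamma_\pm R:\tdotSob^{-1,-\infty}_\comp(X)\to\CI(\pa X)$ is continuous; by the self-adjointness of $R$ (both $\Lambda_R^\pm$ and $\tilde\Lambda_R^\pm$ are Hermitian by positivity) the adjoint $\gamma_\pm R\gamma_\pm^*$ maps $\mathcal E'(\pa X)\to\CI(\pa X)$ continuously with continuous adjoint, and a continuous map $\mathcal E'\to\CI$ whose adjoint is also continuous $\mathcal E'\to\CI$ has smooth Schwartz kernel. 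With this replacement the proof is complete.
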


The operators $\gamma_- \Lambda_R^\pm \gamma_-^*$ and $\gamma_+ \Lambda^\pm_R \gamma_+^*$ are interpreted as two-point functions of an induced theory at the boundary, in the formalism of \emph{generalized free fields}, see \cite{sanders2010equivalence} (the word ``generalized'' refers to the fact that they are not solutions of a natural differential equation). Theorem \ref{thm:holo2} asserts that these two-point functions satisfy a generalized version of the Hadamard condition which is nevertheless sufficient in applications (see also \cite{sanders2010equivalence}).




\appendix

\section{The elliptic setting} \label{app:elliptic}

\subsection{Mapping properties} In this appendix we consider the analogue of the operator $P_{\Robin}$ in the Euclidean signature. The manifold with boundary will be denoted by $S$ and will be assumed compact. With $x \in \CI(S)$ a boundary defining function, fix $F \in x^{\nu_-}\CI(S)$. Given a smooth Riemannian metric $h$ on $S$, we then consider the quadratic form 
\[
\mathcal{L}(u,v) = \int H(d_F u, d_F v) + \alpha u \bar v \, x^2 dh,
\]
where $H$ is the sesquilinear pairing on one-forms induced by the metric, and $\alpha >0$ is a constant chosen to make $\mathcal{L}$ coercive on $\tSob^1(S)$. We can associate to $\mathcal{L}$ an operator 
\[
L : \tSob^1(S) \rightarrow \tdotSob^{-1}(S),
\] 
which extends to a positive self-adjoint operator $L : D(L)  \rightarrow \Ltwo(S)$ with form domain $D(L^{1/2}) = \tSob^1(S)$. The key mapping property we need is that $L^{-1/2}$ preserves conormality:
\begin{lemm} \label{lemm:squareroot}
The operator $L^{-1/2}$ maps $\tSob^{1,\infty}(S) \rightarrow \tSob^{1,\infty}(S)$ continuously.
\end{lemm}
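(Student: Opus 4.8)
The plan is to use the standard functional-calculus representation of $L^{-1/2}$ as an integral over the resolvent,
\[
L^{-1/2} = \frac{1}{\pi} \int_0^\infty \lambda^{-1/2} (L+\lambda)^{-1} \, d\lambda,
\]
and to reduce the claim to uniform (in $\lambda$) mapping estimates for the resolvent $(L+\lambda)^{-1}$ on the conormal scale. Concretely, I would first show that for each fixed $m \geq 0$ and each $A \in \Psib^m(S)$ one has an estimate of the form
\[
\| (L+\lambda)^{-1} f \|_{\tSob^{1,m}(S)} \leq C \big( (1+\lambda)^{-1/2} \| f \|_{\tSob^{0,m}(S)} + (1+\lambda)^{-1}\| f\|_{\tSob^{-1,m}(S)}\big),
\]
together with the elliptic gain $\| (L+\lambda)^{-1} f\|_{\tSob^{1,m+1}(S)} \le C(\| f\|_{\tSob^{0,m}(S)} + (1+\lambda)\|(L+\lambda)^{-1}f\|_{\tSob^{1,m}(S)})$; this is enough for the $\lambda$-integral above to converge in the operator norm on $\tSob^{1,m}(S)$ when tested against an input in $\tSob^{1,m}(S)$, since $\tSob^{1,m}(S) \hookrightarrow \tSob^{0,m}(S)$. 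Iterating over $m$ then yields the claim for $\tSob^{1,\infty}(S) = \bigcap_m \tSob^{1,m}(S)$, with continuity by the closed graph theorem.

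The base case $m=0$ is the coercivity of $\mathcal{L}$: testing $(L+\lambda)u = f$ against $u$ gives $\mathcal{L}(u,u) + \lambda\|u\|_{\Ltwo}^2 = \langle f,u\rangle \le \|f\|_{\tdotSob^{-1}}\|u\|_{\tSob^1}$, and since $\mathcal{L}(u,u) + \|u\|_{\Ltwo}^2 \gtrsim \|u\|_{\tSob^1}^2$ one gets $\|u\|_{\tSob^1}^2 + \lambda\|u\|_{\Ltwo}^2 \lesssim \|f\|_{\tdotSob^{-1}}^2$, which after Cauchy--Schwarz applied more carefully to $\langle f,u\rangle$ with $f\in\Ltwo$ gives the $(1+\lambda)^{-1/2}$ gain in the $L^2\to\Ltwo$ estimate. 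For the inductive step, given $A \in \Psib^m(S)$ elliptic, I would commute $A$ through the equation: $(L+\lambda)(Au) = Af + [L,A]u$, where $[L,A]$ is (microlocally, modulo lower order) a sum $\sum_i R_i^* Q_i$ with $Q_i, R_i \in \tDiff^1(S)$ arising from $L = \sum Q_0^*Q_0 + \cdots$; the commutator structure from Lemma \ref{lem:Q0commutator} and the mapping property $Q^*:\tSob^{0,m}_\loc \to \tdotSob^{-1,m}_\loc$ established after that lemma control $[L,A]u$ in $\tdotSob^{-1,m-1}(S)$ by $\|u\|_{\tSob^{1,m-1}(S)}$. Feeding this into the $m=0$ estimate applied to $Au$, one bootstraps regularity one order at a time; the compactness of $S$ removes any support bookkeeping.

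The main obstacle I anticipate is obtaining the commutator bounds \emph{uniformly in $\lambda$} with the correct powers of $(1+\lambda)$, so that the resolvent integral actually converges — this is exactly the point where one must be careful that the twisted Dirichlet-form structure (rather than an honest second-order $\b$-differential operator) still produces the needed algebra of commutation identities. I would handle this by working throughout at the level of the quadratic form, paralleling the positive-commutator bookkeeping already carried out in Section \ref{subsect:dirichletmicrolocal} (Lemmas \ref{lem:dirichletform1} and \ref{lem:dirichletform2}) but now with the elliptic, coercive form $\mathcal{L}$ in place of $\mathcal{E}_0$, so that all error terms are genuinely lower order and absorbable. A secondary point is the boundary term coming from the Neumann (Robin with $\beta$ built into the self-adjoint realization) condition: since $L$ is the Friedrichs-type extension associated to $\mathcal{L}$ on $\tSob^1(S)$, the boundary pairing is already encoded in the form domain, and the trace interpolation inequality \eqref{eq:traceinterpolation2} lets one absorb it, just as in Lemma \ref{lem:dirichletform2}. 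Once these uniform resolvent estimates are in place, assembling $L^{-1/2}$ from the integral and concluding continuity on $\tSob^{1,\infty}(S)$ is routine.
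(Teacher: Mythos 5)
Your overall strategy — represent $L^{-1/2}$ by the resolvent integral, prove uniform resolvent estimates on the conormal scale, and bootstrap in $m$ via commutators — is the same as the paper's, and the base-case coercivity argument is fine. However, the specific $\lambda$-bookkeeping you propose is off in a way that breaks the convergence of the integral, and the missing piece is exactly the point where the paper has to be clever.

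Concretely: your claimed estimate
\[
\| (L+\lambda)^{-1} f \|_{\tSob^{1,m}(S)} \leq C \big( (1+\lambda)^{-1/2} \| f \|_{\tSob^{0,m}(S)} + (1+\lambda)^{-1}\| f\|_{\tSob^{-1,m}(S)}\big)
\]
does not suffice. With $f\in\tSob^{1,m}\subset\tSob^{0,m}$ the first term gives only a $(1+\lambda)^{-1/2}$ gain, so the integrand behaves like $\lambda^{-1/2}\cdot(1+\lambda)^{-1/2}\sim\lambda^{-1}$ at infinity, which diverges. The second term, as stated, is false: if it held uniformly in $\lambda$ it would force $\|u\|_{\tSob^1}\lesssim\|u\|_{\tdotSob^{-1}}$ (take $u$ an eigenfunction and send $\lambda\to\infty$), i.e.\ an embedding $\tdotSob^{-1}\hookrightarrow\tSob^1$ that does not exist. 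The coercivity calculation you sketch only yields the uniform bound $\|u\|_{\tSob^1}\lesssim\|(L+\lambda)u\|_{\tdotSob^{-1}}$ (no $\lambda$ decay) and the gain $\|u\|_{\tSob^1}\lesssim\lambda^{-1/2}\|(L+\lambda)u\|_{\Ltwo}$; neither of these is the $\lambda^{-1}$ bound on $\tSob^{1,m}\to\tSob^{1,m}$ that the integral actually requires. Your ``elliptic gain'' estimate with the $(1+\lambda)$ prefactor makes things worse, not better.

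What the proof actually needs, and what the paper establishes, is $\|(L+\mu)^{-1}\|_{\tSob^{1,s}\to\tSob^{1,s}}\leq C\mu^{-1}$ (together with the $\mu$-uniform bound for the range $\mu\lesssim 1$). The paper gets the $s=0$ case essentially for free from self-adjointness: since $L^{1/2}$ commutes with $(L+\mu)^{-1}$ and $\tSob^1=D(L^{1/2})$, the $L^2$ spectral bound $\|(L+\mu)^{-1}\|_{\Ltwo\to\Ltwo}\leq\mu^{-1}$ transfers to $\tSob^1$. It then propagates the $\mu^{-1}$ decay to $\tSob^{1,s}$ not by commuting $A$ through $(L+\lambda)(Au)=Af+[L,A]u$ (which would require controlling $\lambda$-dependence of the commutator error, the very thing that is hard) but by an algebraic rearrangement: conjugate by an elliptic $A_s$ and a parametrix $A_{-s}$, and sandwich with $(L+\mu)$ so that the $\mu$-uniform estimate $\|u\|_{\tSob^{1,s}}\leq\|(L+\mu)u\|_{\tdotSob^{-1,s}}$ absorbs all the conormal loss and the $\mu^{-1}$ factor comes out cleanly from the $\tSob^1$ bound alone. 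Your plan as written omits both the self-adjointness input at $s=0$ and the rearrangement that isolates the $\mu^{-1}$ decay from the conormal bootstrap, and with your stated estimates the integral does not converge.
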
 
\begin{proof}
	First, note that for $u \in \tSob^{1,\infty}(S)$ we have a trivial estimate 
	\[
	\| u \|_{\tSob^{1}(S)} \leq \| (L + \mu) u \|_{\tdotSob^{-1}(S)}
	\]
	for $\mu > 0$.
Arguing as in Lemma \ref{lem:dirichletform2}, induction then shows that for each integer $s \geq 0$,
\begin{equation} \label{eq:resolventbounduniform}
\| u \|_{\tSob^{1,s}(S)} \leq \| (L + \mu) u \|_{\tdotSob^{-1,s}(S)}.
\end{equation}
We also have the estimate 
\begin{equation} \label{eq:resolventbound}
\| (L + \mu)^{-1} \|_{\tSob^1(S) \rightarrow \tSob^1(S)} \leq C\mu^{-1},
\end{equation}
since $L^{\pm 1/2}$ commutes with the resolvent and the estimate is clearly true for $\Ltwo(S)$ replacing $\tSob^1(S)$. The claim is that for each $s\geq 0$ an integer we also have 
\begin{equation} \label{eq:resolventboundconormal}
\| (L + \mu)^{-1} \|_{\tSob^{1,s}(S) \rightarrow \tSob^{1,s}(S)} \leq C\mu^{-1},
\end{equation}
for some $C >0$ depending on $s$. Notice that the analogue of Theorem \ref{theo:ellipticregion} (in Riemannian signature) applied to $L$ shows that $L+\mu$ is invertible $\tdotSob^{-1,s}(S) \rightarrow \tSob^{1,s}(S)$ for each $s \in \RR$, so \eqref{eq:resolventboundconormal} is well-defined.  Let $A_s \in \Psib^s(S)$ be elliptic and $A_{-s} \in \Psib^{-s}(X)$ be a parametrix, so $A_s A_{-s} + R = 1$ and $A_{-s}A_s + R' = 1$ for some $R,R' \in \Psib^{-\infty}(S)$. Given $u \in \tSob^{1,\infty}(S)$,
\[
\| (L+\mu)^{-1} u \|_{\tSob^{1,s}(S)} \leq C\| A_s (L+\mu)^{-1} u\|_{\tSob^{1}(S)} + C\| (L+\mu)^{-1} u \|_{\tSob^{1}(S)}.
\]
The second term on the right-hand side we bound by $\mu^{-1} \| u\|_{\tSob^{1}(S)} \leq \mu^{-1} \| u \|_{\tSob^{1,s}(S)}$, using \eqref{eq:resolventbound}. Now write $u = (A_{-s}A_s + R')u$. We then bound
\[
\|A_s(L+\mu)^{-1}A_{-s}A_su \|_{\tSob^{1}(S)} \leq \| A_s(L+\mu)^{-1}A_{-s} (L+\mu) \|_{\tSob^{1}(S)\rightarrow \tSob^{1}(S)}\| (L+\mu)^{-1} A_s u \|_{\tSob^{1}}. 
\]
Now write 
\[
A_s(L+\mu)^{-1}A_{-s}(L+\mu) = A_s (L+\mu)^{-1}A_{-s} L + A_sA_{-s} - A_s (L+\mu)^{-1}LA_{-s},
\]
and apply \eqref{eq:resolventbounduniform} to see that this operator mapping $\tSob^{1}(S) \rightarrow \tSob^{1}(S)$ is uniformly bounded in $\mu$. On the other hand, $\| (L+\mu)^{-1} A_s u \|_{\tSob^{1}} \leq C\mu^{-1} \| u \|_{\tSob^{1,s}(S)}$, which shows that
\[
\|A_s(L+\mu)^{-1}A_{-s}A_su \|_{\tSob^{1}(S)} \leq C\mu^{-1}\| u \|_{\tSob^{1,s}(S)}.
\]
The term $\| A_s(L+\mu)R'u\|_{\tSob^{1}(S)}$ is bounded similarly.

Since we can write
\[
L^{-1/2} = \frac{1}{\pi} \int_0^\infty \mu^{-1/2}(L+\mu)^{-1/2} \, d\mu, 
\]
the estimate \eqref{eq:resolventboundconormal} shows that $L^{-1/2}$ indeed maps $\tSob^{1,\infty}(S) \rightarrow \tSob^{1,\infty}(S)$
\end{proof}

\section{Even b-calculus} \label{app:even}

\subsection{Even b-pseudodifferential operators}

Let $(X,g)$ be an aAdS spacetime which is even modulo $\mathcal{O}(x^{2k+1})$. We sketch a construction of the b-pseudodifferential operators of order $m$ on $X$ that are even modulo $\mathcal{O}(x^{2k+3})$, which we denote by $\Psibeven^m(X)$; the construction closely mirrors that of an even subcalculus by Albin in the 0-calculus \cite{albin2007renormalized}. The operators we consider are described in terms of their Schwartz kernels, and for this reason we assume familiarity with the construction of the usual b-calculus in terms of conormal distributions on the b-stretched product \cite[Chapter 4, 5]{melrose1993atiyah}.

\begin{defi}
We say that $f \in \CI(X)$ is even modulo $\mathcal{O}(x^{2k+3})$, written $f \in \CIeven(X)$, if in any special coordinate system $(x,y)$ the Taylor of expansion of $f$ at $\pa X$ contains only even terms modulo $\mathcal{O}(x^{2k+3})$
\end{defi} 

 This space is well defined (i.e., independent of the choice of special coordinates) in view of \eqref{eq:changeofspecialcoords}. Similarly, we can define the space $\CIodd(X)$ of odd functions modulo $\mathcal{O}(x^{2k+4})$.

Next we consider b-pseudodifferential operators. We write $X^2_\be$ for the b-stretched product, which recall is obtained by blowing up the corner $\partial X \times \partial X$ in $X\times X$. For simplicity we will neglect various b-half-density factors. Let $(x,y)$ and $(x',y')$ be special local coordinates  on $X$, such that $x = x'$. Thus $(x,y,x',y')$ are valid local coordinates on $X^2$ near $\pa X \times \pa X$. Near the front face in $X^2_\be$, local coordinates are given by
\[
\tau = \frac{x-x'}{x+x'}, \quad r= x+x', \quad y, \quad y',
\] 
where $r$ is a bdf for the front face. 

\begin{defi}
We say that a smooth function $f \in \CI(X^2_\be)$ is even modulo $\mathcal{O}(r^{2k+3})$, written $f\in \CIeven(X^2_\be)$, if the Taylor expansion of $f$ at the front face $\{r=0\}$ in coordinates $(\tau,r,y,y')$ contains only even terms modulo $\mathcal{O}(r^{2k+3})$.
\end{defi} 
 In other words, $f \in \CI(X^2_\be)$ is even modulo $\mathcal{O}(r^{2k+3})$ if we can write
\[
f(\tau,r,y,y') = f_0(\tau,r^2,y,y') + r^{2k+3}f'(\tau,r,y,y')
\]
for smooth functions $f_0, f'$. Again by \eqref{eq:changeofspecialcoords}, this definition is independent of the choice of special coordinates in either factor. We can now define the space of even  b-pseudodifferential operators, recalling that that elements of  $\Psib^{m}(X)$ have Schwartz kernels on $X^2_\be$ that are conormal to the lifted diagonal. In local coordinates $(\tau,r,y,y')$, we can view these as distributions conormal to $\{\tau = 0, \, y=y'\}$ with smooth parametric dependence on $r$.

\begin{defi}
	If $A \in \Psib^{m}(X)$, then we say that $A \in \Psibeven^{-\infty}(X)$ if its Schwartz kernel $\mathcal{K}_A$ in local coordinates $(\tau,r,y,y')$ as above has a Taylor expansion at the front face $\{r=0\}$ containing only even terms modulo $\mathcal{O}(r^{2k+3})$.
\end{defi}

The Schwartz kernel $\mathcal{K}_A$ also has the usual infinite order of vanishing at the side faces. Roughly speaking, this definition means that the total symbol of $\mathcal{K}_A$ in local coordinates $(r,y,\sigma,\eta)$ is even in $r$ modulo $\mathcal{O}(r^{2k+3})$.

Next, we discuss composition of even b-pseudodifferential operators. First, we note that 
\[
\CIeven(X^2_\be) \cdot \CIeven(X^2_\be) \subset \CIeven(X^2_\be),
\]
and that the lifts of $\CIeven(X)$ functions to $X^2_\be$ from either the left or right factors land in $\CIeven(X^2_\be)$. Using partitions of unity $\chi_i \in \CIeven(X)$, one can reduce to composition of even operators with localized Schwartz kernels, just as considered in \cite[Section 5.9]{melrose1993atiyah}. It is then straightforward to see that even operators are closed under composition:

\begin{prop}
$A \in \Psibeven^m(X)$ and $B \in \Psibeven^{m'}(X)$, then $AB \in \Psibeven^{m+m'}(X)$.
\end{prop}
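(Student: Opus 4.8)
The plan is to reduce the statement to the composition formula for the ordinary b-calculus, keeping track of the extra evenness structure at the front face. First I would recall that, by the mapping properties collected earlier in the excerpt, $AB \in \Psib^{m+m'}(X)$, so only the evenness of the Schwartz kernel of $AB$ near the front face needs to be checked. Using a partition of unity $\{\chi_i\}$ subordinate to a cover of $X$ by interior charts and special boundary charts, with $\chi_i \in \CIeven(X)$ in the latter case (possible since every aAdS spacetime is even modulo $\mathcal{O}(x)$, and the covering can be refined so that evenness holds to the relevant order), I would write $A = \sum_i \chi_i A \chi_j + \ldots$ and likewise for $B$, and observe that because $\CIeven(X)$ lifted to $X^2_\be$ from either factor lands in $\CIeven(X^2_\be)$, and $\CIeven(X^2_\be)$ is closed under products, it suffices to treat the composition of two even operators each with Schwartz kernel supported in a single special coordinate patch. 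Away from the boundary the composition is the usual interior composition of properly supported pseudodifferential operators and produces no front-face contribution, so this case is immediate.

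Next I would carry out the local computation in special coordinates $(x,y)$, $(x',y')$ with $x=x'$, using front-face coordinates $(\tau,r,y,y')$. The composition of two localized b-pseudodifferential operators is given by the standard push-forward formula from \cite[Section 5.9]{melrose1993atiyah}: writing the kernels as oscillatory integrals with total (left-reduced) symbols $a(r,y,\sigma,\eta)$ and $b(r,y,\sigma,\eta)$, the symbol of the composite is obtained by an asymptotic expansion in derivatives $\partial_\sigma^\beta a \cdot D_y^\beta b$ plus the contribution of the variable-coefficient b-vector fields, all evaluated by integrating over the intermediate fibre. The key point is that all the operations occurring in this formula — multiplication of symbols, differentiation in $\sigma$ and $\eta$, application of the b-vector field $x\partial_x = \tfrac12 r\partial_r$ (in these coordinates, modulo lower-order $r$-corrections), and the stationary-phase integration in the intermediate variable — preserve evenness in $r$ modulo $\mathcal{O}(r^{2k+3})$. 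Concretely, if $a = a_0(r^2,y,\sigma,\eta) + r^{2k+3}a'$ and similarly for $b$, then each term of the composition expansion is again of this form; the only subtlety is that $x\partial_x$ acting on an $O(r^{2k+3})$ remainder does not improve the order of vanishing, but it does preserve the threshold $2k+3$, which is all that is required. I would also note that the relation between the left-reduced symbol and the Schwartz kernel near the front face respects the even/odd decomposition because $dx/x = dr/r$ up to an even factor.

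The main obstacle I expect is bookkeeping rather than conceptual: one must verify carefully that the change-of-special-coordinates formula \eqref{eq:changeofspecialcoords}, which governs why $\CIeven$ is well-defined, is compatible with the composition formula — i.e. that the even subcalculus is genuinely coordinate-independent and closed under the push-forward, not just under products of fixed-coordinate representatives. Following Albin's treatment of the even 0-calculus \cite{albin2007renormalized}, the clean way to handle this is to phrase evenness invariantly via the involution on $X^2_\be$ induced by $\tau \mapsto -\tau$ (the exchange of factors) composed with the known coordinate transformations, and to observe that the composition is equivariant under this involution modulo $\mathcal{O}(r^{2k+3})$. With that structural observation in place, closure under composition is an immediate consequence of the corresponding statement in the full b-calculus together with the fact that the symbolic expansion terms are built from evenness-preserving operations. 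I would present the argument at this level of generality, deferring the routine symbol-calculus verifications to the cited references.
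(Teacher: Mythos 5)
Your proposal is correct and follows essentially the same route as the paper: reduce to localized Schwartz kernels via a partition of unity in $\CIeven(X)$, use that $\CIeven(X^2_\be)$ is closed under products and contains the lifts of $\CIeven(X)$ from either factor, and then invoke the Melrose b-composition formula and check that each of its constituent operations preserves evenness at the front face. You spell out somewhat more of the symbol-level bookkeeping (and the invariant involution formulation following Albin) than the paper, which simply declares the final step straightforward.
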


On $\RR^n_+$ consider the quantization procedure given by
\[
\Opb(a)u(x,y) = \int e^{i ((x/x'-1)\sigma + \langle y-y',\eta \rangle)}\phi(x/x')a(x,y,\sigma,\eta)  \frac{dx'}{x'}dy' d\sigma d\eta,
\]
where $\phi \in \CcI(\RR)$ satisfies $\supp \phi \subset (1/2,2)$ and $\phi(s) = 1$ near $s=1$. If $a \in S^m(\RR^n_+)$ is of the form 
\[
a(x,y,\sigma,\eta) = a_0(x^2,y,\sigma,\eta) + x^{2k+3}a'(x,y,\sigma,\eta),
\]
then $\Opb(a) \in \Psibeven^m(\RR^n_+)$ irrespective of the choice of aAdS metric on $\RR^n_+$ of the form 
\[
\frac{-dx^2 + k^{\alpha\beta}(x,y)dy^{\alpha\beta}}{x^2}.
\] 
By using an even partition of unity to patch these local quantization procedures together, we can quantize \emph{even} symbols on an arbitrary aAdS spacetime $(X,g)$. In particular, we can always construct \emph{elliptic} operators $A \in \Psibeven^m(X)$.

\subsection{The indicial family}

Apart from composition and the existence of elliptic elements, the other key property of even b-pseudodifferential operators is an improved statement about the kernel of the indicial family map. Given $A \in \Psib^m(X)$, recall that $\widehat{N}(A)(s)$ is defined invariantly as follows: first, one restricts the kernel $\mathcal{K}_A$ to the front face in $X^2_\be$. The front face is then identified with the inward pointing spherical normal bundle to $\pa X \times \pa X$, which is identified with $\pa X \times \pa X \times \RR_+$. Finally, the indicial family is the Mellin transform of the resulting function in the $\RR_+$ factor. The vanishing of $\widehat{N}(A)$ is thus equivalent to the statement that $A \in x \Psib^m(X)$.

Now suppose that $(X,g)$ is an aAdS spacetime; in particular, it is trivially even modulo $\mathcal{O}(x)$. The even calculus on $X$ is thus defined modulo cubic terms. In particular, we have the following:
\begin{lemm}
	If
 $A \in \Psibeven^m(X)$ and $\widehat{N}(A) = 0$, then $A \in x^2\Psib^m(X)$.
\end{lemm}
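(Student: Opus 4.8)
The statement to prove is: if $A \in \Psibeven^m(X)$ and $\widehat{N}(A) = 0$, then $A \in x^2 \Psib^m(X)$. The starting point is the general fact recalled in the appendix that $\widehat{N}(A) = 0$ is equivalent to $A \in x\Psib^m(X)$. So the real content is upgrading one order of vanishing to two orders, using the evenness hypothesis. The plan is to work with the Schwartz kernel $\mathcal{K}_A$ of $A$ on the b-stretched product $X^2_\be$ in special local coordinates $(\tau, r, y, y')$ near the front face, where $r = x + x'$ is a bdf for the front face and $\tau = (x-x')/(x+x')$.

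First I would unravel what the two hypotheses say at the level of $\mathcal{K}_A$. Evenness modulo $\mathcal{O}(r^{2k+3})$ (and in particular modulo $\mathcal{O}(r^3)$, since an aAdS spacetime is at least even modulo $\mathcal{O}(x)$, so $k$ can be taken to be $0$) means we can write $\mathcal{K}_A = f_0(\tau, r^2, y, y') + r^3 f'(\tau, r, y, y')$ with $f_0, f'$ conormal to the lifted diagonal $\{\tau = 0,\ y = y'\}$ with smooth parametric dependence in the remaining variables. The vanishing of the indicial family says that the restriction of $\mathcal{K}_A$ to the front face $\{r = 0\}$ vanishes — equivalently, $f_0(\tau, 0, y, y') = 0$, i.e.\ the $r$-independent term in the expansion of $f_0$ vanishes. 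But $f_0$ is a smooth function of $r^2$; its Taylor expansion in $r$ at $r = 0$ contains only even powers of $r$. The first term, the $r^0$ term, is the one that vanishes by $\widehat{N}(A) = 0$; hence the next possible term in $f_0$ is the $r^2$ term. Therefore $f_0 = r^2 g_0(\tau, r^2, y, y')$ for some conormal $g_0$, and consequently $\mathcal{K}_A = r^2 g_0 + r^3 f' = r^2(g_0 + r f')$, which exhibits $\mathcal{K}_A$ as $r^2$ times a conormal kernel. Since $r = x + x'$ and the kernel already has the infinite order of vanishing at the side faces, $r^2$ times a b-kernel of order $m$ is $x^2$ times a b-kernel of order $m$ (the factors $x/r$ and $x'/r$ are smooth and bounded on $X^2_\be$ away from the side faces, and the side-face vanishing absorbs them); this gives $A \in x^2\Psib^m(X)$. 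I would phrase this last step invariantly by noting $x^2\Psib^m(X)$, $r^2\Psib^m(X)$ and $(x')^2\Psib^m(X)$ all coincide as subspaces of $\Psib^m(X)$, which follows from the standard description of multiplication by bdfs on $X^2_\be$ as in \cite[Chapter 5]{melrose1993atiyah}.

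The one point that needs a little care — and which I expect to be the main (modest) obstacle — is making the parity argument rigorous for a kernel that is \emph{conormal}, not smooth, along the diagonal: one must check that ``the $r^0$ Taylor coefficient of $f_0$ vanishes'' together with ``$f_0$ depends on $r$ only through $r^2$'' really does force $f_0 \in r^2 \cdot (\text{conormal})$. This is handled by the usual device of localizing the conormal singularity: away from the lifted diagonal $\mathcal{K}_A$ is smooth and the elementary Taylor argument applies directly, while near the diagonal one works with the full symbol $a(r, y, \sigma, \eta)$ (with parametric dependence on $r$), for which the evenness hypothesis says $a = a_0(r^2, y, \sigma, \eta) + r^3 a'$ and $\widehat{N}(A) = 0$ says $a_0(0, y, \sigma, \eta) = 0$; then $a_0(r^2, y, \sigma, \eta) = r^2 \int_0^1 (\partial_1 a_0)(t r^2, y, \sigma, \eta)\, dt$ is manifestly $r^2$ times a symbol of the same order, and the argument closes. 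Patching with an even partition of unity $\{\chi_i \in \CIeven(X)\}$ as in the appendix completes the proof.
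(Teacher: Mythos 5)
The paper states this lemma without proof, treating it as immediate from the preceding definitions in Appendix~B, so there is no ``paper's proof'' to compare against; your proposal supplies the intended argument and it is correct. Your chain is sound: evenness modulo $\mathcal{O}(r^{2k+3})$ (and one may indeed take $k=0$ here, since any aAdS spacetime is at least even modulo $\mathcal{O}(x)$) gives $\mathcal{K}_A = f_0(\tau,r^2,y,y') + r^3 f'$; the invariant definition of $\widehat{N}(A)$ as the Mellin transform of the restriction of $\mathcal{K}_A$ to the front face makes $\widehat{N}(A)=0$ equivalent to $f_0(\tau,0,y,y')=0$; Taylor's theorem in the second slot of $f_0$ then extracts the $r^2$ factor; and the identification $r^2\Psib^m(X)=x^2\Psib^m(X)$ is the standard fact about multiplication by boundary defining functions, with the side-face factors $(1\pm\tau)/2$ absorbed by the infinite-order vanishing of b-kernels there. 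You are also right that the only genuinely technical point is parity for a distribution that is merely conormal along the lifted diagonal, and your resolution — passing to the (parametrized) full symbol $a(r,y,\sigma,\eta)$, where evenness reads $a = a_0(r^2,y,\sigma,\eta) + r^3a'$ and indicial vanishing reads $a_0(0,\cdot)=0$, so that $a_0(r^2,\cdot)=r^2\int_0^1(\partial_1 a_0)(tr^2,\cdot)\,dt$ is visibly $r^2$ times a symbol of the same order — is exactly the right way to handle it. This is a complete proof of a statement the paper left to the reader.
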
 

This implies the following corollary:

\begin{lemm} \label{lem:quadraticvanishing}
	Let $(X,g)$ be an asymptotically AdS spacetime. If $A \in \Psib^m(X)$ has compact support in a coordinate patch with special coordinates $(x,y^1,\ldots,y^{n-1})$, then 
	\[
	[xD_x,A] \in x^2 \Psib^m(X).
	\]
\end{lemm}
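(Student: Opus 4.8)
The statement to prove is Lemma \ref{lem:quadraticvanishing}: for $(X,g)$ asymptotically AdS and $A \in \Psib^m(X)$ with compact support in a special coordinate patch, one has $[xD_x,A] \in x^2\Psib^m(X)$. The key point is that we already know, from the analysis of indicial families recalled in Section \ref{sec:bpsdo} (the discussion around \eqref{eq:Dxswap}), that $[xD_x,A]$ always has vanishing indicial family, hence lies in $x\Psib^m(X)$ in general. To gain the extra order of vanishing we must bring in the even structure, and the plan is to reduce to the even subcalculus developed in Appendix \ref{app:even}, where the relevant sharpened statement is Lemma \ref{lem:quadraticvanishing}'s immediate predecessor: \emph{if $B \in \Psibeven^m(X)$ and $\widehat N(B)=0$, then $B \in x^2\Psib^m(X)$}. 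Note that any aAdS spacetime is trivially even modulo $\mathcal{O}(x)$ (the case $k=0$ in Appendix \ref{app:even}), so the even calculus $\Psibeven^m(X)$ is available and nontrivial, and $xD_x \in \Psibeven^1(X)$.

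The first step is therefore to observe that the operator $xD_x$ is even: in special coordinates its Schwartz kernel on $X^2_\be$ is (up to the usual b-half-density factors) supported on the lifted diagonal with total symbol $\sigma$, which depends on $r=x+x'$ not at all, hence is trivially even modulo $\mathcal{O}(r^3)$. So $xD_x \in \Psibeven^1(X)$. The second step handles $A$: a general $A \in \Psib^m(X)$ with compact support in a special coordinate patch need not be even, but we do not need it to be — we only need to commute with $xD_x$. The correct move is to write $A = A_{\mathrm{ev}} + A_{\mathrm{rem}}$ is \emph{not} what is wanted; rather, one should note that the commutator map $B \mapsto [xD_x, B]$ sends $\Psibeven^m(X)$ to itself (since $\Psibeven(X)$ is closed under composition by the Proposition in Appendix \ref{app:even}, and $xD_x \in \Psibeven^1$), and moreover $[xD_x, B]$ has vanishing indicial family because $\widehat N$ is an algebra homomorphism and $\widehat N(xD_x)(s) = s$ is a scalar, which commutes with $\widehat N(B)(s)$. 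Hence for $B \in \Psibeven^m(X)$, the predecessor lemma gives $[xD_x,B] \in x^2\Psib^m(X)$.

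The remaining step is to pass from this statement about even $A$ to the statement about arbitrary $A$. Here one uses that the claim is really about the symbol of $A$ only through finitely many terms of its Taylor expansion at $\pa X$. Concretely, in special coordinates write the full symbol $a(x,y,\sigma,\eta)$ of $A$ and split $a = a_{\mathrm{ev}} + a_{\mathrm{odd}}$ where $a_{\mathrm{ev}}$ collects the even-order Taylor terms in $x$ up to order $2$ (which is all the even calculus modulo $\mathcal{O}(x^3)$ sees) and $a_{\mathrm{odd}} \in x\,S^m$ is the rest plus the odd piece. Then $A = \Opb(a_{\mathrm{ev}}) + A_1$ with $\Opb(a_{\mathrm{ev}}) \in \Psibeven^m(X)$ and $A_1 \in x\Psib^m(X)$. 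For the even part we apply the argument above to get $[xD_x,\Opb(a_{\mathrm{ev}})] \in x^2\Psib^m(X)$. For $A_1 = x B_1$ with $B_1 \in \Psib^m(X)$ we compute $[xD_x, xB_1] = x[xD_x,B_1] + x[xD_x,x]B_1 \cdot x^{-1}$; more cleanly, $[xD_x, xB_1] = x\,[xD_x,B_1] + (xD_x x - x\, xD_x)B_1 = x[xD_x,B_1] + (-ix)\,x\,B_1\cdot x^{-1}$, and since $[xD_x, B_1] \in x\Psib^m(X)$ (vanishing indicial family, general case) and the second term is manifestly in $x^2\Psib^m(X)$, we conclude $[xD_x, A_1] \in x^2\Psib^m(X)$. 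Adding the two contributions gives $[xD_x,A] \in x^2\Psib^m(X)$, as claimed. The main obstacle is getting the splitting $A = \Opb(a_{\mathrm{ev}}) + A_1$ to be coordinate-invariant enough to glue across patches — but since the hypothesis already fixes a single special coordinate patch containing $\operatorname{supp} A$, no gluing is needed, and the change-of-special-coordinates formula \eqref{eq:changeofspecialcoords} guarantees the notion ``even modulo $\mathcal{O}(x^3)$'' is well defined there; the only care required is that the quantization $\Opb$ used must be the one (as in Appendix \ref{app:even}) compatible with the even structure, which is exactly the point of that appendix.
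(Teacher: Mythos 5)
Your first two steps are sound and agree with the paper's argument: $xD_x\in\Psibeven^1(X)$, the even calculus is closed under composition, and $\widehat{N}(xD_x)(s)=s$ is scalar, so for $B\in\Psibeven^m(X)$ the commutator $[xD_x,B]$ is even with vanishing indicial family, hence in $x^2\Psib^m(X)$ by the preceding lemma. The problem is the final step, which tries to upgrade to general $A$. You write $(xD_x\cdot x - x\cdot xD_x)B_1 = (-ix)\cdot x\,B_1\cdot x^{-1}$ and call the right-hand side ``manifestly in $x^2\Psib^m(X)$''; neither claim is correct. In fact $(xD_x\cdot x - x\cdot xD_x)B_1 = [xD_x,x]\,B_1 = \tfrac{1}{i}\,x\,B_1$, which differs from $-i\,x^2B_1x^{-1}$ unless $B_1$ commutes with $x$; and $-i\,x^2B_1x^{-1}=-ix\cdot(xB_1x^{-1})$ with $xB_1x^{-1}\in\Psib^m(X)$, so even your (incorrect) expression lies only in $x\Psib^m(X)$. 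The honest identity is $[xD_x,xB_1]=x[xD_x,B_1]+\tfrac{1}{i}xB_1$, and the last term is a genuine $O(x)$ obstruction whenever $\widehat{N}(B_1)\neq 0$.

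This cannot be repaired, because the conclusion is false for general $A\in\Psib^m(X)$. Take $A$ to be multiplication by $\chi x$ with $\chi\in\CcI(X)$ supported in a special coordinate patch and $\chi|_{\pa X}\not\equiv 0$; then $[xD_x,A]$ is multiplication by $\tfrac{1}{i}(x\chi + x^2\partial_x\chi)$, which is $O(x)$ but not $O(x^2)$. The hypothesis should read $A\in\Psibeven^m(X)$: the paper's one-line proof tacitly uses this (the step ``$xD_xA$ and $AxD_x$ are also even'' requires $A$ even), and Lemma \ref{lem:evenQcommutator} --- the only place the lemma is invoked --- applies it only to $B\in\Psibeven^m(X)$. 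Your initial instinct that the even/odd decomposition ``is not what is wanted'' was the right one; the statement for even $A$ is all that is true, and that is exactly what your first two steps already establish.
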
 
\begin{proof}
	The operator $xD_x$ is always even, which implies that $xD_xA,\, AxD_x$ and hence $[xD_x,A]$ are also even. The proof is finished by observing that $\widehat{N}([xD_x,A]) = 0$.
\end{proof}

	\bibliographystyle{alphanum}
	
	\bibliography{central_bibliography}

\end{document}